\newcommand{\Ninfty}{\mathcal N_{\infty}}
\newcommand{\cO}{\mathcal O}
\newcommand{\m}[1]{\underline{#1}}
\let\sma\wedge
\newcommand{\htp}{\simeq}
\renewcommand{\to}{\mathchoice{\longrightarrow}{\rightarrow}{\rightarrow}{\rightarrow}}
\newcommand{\sI}{\mathscr{I}}
\newcommand{\cC}{{\mathcal C}}
\newcommand{\cD}{{\mathcal D}}
\newcommand{\cF}{{\mathcal F}}
\newcommand{\cP}{{\mathcal P}}
\newcommand{\sL}{{\mathscr L}}
\newcommand{\sK}{{\mathscr K}}
\let\catsymbfont\mathcal
\newcommand{\aA}{{\catsymbfont{A}}}
\newcommand{\aB}{{\catsymbfont{B}}}
\newcommand{\aC}{{\catsymbfont{C}}}
\newcommand{\aF}{{\catsymbfont{F}}}
\newcommand{\aL}{{\catsymbfont{L}}}
\newcommand{\aM}{{\catsymbfont{M}}}
\newcommand{\aO}{{\catsymbfont{O}}}
\newcommand{\bL}{\mathbb{L}}
\newcommand{\bP}{{\mathbb{P}}}
\newcommand{\bR}{{\mathbb{R}}}
\newcommand{\bS}{\mathbb{S}}
\newcommand{\bT}{\mathbb{T}}
\def\quickop#1{\expandafter\DeclareMathOperator\csname
#1\endcsname{#1}}
\newcommand{\mSet}{\underline{\Set}}
\newcommand{\mF}{\underline{\cF}}
\newcommand{\modules}[1]{#1\text{-}\mbox{Mod}}
\numberwithin{equation}{section}
\newtheorem{theorem}[equation]{Theorem}
\newtheorem*{theorem*}{Theorem}
\newtheorem{corollary}[equation]{Corollary}
\newtheorem{lemma}[equation]{Lemma}
\newtheorem{proposition}[equation]{Proposition}
\theoremstyle{definition}
\newtheorem{definition}[equation]{Definition}
\newtheorem{remark}[equation]{Remark}
\newtheorem{warning}[equation]{Warning}
\newcommand{\Sp}{\mathcal Sp}
\newcommand{\gspectra}[1][G]{#1\mathcal{S}}
\newcommand{\Sg}[1][G]{\mathbb{S}_{#1}}
\begin{document}

\title[Modules over $\Ninfty$ rings]{$G$-symmetric monoidal categories
of modules over equivariant commutative ring spectra}

\author[A.~J.~Blumberg]{Andrew~J. Blumberg}
\address{University of Texas \\ Austin, TX 78712}
\email{blumberg@math.utexas.edu}
\thanks{A.~J.~Blumberg was supported in part by NSF grant DMS-1151577}

\author[M.~A.~Hill]{Michael~A. Hill}
\address{University of California Los Angeles\\ Los Angeles, CA 90095}
\email{mikehill@math.ucla.edu}
\thanks{M.~A.~Hill was supported in part by NSF grant DMS-0906285, DARPA
grant FA9550-07-1-0555, and the Sloan Foundation}

%\date{\draftdate}
%\subjclass[2000]{}

\begin{abstract}
We describe the multiplicative structures that arise on categories of
equivariant modules over certain equivariant commutative ring spectra.
Building on our previous work on $\Ninfty$ ring spectra, we construct
categories of equivariant operadic modules over $\Ninfty$ rings that
are structured by equivariant linear isometries operads.  These
categories of modules are endowed with equivariant symmetric monoidal
structures, which amounts to the structure of an ``incomplete Mackey
functor in homotopical categories''.  In particular, we construct
internal norms which satisfy the double coset formula.  One
application of the work of this paper is to provide a context in which
to describe the behavior of Bousfield localization of equivariant
commutative rings.  We regard the work of this paper as a first step
towards equivariant derived algebraic geometry.
\end{abstract}

\maketitle \setcounter{tocdepth}{2} \tableofcontents

\section{Introduction}

Stable homotopy theory has been revolutionized over the last twenty
years by the development of symmetric monoidal categories of
spectra~\cite{EKMM, mmss, HSS}.  Commutative monoids in these
categories model $E_\infty$ ring spectra.  Arguably the most important
consequence of this machinery is the ability to have tractable
point-set models for homotopical categories of modules over an
$E_\infty$ ring spectrum $R$.  In the equivariant setting, analogous
symmetric monoidal categories of $G$-spectra have been constructed,
most notably the category of orthogonal $G$-spectra~\cite{MM}.  Once
again commutative monoids model $E_\infty$ ring spectra and so there
are good point-set models for homotopical categories of modules over
such an equivariant $E_\infty$ ring spectrum.

Modules over a commutative ring orthogonal $G$-spectrum $R$ form a
``$G$-symmetric monoidal category''~\cite{HillHopkinsLocalization}.
Roughly speaking, for each $G$-set $T$, we have an internal norm in
the category of $R$-modules; for an orbit $G/H$, the internal norm is
precisely the composite of the $R$-relative norm $_R N_H^G$ and the
forgetful functor from $R$-modules to $\iota_H^* R$-modules.  These
internal norms are compatible with disjoint unions of $G$-sets and
with restrictions to subgroups, and if the set has a trivial action
and cardinality $n$, then we simply recover the smash power functors
$X \mapsto X^n$.

However, in contrast to the non-equivariant setting, there are many
possible notions of $E_\infty$ ring spectra when working over a
nontrivial finite group $G$.  The commutative monoids in orthogonal
$G$-spectra are just one end of the spectrum of possible multiplicative
structures.  In a previous paper, we described this situation in
detail by explaining how such multiplications can be structured by
$\Ninfty$ operads~\cite{BlumbergHill}.  Roughly speaking, just as a
commutative ring is characterized by compatible multiplication maps
$R^{\wedge n} \to R$ as $n$ varies over the natural numbers, a
commutative $G$-ring is characterized by compatible equivariant
multiplication maps $R^{\wedge T} \to R$, where here $T$ is a $G$-set.
The $\Ninfty$ operads structure which such equivariant norms exist for
a given commutative ring, expressed in terms of compatible families of
subgroups of $G \times \Sigma_n$.  Specifically, associated to an
$\Ninfty$ operad $\aO$ there is a coefficient system $\aC(\aO)$ which
controls the ``admissible'' $G$-sets $T$ for which equivariant
multiplications exist.  The commutative monoids in orthogonal
$G$-spectra correspond to the ``complete'' $\Ninfty$ operads which
permit all norms.

In this paper, we turn to the study of the equivariant symmetric monoidal
structure on categories of operadic modules associated to algebras
over particular $\Ninfty$ operads: the linear isometries operads
determined by a (possibly incomplete) $G$-universe $U$.  
Here the admissible sets for
$U$ will play a second role; for an $\aO$-algebra $R$, the admissible
sets determine additional structure on the underlying symmetric
monoidal category of $R$-modules.  Specifically, for each admissible
$G$-set $T$, we have a internal norm in the category of $R$-modules
for an $\aO$-algebra $R$.

In order to describe this structure, it is convenient to instead
consider the collection of categories of modules over $\iota_H^* R$,
where $H \subseteq G$ is a closed subgroup of $G$ and $\iota_H^*$ is
the forgetful functor.  The extra structure on the category of
$R$-modules then is encoded in functors
\[
\iota_H^* \colon \Mod_{\iota_K^* R} \to \Mod_{\iota_H^* R}
\qquad\textrm{and}\qquad
_{(\iota_K^* R)} N_{H}^{K} \colon \Mod_{\iota_H^*
R} \to \Mod_{\iota_K^* R}
\]
for $H \subset K \subset G$ that assemble into a kind of ``incomplete
Mackey functor'' of homotopical categories.  The internal norms arise
from the composite functors $N_H^G \iota^* H (-)$, extended to
arbitrary $G$-sets $T$ by decomposing $T$ into a disjoint union of
orbits $G/H$ and smashing together the corresponding composites.  The
compatibility conditions in particular express the double coset
formula.

More precisely, we have the following functors:

\begin{theorem}\label{thm:main1}
Let $G$ be a finite group and $U$ a $G$-universe.  Let $R$ be an
algebra in orthogonal $G$-spectra over $\sL_U$, the linear isometries
operad structured by $U$.  Then for each $H \subset G$ there exists
a symmetric monoidal model category $\aM_{\iota_H^* R}$ of $\iota_H^* 
R$-modules.  For each $H \subset K \subset G$ such that $K/H$ is an
admissible $K$-set for $U$, there exist homotopical functors
\[
_{(\iota_K^* R)} N_{H, \iota_H^* U}^{K, \iota_K^* U} \colon \aM_{\iota_H^* R} \to \aM_{\iota_K^*
R} \qquad\textrm{and}\qquad \iota_H^* \colon \aM_{\iota_K^*
R} \to \aM_{\iota_H^* R}.
\]
\end{theorem}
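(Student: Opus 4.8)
The plan is to construct the categories $\aM_{\iota_H^* R}$ as categories of operadic modules in the style of EKMM, transported to orthogonal $G$-spectra. For each $H \subseteq G$ one first restricts the universe and the algebra, so that $\iota_H^* R$ is an algebra over $\sL_{\iota_H^* U} = \iota_H^*\sL_U$ in orthogonal $H$-spectra; one forms the category of $\sL_{\iota_H^* U}$-modules, equips it with the operadic smash product, and checks (the equivariant analogue of the arguments of EKMM, Chapter~II) that this is a symmetric monoidal category. Then $\aM_{\iota_H^* R}$ is the category of $\iota_H^* R$-modules therein, with the relative smash product $\wedge_{\iota_H^* R}$. The positive stable model structure is lifted from the positive stable model structure on orthogonal $H$-spectra along the free--forgetful adjunction, and the pushout--product and unit axioms are verified on generating cells as in the non-equivariant case, the unit axiom requiring the standard cofibrant-replacement-of-the-unit trick for operadic sphere modules.

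The restriction functor is essentially formal. Since $\iota_H^*$ is strong symmetric monoidal on orthogonal spectra and carries $\sL_{\iota_K^* U}$-modules to $\sL_{\iota_H^* U}$-modules compatibly with the operadic structure, it restricts $\iota_K^* R$-modules to $\iota_H^* R$-modules and commutes with the relative smash products. It is homotopical with no derived correction because the underlying restriction functor on orthogonal spectra both detects and preserves weak equivalences.

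The norm functor carries the real content and is built as a composite. First apply the multiplicative norm $N_H^K$ of orthogonal spectra to the underlying $H$-spectrum: for an $\iota_H^* R$-module $M$, the indexed smash product $N_H^K M = \bigwedge_{K/H} M$ is naturally a module over $N_H^K(\iota_H^* R)$, and it moreover inherits an $\sL_{\iota_K^* U}$-module structure, since admissibility of the $K$-set $K/H$ for $U$ supplies exactly the $K$-equivariant linear isometric embeddings needed for the norm to interact with the operadic structure maps. Second, extend scalars along the $\sL_{\iota_K^* U}$-algebra map $\phi\colon N_H^K(\iota_H^* R) \to \iota_K^* R$, which is the norm multiplication $R^{\wedge K/H} \to R$ of the $\Ninfty$-algebra structure restricted to $K$; one sets ${}_{(\iota_K^* R)} N_{H,\iota_H^* U}^{K,\iota_K^* U}(M) = \iota_K^* R \wedge_{N_H^K(\iota_H^* R)} N_H^K M$. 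Unwinding definitions then shows this is the $\iota_K^* R$-relative norm and that it agrees with the composites $N_H^G\iota_H^*(-)$ and assembles over orbit decompositions as indicated in the discussion preceding the theorem.

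The main obstacle is proving this norm functor homotopical. The multiplicative norm on orthogonal $G$-spectra preserves weak equivalences only between cofibrant objects, so one must first promote this to the module model structure: the norm of a cofibrant $\iota_H^* R$-module must be cofibrant, or at least sufficiently flat, as an $N_H^K(\iota_H^* R)$-module. This is established by a cellular induction in the manner of the HHR analysis of the norm on associative algebras---$N_H^K$ sends a free $\iota_H^* R$-module on a positively cofibrant $H$-spectrum to a free $N_H^K(\iota_H^* R)$-module on a cofibrant $K$-spectrum, using distributivity of the norm over wedges and its known behaviour on free orthogonal spectra, and it converts a pushout of a cell attachment into a filtered colimit of cell attachments via the norm filtration. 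Granting this flatness, extension of scalars along $\phi$ preserves the relevant weak equivalences and the composite is homotopical on cofibrant modules, which is the sense of ``homotopical functor'' that the theorem asserts. The delicate point is the interaction between the non-cocontinuous norm and the bar construction computing $\wedge_{N_H^K(\iota_H^* R)}$, and controlling it is the technical heart of the argument.
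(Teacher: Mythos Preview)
Your outline is broadly the paper's approach---EKMM-style operadic modules, lifted model structures, restriction as the obvious symmetric monoidal functor, and the relative norm as base change along the counit $N_H^K(\iota_H^* R)\to\iota_K^* R$. But there is a genuine gap in your construction of the norm at the point-set level.

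You assert that for an $\sL_{\iota_H^* U}$-module $M$, the orthogonal-spectrum norm $N_H^K M$ ``inherits an $\sL_{\iota_K^* U}$-module structure'' because admissibility supplies equivariant embeddings. This is not correct as stated. Applying the symmetric monoidal functor $N_H^K$ to the action $\sL_{\iota_H^* U}(1)_+\sma M\to M$ yields an action of the coinduced monoid $F_H(K,\sL_{\iota_H^* U}(1))$ on $N_H^K M$, not of $\sL_{\iota_K^* U}(1)$. These are different monoids, and there is no map of monoids in the direction you would need to restrict the action. Admissibility of $K/H$ does give a $K$-equivariant isomorphism $\Ind_H^K\iota_H^* U\cong\iota_K^* U$, but that alone does not convert an $F_H(K,\sL_{\iota_H^* U}(1))$-action into an $\sL_{\iota_K^* U}(1)$-action.

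The paper handles this by defining a genuinely different point-set functor: $N_{H,\iota_H^* U}^{K,\iota_K^* U} M$ is the coequalizer of
\[
\sL(\Ind_H^K\iota_H^* U,\iota_K^* U)_+\sma F_H(K,\sL_{\iota_H^* U}(1))_+\sma N_H^K M \rightrightarrows \sL(\Ind_H^K\iota_H^* U,\iota_K^* U)_+\sma N_H^K M,
\]
with $\sL_{\iota_K^* U}(1)$ acting on the left of $\sL(\Ind_H^K\iota_H^* U,\iota_K^* U)$. This is not $N_H^K M$ with extra structure; it is a quotient thereof. Establishing that this functor is strong symmetric monoidal for $\sma_U$, that it is left adjoint to $\iota_H^*$ on commutative algebras, and that it agrees with $N_H^K$ up to weak equivalence on cofibrant objects (via the collapse $\sL(\Ind_H^K\iota_H^* U,\iota_K^* U)_+\to S^0$, which is a $K$-equivalence precisely when $K/H$ is admissible) requires a sequence of Hopkins-style reflexive-coequalizer identifications among isometry spaces. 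Your proposal suppresses exactly this layer; without it, you do not have a functor landing in $\iota_K^* R$-modules in $\gspectra[K]_{\iota_K^* U}$, so the subsequent base-change step has no domain. The homotopical analysis you sketch (cellular reduction to free modules, norm filtration) is close to the paper's, but it presupposes the correct point-set functor.
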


The internal norms now arise from these functors:

\begin{definition}
Let $G$ be a finite group and $U$ be a $G$-universe.  For an $H$-set
$T$, writing $T = H/K_1 \amalg H/K_2 \amalg \ldots \amalg H/K_m$,
define
\[
_{(\iota_H^* R)} N^T \colon \aM_{\iota_H^* R} \to \aM_{\iota_H^* R}
\]
by the formula
\begin{multline*}
_{(\iota_H^* R)} N^T X = \\
\left( _{(\iota_H^* R)} N_{K_1}^H \iota_{K_1}^*
X\right) \sma_{\iota_H^* R} \left( _{(\iota_H^* R)} N_{K_2}^H \iota_{K_2}^*
X\right) \sma_{\iota_H^* R} \ldots \sma_{\iota_H^* R} \left( _{(\iota_H^* R)} N_{K_m}^H \iota_{K_m}^* X\right).
\end{multline*}
More generally, define
\[
_R N^T \colon \aM_R \to \aM_R
\]
by the formula
\[
_R N^T X = G_+ \sma_H \left(_{(\iota_H^* R)} N^T X \right).
\]
\end{definition}

The equivariant symmetric monoidal structure on $\aM_R$ is encoded by the
following relations between the internal norms and the forgetful
functors:

\begin{theorem}\label{thm:gsymm}
Let $G$ be a finite group and $U$ be a $G$-universe.

\begin{enumerate}
\item For $H_1 \subseteq H_2 \subseteq H_3 \subseteq G$, there are
natural isomorphisms
\[
N_{H_2}^{H_3} N_{H_1}^{H_2} \cong
N_{H_1}^{H_3} \qquad\textrm{and}\qquad
\iota_{H_1}^* \iota_{H_2}^* \cong \iota_{H_1}^*
\]
that descend to the derived category,
\item For any $H$-sets $T_1$ and $T_2$, there are natural isomorphisms
$N^{T_1 \times T_2} X \cong N^{T_1} N^{T_2} X$ for each $X$ that descend to the
derived category when $T_1$ and $T_2$ are admissible, and
\item For an admissible $H$-set $T$, the derived
composite $\iota_K^* N^T M$ is naturally equivalent to $N^{\iota_K^*
T} \iota_K^* M$.
\end{enumerate}

The last of these relations is a version of the double coset
formula.
\end{theorem}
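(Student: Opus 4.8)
The plan is to reduce each of the three assertions to a corresponding property of the relative norm functors ${}_{(\iota_K^* R)} N_H^K$ on the point-set level module categories $\aM_{\iota_H^* R}$ of Theorem~\ref{thm:main1}, to verify the required isomorphisms from the explicit description of these functors as indexed relative smash products, and then to promote the point-set isomorphisms to the derived category using the model structures of Theorem~\ref{thm:main1} together with control of cofibrancy. At the outset I would decompose every $G$- or $H$-set into its orbits, so that by the Definition preceding the statement it suffices to treat $N^T$ for $T = H/K$ a single orbit and then reassemble via the relative smash product $\sma_{\iota_H^* R}$.

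\textbf{Part (1).} The isomorphism $\iota_{H_1}^*\iota_{H_2}^* \cong \iota_{H_1}^*$ is the transitivity of restriction of the ambient group action; it is a point-set identity and is automatically homotopical. For the transitivity of norms, recall that ${}_{(\iota_K^* R)} N_H^K X$ is assembled from the Hill--Hopkins--Ravenel norm $N_H^K$ of the underlying orthogonal spectrum together with the $\iota_K^* R$-module structure (a relative indexed smash product); the isomorphism $N_{H_2}^{H_3} N_{H_1}^{H_2} \cong N_{H_1}^{H_3}$ then follows from the transitivity of the absolute norm, once one checks the compatibility of the relative-smash bookkeeping under the composite. To see that it descends to the derived category one needs the relative norm to carry cofibrant $\iota_H^* R$-modules to cofibrant $\iota_K^* R$-modules, so that in the iterated composite each individual norm is applied to a cofibrant object and is therefore homotopical; this preservation of cofibrant objects is part of the construction underlying Theorem~\ref{thm:main1}. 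Admissibility of $H_2/H_1$ and of $H_3/H_2$ forces admissibility of $H_3/H_1$ by the closure properties of the coefficient system of admissible sets established in our previous work on $\Ninfty$ operads, so all three norms in the statement are defined.

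\textbf{Parts (2) and (3).} Both are Fubini-type identities for indexed relative smash products. Writing $N^T X = X^{\sma_{\iota_H^* R} T}$ for the $H$-indexed relative smash power (so that for $T = \coprod_i H/K_i$ this unwinds to $\sma_i\, {}_{(\iota_H^* R)} N_{K_i}^H \iota_{K_i}^* X$), statement (2) is the exponential law $\bigl(X^{\sma_R T_2}\bigr)^{\sma_R T_1} \cong X^{\sma_R (T_1 \times T_2)}$: the indexed smash over $T_1$ of the indexed smash over $T_2$, after restricting and re-norming orbitwise, reassembles into a single indexed smash over the product $H$-set. This is the relative analogue of the exponential/distributivity property of the absolute norm, and the underlying point-set isomorphism is a direct manipulation of the defining colimits; the hypothesis that $T_1$, $T_2$, and hence $T_1 \times T_2$ are admissible — again using the closure of the admissible coefficient system under products — is exactly what makes $N^{T_1}$ homotopical on the cofibrant object $N^{T_2}X$, so that the isomorphism descends. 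Statement (3), the double coset formula, reduces to the orbit case $\iota_K^*\, {}_{(\iota_H^* R)} N_L^H \iota_L^* M$ with $K \subseteq H$: restricting the indexed relative smash over the $H$-set $H/L$ to $K$ breaks it up along the $K$-orbits of the indexing set, which are the double cosets in $K\backslash H / L$ with stabilizers of the form $K \cap gLg^{-1}$, yielding the smash over these double cosets of the norms ${}_{(\iota_K^* R)} N_{K\cap gLg^{-1}}^{K}$ applied to the appropriate conjugates of $M$, which is precisely $N^{\iota_K^* (H/L)} \iota_K^* M$; the general case follows by smashing the orbit cases together. This is the relative version of the Hill--Hopkins--Ravenel double coset formula, the additional content being the compatibility of the orbit decomposition with the $R$-module structures and with the conjugation twists.

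\textbf{The main obstacle.} The genuinely delicate point is not the point-set isomorphisms, which are formal consequences of the indexed-smash-product calculus, but their passage to the derived category. Norm functors are not homotopical in general --- they preserve weak equivalences only between suitably cofibrant (flat) objects --- so one must check that the relative norms preserve cofibrancy, that in each iterated composite in (1)--(3) every stage is evaluated on a cofibrant input, and that the comparison maps are themselves maps of cofibrant objects, so that no subsequent cofibrant replacement disturbs them. Assembling these compatibilities coherently, which amounts to building the ``incomplete Mackey functor in homotopical categories'' cleanly, is where the real work lies, and it is precisely the role of the admissibility hypotheses: they are what simultaneously make the relevant norms defined and homotopy-invariant.
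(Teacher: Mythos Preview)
Your outline is broadly sound and lands on the right endpoint---reduce to the Hill--Hopkins--Ravenel identities for the absolute norm and then check that the relative bookkeeping and the homotopical control go through---but you are missing the single lemma that makes the reduction clean. In the paper the norm on $\gspectra_U$ (and hence on $\aM_{R,U}$) is \emph{not} literally the HHR norm: it is the EKMM-style functor $N_{H,\iota_H^* U}^{G,U}$ built from the linear isometries spaces $\sL(\Ind_H^G\iota_H^* U,U)$. The bridge is the comparison theorem that for cofibrant $X$ and $G/H$ admissible the natural map $N_{H,\iota_H^* U}^{G,U} X \to N_H^G X$ is a weak equivalence. Once you have this, parts~(2) and~(3) are exactly as you say: pass to the HHR norm, invoke the exponential law and the double coset formula there (the paper does this via the translation-category description $N^T X \cong p_*^{\otimes} X^T$ from~\cite[App.~A]{HHR}), and note that admissibility is what makes both the comparison map and the norm homotopical. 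Without naming this comparison, your ``direct manipulation of the defining colimits'' would have to redo the HHR analysis inside the EKMM framework, which is substantially harder and in fact only produces weak equivalences rather than the point-set isomorphisms you claim.

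Two further differences worth noting. For the transitivity of norms in part~(1) the paper does \emph{not} reduce to HHR but instead computes directly with the linear isometries spaces, identifying $\sL(\Ind_{H_2}^G\iota_{H_2}^* U,U)\times_{F_{H_2}(G,\sL_{\iota_{H_2}^* U}(1))} F_{H_2}(G,\sL(\Ind_{H_1}^{H_2}\iota_{H_1}^* U,\iota_{H_2}^* U))$ with $\sL(\Ind_{H_1}^G\iota_{H_1}^* U,U)$ via a reflexive coequalizer argument; your route via the absolute norm plus the comparison would also work but is not what is done. For the relative double coset formula, the paper's argument is sharper than your orbit-by-orbit decomposition: it uses that $\iota_K^*$ is strong symmetric monoidal, so $\iota_K^*\bigl({}_R N_U^T M\bigr)\cong(\iota_K^* N_U^T M)\sma_{\iota_K^* N_U^T R}\iota_K^* R$, identifies $\iota_K^* N_U^T R\cong N_{\iota_K^* U}^{\iota_K^* T}\iota_K^* R$ via the $(N,\iota^*)$-adjunction on commutative rings, and then applies the absolute double coset formula to the module factor. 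Your ``compatibility of the orbit decomposition with the $R$-module structures'' is correct in spirit but this base-change argument is both cleaner and what actually appears in the paper.
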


When $G = e$, the structure described by Theorem~\ref{thm:gsymm} is
simply the usual symmetric monoidal structure on orthogonal spectra;
the functors $N^T$ for a set $T$ are just the smash powers $X^{\sma |T|}$.
When $U$ is the complete universe, this structure is precisely the
$G$-symmetric monoidal structure on $R$-modules obtained by choosing a
model of $R$ that is a commutative monoid in orthogonal $G$-spectra.

Note that we have avoided trying to precisely formulate the notion of
an incomplete Mackey functor of homotopical categories here, choosing
instead to explicitly write out the structure and some of the
coherences.  However, if we are willing to pass to the homotopy
category, we can state the following result.

\begin{corollary}
Let $R$ be an algebra in orthogonal $G$-spectra over $\sL_U$.  Let
$\aB_{G,U}$ denote the bicategory of spans of the admissible sets for
$\sL_U$.  There exists a $2$-functor from $\aB_{G,U}$ to the
$2$-category of triangulated categories, exact functors, and natural
isomorphisms that takes an admissible set $G/H$ to $\aM_{\iota_H^*
R, \iota_H^* U}$.
\end{corollary}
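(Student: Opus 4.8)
The plan is to obtain the desired $2$-functor $\Phi$ simply by passing to homotopy categories in the ``incomplete Mackey functor of homotopical categories'' already packaged by Theorems~\ref{thm:main1} and~\ref{thm:gsymm}. First I would pin down the two structures involved. For $\aB_{G,U}$ one takes objects to be finite $G$-sets all of whose orbits are admissible for $\sL_U$; a $1$-morphism $S\to T$ is a span $S \xleftarrow{p} X \xrightarrow{q} T$ in which every orbit-fibre of the forward leg $q$ is admissible (so that the relevant norms are defined); $2$-morphisms are isomorphisms of spans; and horizontal composition is by pullback. Here one must check this is well defined, i.e.\ that the condition on forward legs is stable under pullback, which follows from the closure properties of the coefficient system $\aC(\sL_U)$ in~\cite{BlumbergHill} (closure under subobjects, products, coproducts, and self-induction): a fibre of a pullback $X\times_T T'\to T'$ is assembled from a fibre of $q$ by intersection with a conjugate of a subgroup, which stays admissible. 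On objects I would set $\Phi\bigl(\coprod_i G/H_i\bigr)=\prod_i \Ho(\aM_{\iota_{H_i}^*R,\iota_{H_i}^*U})$; each factor is triangulated since $\aM_{\iota_{H_i}^*R}$ is a stable model category of modules over a ring in orthogonal $G$-spectra, and a finite product of triangulated categories is triangulated.

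Next I would define $\Phi$ on $1$- and $2$-morphisms. Up to isomorphism of its source, every $G$-map is a disjoint union, over the orbits of its target, of projections $G/H\to G/K$ with $H\subseteq K$; the restriction functor $\iota_H^*$ of Theorem~\ref{thm:main1} is homotopical (it descends to an exact functor of triangulated categories), and for $K/H$ admissible the norm ${}_{(\iota_K^*R)}N_{H,\iota_H^*U}^{K,\iota_K^*U}$ is homotopical by Theorem~\ref{thm:main1}. Assembling these over orbits and disjoint unions exactly as in the Definition of ${}_RN^{T}$, a span $S \xleftarrow{p} X \xrightarrow{q} T$ is sent to the derived composite $q_!\,p^{*}\colon \Phi(S)\to\Phi(T)$, where $p^{*}$ is the (derived) restriction and $q_!$ the (derived) relative norm. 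A $2$-morphism, being an isomorphism of spans $X\xrightarrow{\cong}X'$ over both legs, goes to the evident natural isomorphism of the corresponding composites.

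The real content is functoriality, which I would reduce to base change. Given composable spans $S\leftarrow X\to T$ and $T\leftarrow Y\to W$, the composite span is formed from $X\times_T Y$, and one needs a natural isomorphism between $\Phi$ of the composite span and the composite of the two $\Phi$'s. Decomposing every map into orbits, this reduces to a Beck--Chevalley isomorphism for a pullback square of $G$-sets, and the orbit square one is left with is precisely the situation governed by the double coset formula: Theorem~\ref{thm:gsymm}(3) supplies $\iota_K^*N^{T}\cong N^{\iota_K^*T}\iota_K^*$, while Theorem~\ref{thm:gsymm}(1) and~(2) handle the composites of norms, the composites of restrictions, and the products/disjoint unions that appear after decomposing the square. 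The unitors $\Phi(\mathrm{id}_T)\cong\mathrm{id}_{\Phi(T)}$ come from Theorem~\ref{thm:gsymm}(1).

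I expect the main obstacle to be coherence: verifying that these structure isomorphisms satisfy the associativity (pentagon) and unit (triangle) axioms of a $2$-functor, equivalently that the Beck--Chevalley isomorphisms are compatible with horizontal and vertical pasting of pullback squares and with composition of the structure maps of Theorem~\ref{thm:gsymm}. Since the isomorphisms of Theorem~\ref{thm:gsymm}(1)--(2) are genuine point-set isomorphisms and the double-coset equivalence is pinned down by an explicit orbit decomposition, the verification is a long but finite diagram chase in which each coherence cell is reduced to the case of orbits; alternatively one can first strictify the point-set restriction and norm functors along the isomorphisms of Theorem~\ref{thm:gsymm}(1), leaving only the base-change cells to be checked. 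Either way, the coherences needed are exactly the ones used to define an incomplete Mackey/Tambara structure, and no homotopy-theoretic input beyond Theorems~\ref{thm:main1} and~\ref{thm:gsymm} is required.
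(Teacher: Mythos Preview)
The paper does not prove this corollary; it is stated without argument as a consequence of Theorems~\ref{thm:main1} and~\ref{thm:gsymm}, with the remark that passing to the homotopy category eases the coherence problem. Your outline is therefore the natural way to fill in the details, and the reduction of functoriality to a Beck--Chevalley condition supplied by Theorem~\ref{thm:gsymm}(3) is the right idea.

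There is, however, a genuine gap. You take the forward leg $q_!$ of a span to be the derived relative norm, but the norm is \emph{not} an exact functor of triangulated categories: ${}_RN_H^G$ preserves neither cofiber sequences nor finite wedges (for instance ${}_RN^{G/H}(X\vee Y)$ decomposes by an equivariant binomial formula with many cross terms, not as ${}_RN^{G/H}X\vee{}_RN^{G/H}Y$). Hence the composite $q_!p^{*}$ does not land in the stated target $2$-category of triangulated categories and exact functors. Either the covariant leg should instead be the additive induction $G_+\sma_H(-)$ on module categories---which \emph{is} exact, but then the admissibility constraint on spans becomes vacuous and the norm machinery of the paper plays no role---or the target $2$-category must be weakened by dropping the word ``exact''. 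In the latter reading your construction goes through; either way, you should flag this tension between the corollary as literally stated and the functors actually available.
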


However, the coherences necessary for the definition of an incomplete
Mackey functor at the level of homotopical categories is most easily
handled using the formalism of $\infty$-categories; we expect such a
treatment to come from the forthcoming work of~\cite{ClanBarwick}.
(See also~\cite{BohmannOsorno} for a treatment of equivariant
permutative categories from this kind of perspective.  A different
approach to equivariant permutative categories is described
in~\cite{GuillouMay}.)  

One of the applications of our work is the construction of strict
point-set models of $N_\infty$ ring spectra.  Specifically, let $\Sg$
be the equivariant sphere spectrum, regarded as an $\sL_U$ algebra.
Then we have the following straightforward consequence of the proof of
Theorem~\ref{thm:main1}.

\begin{corollary}
The category of commutative monoid objects in $\aM_{\Sg}$ is
equivalent to the category of $N_\infty$ algebras structured by
$\sL_U$.
\end{corollary}

More generally, for an $N_\infty$ algebra $R$, we obtain a description
of $N_\infty$ $R$-algebras.

\begin{corollary}
Let $R$ be an $N_\infty$ algebra structured by $\sL_U$.  The category
of commutative monoid objects in $\aM_R$ is equivalent to the category
of $N_\infty$ $R$-algebras structured by $\sL_U$.
\end{corollary}

These corollaries are particularly useful in the context of
equivariant Bousfield localization.  In their study of the
multiplicative properties of equivariant Bousfield localization, the
second author and Hopkins showed that localization of an $N_\infty$
ring spectrum can change the universe that structures the
multiplication~\cite{HillHopkinsLocalization}.
Specifically,~\cite[6.3]{HillHopkinsLocalization} shows that a
Bousfield localization $L$ of orthogonal $G$-spectra takes $\sL_U$
algebras to $\sL_U$ algebras precisely when the category of
$L$-acyclics is closed under norms for the indexing system determined
by $U$.  Therefore, we obtain the following result.

\begin{theorem}\label{thm:mainloc}
Let $A$ be a commutative monoid in $M_{\bS_U^G}$, where $\bS_U^G$
denotes the sphere spectrum regarded as an $\sL_U$ algebra in
orthogonal $G$-spectra.  Let $L$ be a Bousfield localization functor
with $L$-acyclics closed under norms specified by the indexing system
for a universe $U'$.  Suppose that $U''$ is a universe with
corresponding indexing system contained in the indexing system
obtained as the intersection of $U$ and $U'$.  Then $LR$ is a
commutative monoid object in $M_{\bS_{U''}^G}$.
\end{theorem}

In order to explain the restriction to $\Ninfty$ operads that can be
modeled as linear isometries operads, we need to explain the strategy
of proof for Theorem~\ref{thm:main1}.  Our approach is to adapt the
strategy of EKMM to study operadic multiplications on $G$-spectra.
Let $\Sp_G$ denote the category of orthogonal $G$-spectra on a
complete universe.  Fix a different (possibly incomplete) $G$-universe
$U$.  Then there is a monad $\bL_U$ on $\Sp_G$, specified by the
formula
\[
X \mapsto \sL(U,U)_+ \sma X,
\]
where $\sL(U,U)$ is the $G$-space of non-equivariant linear isometries
from $U$ to $U$ (with $G$ acting by conjugation).

The category $\Sp_G[\bL_U]$ of $\bL_U$-algebras has a model structure
that is Quillen equivalent to the standard model structures on
$\Sp_G$.  Moreover, it has a new symmetric monoidal product $\sma_U$
such that the underlying orthogonal $G$-spectrum of $X \sma_U Y$ is
equivalent to $X \sma Y$.  But now monoids and commutative monoids for
$\sma_U$ are precisely (non)-symmetric algebras for the $G$-linear
isometries operad for $U$.  Just as in the category of spectra, we can
restrict to the unital objects in $\Sp_G[\bL_U]$ to obtain a symmetric
monoidal category $\gspectra_U$.  All of these categories can be
equipped with symmetric monoidal model category structures.  Using
these symmetric monoidal model categories, we construct symmetric
monoidal module categories for an $\Ninfty$ ring $R$ structured by the
$G$-linear isometries operad for $U$.

We expect that Theorem~\ref{thm:main1} is true more generally for any
$\Ninfty$ operad, but it is difficult to obtain control on categories
of operadic modules over operads other than the linear isometries
operad using point-set techniques.  In fact, a substantial part of the
work of this paper involves verification of delicate point-set facts
about the linear isometries operad that are simply not true for an
arbitrary $N_\infty$ operad, just as in~\cite{EKMM}.  Unfortunately,
as we explain in~\cite[Theorem~4.24]{BlumbergHill}, there are
equivariant operads which arise from ``little disks'' constructions
that are not equivalent to equivariant linear isometries operads for
any universe.  Again, we expect that it is more tractable to handle
these sorts of homotopical questions in the $\infty$-categorical
setting; specifically, working with equivariant $\infty$-operads
structured over the nerve of distinguished subcategories of the
category of finite $G$-sets.

One benefit of our approach to Theorem~\ref{thm:main1} is that our
technical results about the equivariant linear isometries operad
validate the multiplicative theory of the equivariant version of EKMM
spectra.  Although~\cite[0.1]{EKMM} famously asserts that all of the
work of that volume holds mutatis mutandis when assuming that a
compact Lie group $G$ acts, verifying such a theorem requires some
subtle checks about the behavior of the linear isometries operad (most
notably Theorem~\ref{thm:mandell});
and~\cite{ElmendorfMay}, which amongst other endeavors attempts to
justify some of these properties, contains a critical error
(in~\cite[1.2]{ElmendorfMay}).  As such, our work in this paper
supports prior applications of the equivariant category of
$S$-modules, notably~\cite{GreenleesMay}.

Our interest in Theorem~\ref{thm:main1} comes in large part from
consequences for the foundations of equivariant derived algebraic
geometry.  
%
%
%Equivariant refinements of the spectra of topological
%modular forms. . . {\bf Say something here.} 
%
As explained above, localization of an $N_\infty$ ring
spectrum can change the universe that structures the
multiplication~\cite{HillHopkinsLocalization}.  This implies that
there is not necessarily a ``genuine'' affine scheme associated to a
commutative ring orthogonal $G$-spectrum when we work with the Zariski
topology.  Work of Nakaoka shows that something similar is true for
Tambara functors: there does not exist a sheaf of Tambara functors on
the Zariski site of a Tambara functor~\cite{Nakoaka}.

However, by restriction of structure, every equivariant commutative
ring spectrum $R$ is also an algebra over $\sL_{\mathbb R^{\infty}}$,
the linear isometries operad for a trivial universe.  Bousfield
localization always preserves the property of being an algebra over
$\sL_{\mathbb R^{\infty}}$, so in particular, we do have a sheaf of
such rings in the Zariski topology.  Therefore, using the work of this
paper we can define equivariant derived affine schemes (and then more
general derived schemes by gluing) in this fashion.  More generally,
Theorem~\ref{thm:mainloc} explains the situations when we can expect
more general affines.  We intend to return to the study of equivariant
derived schemes in a subsequent paper.

As a concrete example of this circle of ideas, let $\mathcal
X\to\mathcal Y$ be a Galois cover of stacks with Galois group $G$, and
let $\mathcal Y\to\mathcal M_{Ell}$ be an \'etale map to the moduli
stack of elliptic curves. We can evaluate the Goerss-Hopkins-Miller
sheaf of topological modular forms $\mathcal O^{top}$ on these \'etale
maps, producing commutative ring spectra and maps
\[
\mathrm{TMF}(\mathcal Y)\to\mathrm{TMF}(\mathcal X).
\]
The $G$-action on $\mathcal X$ gives a $G$-action on
$\mathrm{TMF}(\mathcal X)$, and we can then view this as a genuine
commutative equivariant ring spectrum by pushing forward to a complete
universe (see~\cite{HillMeier} for a related discussion).  We would
like to be able to understand the category of equivariant
$\mathrm{TMF}(\mathcal X)$-modules in algebro-geometric terms.  The
machinery presented in this paper is an essential tool in this
endeavor, making it possible to make sense of sheaves of modules on
the Zariski site.

\subsection*{Acknowledgments.}  The authors would like to thank Mike
Mandell for his assistance and Tony Elmendorf, John Greenlees, Mike
Hopkins, Magda Kedziorek, Peter May, and Brooke Shipley for many
helpful conversations.  This paper was improved by helpful comments by
an anonymous referee.  This project was made possible by the
hospitality of MSRI and the Hausdorff Research Institute for
Mathematics at the University of Bonn.

\section{Review of $\Ninfty$ operads}

In this section, we review the framework for describing equivariant
commutative ring spectra that we will work with in the paper.  We
refer the reader to~\cite{BlumbergHill} for a more detailed
discussion.

Let $G$ be a finite group and let $\gspectra$ denote the category of
orthogonal $G$-spectra structured by a complete universe and with
morphisms all (not necessarily equivariant) maps.  We will tacitly
suppress notation for the ``additive'' universe implicit in the
definition of $\gspectra$, as we are focused on multiplicative
phenomena.  Recall that the category $\gspectra$ is a complete and
cocomplete closed symmetric monoidal category under the smash product
$\sma$ with unit the equivariant sphere spectrum $\Sg$.  We will write
$F(-,-)$ for the internal mapping $G$-spectrum in $\gspectra$.  The
category $\gspectra$ is enriched over based $G$-spaces and has tensors
and cotensors; for $X$ an object of $\gspectra$, the tensor with a
based $G$-space $A$ is given by the smash product $A \sma X$ and the
cotensor by the function spectrum $F(A,X)$.

The enrichment of $\gspectra$ means that we can regard operads in
$G$-spaces as acting on objects of $\gspectra$ via the addition of a
$G$-fixed disjoint basepoint and the tensor.  Given a $G$-operad in
spaces, recall the following definition from~\cite[Definition
3.7]{BlumbergHill}.

\begin{definition}
An $\Ninfty$ operad is a $G$-operad $\aO$ such that
\begin{enumerate}
\item The space $\aO_0$ is $G$-contractible,
\item The action of $\Sigma_n$ on $\aO_n$ is free,
\item and $\aO_n$ is a universal space for a family $\aF_n(\aO)$ of
subgroups of $G \times \Sigma_n$ which contains all subgroups of the
form $H \times \{1\}$.
\end{enumerate}
\end{definition}

For any $\Ninfty$ operad $\aO$, there is an associated category
$\aO\mbox{-}\Alg$ of $\aO$-algebras in $\gspectra$.  We will be
particularly interested in the algebras associated to the $G$-linear
isometries operads.  Fix a possibly incomplete universe $U$ of
finite-dimensional $G$-representations; we adopt the standard
convention that $U$ contains a trivial representation and each of its
finite-dimensional subrepresentations infinitely often.  We do not
assume any relationship between $U$ and the ``additive'' universe that
arises in the definition of $\gspectra$.

\begin{definition}
The $G$-linear isometries operad $\sL_U$ has $n$th space $\aL_U(n)
= \sL(U^n,U)$ the $G$-space of non-equivariant linear isometries
$U^n \to U$ equipped with the conjugation action.  The distinguished
element $1 \in \aL_U(1)$ is the identity map and the operad structure
maps are induced by composition and direct sum.
\end{definition}

Recall from~\cite[Theorem~4.24]{BlumbergHill} that the $G$-linear isometries
operads do not always describe all of the possible $\Ninfty$
operads.  Nonetheless, they do capture many examples of interest, in
particular including the trivial and complete multiplicative
universes.

One of the major themes of our previous study of $\Ninfty$ operads was
that the essential structure encoded by an operad $\aO$ is the
collection of {\em admissible sets}.  We now review the relevant
definitions from~\cite[\S 3]{BlumbergHill}.

\begin{definition}
A {\emph{symmetric monoidal coefficient system}} is a contravariant
functor $\underline{\cC}$ from the orbit category of $G$ to the
category of symmetric monoidal categories and strong symmetric
monoidal functors.  The {\emph{value at H}} of a symmetric monoidal coefficient system
$\underline{\cC}$ is $\underline{\cC}(G/H)$, and will often be
denoted $\underline{\cC}(H)$.
\end{definition}

The most important example of a symmetric monoidal coefficient system
for us is the coefficient system of finite $G$-sets.

\begin{definition}
Let $\mSet$ be the symmetric monoidal coefficient system of finite
sets. The value at $H$ is $\Set^{H}$, the category of finite $H$-sets
and $H$-maps. The symmetric monoidal operation is disjoint union.
\end{definition}

We will associate to every $\Ninfty$ operad a subcoefficient system of
$\mSet$.  The operadic structure gives rise to additional structure on
the coefficient system.

\begin{definition}
We say that a full sub symmetric monoidal coefficient system $\cF$ of
$\mSet$ is {\emph{closed under self-induction}} if whenever
$H/K\in \cF(H)$ and $T\in \cF(K)$, $H\times_{K} T\in \cF(H)$.
\end{definition}

\begin{definition}
Let $\cC\subset\cD$ be a full subcategory. We say that $\cC$ is a
{\emph{truncation subcategory}} of $\cD$ if whenever $X\to Y$ is monic
in $\cD$ and $Y$ is in $\cC$, then $X$ is also in $\cC$.
A truncation sub coefficient system of a symmetric monoidal
coefficient system $\underline{\cD}$ is a sub coefficient system that
is levelwise a truncation subcategory.
\end{definition}

In particular, for finite $G$-sets, truncation subcategories are
subcategories that are closed under passage to subobjects and which
are closed under isomorphism.

\begin{definition}
An {\emph{indexing system}} is a truncation sub symmetric monoidal
coefficient system $\mF$ of $\mSet$ that contains all trivial sets and
is closed under self induction and Cartesian product.
\end{definition}

One of the main structural theorems about $\Ninfty$ operads
~\cite[4.17]{BlumbergHill} is that an $\Ninfty$ operad $\aO$
determines an indexing system of admissible sets.  This connection
arises from the standard observation that subgroups $\Gamma$ of
$G \times \Sigma_n$ such that $\Gamma\cap
(\{1\}\times\Sigma_{n})=\{1\}$ arise as the graphs of homomorphisms
$H \to \Sigma_n$, for some $H \subseteq G$.

\section{Point-set categories of modules over an $\Ninfty$
algebra}

In this section, we describe an approach to constructing categories of
modules over $\Ninfty$ algebras that proceeds via a rigidification
argument.  Of course, for any given $\Ninfty$ operad $\aO$ and an
$\aO$-algebra $R$ in orthogonal $G$-spectra, we can construct a
category of operadic modules over $R$.  However, experience in the
non-equivariant case teaches us that for practical work it is
extremely convenient to have rigid models of such categories that are
equipped with a symmetric monoidal smash product.

Specifically, for each linear isometries operad $\aO=\aL(U)$, we will
construct a symmetric monoidal structure on a category Quillen
equivalent to orthogonal $G$-spectra such that monoids and commutative
monoids correspond to $\aO$-algebras.  We describe how to produce such
a structure by adapting the techniques pioneered in the development of
the EKMM category of $S$-modules.  See also~\cite{Blumthesis, BCS, KM,
Spitzweck} for other categories in which this kind of approach has
been developed.  We can then define modules over an $\aO$-algebra $R$
in the evident fashion.

We are not able to rigidify algebras and modules over $\Ninfty$
operads which are not equivalent to equivariant linear isometries
operads.  Although in these cases we can give a homotopical
construction of the tensor product of operadic $\aO$-modules in terms
of the bar construction, we do not have good point-set control.

\subsection{The point-set theory of $\bL_U$-algebras in orthogonal spectra}

We begin by discussing the point-set details of the category of
algebras for a monad obtained from the first part of the equivariant
linear isometries operad.  Recall that $\gspectra$ denotes the
category of orthogonal $G$-spectra structured by a complete
$G$-universe.  Since the universe implicit in the definition of
$\gspectra$ does not play an essential role in what follows (once the
weak equivalences are fixed), we continue to suppress this choice from
the notation.

\begin{remark}
It is possible to carry out the work of this paper in the context of
an incomplete additive universe on $\gspectra$; the simplest case
arises when the additive and multiplicative universes are the same.
We leave this elaboration (and its attendant complications) to the
interested reader.
\end{remark}

Fix a (possibly incomplete) universe $U$.  Let
\[
\aL_U(1) = \aL(U,U)
\]
denote the $G$-space of linear isometries $U \to U$; i.e., the space
of non-equivariant linear isometries $U \to U$ equipped with the
conjugation action.  More generally, we write $\aL_U(n)$ to denote
$\aL(U^n, U)$, the $n$th space of the equivariant linear isometries
operad.

Since $\gspectra$ is tensored over based $G$-spaces, the formula
\[
X \mapsto \aL_U(1)_+ \sma X
\]
specifies a monad
$\bL_U \colon \gspectra \to \gspectra$.  The monadic structure maps are
induced by the identity element $\id_U \in \aL_U(1)$ and the composition
\[
\aL(U,U) \times \aL(U,U) \to \aL(U,U).
\]

\begin{definition}
Let $\gspectra[G][\bL_U]$ denote the category of $\bL_U$-algebras in
$\gspectra$.
\end{definition}

Since the monad $\bL_U$ has a right adjoint $F(\aL_U(1)_+,-)$, the
observation of~\cite[I.4.3]{EKMM} implies that this right adjoint
determines a comonad $\bL_U^{\sharp}$ such that the category of
coalgebras over $\bL_U^{\sharp}$ is equivalent to $\gspectra[G][\bL_U]$.
As a consequence we conclude the following result about the existence
of limits and colimits.

\begin{lemma}\label{lem:compco}
The category $\gspectra[G][\bL_U]$ is complete and cocomplete, with
limits and colimits created in $\gspectra$.  Similarly,
$\gspectra[G][\bL_U]$ has tensors and cotensors with based $G$-spaces;
the indexed colimits and limits are created in $\gspectra$.
\end{lemma}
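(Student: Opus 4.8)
The plan is to leverage the two dual descriptions of $\gspectra[G][\bL_U]$: as algebras over the monad $\bL_U$ and, via~\cite[I.4.3]{EKMM}, as coalgebras over the comonad $\bL_U^{\sharp} = F(\aL_U(1)_+,-)$. The key input is that $\gspectra$ itself is complete and cocomplete (indeed a closed symmetric monoidal category tensored and cotensored over based $G$-spaces), so all the limits, colimits, tensors, and cotensors we wish to construct already exist downstairs; the content is that they lift to the category of algebras and that the forgetful functor to $\gspectra$ creates them.

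First I would treat limits and cotensors. The forgetful functor $\gspectra[G][\bL_U] \to \gspectra$ is a right adjoint — its left adjoint is the free $\bL_U$-algebra functor $X \mapsto \bL_U X$ — so it is well known that it creates all limits: given a diagram of $\bL_U$-algebras, form the limit of the underlying $G$-spectra, and since $\bL_U = \aL_U(1)_+ \sma (-)$ need not commute with limits we instead use the comonad picture, where $\bL_U^{\sharp} = F(\aL_U(1)_+,-)$ \emph{is} a right adjoint and hence does commute with limits; thus the limit $G$-spectrum inherits a canonical $\bL_U^{\sharp}$-coalgebra structure, equivalently a $\bL_U$-algebra structure, and one checks this is the limit in $\gspectra[G][\bL_U]$. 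The same argument with the cotensor $F(A,-)$ in place of a limit — noting $F(A,-)$ is again a right adjoint commuting with $\bL_U^{\sharp}$ — produces cotensors created in $\gspectra$.

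Dually, for colimits and tensors I would run the argument through the monad picture directly: $\bL_U = \aL_U(1)_+ \sma (-)$ is a left adjoint (with right adjoint $F(\aL_U(1)_+,-)$), hence preserves all colimits, and in particular preserves coequalizers and arbitrary coproducts. This is exactly the hypothesis needed for the standard result (see e.g.~\cite{EKMM} in the non-equivariant case) that the category of algebras over such a monad is cocomplete with colimits created by the forgetful functor: a colimit of algebras is computed by taking the reflexive coequalizer presenting it as a colimit of free algebras, which lands back in $\gspectra[G][\bL_U]$ because $\bL_U$ preserves the relevant colimits. The tensor $X \sma A$ of a $\bL_U$-algebra with a based $G$-space $A$ is handled identically, using that $(-) \sma A$ commutes with $\bL_U$ up to the symmetry and associativity isomorphisms of the smash product in $\gspectra$; concretely $\bL_U(X \sma A) = \aL_U(1)_+ \sma X \sma A = (\bL_U X) \sma A$, so the algebra structure map on $X$ tensored with $A$ gives the required structure on $X \sma A$, and one verifies the universal property.

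The step I expect to be the main obstacle is the bookkeeping in the colimit case: one must check that the reflexive coequalizer computing a colimit of $\bL_U$-algebras, although formed in $\gspectra$, genuinely carries a $\bL_U$-algebra structure and satisfies the universal property — this requires knowing that $\bL_U$ preserves reflexive coequalizers, which follows since it is a left adjoint, but also requires care that the structure maps assemble coherently. Since $\bL_U$ is built from the honest smash product $\aL_U(1)_+ \sma (-)$ rather than a more complicated bar construction, these coherences are strict and the verification is routine; the equivariance adds no genuine difficulty because all the functors involved ($\sma$, $F(-,-)$, tensor and cotensor with $G$-spaces) are the standard structure on $\gspectra$. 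The indexed colimit/limit assertion is then the observation that indexed (co)limits are built from ordinary (co)limits and (co)tensors, all of which we have just shown are created in $\gspectra$.
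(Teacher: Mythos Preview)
Your proposal is correct and follows essentially the same route as the paper: the paper simply observes (in the paragraph immediately preceding the lemma) that because $\bL_U$ has the right adjoint $F(\aL_U(1)_+,-)$, by~\cite[I.4.3]{EKMM} one obtains the dual comonad $\bL_U^{\sharp}$ with an equivalence of categories between $\bL_U$-algebras and $\bL_U^{\sharp}$-coalgebras, and the lemma follows immediately. Your write-up unpacks this in more detail; one small expository note is that you do not actually need the comonad picture for limits and cotensors (the forgetful functor from algebras over \emph{any} monad creates limits), and conversely the cleanest route to creation of colimits is simply the dual of that fact applied to $\bL_U^{\sharp}$-coalgebras rather than the reflexive-coequalizer argument you sketch.
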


The category $\gspectra[G][\bL_U]$ is equipped with mapping
$G$-spectra $F_{\gspectra[G][\bL_U]}(-,-)$ defined by the equalizer
\[
\xymatrix{
F(X,Y) \ar@<.5ex>[r] \ar@<-.5ex>[r] & F(\bL_U X, Y),
}
\]
where the maps are induced by the action $\bL_U X \to X$ and the
adjoint of the composite
\[
(\bL_U X) \sma F(X,Y) \cong \bL_U (X \sma F(X,Y)) \to \bL_U Y \to Y.
\]

Next, we note that any orthogonal $G$-spectrum can be given a trivial
$\gspectra[G][\bL_U]$ structure.  Specifically, in addition to the
free $\bL_U$-algebra functor
\[
\aL_U(1)_+ \sma (-) \colon \gspectra \to \gspectra[G][\bL_U],
\]
there is another functor
$p^* \colon \gspectra \to \gspectra[G][\bL_U]$ determined by the
unique projection map $p \colon \sL_U(1) \to *$; i.e., we can equip
any orthogonal $G$-spectrum $X$ with the trivial structure map
\[
\sL_U(1)_+ \sma X \to (*)_+ \sma X \cong X.
\]
We will be most interested in the sphere spectrum $\Sg$ regarded as an
$\bL_U$-algebra in this fashion.  The pullback functor is the right
adjoint of a functor $Q \colon \gspectra[G][\bL_U] \to \gspectra$
specified by the formula $QX = \Sg \sma_{\Sigma^{\infty}_+ \sL_U(1)}
X$.

We now define a closed weak symmetric monoidal structure on
$\gspectra[G][\bL_U]$ with unit $\Sg$.  (Recall that a weak symmetric
monoidal category has a product and a unit satisfying all of the
axioms of a symmetric monoidal category except that the unit map is
not required to be an isomorphism~\cite[II.7.1]{EKMM}.)

\begin{definition}
Let $X$, $Y$ be objects of $\gspectra[G][\bL_U]$.  We define the smash
product $\sma_{U}$ to be the coequalizer of the diagram
\[
\xymatrix{
(\sL_U(2) \times \sL_U(1) \times \sL_U(1))_+ \sma (X \sma
Y) \ar@<1ex>[r] \ar@<-1ex>[r] & \sL_U(2)_+ \sma (X \sma Y) \to
X \sma_U Y
}
\]
where the maps are specified by the actions of $\sL_U(1)_+$ on $X$ and
$Y$ and the right action of $\sL_U(1) \times \sL_U(1)$ on $\sL_U(2)$
via block sum and precomposition.

We will sometimes write this coequalizer using the notation
\[
\sL_U(2) \times_{\sL_U(1) \times \sL_U(1)} (X \sma Y).
\]
\end{definition}

Here the left action of $\sL_U(1)$ on $\sL_U(2)$ induces a left action
of $\sL_U(1)$ on $X \sma_U Y$ which endows it with the structure of an
$\bL_U$ algebra.  As an example, when $X = \bL_U A$ and $Y = \bL_U B$
are free $\bL_U$-algebras,
\begin{equation}\label{eq:freeiso}
X \sma_U Y \cong \sL_U(2)_+ \sma (A \sma B).
\end{equation}

Analogously, we have an internal function object in $\gspectra[G][\bL_U]$
that satisfies the usual adjunction.

\begin{definition}
Let $X, Y$ be objects of $\gspectra[G][\bL_U]$.  We define the mapping
$\bL_U$-spectrum $F_{\sL_U}(X,Y)$ to be the equalizer of the diagram
\[
\xymatrix{
F_{\gspectra[G][\bL_U]}(\sL_U(2)_+ \sma X, Y) \ar@<1ex>[r] \ar@<-1ex>[r]
& F_{\gspectra[G][\bL_U]}((\sL_U(2) \times \sL_U(1) \times \sL_U(1))_+ \sma
X, Y),
}
\]
where the maps are induced by the action of $\sL_U(1) \times \sL_U(1)$
on $\sL_U(2)$ by block sum and via the adjunction homeomorphism
\begin{multline*}
F_{\gspectra[G][\bL_U]}((\sL_U(2) \times \sL_U(1) \times \sL_U(1))_+ \sma X,
Y) \cong \\
F_{\gspectra[G][\bL_U]}((\sL_U(2) \times \sL(1))_+ \sma X,
F_{\gspectra[G][\bL_U]}(\sL(1)_+ \sma \Sg, Y))
\end{multline*}
along with the action $\sL_U(1)_+ \sma X \to X$ as well as the
coaction
\[
Y \to F_{\gspectra[G][\bL_U]}(\sL(1)_+ \sma \Sg,Y).
\]
\end{definition}

In what follows, we will repeatedly make use of the fact that for any
$n>0$ and admissible set $T$, we can choose a $G$-equivariant
homeomorphism $\mathbb R\{T\}\otimes U \to U$ (see Lemma~\ref{lem:unisum} in
Appendix~\ref{app:lin}).  We now establish the basic properties of
$\sma_U$.
 
\begin{theorem}\label{thm:assoc}
Let $X$, $Y$, and $Z$ be objects of $\gspectra[G][\sL_U]$.  There is a
natural commutativity isomorphism
\[
\tau \colon X \sma_U Y \to Y \sma_U X
\]
and a natural associativity isomorphism
\[
(X \sma_U Y) \sma_U Z \cong X \sma_U (Y \sma_U Z).
\]
More generally, there is a canonical natural isomorphism
\[
X_1 \sma_U \ldots \sma_U
X_k \cong \sL_U(k) \times_{\underbrace{(\sL_U(1) \times \ldots \times \sL_U
(1))}_{k}}
(X_1 \sma \ldots \sma X_k),
\]
where the left-hand side is associated in any order and the right-hand
side denotes the evident coequalizer generalizing the definition of
$\sma_U$.
\end{theorem}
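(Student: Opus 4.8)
The plan is to transport the construction of the weak symmetric monoidal structure on $\mathbb L$-spectra from \cite[\S I.5]{EKMM} to the equivariant setting, the only real issue being to check that the point-set properties of the spaces $\sL_U(n)=\sL(U^n,U)$ survive in the presence of the conjugation $G$-action. I would isolate the key fact as a separate lemma: for all $j\ge 1$ and $k_1,\dots,k_j\ge 1$, the operad structure map $\gamma$ of $\sL_U$ induces a $G$-homeomorphism
\[
\bar\gamma\colon\sL_U(j)\times_{\sL_U(1)\times\cdots\times\sL_U(1)}\bigl(\sL_U(k_1)\times\cdots\times\sL_U(k_j)\bigr)\;\xrightarrow{\ \cong\ }\;\sL_U(k_1+\cdots+k_j),
\]
where the $i$-th copy of $\sL_U(1)$ acts on $\sL_U(j)$ on the right by block sum in the $i$-th slot and on $\sL_U(k_i)$ on the left by post-composition. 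That $\bar\gamma$ is a well-defined continuous $G$-map is immediate from the operad axioms; that it is a homeomorphism I would prove by exhibiting an explicit inverse. Choose $G$-equivariant isometric isomorphisms $\iota_i\colon U^{k_i}\to U$ --- these exist by Lemma~\ref{lem:unisum} applied to the trivial $G$-set of cardinality $k_i$ --- and set $\iota^{\oplus}=\iota_1\oplus\cdots\oplus\iota_j\colon U^{k_1+\cdots+k_j}\to U^j$, which is an isomorphism; then $\psi\mapsto[\,\psi\circ(\iota^{\oplus})^{-1}\,;\,\iota_1,\dots,\iota_j\,]$ is a continuous $G$-map, it is a one-sided inverse to $\bar\gamma$ by inspection, and it is a two-sided inverse because in the balanced product $[\,g\,;\,f_1,\dots,f_j\,]=[\,g\circ(a_1\oplus\cdots\oplus a_j)\,;\,\iota_1,\dots,\iota_j\,]$ with $a_i=f_i\circ\iota_i^{-1}$, using the defining relation. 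The relevant freeness of the right $\sL_U(1)^{\,j}$-action on $\sL_U(j)$, needed so that the balanced product is well behaved, follows as in the non-equivariant case since a linear isometric embedding is left-cancellable, and this cancellation is compatible with the conjugation action.

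Granting the lemma, the commutativity isomorphism is the easy part. The $\Sigma_2$-action on $\sL_U(2)$ that permutes the two inputs conjugates the right $\sL_U(1)\times\sL_U(1)$-action by the transposition and commutes with the left $\sL_U(1)$-action; composing it with the symmetry isomorphism $X\sma Y\cong Y\sma X$ of $\gspectra$ therefore descends to the coequalizers and produces an involutive natural isomorphism of $\bL_U$-algebras $\tau\colon X\sma_U Y\to Y\sma_U X$.

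For the general formula I would argue in two steps. First, for free $\bL_U$-algebras: since $\bL_U A=\sL_U(1)_+\sma A$ and $(\sL_U(m)_+\sma E)\sma(\sL_U(1)_+\sma F)\cong(\sL_U(m)\times\sL_U(1))_+\sma(E\sma F)$, an induction over the parenthesization (using the case $j=2$, $(k_1,k_2)=(m,1)$, of the lemma for the left-associated product, and the case $j=2$ with general $(k_1,k_2)$ for an arbitrary one) yields a canonical isomorphism $\bL_U A_1\sma_U\cdots\sma_U\bL_U A_k\cong\sL_U(k)_+\sma(A_1\sma\cdots\sma A_k)$ generalizing \eqref{eq:freeiso}, and the compatibility of these isomorphisms under reassociation and permutation of factors is precisely the associativity and equivariance axioms for the operad $\sL_U$. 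Second, for arbitrary $\bL_U$-algebras: by Lemma~\ref{lem:compco} colimits in $\gspectra[G][\bL_U]$ are created in $\gspectra$, and both $(-)\sma_U(-)$ and $\sL_U(k)\times_{\sL_U(1)\times\cdots\times\sL_U(1)}(-)$ are, directly from their definitions in terms of coequalizers, tensors with $G$-spaces, and smash products, colimit-preserving in each variable. Writing each $X_i$ as the reflexive coequalizer of $\bL_U\bL_U X_i\rightrightarrows\bL_U X_i$ and applying the free case to the resulting diagram of free algebras then identifies $X_1\sma_U\cdots\sma_U X_k$, associated in any order, with $\sL_U(k)\times_{\sL_U(1)\times\cdots\times\sL_U(1)}(X_1\sma\cdots\sma X_k)$, naturally in the $X_i$ and compatibly with reassociation and with the $\Sigma_k$-action; specializing to $k=2$ and $k=3$ gives the asserted commutativity and associativity isomorphisms.

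The main obstacle is the lemma, and within it the two equivariant subtleties: that the right $\sL_U(1)^{\,j}$-action on $\sL_U(j)$ remains free with quotient compatible with the conjugation action, so that the balanced products are computed as in \cite[\S I.5]{EKMM}; and that the auxiliary isometric isomorphisms $\iota_i$ can be chosen $G$-equivariantly, which is exactly the point at which the standing hypothesis that $U$ is a genuine $G$-universe, i.e. Lemma~\ref{lem:unisum}, enters. Once the lemma is in place, everything else is the bookkeeping of \cite[\S I.5]{EKMM} carried over verbatim.
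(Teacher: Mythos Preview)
Your proposal is correct and takes essentially the same approach as the paper: both isolate the equivariant analogue of \cite[I.5.4]{EKMM} as the key input and invoke Lemma~\ref{lem:unisum} to supply the needed $G$-equivariant isomorphisms $U^{k}\cong U$. The only difference is in presentation: the paper records just the $j=2$ case of your lemma, $\sL_U(i+j)\cong\sL_U(2)\times_{\sL_U(1)\times\sL_U(1)}(\sL_U(i)\times\sL_U(j))$, and then, rather than passing through free algebras and the bar resolution, computes the iterated $\sma_U$-product directly for arbitrary $X_i$ by inserting the tautology $\sL_U(1)\times_{\sL_U(1)}Z\cong Z$ and commuting the nested coequalizers---a slightly slicker but equivalent manipulation.
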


\begin{proof}
Commutativity is essentially immediate (see~\cite[I.5.2]{EKMM}) and
associativity is a consequence of the equivariant analogue
of~\cite[I.5.4]{EKMM}, that is, the isomorphism
\[
\sL_U(i+j) \cong \sL_U(2) \times_{\sL_U(1) \times \sL_U(1)} \sL_U(i) \times \sL_U(j),
\]
which we prove as Lemma~\ref{lem:normhop}.  Associativity and the last formula now
follow from the arguments for~\cite[I.5.6]{EKMM}.  Specifically, we
have natural isomorphisms $\sL_U(1) \times_{\sL_U(1)} X \cong X$ for all $X$ in
$\gspectra[G][\bL_U]$, and therefore there are natural isomorphisms
\begin{align*}
X &\sma_U Y \sma_U Z \\
&\cong \sL_U(2) \times_{\sL_U(1) \times \sL_U(1)}
(\sL_U(2) \times_{\sL_U(1) \times \sL_U(1)} (X \sma Y)) \sma
(\sL_U(1) \times_{\sL_U(1)} Z) \\
&\cong (\sL_U(2) \times_{\sL_U(1) \times \sL_U(1)} \sL_U(2) \times \sL_U(1)) \times_{\sL_U(1) \times \sL_U(1) \times \sL_U(1)}
(X \sma Y \sma Z) \\
&\cong
\sL_U(3) \times_{\sL_U(1) \times \sL_U(1) \times \sL_U(1)} (X \sma
Y \sma Z).
\end{align*}
\end{proof}

Next, we construct the unit map, which is a consequence of the
equivariant analogue of a basic point-set property of spaces of linear
isometries; see Lemma~\ref{lem:equikriz}.

\begin{corollary}\label{corollary:unitisunit}
There is a natural isomorphism of $\bL_U$-spectra
\[
\lambda \colon \Sg \sma_U \Sg \cong \Sg
\]
such that $\lambda \tau = \lambda$.
\end{corollary}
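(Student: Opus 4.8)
The plan is to compute $\Sg \sma_U \Sg$ directly from its defining coequalizer and to identify the result, as a $\bL_U$-algebra, with $\Sg$ carrying its trivial ($p^*$) structure. Since $\Sg = p^*\Sg$ is the trivial $\bL_U$-algebra, the action maps $\sL_U(1)_+ \sma \Sg \to \Sg$ appearing in the coequalizer are projections, so after applying the unit isomorphism $\Sg \sma \Sg \iso \Sg$ the defining coequalizer of $\Sg \sma_U \Sg$ reduces to
\[
\bigl(\sL_U(2) \times \sL_U(1) \times \sL_U(1)\bigr)_+ \sma \Sg
\;\rightrightarrows\;
\sL_U(2)_+ \sma \Sg
\;\to\;
\Sg \sma_U \Sg,
\]
in which one arrow is induced by the right action of $\sL_U(1)\times\sL_U(1)$ on $\sL_U(2)$ by block sum and precomposition, and the other is the projection that forgets the last two coordinates. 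Because $(-)_+$ and $(-)\sma\Sg$ preserve colimits, this coequalizer is $\bigl(\sL_U(2)/(\sL_U(1)\times\sL_U(1))\bigr)_+ \sma \Sg$, the orbit being formed in $G$-spaces.

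Next I would invoke Lemma~\ref{lem:equikriz}: $\sL_U(2)/(\sL_U(1)\times\sL_U(1))$ is a single $G$-fixed point, whence $\bigl(\sL_U(2)/(\sL_U(1)\times\sL_U(1))\bigr)_+ \sma \Sg \iso S^0 \sma \Sg \iso \Sg$; call this composite $\lambda$. To see that $\lambda$ is an isomorphism of $\bL_U$-algebras, recall that the $\bL_U$-structure on $\Sg\sma_U\Sg$ is induced by the left action of $\sL_U(1)$ on the first factor of $\sL_U(2)$ by postcomposition. This left action commutes with the right $\sL_U(1)\times\sL_U(1)$-action, hence descends to the one-point orbit space, where it is necessarily trivial; therefore the structure map of $\Sg\sma_U\Sg$ corresponds under $\lambda$ to the projection $\sL_U(1)_+ \sma \Sg \to \Sg$, i.e.\ to the trivial structure on $\Sg$. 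All of the above is canonical, giving the asserted natural isomorphism $\lambda$.

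It remains to check $\lambda\tau = \lambda$. The commutativity isomorphism $\tau$ of Theorem~\ref{thm:assoc} is induced by the $\Sigma_2$-symmetry of $\sL_U(2) = \sL(U^2,U)$ together with the symmetry isomorphism of $\Sg\sma\Sg$; the former descends to an automorphism of the one-point orbit space, hence is the identity there, while the latter is the self-symmetry of the unit object $\Sg$ and so becomes the identity under $\Sg\sma\Sg\iso\Sg$. Thus $\tau$ corresponds to $\id_\Sg$ under $\lambda$, and $\lambda\tau=\lambda$. I expect the only mildly delicate point to be the bookkeeping confirming that the two arrows of the reduced coequalizer, and the left $\sL_U(1)$-action giving $\Sg\sma_U\Sg$ its $\bL_U$-structure, are literally the maps described above; this is the equivariant counterpart of the discussion surrounding~\cite[I.8.1]{EKMM} and presents no new difficulty once Lemma~\ref{lem:equikriz} is in hand.
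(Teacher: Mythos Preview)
Your proposal is correct and is exactly the approach the paper has in mind: the corollary is stated without proof immediately after Lemma~\ref{lem:equikriz} precisely because the identification of $\Sg\sma_U\Sg$ with $\Sg$ follows by reducing the defining coequalizer to the orbit space $\sL_U(2)/(\sL_U(1)\times\sL_U(1))$ (using the trivial $\bL_U$-structure on $\Sg$) and then applying that lemma, just as in~\cite[I.8.1--I.8.3]{EKMM}. Your bookkeeping for the $\bL_U$-structure and for $\lambda\tau=\lambda$ is also correct and matches the EKMM argument the paper is invoking.
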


The argument for~\cite[I.8.3]{EKMM} now generalizes without change to
the equivariant setting:

\begin{theorem}\label{thm:unit}
Let $X$ be an object of $\gspectra[G][\bL_U]$.  Then there exists a natural
map
\[
\psi \colon \Sg \sma_U X \to X
\]
which is compatible with the commutativity and associativity
isomorphisms.
\end{theorem}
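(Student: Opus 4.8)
The plan is to follow the construction of the unit map in~\cite[I.8.3]{EKMM}. Unwinding the definition of $\sma_U$ and using the natural isomorphism $\Sg \sma X \cong X$, the object $\Sg \sma_U X$ is the coequalizer of the two $G$-maps $(\sL_U(2) \times \sL_U(1) \times \sL_U(1))_+ \sma X \rightrightarrows \sL_U(2)_+ \sma X$, one induced by the right action of $\sL_U(1) \times \sL_U(1)$ on $\sL_U(2)$ (block sum and precomposition), the other by the trivial action on $\Sg$ in the first slot together with the structure map $\xi \colon \sL_U(1)_+ \sma X \to X$ of $X$ as a $\bL_U$-algebra in the second slot; its $\bL_U$-structure comes from the residual left $\sL_U(1)$-action on $\sL_U(2)$. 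So it suffices to produce a $G$-map $\sL_U(2)_+ \sma X \to X$ equalizing these two maps and intertwining the left $\sL_U(1)$-actions, and then to verify the two coherences.

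First I would let $r \colon \sL_U(2) \to \sL_U(1)$ be restriction along the inclusion $\iota_2 \colon U \hookrightarrow U \oplus U$ of the second summand (an instance of the operad composition maps, obtained by inserting the point of $\sL_U(0) = *$ and the identity into the two slots), and define $\psi$ to be the composite $\sL_U(2)_+ \sma X \overto{r_+ \sma \id} \sL_U(1)_+ \sma X \overto{\xi} X$, i.e.\ $\psi(\sigma, x) = \xi(r(\sigma), x)$. That this descends to $\Sg \sma_U X$ is a short check: precomposing $\sigma$ with an element of the first $\sL_U(1)$ changes only its restriction to the \emph{first} summand and so leaves $r(\sigma)$ unchanged, while precomposing with $f$ in the second $\sL_U(1)$ replaces $r(\sigma)$ by $r(\sigma) \circ f$, and $\xi(r(\sigma) \circ f, x) = \xi(r(\sigma), \xi(f, x))$ by the associativity axiom for the $\bL_U$-action. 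Since $r$ also commutes with postcomposition, the same axiom shows $\psi$ is a map of $\bL_U$-algebras; naturality in $X$ is immediate from naturality of $\xi$.

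It then remains to check compatibility with the commutativity and associativity isomorphisms of Theorem~\ref{thm:assoc}. For commutativity, one has the mirror-image map $\psi' \colon X \sma_U \Sg \to X$ built from restriction along the \emph{first} summand, and $\psi = \psi' \circ \tau$ because the transposition of $U \oplus U$ carries the first-summand inclusion to $\iota_2$. For associativity, under the canonical isomorphism $\Sg \sma_U X \sma_U Y \cong \sL_U(3) \times_{\sL_U(1)^{\times 3}} (\Sg \sma X \sma Y)$ of Theorem~\ref{thm:assoc}, both the composite $\Sg \sma_U X \sma_U Y \xrightarrow{\psi \sma_U \id_Y} X \sma_U Y$ and the composite $\Sg \sma_U X \sma_U Y \cong \Sg \sma_U (X \sma_U Y) \xrightarrow{\psi} X \sma_U Y$ are induced by the restriction $\sL_U(3) \to \sL_U(2)$ along the inclusion of the last two summands together with the $\bL_U$-structure of $X \sma_U Y$; hence they coincide, modulo the routine verification that this identification is compatible with the decomposition isomorphisms of Theorem~\ref{thm:assoc} — the same diagram chase with the linear-isometries structure maps as in~\cite[I.8.3]{EKMM}.

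I expect no genuine obstacle here: the argument is formal once the point-set geometry of the spaces $\sL_U(n)$ is in hand, and everything needed — freeness of the $\sL_U(1)^{\times n}$-actions, the decomposition $\sL_U(i+j) \cong \sL_U(2) \times_{\sL_U(1) \times \sL_U(1)} \sL_U(i) \times \sL_U(j)$, and the triviality of the orbit spaces — has already been established equivariantly in Lemmas~\ref{lem:unisum} and~\ref{lem:equikriz} and in the proof of Theorem~\ref{thm:assoc}. The one thing to be careful about is the temptation to identify $\Sg \sma_U X$ with $X$ outright: the map $r$ is not surjective in general (for instance no invertible isometry $U \to U$ lies in its image), so $\psi$ is not an isomorphism, and $\psi$ must genuinely be extracted from the coequalizer presentation rather than from a shortcut. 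Apart from that, the proof is bookkeeping, carried out exactly as in~\cite[I.8.3]{EKMM}.
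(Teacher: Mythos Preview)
Your proposal is correct and is exactly the approach the paper takes: the paper's proof consists solely of the sentence ``The argument for~\cite[I.8.3]{EKMM} now generalizes without change to the equivariant setting,'' and what you have written is a faithful and accurate unpacking of that argument in the present notation, using the equivariant inputs (Lemma~\ref{lem:equikriz}, Theorem~\ref{thm:assoc}) already established.
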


Combining Theorems~\ref{thm:assoc} and~\ref{thm:unit}, we have proved
the following result.

\begin{theorem}
The category $\gspectra[G][\bL_U]$ is a closed weak symmetric monoidal
category with product $\sma_U$, unit $\Sg$, and function object
$F_{\sL_U}(-,-)$.
\end{theorem}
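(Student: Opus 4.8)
The plan is to assemble the final statement from the structural results already established in the section, treating it essentially as a bookkeeping argument that collects Theorem~\ref{thm:assoc}, Theorem~\ref{thm:unit}, and the definition of $F_{\sL_U}(-,-)$ into the single assertion that $\gspectra[G][\bL_U]$ is a closed weak symmetric monoidal category. First I would recall the precise list of axioms for a closed weak symmetric monoidal category as in~\cite[II.7.1]{EKMM}: a bifunctor $\sma_U$, a unit object $\Sg$, natural associativity and commutativity isomorphisms satisfying the pentagon and hexagon coherence diagrams, a natural unit map $\psi\colon \Sg\sma_U X\to X$ (not required to be an isomorphism), compatibility of $\psi$ with the associativity and commutativity isomorphisms, and an internal function object $F_{\sL_U}(-,-)$ together with a natural adjunction isomorphism $\gspectra[G][\bL_U](X\sma_U Y, Z)\cong \gspectra[G][\bL_U](X, F_{\sL_U}(Y,Z))$.

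Next I would check each item against what the section provides. The bifunctoriality of $\sma_U$ is clear from its definition as a coequalizer of functorial diagrams, and colimits in $\gspectra[G][\bL_U]$ exist by Lemma~\ref{lem:compco}. The commutativity isomorphism $\tau$ and associativity isomorphism are supplied by Theorem~\ref{thm:assoc}; the pentagon and hexagon then follow from the ``canonical natural isomorphism'' $X_1\sma_U\cdots\sma_U X_k\cong \sL_U(k)\times_{\sL_U(1)^{\times k}}(X_1\sma\cdots\sma X_k)$ in that theorem, exactly as in~\cite[I.5.6]{EKMM}: any two ways of parenthesizing agree with this common expression compatibly, so all coherence diagrams built from $\tau$ and the associator commute. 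The unit map $\psi$ and its compatibility with the commutativity and associativity isomorphisms come directly from Theorem~\ref{thm:unit} (which in turn rests on Corollary~\ref{corollary:unitisunit}, giving $\lambda\tau=\lambda$). Finally, the closed structure: I would verify that $F_{\sL_U}(-,-)$, defined as the displayed equalizer, is right adjoint to $-\sma_U Y$, which amounts to matching the equalizer presentation of $F_{\sL_U}(Y,Z)$ against the coequalizer presentation of $X\sma_U Y$ under the ambient adjunction between $F_{\gspectra[G][\bL_U]}(-,-)$ and $\sma$ on $\bL_U$-algebras; this is the equivariant analogue of the corresponding EKMM statement and goes through with no change.

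The only genuine content to spell out is the coherence claim, and I expect that to be the main (mild) obstacle: one must be slightly careful that the iterated smash product is well-defined independently of parenthesization \emph{and} bracketing order in a way that is natural, so that the pentagon and hexagon are not merely asserted but follow from the $k$-fold formula. I would handle this exactly as EKMM do, by observing that all the relevant isomorphisms are induced by the operad structure maps of $\sL_U$ (composition and direct sum) together with the identifications $\sL_U(1)\times_{\sL_U(1)}X\cong X$, and that the associativity and unitality of the operad then force the diagrams to commute; Lemma~\ref{lem:unisum} and Lemma~\ref{lem:equikriz} are what make these operadic identities hold $G$-equivariantly. Since every ingredient has already been established in the equivariant setting, the proof reduces to citing Theorems~\ref{thm:assoc} and~\ref{thm:unit}, noting the adjunction for $F_{\sL_U}$, and remarking that the coherence axioms follow as in~\cite[\S I.5--I.8]{EKMM}.
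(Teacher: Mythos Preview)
Your proposal is correct and matches the paper's approach exactly: the paper gives no separate proof but simply states that the theorem follows by ``combining Theorems~\ref{thm:assoc} and~\ref{thm:unit},'' which is precisely the assembly you describe. Your write-up is in fact more detailed than the paper's treatment, spelling out the coherence and closed-structure verifications that the paper leaves implicit via the reference to~\cite[\S I.5--I.8]{EKMM}.
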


Just as in the setting of spaces~\cite{Blumthesis, BCS} and
spectra~\cite{EKMM}, we can actually work with the closed symmetric
monoidal category obtained by restricting to the unital objects.

\begin{definition}
Let $\gspectra_U$ denote the full subcategory of $\gspectra[G][\bL_U]$
consisting of those objects for which $\psi \colon \Sg \sma_U X \to X$
is an isomorphism.  For $X,Y$ objects in $\gspectra_U$, let $F_U(X,Y)$
denote $\Sg \sma_U F_{\sL_U}(X,Y)$ and abusively denote by $X \sma_U Y$ the
coequalizer regarded as an object of $\gspectra_U$.
\end{definition}

Corollary~\ref{corollary:unitisunit} implies that there is a functor
$\Sg \sma_U (-) \colon \gspectra[G][\bL_U] \to \gspectra_U$ which is the
left adjoint to the functor $F_{\sL_U}(\Sg,-)$ and the right adjoint to
the forgetful functor.  As a consequence, we can deduce the following
result.

\begin{proposition}
The category $\gspectra_U$ is complete and cocomplete.  Colimits are
created in $\gspectra[G][\bL_U]$ (and hence in $\gspectra$).  Limits are
formed by applying $\Sg \sma_U (-)$ to the limit in
$\gspectra[G][\bL_U]$.  Similarly, $\gspectra_U$ has tensors and
cotensors with based $G$-spaces.  Tensors are created in
$\gspectra[G][\bL_U]$ (and hence in $\gspectra$).  Cotensors are
formed by aplying $\Sg \sma_U (-)$ to the cotensor in
$\gspectra[G][\bL_U]$.
\end{proposition}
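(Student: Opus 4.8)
The plan is to realize $\gspectra_U$ as a coreflective full subcategory of $\gspectra[G][\bL_U]$ and to transport the completeness, cocompleteness and enrichment of the latter --- supplied by Lemma~\ref{lem:compco} --- across the coreflection. The structural input is the observation recorded after Corollary~\ref{corollary:unitisunit}: the functor $\Sg \sma_U (-) \colon \gspectra[G][\bL_U] \to \gspectra_U$ is simultaneously right adjoint to the fully faithful inclusion $\gspectra_U \hookrightarrow \gspectra[G][\bL_U]$ and left adjoint to $F_{\sL_U}(\Sg, -)$. Being a left adjoint, $\Sg \sma_U (-)$ preserves all colimits; and the evident natural isomorphism $\Sg \sma_U (A \sma X) \cong A \sma (\Sg \sma_U X)$ shows it also preserves tensors with based $G$-spaces. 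The inclusion, also a left adjoint, preserves colimits. Finally, since the transformation $\psi$ of Theorem~\ref{thm:unit} is natural, under the canonical identification $\Sg \sma_U \colim_j X_j \cong \colim_j (\Sg \sma_U X_j)$ the map $\psi_{\colim_j X_j}$ is carried to $\colim_j \psi_{X_j}$, and similarly for tensors.

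First I would handle colimits. Given a small diagram $\{X_j\}$ in $\gspectra_U$, form its colimit $X$ in $\gspectra[G][\bL_U]$; by Lemma~\ref{lem:compco} this is created in $\gspectra$. Each $\psi_{X_j}$ is an isomorphism because $X_j$ is unital, hence so is $\psi_X \cong \colim_j \psi_{X_j}$, so $X$ lies in $\gspectra_U$. Full faithfulness of the inclusion then shows $X$ is also the colimit of the diagram in $\gspectra_U$. Thus $\gspectra_U$ is cocomplete, with colimits created in $\gspectra[G][\bL_U]$ and hence in $\gspectra$. Replacing ``colimit'' by ``tensor with a based $G$-space $A$'' and $\psi_X \cong \colim_j \psi_{X_j}$ by the isomorphism $\Sg \sma_U (A \sma X) \cong A \sma (\Sg \sma_U X) \cong A \sma X$ gives the statement for tensors.

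For limits, let $L$ be the limit in $\gspectra[G][\bL_U]$ of a small diagram $\{X_j\}$ in $\gspectra_U$. Using that the inclusion is left adjoint to $\Sg \sma_U (-)$ together with full faithfulness, for any $W$ in $\gspectra_U$ one has $\Hom_{\gspectra_U}(W, \Sg \sma_U L) \cong \Hom_{\gspectra[G][\bL_U]}(W, L) \cong \lim_j \Hom_{\gspectra[G][\bL_U]}(W, X_j) = \lim_j \Hom_{\gspectra_U}(W, X_j)$, so $\Sg \sma_U L$ is the limit in $\gspectra_U$; this gives completeness and the asserted formula. The identical computation with a based $G$-space $A$ in place of the diagram, using the cotensor $F(A,X)$ of $\gspectra[G][\bL_U]$, shows cotensors in $\gspectra_U$ are obtained by applying $\Sg \sma_U (-)$ to the cotensor in $\gspectra[G][\bL_U]$.

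The only step that is not formal nonsense about coreflective subcategories is the stability of $\gspectra_U$ under the colimits and tensors of $\gspectra[G][\bL_U]$ --- that is, that these operations preserve unitality --- which is precisely where the fact that $\Sg \sma_U (-)$ is a left adjoint is used; the remaining bookkeeping follows~\cite{EKMM} verbatim.
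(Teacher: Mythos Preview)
Your proposal is correct and matches the paper's approach exactly. The paper gives no proof beyond the sentence ``As a consequence, we can deduce the following result,'' relying on the adjunctions $\gspectra_U \hookrightarrow \gspectra[G][\bL_U]$ (left adjoint) and $\Sg \sma_U (-)$ (its right adjoint, itself left adjoint to $F_{\sL_U}(\Sg,-)$) recorded just before the proposition; you have simply written out the standard coreflective-subcategory argument that those adjunctions encode, in the same spirit as~\cite[II.1.1--1.4]{EKMM}.
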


It is now straightforward to conclude the following result.

\begin{theorem}
The category $\gspectra_U$ is a closed symmetric monoidal category
with unit $\Sg$, product $\sma_U$, and function object $F_U(-,-)$.
\end{theorem}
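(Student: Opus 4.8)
The plan is to follow the argument of~\cite[I.8]{EKMM} essentially without change. All of the point-set input has already been assembled: the orbit computations of Lemmas~\ref{lem:equikriz} and~\ref{lem:unisum}, the associativity isomorphisms of Theorem~\ref{thm:assoc}, and the unit map $\psi$ of Theorem~\ref{thm:unit}, which together make $\gspectra[G][\bL_U]$ a closed weak symmetric monoidal category. Since a weak symmetric monoidal category satisfies all the coherence diagrams of a symmetric monoidal category and fails only to have the unit map invertible, and since $\psi$ is invertible on $\gspectra_U$ by construction, the theorem reduces to checking that $\gspectra_U$ is a full subcategory of $\gspectra[G][\bL_U]$ containing $\Sg$ and closed under $\sma_U$ and under $F_U(-,-)$, together with the formal bookkeeping needed to descend the closed-structure adjunction.

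First I would verify $\Sg \in \gspectra_U$. This amounts to identifying the map $\psi_{\Sg}\colon \Sg \sma_U \Sg \to \Sg$ of Theorem~\ref{thm:unit} with the isomorphism $\lambda$ of Corollary~\ref{corollary:unitisunit}; this is the equivariant form of~\cite[I.8]{EKMM}, and it follows by unwinding the definitions of $\psi$ and $\lambda$ and using $\lambda\tau = \lambda$. Hence $\psi_{\Sg}$ is an isomorphism. Next, for $X, Y \in \gspectra_U$ I would show that $\psi_{X \sma_U Y}$ is an isomorphism: using the associativity isomorphism of Theorem~\ref{thm:assoc} to rewrite $\Sg \sma_U (X \sma_U Y) \cong (\Sg \sma_U X) \sma_U Y$ and the compatibility of $\psi$ with associativity from Theorem~\ref{thm:unit}, one identifies $\psi_{X \sma_U Y}$ with $\psi_X \sma_U \id_Y$, which is an isomorphism because $\psi_X$ is. Running the same argument with $X = \Sg$ shows that every object of the form $\Sg \sma_U W$ lies in $\gspectra_U$; in particular $F_U(X,Y) = \Sg \sma_U F_{\sL_U}(X,Y)$ does. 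Consequently $\gspectra_U \subseteq \gspectra[G][\bL_U]$ is full, contains $\Sg$, and is closed under $\sma_U$ and $F_U$, so the commutativity, associativity, pentagon, hexagon, and triangle diagrams all restrict from $\gspectra[G][\bL_U]$, and $(\gspectra_U, \sma_U, \Sg)$ is a genuine symmetric monoidal category.

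For the closed structure I would then chain together the weak closed adjunction of $\gspectra[G][\bL_U]$ with the adjunction between the forgetful functor $\gspectra_U \to \gspectra[G][\bL_U]$ and $\Sg \sma_U(-)$ recorded above. Concretely, for $X, Y, Z \in \gspectra_U$, fullness gives $\gspectra_U(X \sma_U Y, Z) = \gspectra[G][\bL_U](X \sma_U Y, Z)$, the weak closed structure gives $\gspectra[G][\bL_U](X \sma_U Y, Z) \cong \gspectra[G][\bL_U](X, F_{\sL_U}(Y,Z))$, and since $X$ is unital the latter equals $\gspectra[G][\bL_U](\Sg \sma_U X, F_{\sL_U}(Y,Z)) \cong \gspectra_U(X, \Sg \sma_U F_{\sL_U}(Y,Z)) = \gspectra_U(X, F_U(Y,Z))$. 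Naturality in all variables is immediate from naturality of the constituent isomorphisms.

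I do not expect a real obstacle here; the content is bookkeeping parallel to~\cite[I.8.3--I.8.5]{EKMM}, with the equivariance already absorbed into Lemmas~\ref{lem:equikriz} and~\ref{lem:unisum}. The one point that needs care is keeping the two a priori distinct maps $\Sg \sma_U \Sg \to \Sg$ straight — the unit map $\psi_{\Sg}$ versus the isomorphism $\lambda$ — and, relatedly, invoking the compatibility of $\psi$ with the associativity isomorphism at exactly the right place when showing that $\sma_U$ and $F_U(-,-)$ preserve unital objects.
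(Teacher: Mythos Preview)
Your proposal is correct and follows essentially the same approach the paper has in mind: the paper gives no explicit proof, stating only that the result is ``straightforward to conclude,'' and your argument is the standard EKMM bookkeeping of~\cite[I.8]{EKMM} that the paper is tacitly invoking, with the equivariant point-set input already supplied by Theorems~\ref{thm:assoc} and~\ref{thm:unit} and Corollary~\ref{corollary:unitisunit}. One minor streamlining: in the closed-structure step you can pass directly from $\gspectra[G][\bL_U](X,F_{\sL_U}(Y,Z))$ to $\gspectra_U(X,\Sg\sma_U F_{\sL_U}(Y,Z))$ via the (forgetful, $\Sg\sma_U(-)$) adjunction recorded just before the theorem, without the intermediate rewrite $X\cong \Sg\sma_U X$.
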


\subsection{Point-set multiplicative change of universe functors}

A counterintuitive but useful fact about the category of orthogonal
$G$-spectra is that the point-set change of universe functors are
symmetric monoidal equivalences of categories.  In particular, for any
universe $U$, there is an equivalence of categories between
$G$-objects in the category of (non-equivariant) orthogonal spectra
and orthogonal $G$-spectra on $U$.  In this section, we explain the
corresponding result in the context of {\em multiplicative} change of
universe functors for the categories $\gspectra[G][\bL_U]$ as $U$
varies; the underlying (complete) additive universe that structures
$\gspectra[G]$ remains constant.

Let $U$ and $U'$ be $G$-universes, and denote by $\sL(U,U')$ the
$G$-space of non-equivariant linear isometries $U \to U'$, where $G$
acts by conjugation.  When $U = U'$, note that $\sL(U,U) = \sL_U(1)$.

\begin{definition}\label{defn:multichange}
Let $U$ and $U'$ be $G$-universes.  We define the functor
\[
\sL\sI_U^{U'} \colon \gspectra[G][\bL_U] \to \gspectra[G][\bL_{U'}]
\]
by setting $\sL\sI_U^{U'} X$ to be the coequalizer of the diagram
\[
\xymatrix{
\sL(U,U')_+ \sma \sL(U,U)_+ \sma X  \ar@<1ex>[r] \ar@<-1ex>[r] & \sL(U,U')_+ \sma X
}
\]
where the maps are determined by the action of $\sL_U(1)$ on $X$ and
the composition $\sL(U,U') \times \sL(U,U) \to \sL(U,U')$.  The action
of $\sL_{U'}(1)$ on $\sI_U^{U'} X$ is also induced by the composition
map $\sL(U',U') \times \sL(U,U') \to \sL(U,U')$.
\end{definition}

As explained in~\cite[1.3, 1.4]{ElmendorfMay}, we have the following
point-set result about the behavior of these functors.  We include the
proof here in order to make this paper more self-contained.

\begin{theorem}\label{thm:multichange}
Let $U$ and $U'$ be $G$-universes.  The functors $\sL\sI_{U}^{U'}$ and
$\sL\sI_{U'}^{U}$ are inverse equivalences of categories between
$\gspectra[G][\bL_U]$ and $\gspectra[G][\bL_{U'}]$.  Both functors are
strong symmetric monoidal.  As a consequence, the change of universe
functors descend to the categories $\gspectra_U$ and $\gspectra_{U'}$.
\end{theorem}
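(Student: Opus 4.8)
The plan is to adapt the arguments of~\cite[1.3, 1.4]{ElmendorfMay}, reducing everything to point-set statements about balanced products of spaces of linear isometries which hold $G$-equivariantly because the relevant structure maps are maps of $G$-spaces — so that, exactly as in the proof of Lemma~\ref{lem:equikriz}, one may compute orbit spaces and other quotients on underlying spaces and invoke the non-equivariant facts of~\cite[I.5, I.8]{EKMM}. There are two key inputs. The first is a family of composition homeomorphisms
\[
\sL(V_2,V_3)_+ \sma_{\sL(V_2,V_2)_+} \sL(V_1,V_2)_+ \overto{\cong} \sL(V_1,V_3)_+
\]
for $G$-universes $V_1,V_2,V_3$ — and, in the form actually needed for the multiplicative statement, with $V_1$ replaced by a finite direct sum such as $U^n$ — where $\sL(V_2,V_2)$ acts by precomposition on $\sL(V_2,V_3)$ and by postcomposition on $\sL(V_1,V_2)$. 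The second is the computation that the orbit space $\sL(U,U')/\sL(U,U)$ (and, more generally, $\sL(U^n,U')/\sL(U,U)^{\times n}$, with $\sL(U,U)$ acting by precomposition) consists of a single point. I would begin by recording these, in the equivariant generality needed, as a preliminary lemma, together with the fact that the relevant right $\sL(U,U)$-actions are free, so that the balanced products have the expected point-set behavior.

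Granting this, the first claim follows by unwinding the iterated coequalizer defining $\sL\sI_{U'}^U \sL\sI_U^{U'} X$. Since the two coequalizers are formed over commuting actions on $\sL(U,U')$, the composite is naturally isomorphic to
\[
\bigl(\sL(U',U)_+ \sma_{\sL(U',U')_+} \sL(U,U')_+\bigr) \sma_{\sL(U,U)_+} X,
\]
and the composition homeomorphism with $(V_1,V_2,V_3)=(U,U',U)$ identifies the parenthesized factor with $\sL(U,U)_+$; the identification $\sL_U(1) \times_{\sL_U(1)} X \cong X$ from the proof of Theorem~\ref{thm:assoc} then yields $\sL\sI_{U'}^U \sL\sI_U^{U'} X \cong X$ as $\bL_U$-algebras, naturally in $X$. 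The symmetric argument produces $\sL\sI_U^{U'} \sL\sI_{U'}^U \cong \id$, so $\sL\sI_U^{U'}$ and $\sL\sI_{U'}^U$ are inverse equivalences.

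For the strong symmetric monoidal structure, I would produce the coherence isomorphism $\sL\sI_U^{U'}(X \sma_U Y) \cong \sL\sI_U^{U'} X \sma_{U'} \sL\sI_U^{U'} Y$ by unwinding both sides, via Theorem~\ref{thm:assoc}, into coequalizers built out of $\sL(U^2,U)$ (resp.\ $\sL((U')^2,U')$) over $\sL(U,U)^{\times 2}$ (resp.\ $\sL(U',U')^{\times 2}$), and checking — using the composition homeomorphisms with $V_1 = U^2$ — that both are naturally isomorphic to
\[
\sL(U^2,U')_+ \sma_{(\sL(U,U)^{\times 2})_+} (X \sma Y);
\]
the general $k$-ary version is identical, using the $k$-fold form of Theorem~\ref{thm:assoc}. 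The unit isomorphism $\sL\sI_U^{U'} \Sg \cong \Sg$ comes from the fact that $\Sg$ carries the trivial $\bL_U$-action, so that $\sL\sI_U^{U'} \Sg \cong (\sL(U,U')/\sL(U,U))_+ \sma \Sg \cong \Sg$ by the orbit-space computation above. Compatibility of these isomorphisms with the associativity, commutativity, and unit constraints of Theorems~\ref{thm:assoc} and~\ref{thm:unit} is then a diagram chase at the level of the defining coequalizers. Finally, since $\sL\sI_U^{U'}$ is an equivalence that is strong symmetric monoidal with $\sL\sI_U^{U'}\Sg \cong \Sg$, the coherence isomorphisms identify $\sL\sI_U^{U'}(\psi\colon \Sg \sma_U X \to X)$ with $\psi\colon \Sg \sma_{U'} \sL\sI_U^{U'} X \to \sL\sI_U^{U'} X$; hence $X$ is unital precisely when $\sL\sI_U^{U'} X$ is, and so $\sL\sI_U^{U'}$ restricts to a strong symmetric monoidal equivalence $\gspectra_U \simeq \gspectra_{U'}$.

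The step I expect to be the main obstacle is the preliminary lemma: verifying the equivariant composition homeomorphisms between balanced products of linear isometries spaces and the orbit-space computations carefully enough — in particular pinning down freeness and the requisite cofibration properties of the $\sL(U,U)$-actions — that the subsequent manipulations of coequalizers are legitimate on the point-set level and not merely up to homotopy. Once these are established, the remaining bookkeeping runs parallel to~\cite[II.1]{ElmendorfMay} and~\cite[I.5, I.8]{EKMM}.
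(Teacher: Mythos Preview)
Your approach is correct and matches the paper's closely: both reduce everything to the composition identification
\[
\sL(V_2,V_3) \times_{\sL(V_2,V_2)} \sL(V_1,V_2) \;\cong\; \sL(V_1,V_3)
\]
(the paper states this as the coequalizer of $\sL(U',U'')\times\sL(U',U')\times\sL(U,U') \rightrightarrows \sL(U',U'')\times\sL(U,U')$), and both observe that since coequalizers in $G$-spaces are created on underlying spaces, it suffices to verify this non-equivariantly. The strong symmetric monoidal and unit statements, and the descent to $\gspectra_U$, follow exactly as you outline; the paper in fact leaves these essentially to the reader.

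The one place you and the paper diverge is precisely the step you flag as your main obstacle. You anticipate needing freeness and cofibration properties of the $\sL(U,U)$-actions to justify the point-set coequalizer computation. The paper sidesteps this entirely: non-equivariantly one may choose an isomorphism $s\colon U \to U'$, and then the maps $h(f)=(f\circ s^{-1},\,s)$ and $k(g',g)=(g',\,g\circ s^{-1},\,s)$ exhibit the coequalizer diagram as a \emph{split} coequalizer. Split coequalizers are absolute, so the identification holds on the nose with no appeal to freeness or cofibration hypotheses. This is both cleaner and removes the only genuine technical worry in your plan; the same splitting trick handles the $V_1 = U^n$ variants you need for the monoidal comparison and the orbit-space computation for the unit.
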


\begin{proof}
This result follows from the identification of the coequalizer
\[
\xymatrix{
\sL(U', U'') \times \sL(U',U') \times \sL(U,U') \ar@<1ex>[r] \ar@<-1ex>[r] &
\sL(U', U'') \times \sL(U,U')
}
\]
as $\sL(U,U'')$, for any universes $U$,$U'$, and
$U''$~\cite[2.2]{ElmendorfMay} and where the maps are all induced by
the composition $\gamma$.  Since coequalizers in $G$-spaces are
computed using the forgetful functor to spaces, it suffices to show
that this is a coequalizer diagram of non-equivarant spaces.  But in
this setting, the diagram is a split coequalizer.  The splitting is
constructed as follows.  Choose an isomorphism $s \colon U \to U'$ and
define
\[
h \colon \sL(U,U'') \to \sL(U', U'') \times \sL(U,U')
\]
and
\[
k \colon \sL(U', U'') \times \sL(U,U') \to \sL(U',
U'') \times \sL(U',U') \times \sL(U,U')
\]
via the formulas $h(f) = (f \circ s^{-1}, s)$ and $k(g',g) = (g',
g \circ s^{-1}, s)$.  Then $\gamma \circ h = \id$,
$(\id \times \gamma) \circ k = \id$, and $(\gamma \times \id) \circ k
= h \circ \gamma$.
\end{proof}

In particular, we have the following surprising corollary.

\begin{corollary}\label{cor:schwede}
Let $U$ be any $G$-universe.  The categories $\gspectra[G][\bL_U]$ and
$\gspectra_U$ are equivalent to the categories
$\gspectra[G][\bR^{\infty}]$ and $\gspectra_{\bR^{\infty}}$
respectively.
\end{corollary}

In our work in this paper, we will make critical use of this
equivalence to establish some point-set properties of our categories
$\gspectra[G][\bL_U]$ and $\gspectra_U$, notably about the
multiplicative norm and the fixed-point functors.  In fact, as pointed
out by an anonymous referee, we could simplify some of the work of the
previous section by using the fact that $\gspectra_{\bR^{\infty}}$ can
be described as a diagram category; the construction of the smash
product, colimits, and limits is then immediate from general results
about diagram categories and Corollary~\ref{cor:schwede}.

\begin{remark}
The additive version of this phenomenon was originally discovered in
the context of equivariant $\Gamma$-spaces by
Shimakawa~\cite{Shimakawa} and was proved for orthogonal $G$-spectra
in~\cite[\S V.1]{MM}.  In the multiplicative setting, the use of these
formulas to simplify the point-set theory for the equivariant stable
category is sketched in~\cite[XXIII.4]{MayAlaska}, in the context of
the equivariant version of EKMM spectra~\cite{EKMM}; this exposition
followed~\cite{ElmendorfMay}.

Although these facts were known to experts for a long time, the
observation has become prominent after its use in the definition of
the Hill-Hopkins-Ravenel multiplicative norm~\cite{HHR}; it is vastly
simpler to define the norm directly on $G$-objects and use the
universe only to study the homotopy theory.  Another important recent
application of these ideas comes from global equivariant homotopy
theory; this technique is essentially required to make the point-set
approach to global equivariant homotopy theory
tractable~\cite{Schwedeglobal}.  Likely motivated by this fact,
Schwede~\cite{SchwedeBook} has advocated for developing the
foundations of equivariant stable homotopy theory from this
perspective (although on the other hand see~\cite[V.1.9]{MM} for a
contrary view).

Nonetheless, we believe that despite Corollary~\ref{cor:schwede}, it
is conceptually clarifying in our work to keep track of the
multiplicative universe at the point-set level.  The issue is simply
that we have two universes in play, the universe structuring the
additive theory and the universe structuring the multiplicative
theory.  We believe that the approach outlined
in~\cite[XXIII.4]{MayAlaska} works best when there is only a single
universe; i.e., when the additive and multiplicative universe
coincide.  Moreover, when doing homotopical work, there is of course
no way to avoid incorporating the universe explicitly when writing
down formulas for fibrant replacement and (right) derived functors.
\end{remark}

\subsection{Rings and modules in $\gspectra[G][\bL_U]$ and $\gspectra_U$}

We now turn to the characterization of multiplicative objects in
$\gspectra[G][\bL_U]$ and $\gspectra_U$.  The key observation about
$\sma_U$ is that (in direct analogy with the non-equivariant case),
monoids for $\sma_U$ are algebras over the non-$\Sigma$ linear
isometries operad $\sL_U$ and commutative monoids for $\sma_U$ are
algebras over the linear isometries operad $\sL_U$.  More precisely,
let $\bT$ and $\bP$ denote the monads structuring associative and
commutative monoid objects in $\gspectra[G][\bL_U]$ respectively.
Concretely, for $X$ an object of $\gspectra[G][\bL_U]$,
\[
\bT X = \bigvee_{k \geq 0} \underbrace{X \sma_U \ldots \sma_U
X}_k \qquad \textrm{and} \qquad \bP X = \bigvee_{k \geq
0} \left(\underbrace{X \sma_U \ldots \sma_U X}_k \right) / \Sigma^k,
\]
where $X^0$ is defined to be $\Sg$.

Monadic algebras over $\bT$ and $\bP$ in $\gspectra_U$ are simply
algebras in $\gspectra[G][\bL_U]$ that are unital; there are functors
\[
\Sg \sma_U (-) \colon (\gspectra[G][\bL_U])[\bT] \to \gspectra_U[\bT]
\]
and
\[
\Sg \sma_U (-) \colon (\gspectra[G][\bL_U])[\bP] \to \gspectra_U[\bP].
\]

The next result connects the categories $(\gspectra[G][\bL_U])[\bT]$ and
$(\gspectra[G][\bL_U])[\bP]$ of monadic algebras to categories of
operadic $\Ninfty$ algebras~\cite{BlumbergHill}.

\begin{theorem}\label{thm:commninf}
The category $(\gspectra[G][\bL_U])[\bT]$ is isomorphic to the category of
non-$\Sigma$ $\sL_U$-algebras in $\gspectra$.  The category
$(\gspectra[G][\bL_U][\bP])$ is isomorphic to the category of
$\sL_U$-algebras in $\gspectra$.
\end{theorem}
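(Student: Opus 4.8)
The plan is to unwind both sides of the claimed isomorphism of categories and match the structure maps directly, using the iterated smash product formula of Theorem~\ref{thm:assoc}. Recall that an object of $(\gspectra[G][\bL_U])[\bT]$ is an $\bL_U$-algebra $X$ together with a monoid structure for $\sma_U$, that is, maps $\eta\colon \Sg \to X$ and $\mu\colon X \sma_U X \to X$ satisfying the usual associativity and unit diagrams (packaged by the monad $\bT$); iterating $\mu$ and $\eta$ yields compatible maps $\mu_k\colon X^{\sma_U k} \to X$ for all $k\ge 0$, with $\mu_0 = \eta$. Dually, a non-$\Sigma$ $\sL_U$-algebra in $\gspectra$ is an orthogonal $G$-spectrum $Y$ with operad action maps $\theta_k\colon \sL_U(k)_+ \sma Y^{\sma k}\to Y$ satisfying operadic associativity and unitality. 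The first observation is that $\theta_1\colon \sL_U(1)_+\sma Y\to Y$ is exactly an $\bL_U$-algebra structure on $Y$ --- the monad axioms for $\bL_U$ are the restriction of the operad axioms to arity one --- so every $\sL_U$-algebra has an underlying object of $\gspectra[G][\bL_U]$, functorially. This gives the functor in one direction once we produce the monoid structure; the functor in the other direction sends $(X,\bL_U\text{-structure},\mu,\eta)$ to $X$ with the $\theta_k$ defined below.

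To produce the correspondence of structures I would invoke Theorem~\ref{thm:assoc}: for $X$ in $\gspectra[G][\bL_U]$ there is a canonical isomorphism $X^{\sma_U k}\iso \sL_U(k)\times_{\sL_U(1)^{\times k}}(X^{\sma k})$, so $X^{\sma_U k}$ is the coequalizer presenting the $\sL_U(1)^{\times k}$-orbits of $\sL_U(k)_+\sma X^{\sma k}$. Hence giving a map $\mu_k\colon X^{\sma_U k}\to X$ is the same as giving a map $\theta_k\colon \sL_U(k)_+\sma X^{\sma k}\to X$ which is compatible with the $\sL_U(1)^{\times k}$-action in the precise way recorded by the arity-one operad axioms $\theta_k(\omega\cdot(\omega_1,\dots,\omega_k);\bar y) = \theta_k(\omega;\theta_1(\omega_1;y_1),\dots,\theta_1(\omega_k;y_k))$. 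Under this dictionary the associativity constraint for $\mu$, expressed through the isomorphism $\sL_U(i+j)\iso \sL_U(2)\times_{\sL_U(1)\times\sL_U(1)}\sL_U(i)\times\sL_U(j)$ used in the proof of Theorem~\ref{thm:assoc}, becomes exactly operadic composition relating the $\theta_k$; the unit $\mu_0=\eta\colon\Sg\to X$ becomes the nullary operation $\theta_0$ (note $\sL_U(0)$ is a single point); and the $\bL_U$-structure already present on $X$ is recovered as $\theta_1$. Chasing these identifications in both directions gives mutually inverse functors, so the categories are isomorphic. This is the equivariant form of \cite[II.4.6]{EKMM}, and every identification invoked is one already established in Theorem~\ref{thm:assoc} together with the free-algebra formula \eqref{eq:freeiso}; the only genuinely equivariant inputs are Lemmas~\ref{lem:unisum} and~\ref{lem:equikriz}, which were already consumed in proving the associativity isomorphism.

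For the commutative statement one argues identically while tracking the symmetry isomorphism $\tau$ of $\sma_U$. A $\bP$-algebra structure is a $\bT$-algebra structure for which each $\mu_k$ is invariant under the $\Sigma_k$-action on $X^{\sma_U k}$ permuting the tensor factors; under the isomorphism of Theorem~\ref{thm:assoc} (valid ``associated in any order'') this is the diagonal of the permutation action on $\sL_U(k)=\sL(U^k,U)$ and on $X^{\sma k}$, so $\Sigma_k$-invariance of $\mu_k$ is precisely $\Sigma_k$-equivariance of $\theta_k$, i.e.\ the full symmetric operad-algebra axioms. Thus $(\gspectra[G][\bL_U])[\bP]$ is isomorphic to the category of $\sL_U$-algebras in $\gspectra$. (The isomorphism is formal here and does not need freeness of the $\Sigma_k$-action on $\sL_U(k)$; that freeness --- which does hold, since an isometric embedding $U^k\to U$ is a monomorphism and so is fixed by no nontrivial permutation of its coordinates --- is what will later ensure these monadic algebras have the expected homotopy type.)

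The step requiring the most care, and the one I would write out in detail, is the coherence bookkeeping in the previous two paragraphs: checking that the $\theta_k$ assembled from $\mu$ and $\eta$ descend correctly through the defining coequalizers of $X^{\sma_U k}$ (this is exactly where the built-in $\sL_U(1)^{\times k}$-equivariance of $\sma_U$ is used), that they satisfy the operadic associativity pentagon-type relations and not merely up-to-coherent-isomorphism versions, and conversely that the $\mu_k$ built from an operad algebra form a strict monoid for $\sma_U$. Each of these is the evident equivariant translation of the corresponding verification in \cite[\S II.4]{EKMM}, and since the point-set facts those verifications rest on (the homeomorphisms of Lemma~\ref{lem:unisum}, the one-point orbit spaces of Lemma~\ref{lem:equikriz}, and \eqref{eq:freeiso}) are already in hand, I expect no essential difficulty --- only the routine diagram chases.
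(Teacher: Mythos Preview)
Your proposal is correct and follows essentially the same route as the paper: the paper's proof simply cites \cite[II.4.6]{EKMM} together with equation~\eqref{eq:freeiso}, and what you have written is a careful unpacking of exactly that argument via the iterated $\sma_U$-product formula of Theorem~\ref{thm:assoc}. The only difference is expository---the paper compresses the monad comparison into a one-line citation, while you spell out the dictionary between $\mu_k$ and $\theta_k$ explicitly.
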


\begin{proof}
The argument is the same as the proof of~\cite[II.4.6]{EKMM}, using
the isomorphism of equation~\eqref{eq:freeiso} levelwise.
\end{proof}

In light of the previous theorem, we will refer to monoids and
commutative monoids in $\gspectra[G][\bL_U]$ and $\gspectra$ as
associative and commutative $\Ninfty$ ring orthogonal $G$-spectra,
respectively.

Next, the arguments of~\cite[II.7]{EKMM} extend to prove the
following:

\begin{theorem}
The categories $(\gspectra[G][\bL_U])[\bT]$, $(\gspectra[G][\bL_U])[\bP]$,
$\gspectra_U[\bT]$, and $\gspectra_U[\bP]$ are complete and
cocomplete, with limits created in $\gspectra$.  The categories
$(\gspectra[G][\bL_U])[\bT]$ and $\gspectra_U[\bT]$ are tensored and
cotensored over based $G$-spaces, with cotensors created in
$\gspectra[G][\bL_U]$ and $\gspectra_U$ respectively.  The categories
$(\gspectra[G][\bL_U])[\bP]$ and $\gspectra_U[\bP]$ are tensored and
cotensored over unbased $G$-spaces, with cotensors created in
$\gspectra[G][\bL_U]$ and $\gspectra_U$ respectively (regarding these
categories as cotensored over unbased spaces via the functor that
adjoins a disjoint $G$-fixed basepoint).
\end{theorem}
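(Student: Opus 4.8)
The plan is to transcribe the arguments of~\cite[II.7]{EKMM} --- together with the treatment of rings there --- into the $G$-equivariant setting, checking that each step survives without change.

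\textbf{Limits.} The forgetful functors $(\gspectra[G][\bL_U])[\bT]\to\gspectra[G][\bL_U]$ and $(\gspectra[G][\bL_U])[\bP]\to\gspectra[G][\bL_U]$ are monadic by construction, hence create limits; composing with the forgetful functor to $\gspectra$, which creates limits by Lemma~\ref{lem:compco}, shows that limits in these two monoid categories are created in $\gspectra$. For the unital categories $\gspectra_U[\bT]$ and $\gspectra_U[\bP]$ one first forms the limit in the corresponding $\gspectra[G][\bL_U]$-based category and then applies $\Sg\sma_U(-)$, exactly as limits in $\gspectra_U$ are assembled from limits in $\gspectra[G][\bL_U]$; this is legitimate because $\Sg\sma_U(-)$ is right adjoint to the inclusion $\gspectra_U\hookrightarrow\gspectra[G][\bL_U]$ and the adjunction lifts to the algebra categories.

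\textbf{Colimits.} This is the substantive part. I would invoke the standard fact (Linton; cf.~\cite[II.7]{EKMM}) that if $\cC$ is cocomplete and a monad $\mathbb{M}$ on $\cC$ preserves reflexive coequalizers, then $\cC[\mathbb{M}]$ is cocomplete, with colimits realized as reflexive coequalizers of free $\mathbb{M}$-algebras on colimits formed in $\cC$. It therefore suffices to check that $\bT$ and $\bP$ preserve reflexive coequalizers on $\gspectra[G][\bL_U]$ (and on $\gspectra_U$), and since $\bT X=\bigvee_{k\ge 0}X^{\sma_U k}$ and $\bP X=\bigvee_{k\ge 0}(X^{\sma_U k})/\Sigma_k$ --- and wedges preserve reflexive coequalizers --- it is enough to treat the summands. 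The functor $X\mapsto X^{\sma_U k}$ preserves reflexive coequalizers because $\sma_U$ is closed, hence cocontinuous in each variable separately (the right adjoint being $F_{\sL_U}(-,-)$), and a functor cocontinuous in each variable separately preserves reflexive coequalizers of its diagonal --- a phenomenon that fails for general coequalizers but holds for reflexive ones. For $\bP$ one additionally needs $(-)^{\sma_U k}/\Sigma_k$ to preserve reflexive coequalizers; the plan here is to run the filtration argument of~\cite[II.7]{EKMM}, filtering $(X^{\sma_U k})/\Sigma_k$ by its ``diagonal'' subobjects and identifying the subquotients in terms of smash products and $\Sigma$-orbits, using that $\Sigma_k$ acts freely on $\sL_U(k)=\sL(U^k,U)$ --- precisely axiom~(2) of an $\Ninfty$ operad applied to $\sL_U$ --- so that the orbit constructions that arise again preserve reflexive coequalizers. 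The main obstacle is that this filtration must be constructed $G\times\Sigma_k$-equivariantly; however, since $G$ acts through maps of all the spaces and spectra in sight, the nonequivariant argument adapts with no essential change. The same reasoning handles $\gspectra_U[\bT]$ and $\gspectra_U[\bP]$, since $\gspectra_U$ is cocomplete with colimits created in $\gspectra[G][\bL_U]$ and $\sma_U$ remains closed symmetric monoidal on it.

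\textbf{Tensors and cotensors.} The cotensors are created in the underlying categories: for a $\bT$- (resp.\ $\bP$-) algebra $R$ and a based $G$-space $A$ (resp.\ unbased $G$-space $A$) the cotensor is $F(A,R)$ (resp.\ $F(A_+,R)$), with the induced algebra structure supplied exactly as in~\cite[II.7]{EKMM} --- in the commutative case via the coassociative, cocommutative diagonal $A\to A\times A$, which is precisely why one passes to unbased spaces (disjoint basepoints) there. For the $\gspectra_U$-based categories the cotensor is obtained by applying $\Sg\sma_U(-)$ to the cotensor just described, matching the construction of cotensors in $\gspectra_U$. The tensors, by contrast, are \emph{not} created in the underlying categories; rather $A\otimes R$ is a genuine colimit, realized as a reflexive coequalizer of free algebras on $A\sma R$ and $A\sma\bT R$ (resp.\ with $A_+$ and $\bP$), whose existence is guaranteed by the cocompleteness established above --- alternatively one may appeal to the adjoint functor theorem, the forgetful functors being accessible and limit-preserving. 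Verifying the remaining enrichment and adjunction identities is then routine.
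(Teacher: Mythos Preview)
Your outline is correct and matches the paper, which itself simply defers to~\cite[II.7]{EKMM}. One clarification is in order, though: in handling $\bP$ you do not need any filtration argument, nor the freeness of the $\Sigma_k$-action on $\sL_U(k)$. No such filtration appears in~\cite[II.7]{EKMM} for this purpose, and freeness is a \emph{homotopical} input used elsewhere in the paper (e.g., Theorem~\ref{thm:extendedpower} and the construction of the model structure on $\gspectra_U[\bP]$), not a point-set one. The actual step is simpler than you make it: once $X\mapsto X^{\sma_U k}$ preserves reflexive coequalizers (as you correctly argued from closedness of $\sma_U$ together with the diagonal lemma), so does $X\mapsto (X^{\sma_U k})/\Sigma_k$, because passage to $\Sigma_k$-orbits is itself a colimit and hence commutes with all colimits. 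The filtration you have in mind is presumably the one used to analyze pushouts along free commutative algebra maps (as in~\cite[B.117]{HHR} or the later chapters of EKMM), which is a different matter.
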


As an aside, we note the following standard observation, which follows
as usual simply by checking the universal property.

\begin{lemma}
The symmetric monoidal product $\sma_U$ is the coproduct on
$\gspectra_U[\bP]$.
\end{lemma}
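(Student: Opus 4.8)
The plan is to verify the universal property of the coproduct directly, following the standard argument that in any symmetric monoidal category the tensor of two commutative monoids is their coproduct in the category of commutative monoids.

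Since $\gspectra_U$ is a genuine closed symmetric monoidal category with strict unit $\Sg$, any two objects $A$ and $B$ of $\gspectra_U[\bP]$ acquire a canonical commutative monoid structure on $A \sma_U B$: the multiplication is
\[
(A \sma_U B) \sma_U (A \sma_U B) \cong (A \sma_U A) \sma_U (B \sma_U B) \overto{\mu_A \sma_U \mu_B} A \sma_U B,
\]
where the isomorphism is assembled from the associativity and symmetry constraints of Theorem~\ref{thm:assoc}, and the unit is $\Sg \cong \Sg \sma_U \Sg \overto{\eta_A \sma_U \eta_B} A \sma_U B$. Associativity, commutativity, and the unit axioms for this structure are formal consequences of the coherence isomorphisms in $\gspectra_U$, exactly as in the nonequivariant case; in particular one records that $\Sg$, being the unit of $\sma_U$ and the empty $\sma_U$-product, is the initial object of $\gspectra_U[\bP]$.

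Next I would exhibit the coproduct inclusions together with the comparison map. The unit isomorphisms $A \cong A \sma_U \Sg$ and $B \cong \Sg \sma_U B$ --- which are honest isomorphisms precisely because we have passed to the unital subcategory $\gspectra_U$ --- compose with $\id_A \sma_U \eta_B$ and $\eta_A \sma_U \id_B$ to give maps $\iota_A \colon A \to A \sma_U B$ and $\iota_B \colon B \to A \sma_U B$, and one checks from the unit axiom that these are maps of commutative monoids. Given a commutative monoid $C$ and monoid maps $f \colon A \to C$, $g \colon B \to C$, set $h = \mu_C \circ (f \sma_U g) \colon A \sma_U B \to C$; commutativity of $C$ shows $h$ is a monoid map, while the unit axioms for $f$, $g$, and $C$ give $h \iota_A = f$ and $h \iota_B = g$. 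For uniqueness, any monoid map $h'$ with $h'\iota_A = f$, $h'\iota_B = g$ satisfies
\[
h' = \mu_C \circ (h' \sma_U h') \circ (\iota_A \sma_U \iota_B) = \mu_C \circ (f \sma_U g) = h,
\]
using that $h'$ is a monoid map and that $\id_{A \sma_U B}$ factors as $\mu_{A \sma_U B} \circ (\iota_A \sma_U \iota_B)$ (a routine check with the shuffle and unit coherences).

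Every step is purely formal, so I do not anticipate a genuine obstacle. The one point that must not be glossed over is that the argument truly uses the strict unit $\Sg$ --- which is why the statement is made in $\gspectra_U[\bP]$ and not in $(\gspectra[G][\bL_U])[\bP]$, where $\Sg$ is only a weak unit and $A \sma_U B$ need not carry a strict monoid structure along these lines.
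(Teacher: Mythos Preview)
Your argument is correct and is precisely the standard verification of the universal property that the paper alludes to without spelling out. Your observation about why the strict unit in $\gspectra_U$ is needed (and why the statement would fail in $(\gspectra[G][\bL_U])[\bP]$) is a nice addition beyond what the paper records.
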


Finally, for any monoid or commutative monoid $R$, there are
associated categories of (left) $R$-modules in $\gspectra[\bL_U]$ and
$\gspectra_U$.  Since the theory is cleanest in the case of
$\gspectra_U$, we focus on the unital setting in the following
discussion.  The multiplication and unit maps for $R$ give the functor
$R \sma_U (-)$ the structure of a monad on $\gspectra_U$.

\begin{definition}
Let $R$ be an object in $\gspectra_U[\bT]$ or $\gspectra_U[\bP]$.  The
category $\aM_{R,U}$ of $R$-modules in $\gspectra_U[\bP]$ is the
category of algebras for the monad $R \sma_U (-)$ in $\gspectra_U$.
\end{definition}

Such categories of $R$-modules are complete and cocomplete, with
limits and colimits created in $\gspectra_U$.  When $R$ is
commutative, the category of $R$-modules is closed symmetric monoidal with
unit $R$ and product $X \sma_{R,U} Y$ defined as the coequalizer of
the diagram
\[
\xymatrix{
X \sma_U R \sma_U Y \ar@<1ex>[r] \ar@<-1ex>[r] & X \sma_U Y }
\]
where the maps are induced by the right action of $R$ on $X$ via the
symmetry isomorphism and the left action of $R$ on $Y$.  The function
object is defined as the equalizer of the diagram
\[
\xymatrix{
F_U(X,Y) \ar@<1ex>[r] \ar@<-1ex>[r] & F_U(R \sma_U X, Y) 
}
\]
where the maps are induced by the action of $R$ on $X$ and the adjoint
of the composite
\[
R \sma_U X \sma_U F_U (X,Y) \to R \sma_U Y \to Y.
\]
There are also the evident categories of $R$-algebras and commutative
$R$-algebras. 

\begin{definition}
Let $R$ be an object in $\gspectra_U[\bP]$.  Abusively denote by $\bT$
and $\bP$ the monads in $\aM_{R,U}$ that structure monoids and
commutative monoids.  We refer to the categories $\aM_{R,U}[\bT]$ and
$\aM_{R,U}[\bP]$ as the categories of $R$-algebras and commutative
$R$-algebras respectively.
\end{definition}

\subsection{Change of group and fixed-point functors}\label{sec:change}

In this section, we study change-of-group and fixed-point functors in
the context of the categories $\gspectra[G][\bL_U]$ and
$\gspectra_U$.  If we are content to ignore the monoidal structure,
the point-set theory of the change of group and fixed-point functors
is the same as for $\gspectra[G]$.  The interaction of these functors
with the action of $\sL_U(1)$ is more subtle.  Our discussion relies
on observations from~\cite[\S VI.1]{MM}.

Let $\iota_H \colon H \to G$ be the inclusion of a subgroup.
Denote by $WH$ the quotient $NH/H$, where $NH$ is the normalizer of
$H$ in $G$.  For $X$ an object of $\gspectra$, there is a
homeomorphism
\[
\iota^*_H \bL_U X \cong \bL_{(\iota^*_H U)} (\iota^*_H X).
\]
This homeomorphism is easily seen to be compatible with the monad
structure, and so we obtain a functor
\[
\iota^*_H \colon \gspectra[G][\bL_U] \to \gspectra[H][\bL_{(\iota^*_H
U)}],
\]
where the additive universe on $\gspectra[H]$ here is $\iota^*$
applied to the complete universe structuring $\gspectra[G]$.
Analogously, for $Y$ an object of $\gspectra[H]$, we have a
homeomorphism
\[
G_+ \sma_H \bL_{(\iota^*_H U)} Y \cong \bL_U (G_+ \sma_H Y)
\]
that is compatible with the monad structure, producing a functor
\[
G_+ \sma_H (-) \colon \gspectra[H][\bL_{(\iota^*_H U)}] \to \gspectra[G][\bL_U]
\]
that is the left adjoint to $\iota^*_H$.  Finally, there is also a
homeomorphism
\[
F_H(G, \bL^{\sharp}_{(\iota^*_H U)} Y) \cong \bL^{\sharp} F_H(G, Y)
\]
(here recall that the comonad $\bL^{\sharp}$ is described just prior
to the proof of Lemma~\ref{lem:compco}) that is compatible with the
comonad structure and thus produces the right adjoint
\[
F_H(G, -) \colon \gspectra[H][\bL_{(\iota^*_H U)}] \to \gspectra[G][\bL_U]
\]
to $\iota^*_H$.

Furthermore, all of these functors are compatible with the functors
creating the unital objects, and so descend to functors
$\iota^*_H \colon \gspectra_U \to \gspectra[H]_{(\iota^*_H U)}$ and
the attendant left and right adjoints.

Finally, it is evident that $\iota^*_H$ is symmetric monoidal and so
it restricts to categories of monoids and commutative monoids.

\begin{proposition}
Let $H$ be a subgroup of $G$.  Then there are forgetful functors
\[
\iota_H^* \colon (\gspectra[G][\bL_U])[\bT] \to (\gspectra[H][\bL_{\iota_H^* U}])[\bT] \qquad\textrm{and}\qquad \iota_H^* \colon \gspectra_U[\bT] \to \gspectra_{\iota_H^* U}[\bT]
\]
and
\[
\iota_H^* \colon (\gspectra[G][\bL_U])[\bP] \to (\gspectra[H][\bL_{\iota_H^* U}])[\bP] \qquad\textrm{and}\qquad \iota_H^* \colon \gspectra_U[\bP] \to \gspectra_{\iota_H^* U}[\bP].
\]
\end{proposition}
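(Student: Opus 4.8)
The plan is to promote the restriction functor $\iota_H^*$, already constructed as a functor $\gspectra[G][\bL_U] \to \gspectra[H][\bL_{\iota_H^* U}]$ compatible with the monad structure, to a strong symmetric monoidal functor for the products $\sma_U$ and $\sma_{\iota_H^* U}$, and then to invoke the standard principle that a strong (hence lax) symmetric monoidal functor carries monoids to monoids and commutative monoids to commutative monoids. In view of Theorem~\ref{thm:commninf} and its evident $H$-equivariant analogue, this produces precisely the four functors in the statement, once the argument is also seen to descend to the unital subcategories.

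The key computation is the behavior of $\iota_H^*$ on the operad spaces. As an $H$-space, $\iota_H^*\sL_U(n) = \iota_H^*\sL(U^n,U)$ is the space of non-equivariant linear isometries $U^n \to U$ with $H$ acting by conjugation, which is $\sL\bigl((\iota_H^* U)^n,\iota_H^* U\bigr) = \sL_{\iota_H^* U}(n)$; these identifications are visibly compatible with the operadic block-sum and precomposition maps. Now on $\gspectra$ the functor $\iota_H^*$ is simultaneously a left and a right adjoint (so it preserves all colimits, in particular coequalizers), it is strong symmetric monoidal for $\sma$, and it satisfies $\iota_H^*(A_+ \sma X) \cong (\iota_H^* A)_+ \sma \iota_H^* X$ for a $G$-space $A$. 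Applying $\iota_H^*$ to the defining coequalizer
\[
(\sL_U(2) \times \sL_U(1) \times \sL_U(1))_+ \sma (X \sma Y) \rightrightarrows \sL_U(2)_+ \sma (X \sma Y) \to X \sma_U Y
\]
therefore yields the corresponding coequalizer for $\iota_H^* X \sma_{\iota_H^* U} \iota_H^* Y$, and hence a natural isomorphism $\iota_H^*(X \sma_U Y) \cong \iota_H^* X \sma_{\iota_H^* U} \iota_H^* Y$; together with $\iota_H^*\Sg \cong \Sg[H]$ this is the required monoidal structure. One then checks that these isomorphisms are compatible with the commutativity, associativity, and unit isomorphisms of Theorems~\ref{thm:assoc} and~\ref{thm:unit}; this is immediate from the corresponding compatibilities on $\gspectra$ together with the naturality of the $k$-fold coequalizer presentation in Theorem~\ref{thm:assoc}.

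For the unital versions I would recall that $\iota_H^*$ already descends to $\iota_H^* \colon \gspectra_U \to \gspectra[H]_{\iota_H^* U}$, and that the symmetric monoidal structure on $\gspectra_U$ is obtained from the one on $\gspectra[G][\bL_U]$ by applying the reflection $\Sg \sma_U (-)$ (Corollary~\ref{corollary:unitisunit}). Since $\iota_H^*$ is symmetric monoidal and sends $\Sg$ to $\Sg[H]$, it commutes with these reflections and so is again strong symmetric monoidal; the monoid-preservation argument then applies verbatim to give the remaining two functors in the statement. The only step needing any real care --- essentially the sole ``obstacle'' --- is the coherence check above: that $\iota_H^*$ commutes with the coequalizer defining $\sma_U$ \emph{compatibly} with the associativity and symmetry constraints. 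But because $\iota_H^*$ on $\gspectra$ is both a left and a right adjoint and is itself strong symmetric monoidal, every diagram in sight is preserved on the nose, so this is automatic; this is exactly why the assertion could be called ``evident'' above.
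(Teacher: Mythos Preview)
Your proposal is correct and follows exactly the approach the paper takes: the paper simply asserts that ``it is evident that $\iota_H^*$ is symmetric monoidal and so it restricts to categories of monoids and commutative monoids,'' and states the proposition without further proof. Your argument is a careful unpacking of precisely this claim, using the identification $\iota_H^*\sL_U(n)\cong \sL_{\iota_H^* U}(n)$ and the colimit- and $\sma$-preservation properties of $\iota_H^*$ on $\gspectra$ to show that the defining coequalizer for $\sma_U$ is carried to that for $\sma_{\iota_H^* U}$.
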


Next, we turn to the question of the categorical fixed points.  Our
definition is built from the categorical fixed point functor $(-)^H$
on $\gspectra$~\cite[V.3.9]{MM}.

\begin{theorem}
Let $H$ be a subgroup of $G$.  Then the categorical $H$-fixed point
functor on $\gspectra$ induces a lax monoidal categorical
$H$-fixed point functor 
\[
(-)^H \colon \gspectra[G][\bL_U] \to \gspectra[WH][\bL_{U^H}]
\]
specified (in mild abuse of notation) by the formula
\[
X^H = (\sL\sI_U^{U^H} X)^H,
\]
where the $(-)^H$ on the righthand side denotes the categorical
fixed points in $\gspectra$.  The fixed point functor has an op-lax
symmetric monoidal left adjoint 
\[
\epsilon_H^* \colon \gspectra[WH][\bL_{U^H}] \to \gspectra[G][\bL_{U}].
\]
which assigns to a $WH$-spectrum $X$ the $G$-spectrum obtained by
pulling back along the quotient $NH \to WH$, changing (additive)
universe, and inducing up to $G$, and changing multiplicative
universe.  When $H$ is normal, the left adjoint is strong symmetric
monoidal.
\end{theorem}

\begin{proof}
Since
\[
(\sL(U^H, U^H)_+ \sma
X)^H \cong \sL(U^H,U^H)_+ \sma X^H,
\]
the categorical $H$-fixed point functor restricts to a functor
\[
\gspectra[G][\bL_{U^H}] \to \gspectra[WH][\bL_{U^H}].
\]
Analogously,
\[
\epsilon_H^* (\sL(U,U)_+ \sma Y) \cong \sL(U,U)_+ \sma \epsilon_H^* Y,
\]
which implies that $\epsilon_H^*$ restricts to a functor from
$\gspectra[WH][\bL_{U^H}]$ to
$\gspectra[G][\bL_{U}]$.

Next, we consider the interaction of $(-)^H$ with the monoidal
structure.  Since the action of $H$ on
$\sL(U^H,U^H)$ is trivial and $(-)^H$ is lax
monoidal on $\gspectra$~\cite[V.3.8]{MM}, for $X$ and $Y$ in
$\gspectra[G][\bL_U]$ and $H \subseteq G$ we have a natural map
\[
(\sL_{U^H}(2) \times_{\sL_{U^H}(1) \times \sL_{U^H}(1)})_+
\sma X^H \sma Y^H \to
(\sL_{U^H}(2) \times_{\sL_{U^H}(1) \times \sL_{U^H}(1)})_+
\sma (X \sma Y)^H
\]
which lands in the fixed-points
\[
\left((\sL_{U^H}(2) \times_{\sL_{U^H}(1) \times \sL_{U^H}(1)})_+
\sma (X \sma Y)\right)^H,
\]
and so we deduce that $(-)^H$ is a lax symmetric monoidal functor
\[
\gspectra[G][\bL_U] \to \gspectra[WH][\bL_{U^H}].
\]
Finally, when $H$ is normal, the left adjoint is strong symmetric
monoidal since the pullback and additive change of universe are. 
\end{proof}

The situation for the geometric fixed point functor is analogous;
again, we construct $\Phi^H$ on $\gspectra[G][\bL_U]$ by considering
the composite $\Phi^H (\sL\sI_U^{U^H} X)$.

\begin{theorem}
Let $H$ be a subgroup of $G$.  Then there is a lax symmetric
monoidal geometric $H$-fixed point functor
\[
\Phi^H \colon \gspectra[G][\bL_U] \to \gspectra[WH][\bL_{U^H}].
\]
\end{theorem}

\begin{proof}
The compatibility of the geometric fixed points functor with
$\sL_U(1)$ action is clear.  Next, once again the fact that the
actions of $H$ on $\sL_{U^H}(2)$ and $\sL_{U^H}(1)$ are trivial and
the fact that $\Phi^H$ is lax symmetric monoidal on $\gspectra$
implies that it is lax symmetric monoidal on $\gspectra[G][\bL_U]$.
\end{proof}

\subsection{The point-set theory of the norm}\label{sec:pointnorm}

In this subsection, we construct multiplicative norm functors in the
sense of~\cite{HHR} on the categories $\gspectra[G][\bL_U]$,
$\gspectra_U$, and $\aM_{R,U}$ for $R$ a commutative algebra in
$\gspectra_U$.  Fix a subgroup $H \subseteq G$ and let $\widehat{U}$
denote an $H$-universe.  The norm functor
$N_H^G \colon \gspectra[H] \to \gspectra[G]$ is strong symmetric
monoidal and so there is a natural isomorphism
\[
N_H^G \big(\sL_{\widehat{U}}(1)_+ \sma X\big) \cong
F_H(G, \sL_{\widehat{U}}(1))_+ \sma N_H^G X.
\]
This leads to the following definition (compare with
Theorem~\ref{thm:assoc}).

\begin{definition}\label{defn:internalabsnorm}
We define the functor
\[
N_{H,\widehat{U}}^{G,U} \colon \gspectra[H][\bL_{\widehat{U}}] \to \gspectra[G][\bL_U]
\]
on objects $X$ via the coequalizer of the diagram
\[
\xymatrix{
\sL(\Ind_H^G \widehat{U}, U)_+ \sma F_H(G, \sL_{\widehat{U}}(1))_+ \sma N_H^G X
\ar@<1ex>[r] \ar@<-1ex>[r] &
\sL(\Ind_H^G \widehat{U}, U)_+ \sma N_H^G X,
}
\]
where the right action of $U$ on $\sL(\Ind_H^G \widehat{U}, U)$
provides the structure of an $\bL_U$ algebra.
\end{definition}

In the coequalizer, the other map is specified by the action of
$F_{H}(G,\sL_{\widehat{U}}(1))$ on $\sL(\Ind_{H}^{G}\widehat{U},U)$
via the map of monoids
\[
I_{(-)}\colon F_{H}(G,\sL_{\widehat{U}}(1))\to \sL_{\Ind_{H}^{G}\widehat{U}}(1)
\]
given by
\[
f\mapsto I_{f}=\big(g\otimes u\mapsto g\otimes f(g)(u)\big),
\]
the target of which is underlain by the orthogonal sum of isometries
and hence is an isometry.

There is an alternate characterization of $N^{G,U}_{H, \iota_H^* U}$
which can be given using the multiplicative change of universe
functors.  

\begin{lemma}\label{lem:schnorm}
There is a natural isomorphism
\[
N_{H,\iota_H^* U}^{G,U} X \cong
\sL\sI_{\bR^{\infty}}^{U}
\left(N^{G,\bR^{\infty}}_{H, \bR^{\infty}} (\sL\sI_{\iota_H^* U}^{\bR^{\infty}} X)\right)
\]
\end{lemma}

\begin{proof}
To establish the identification, we expand the right-hand side,
writing $\bR$ in place of $\bR^{\infty}$ for concision:
\begin{align*}
&\sL(\bR,U) \times_{\sL(\bR, \bR)}
N^{G, \bR}_{H, \bR} \sL(\iota_H^*
U, \bR) \times_{\sL(\iota_H^* U, \iota_H^* U)} X \\
&\cong \sL(\bR,U) \times_{\sL_{\bR}(1)}
\left(
\sL(\Ind_{H}^{G} \bR, \bR) \times_{F_H(G, \sL_{\bR}(1))}
N_H^G \left( \sL(\iota_H^*
U, \bR) \times_{\sL_{\iota_H^* U}(1)} X
\right)\right) \\
&\cong \sL(\Ind_{H}^{G} \bR, U) \times_{F_H(G, \sL(\bR, \bR))}
N_H^G \sL(\iota_H^*
U, \bR) \times_{\sL(\iota_H^* U, \iota_H^* U)} X \\
&\cong \sL(\Ind_{H}^{G} \bR, U) \times_{F_H(G, \sL(\bR, \bR))}
F_H(G,\sL(\iota_H^*
U, \bR)) \times_{F_H(G,\sL(\iota_H^* U, \iota_H^* U))} N_H^G
X \\
&\cong \sL(\Ind_{H}^{G} \iota_H^* U, U) \times_{F_H(G,\sL(\iota_H^*
U, \iota_H^* U))} N_H^G X \\
& \cong N^{G,U}_{H,\iota_H^*} X.
\end{align*}

In these expansions, note that we use the fact that the norm preserves
reflexive coequalizers~\cite[A.54]{HHR}.
\end{proof}

We now show that norm is strong symmetric monoidal.  This can be done
using Lemma~\ref{lem:schnorm}, but it is convenient in the homotopical
analysis to give a slightly more expansive proof that involves a bit
more work with the linear isometries operad, also given in the
Appendix~\ref{app:lin}.  (In contrast, compare the proof of
Theorem~\ref{thm:adj} below.)

\begin{theorem}\label{thm:normstrong}
The functor $N_{H, \widehat{U}}^{G,U}$ is strong symmetric monoidal.
\end{theorem}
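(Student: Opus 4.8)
The plan is to follow the EKMM strategy of~\cite[II.1--2]{EKMM}: first produce the monoidal comparison on free $\bL_{\widehat{U}}$-algebras, where it reduces entirely to the point-set identifications of spaces of linear isometries already recorded in Corollaries~\ref{cor:corhop} and~\ref{cor:corhop2} and Lemma~\ref{lem:normkriz}, and then propagate it to arbitrary objects by a reflexive coequalizer argument, closing with the coherence bookkeeping.

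For the free case, let $X = \bL_{\widehat{U}}A$ and $Y = \bL_{\widehat{U}}B$. Using the $H$-equivariant $\widehat{U}$-analogue of~\eqref{eq:freeiso} we identify $X \sma_{\widehat{U}} Y$ with $\sL_{\widehat{U}}(2)_+ \sma (A \sma B)$ as an $\bL_{\widehat{U}}$-algebra. Applying $N_{H,\widehat{U}}^{G,U}$ and using that $N_H^G$ is strong symmetric monoidal together with the isomorphism $N_H^G(\sL_{\widehat{U}}(n)_+ \sma Z) \cong F_H(G,\sL_{\widehat{U}}(n))_+ \sma N_H^G Z$ from just before Definition~\ref{defn:internalabsnorm}, the coequalizer of Definition~\ref{defn:internalabsnorm} computes $N_{H,\widehat{U}}^{G,U}(X \sma_{\widehat{U}} Y)$ as
\[
\big(\sL(\Ind_H^G\widehat{U},U) \times_{F_H(G,\sL_{\widehat{U}}(1))} F_H(G,\sL_{\widehat{U}}(2))\big)_+ \sma N_H^G A \sma N_H^G B,
\]
which by Corollary~\ref{cor:corhop2} is $\sL(\Ind_H^G\widehat{U}\oplus\Ind_H^G\widehat{U},U)_+ \sma N_H^G A \sma N_H^G B$. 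On the other side, the same defining coequalizer gives $N_{H,\widehat{U}}^{G,U}(\bL_{\widehat{U}}A) \cong \sL(\Ind_H^G\widehat{U},U)_+ \sma N_H^G A$; feeding two such objects into the coequalizer defining $\sma_U$ and collapsing the two $\sL_U(1)$-actions yields
\[
\big(\sL_U(2) \times_{\sL_U(1)\times\sL_U(1)} (\sL(\Ind_H^G\widehat{U},U)\times\sL(\Ind_H^G\widehat{U},U))\big)_+ \sma N_H^G A \sma N_H^G B,
\]
which by Corollary~\ref{cor:corhop} is the same $G$-spectrum. The composite of these isomorphisms defines the comparison map, natural in $A$ and $B$ by naturality of Corollaries~\ref{cor:corhop} and~\ref{cor:corhop2} and of the $N_H^G$ isomorphisms. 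For the unit, give $\Sg[H]$ its trivial $\bL_{\widehat{U}}$-structure; then $N_H^G\Sg[H] \cong \Sg$, the two maps of the defining coequalizer become the projection and the right action on $\sL(\Ind_H^G\widehat{U},U)$, and the coequalizer is $\big(\sL(\Ind_H^G\widehat{U},U)/F_H(G,\sL_{\widehat{U}}(1))\big)_+ \sma \Sg \cong \Sg$ by Lemma~\ref{lem:normkriz}, with the induced $\bL_U$-structure trivial since that orbit space is a point. Hence $N_{H,\widehat{U}}^{G,U}$ carries the unit of $\sma_{\widehat{U}}$ to the unit of $\sma_U$.

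To pass to general objects, note that every $\bL_{\widehat{U}}$-algebra $X$ is the reflexive coequalizer of $\bL_{\widehat{U}}\bL_{\widehat{U}}X \rightrightarrows \bL_{\widehat{U}}X$. The functor $N_{H,\widehat{U}}^{G,U}$ preserves reflexive coequalizers, being assembled from a colimit in the variable $X$, the norm $N_H^G$ (which preserves reflexive coequalizers, cf.~\cite{HHR}), and smashing with a fixed space; and $\sma_{\widehat{U}}$ and $\sma_U$ preserve colimits in each variable. So the comparison on free algebras extends uniquely to a natural isomorphism on all $X$ and $Y$ by the two-step reflexive coequalizer argument of~\cite[II.2]{EKMM}. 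Compatibility of the comparison with the commutativity, associativity, and unit isomorphisms is then the corresponding bookkeeping: the symmetry of $\sma_U$ comes from the $\Sigma_2$-action on $\sL_U(2)$, the associativity from the isomorphism $\sL_U(i+j)\cong\sL_U(2)\times_{\sL_U(1)\times\sL_U(1)}\sL_U(i)\times\sL_U(j)$ of Theorem~\ref{thm:assoc}, and the isomorphisms of Corollaries~\ref{cor:corhop} and~\ref{cor:corhop2} are compatible with these; each coherence diagram is checked on free algebras, where it becomes an identity of spaces of linear isometries, and then on all objects by the coequalizer argument.

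The genuine geometric input is already isolated in Corollaries~\ref{cor:corhop} and~\ref{cor:corhop2} and Lemma~\ref{lem:normkriz}, so the hard part is not there. Rather, I expect the main obstacle to be twofold: verifying carefully that $N_H^G$ interacts with the relevant reflexive coequalizers well enough for the propagation step to go through in the equivariant setting, and, more seriously, the coherence bookkeeping---checking that the free-algebra comparison is the appropriate $\Sigma_2$-equivariant (respectively associativity-compatible) isomorphism so that all of the monoidal coherence hexagons and squares commute. This is routine but delicate, exactly as in~\cite{EKMM}.
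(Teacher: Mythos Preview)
Your proof is correct and rests on exactly the same three point-set inputs the paper uses (Lemma~\ref{lem:normkriz}, Corollary~\ref{cor:corhop}, Corollary~\ref{cor:corhop2}), but the paper organizes the argument more directly. Rather than first establishing the comparison on free $\bL_{\widehat{U}}$-algebras and then propagating via the canonical resolution $\bL_{\widehat{U}}\bL_{\widehat{U}}X \rightrightarrows \bL_{\widehat{U}}X$, the paper performs a single computation valid for arbitrary $X$ and $Y$: it unwinds the coequalizer defining $N_{H,\widehat{U}}^{G,U}(X \sma_{\widehat{U}} Y)$, uses that $N_H^G$ is strong symmetric monoidal and commutes with reflexive coequalizers to push $N_H^G$ past the coequalizer for $\sma_{\widehat{U}}$, and then applies Corollary~\ref{cor:corhop2}; on the other side it unwinds $(N_{H,\widehat{U}}^{G,U} X)\sma_U(N_{H,\widehat{U}}^{G,U} Y)$ and applies Corollary~\ref{cor:corhop}. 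Both sides land on the common closed form
\[
\sL(\Ind_H^G \widehat{U} \oplus \Ind_H^G \widehat{U}, U) \times_{F_H(G, \sL_{\widehat{U}}(1)) \times F_H(G, \sL_{\widehat{U}}(1))} (N_H^G X \sma N_H^G Y),
\]
and the pentagon follows from naturality. The upshot is that your ``main obstacle''---the propagation step and its attendant coequalizer bookkeeping---simply does not arise: the only use of reflexive coequalizers is the single pass of $N_H^G$ through the defining coequalizer of $\sma_{\widehat{U}}$, which you already invoke. Your route works, but the paper's is shorter and sidesteps the coherence-on-frees-then-extend maneuver entirely.
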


\begin{proof}
By Lemma~\ref{lem:normkriz}, we see that $N_{H,\widehat{U}}^{G,U}$
preserves the unit.  We now compare
$N_{H,\widehat{U}}^{G,U}(X \sma_{\widehat{U}} Y)$ and
$(N_{H,\widehat{U}}^{G,U} X) \sma_U (N_{H,\widehat{U}}^{G,U} Y)$ by
direct computation.  By definition, we have
\begin{multline*}
N_{H,\widehat{U}}^{G,U}(X \sma_{\widehat{U}} Y) = \\
\sL(\Ind_H^G \widehat{U},
U) \times_{F_H(G,\sL_{\widehat{U}}(1))} \left( N_H^G
(\sL_{\widehat{U}}(2) \times_{\sL_{\widehat{U}}(1) \times \sL_{\widehat{U}}(1)}
(X \sma Y) \right).
\end{multline*}
Since the norm functor commutes with reflexive coequalizers and is
symmetric monoidal as a functor on orthogonal $H$-spectra, this is
isomorphic to
\begin{multline*}
\sL(\Ind_H^G \widehat{U},
U) \times_{F_H(G,\sL_{\widehat{U}}(1))} \\
\left( F_H(G,\sL_{\widehat{U}}(2)) \times_{F_H(G,\sL_{\widehat{U}}(1)) \times F_H(G,\sL_{\widehat{U}}(1))}
(N_H^G X \sma N_H^G Y) \right).
\end{multline*}
Applying Corollary~\ref{cor:corhop2}, we rewrite this as
\[
\sL(\Ind_H^G \widehat{U} \oplus \Ind_H^G \widehat{U},
U) \times_{F_H(G, \sL_{\widehat{U}}(1)) \times
F_H(G, \sL_{\widehat{U}}(1))} (N_H^G X \sma N_H^G Y).
\]

On the other hand, writing out $(N_{H,\widehat{U}}^{G,U} X) \sma_U
(N_{H,\widehat{U}}^{G,U} Y)$ we have
\begin{align*}
\sL&_U(2) \times_{\sL_U(1) \times \sL_U(1)} \\
&\left((\sL(\Ind_H^G \widehat{U}, U) \times_{F_H(G,\sL_{\widehat{U}}(1))}
N_H^G X) \sma
(\sL(\Ind_H^G \widehat{U}, U) \times_{F_H(G,\sL_{\widehat{U}}(1))}
N_H^G Y) \right).
\end{align*}
Applying Corollary~\ref{cor:corhop}, we can rewrite this as
\[
\sL(\Ind_H^G \widehat{U} \oplus \Ind_H^G \widehat{U},
U) \times_{F_H(G, \sL_{\widehat{U}}(1)) \times F_H(G, \sL_{\widehat{U}}(1))} (N_H^G X \sma
N_H^G Y).
\]
Finally, the naturality of the isomorphisms above make it clear that
the pentagon identities hold.
\end{proof}

As a consequence of Theorem~\ref{thm:normstrong}, we have the
following corllary.

\begin{corollary}
The functor
$N_{H,\widehat{U}}^{G,U}$ restricts to a functor
\[
N_{H,\widehat{U}}^{G,U} \colon \gspectra[H]_{\widehat{U}} \to \gspectra_U
\]
which we abusively refer to with the same notation.
\end{corollary}

\begin{remark}
Lemma~\ref{lem:schnorm} can now be interpreted as the statement that
the norm can be described as the indexed product on
$\gspectra[H]_{\bR^{\infty}}$; this makes it clear that the norm is
functorial in both the group and the input spectrum.  
\end{remark}

We now turn to establish the following adjunction on commutative ring
objects.  We will be predominantly interested in the case where
$\widehat{U} = \iota_H^* U$, as this is relevant for describing the
equivariant symmetric monoidal structures on the categories
$\aM_{R,U}$.

\begin{theorem}\label{thm:adj}
There are adjoint pairs with left adjoints
\[
N_{H,\iota_H^* U}^{G,U} \colon (\gspectra[H][\bL_{\iota_H^*
U}])[\bP] \to (\gspectra[G][\bL_{U}])[\bP] \quad\textrm{and}\quad
N_{H,\iota_H^* U}^{G,U} \colon (\gspectra[H]_{\iota_H^* U})[\bP] \to \gspectra_U[\bP]
\]
and right adjoints
\[
\iota_H^* \colon (\gspectra[G][\bL_{U}])[\bP] \to (\gspectra[H][\bL_{\iota_H^*
U}])[\bP]
\quad\textrm{and}\quad
\iota_H^* \colon \gspectra_U[\bP] \to \gspectra[H]_{\iota_H^* U}[\bP]
\]
respectively.
\end{theorem}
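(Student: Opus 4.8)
The plan is to deduce the statement from the norm--restriction adjunction on commutative monoids in orthogonal spectra established in~\cite{HHR} (that $N_H^G\colon\Comm(\gspectra[H])\to\Comm(\gspectra[G])$ is left adjoint to $\iota_H^*$). I treat the adjunction between $(\gspectra[H][\bL_{\iota_H^* U}])[\bP]$ and $(\gspectra[G][\bL_U])[\bP]$; the version for $(\gspectra[H]_{\iota_H^* U})[\bP]$ and $\gspectra_U[\bP]$ then follows formally by restricting to unital objects, as $N_{H,\iota_H^* U}^{G,U}$ and $\iota_H^*$ are both compatible with the coreflections $\Sg\sma_U(-)$ creating the unital subcategories (recorded after Theorem~\ref{thm:normstrong} and in Section~\ref{sec:change}).

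First, both functors restrict to the indicated categories of commutative monoids: for $N_{H,\iota_H^* U}^{G,U}$ this is immediate from Theorem~\ref{thm:normstrong}, a strong symmetric monoidal functor carrying $\bP$-algebras to $\bP$-algebras, and for $\iota_H^*$ it is equally immediate because $\iota_H^*$ is itself strong symmetric monoidal --- it commutes with the smash product and tensors of underlying $G$-spectra, with the coequalizer defining $\sma_U$, and it identifies $\sL_U(n)$ with $\sL_{\iota_H^* U}(n)$. Second, $N_{H,\iota_H^* U}^{G,U}$ preserves reflexive coequalizers: it is built from $N_H^G$ --- which preserves reflexive coequalizers, as was already used in the proof of Theorem~\ref{thm:normstrong} --- together with smashing against based $G$-spaces and balanced products over $F_H(G,\sL_{\iota_H^* U}(1))$, all of which preserve reflexive coequalizers. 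Now every commutative monoid in $\gspectra[H][\bL_{\iota_H^* U}]$ --- equivalently, by Theorem~\ref{thm:commninf}, every $\sL_{\iota_H^* U}$-algebra in $\gspectra[H]$ --- is a reflexive coequalizer of free commutative monoids $\bP\bL_{\iota_H^* U}A'$ on underlying $H$-spectra (the standard bar resolution), and $N_{H,\iota_H^* U}^{G,U}$ preserves this coequalizer. Since moreover $\Hom(-,Z)$ and $\Hom(-,\iota_H^* Z)$ carry colimits to limits, it suffices to produce the adjunction isomorphism, naturally in $A'$ and $Z$, when the source object is such a free commutative monoid.

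The heart of the argument is then the identification
\[
N_{H,\iota_H^* U}^{G,U}\bigl(\bP\bL_{\iota_H^* U}A'\bigr)\;\cong\;\bP\bL_U\bigl(G_+\sma_H A'\bigr),
\]
natural in the $H$-spectrum $A'$; this is the analogue, twisted by linear isometries operads, of the computation of the norm of a free commutative ring spectrum underlying~\cite{HHR}. Granting it, the universal properties of the free functors $\bP\bL_{(-)}$ together with the induction--restriction adjunction $G_+\sma_H(-)\dashv\iota_H^*$ on orthogonal spectra give
\[
\Hom\bigl(N_{H,\iota_H^* U}^{G,U}\bP\bL_{\iota_H^* U}A',\,Z\bigr)\;\cong\;\Hom_{\gspectra[H]}(A',\iota_H^* Z)\;\cong\;\Hom\bigl(\bP\bL_{\iota_H^* U}A',\,\iota_H^* Z\bigr),
\]
and the reduction above finishes the proof once naturality in $A'$ and $Z$ is noted. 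To prove the displayed isomorphism I would unwind Definition~\ref{defn:internalabsnorm}: since $N_H^G$ is strong symmetric monoidal on orthogonal $H$-spectra and distributes over the wedges appearing in $\bP$, the spectrum $N_H^G(\bP\bL_{\iota_H^* U}A')$ decomposes into summands built from the spaces $F_H(G,\sL_{\iota_H^* U}(k))$ and from norms of smash powers of $A'$; forming the balanced product against $\sL(\Ind_H^G\iota_H^* U,U)$ over $F_H(G,\sL_{\iota_H^* U}(1))$ and repeatedly applying Lemmas~\ref{lem:normhop} and~\ref{lem:lemhop2}, with their corollaries and Lemmas~\ref{lem:equikriz} and~\ref{lem:normkriz}, collapses these onto $\sL_U(m)_+\sma_{\Sigma_m}(G_+\sma_H A')^{\sma m}$, which is the $m$-th summand of $\bP\bL_U(G_+\sma_H A')$ by the free-algebra formula of Theorem~\ref{thm:assoc} (see~\eqref{eq:freeiso}).

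I expect the main obstacle to be exactly this last identification: matching, at the point-set level, the $G$-equivariant combinatorics of $N_H^G$ applied to a wedge --- the distributive law, with indexed smash products ranging over functions $G/H\to\bN$ --- against the $\Sigma$-orbit decomposition of $\bP\bL_U(G_+\sma_H A')$, all while tracking the twist by the monoid map $I_{(-)}\colon F_H(G,\sL_{\iota_H^* U}(1))\to\sL_{\Ind_H^G\iota_H^* U}(1)$ and the change of universe encoded by $\sL(\Ind_H^G\iota_H^* U,U)$. As in the corresponding arguments of~\cite{EKMM, HHR}, everything becomes a split coequalizer after forgetting the $G$-action (where the isometry spaces are contractible and the $\Sigma$-actions free), so the real content is the equivariance of the comparison maps, precisely the kind of verification already carried out for Theorem~\ref{thm:normstrong}. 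A more hands-on alternative is to construct directly a unit $X\to\iota_H^* N_{H,\iota_H^* U}^{G,U}X$ and a counit $N_{H,\iota_H^* U}^{G,U}\iota_H^* Z\to Z$ --- the latter built from the commutative $\sma_U$-monoid structure of $Z$, using the left $\sL_U(1)$-action on $\sL(\Ind_H^G\iota_H^* U,U)$ to land equivariantly in $Z$ --- and then to verify the triangle identities; this trades the distributive-law bookkeeping for an equally delicate check that the unit and counit are maps of operadic algebras.
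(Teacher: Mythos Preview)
Your strategy is workable, but the paper takes a much shorter route that sidesteps the distributive-law combinatorics you correctly identify as the main obstacle. The paper's key observation is that the multiplicative change-of-universe functors $\sL\sI_U^{U'}$ of Definition~\ref{defn:multichange} are strong symmetric monoidal \emph{equivalences of categories} (Theorem~\ref{thm:multichange}), not merely Quillen equivalences. This lets one reduce the entire adjunction to the case $U=\bR^\infty$, where $\gspectra[G][\bL_{\bR^\infty}]$ is equivalent to the category of $G$-objects in $\mathcal{S}[\bL_{\bR^\infty}]$ and the norm--restriction adjunction is already established in~\cite[A.56]{HHR}. The only remaining work is a point-set identification
\[
N_{H,\iota_H^* U}^{G,U} X \;\cong\; \sL\sI_{\bR^\infty}^{U}\Bigl(N_{H,\bR^\infty}^{G,\bR^\infty}\bigl(\sL\sI_{\iota_H^* U}^{\bR^\infty} X\bigr)\Bigr),
\]
which the paper checks by a short chain of balanced-product manipulations in the spirit of Lemma~\ref{lem:normhop} and Corollary~\ref{cor:corhop2} (using that $N_H^G$ preserves reflexive coequalizers). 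Once that factorization is in hand, the adjunction transports for free along the categorical equivalences.

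Your route, by contrast, attacks the adjunction head-on via the free-algebra identification $N_{H,\iota_H^* U}^{G,U}(\bP\bL_{\iota_H^* U}A')\cong\bP\bL_U(G_+\sma_H A')$. That identification is true --- indeed it drops out immediately from the paper's factorization applied to a free object --- so you are in effect reproving a special case by hand. The advantage of your approach is that it is self-contained and does not lean on Theorem~\ref{thm:multichange}; the cost is exactly the indexed-smash bookkeeping (matching the decomposition of $N_H^G$ of a wedge against the $\Sigma_m$-orbit decomposition of $\bP\bL_U(G_+\sma_H A')$) that you anticipate. The paper trades all of that for a single reduction to the trivial universe, at the price of having set up the change-of-universe machinery earlier in the section.
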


\begin{proof}
First, observe that the conclusion of the theorem follows immediately
when $U = \bR^{\infty}$: since for any $G$ the category
$\gspectra[G][\bL_{\bR^\infty}]$ is equivalent to the category of
$G$-objects in $\mathcal{S}[\bL_{\bR^\infty}]$, we can
apply~\cite[A.56]{HHR}.  We now use the alternate characterization of
the norm from Lemma~\ref{lem:schnorm} and the fact that by
Theorem~\ref{thm:multichange} the change of universe functors are
symmetric monoidal equivalences of categories.
\end{proof}

An immediate corollary of Theorem~\ref{thm:adj} is that commutative
ring objects have an ``internal norm'' map arising from the counit of
the adjunction.

\begin{corollary}
Let $R$ be an object in $(\gspectra[G][\bL_U])[\bP]$ or
$\gspectra_U[\bP]$.  Then there is a natural map
\[
N_{H,\iota_H U}^{G,U} \iota_H^* R \to R.
\]
\end{corollary}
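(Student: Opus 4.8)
The plan is to obtain the map directly as the counit of the adjunctions established in Theorem~\ref{thm:adj}, so essentially no new work is required. First I would recall that Theorem~\ref{thm:adj} produces, in each of the two settings, an adjoint pair of endofunctors on the category of commutative ring objects whose left adjoint is $N_{H,\iota_H^* U}^{G,U}$ and whose right adjoint is the forgetful functor $\iota_H^*$. Every adjunction comes equipped with a counit, here a natural transformation
\[
\varepsilon \colon N_{H,\iota_H^* U}^{G,U} \circ \iota_H^* \Longrightarrow \mathrm{id}
\]
of functors on commutative ring objects; evaluating $\varepsilon$ at $R$ yields the asserted map $N_{H,\iota_H^* U}^{G,U} \iota_H^* R \to R$, and its naturality in $R$ is precisely the naturality of $\varepsilon$.

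Next I would record two small points. Because $\varepsilon$ is a transformation of functors valued in commutative ring objects, the map is automatically a map of commutative $\Ninfty$ ring orthogonal $G$-spectra (respectively of commutative monoids in $\gspectra_U$), not merely of underlying spectra. And in the unital case one should check that the counit for the adjunction on $\gspectra_U[\bP]$ is compatible, via the functors $\Sg \sma_U(-)$, with the counit on $(\gspectra[G][\bL_U])[\bP]$; this is immediate from the construction of the two adjunctions in the proof of Theorem~\ref{thm:adj}.

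There is no genuine obstacle here — all of the substance is already contained in Theorem~\ref{thm:adj}, and extracting the counit is formal. If an explicit formula for $\varepsilon_R$ is wanted, it can be read off by composing the chain of isomorphisms appearing in the proof of Theorem~\ref{thm:adj} with the $\bP$-algebra structure map of $R$; but for the statement as given, existence and naturality follow immediately.
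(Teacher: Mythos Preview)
Your proposal is correct and matches the paper's approach exactly: the paper states this as an immediate corollary of Theorem~\ref{thm:adj}, observing that the map arises as the counit of the adjunction. The additional remarks you make about the map being a map of commutative ring objects and about compatibility in the unital case are fine elaborations, but the core argument is the same one-line extraction of the counit.
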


Using the counit of the adjunction of Theorem~\ref{thm:adj} and the
absolute norm functor described in
Definition~\ref{defn:internalabsnorm}, we can express the $R$-relative
norm for a commutative ring object $R$ in $\gspectra_U$ using
base-change:

\begin{definition}
Let $R$ be an object in $\gspectra_U[\bP]$.
We define the functor
\[
_R N_{H, \iota_H^* U}^{G,U} \colon \aM_{\iota_H^* R, \iota_H^*
U} \to \aM_{R,U}
\]
via the formula
\[
X \mapsto R \sma_{N_{H, \iota_H^* U}^{G,U} R} N_{H, \iota_H^* U}^{G,U} X,
\]
where the coequalizer is over the counit map and the map induced by
the action of $R$ on $X$.
\end{definition}

It is clear from the definition and Theorem~\ref{thm:normstrong} that
the $R$-relative norm is also a strong symmetric monoidal functor.

\section{Homotopical categories of modules over an $N_\infty$ algebra}

In this section, we describe model structures on the categories
$\gspectra[G][\bL_U]$, $\gspectra_U$, and categories of algebras and
modules over an algebra.  The main goal of our efforts is to describe
the derived functors of the norm and forgetful functors as a prelude
to the construction of the equivariant symmetric monoidal structure.

\subsection{The homotopical theory of $\gspectra[G][\bL_U]$ and $\gspectra_U$}

We begin by quickly reviewing some of the less commonly-used
terminology from the theory of model categories that we will employ in
the statements of results below.
Recall from~\cite[5.9]{mmss} that a cofibrantly generated topological
model structure is compactly generated if the domains of the
generating cofibrations and acyclic cofibrations are compact and
satisfy the ``Cofibration Hypothesis''~\cite[5.3]{mmss}.  Let $\aC$ be
a complete and cocomplete topologically enriched category.  An
$h$-cofibration in $\aC$ is a map that is the analogue of a Hurewicz
cofibration; i.e., a map $X \to Y$ such that the induced map $Y \cup_X
(X \otimes I) \to Y \otimes I$ has a retraction.  The Cofibration
Hypothesis for a set of maps $I$ in a model category $\aA$ equipped
with a forgetful functor $\aA \to \aC$ specifies that the following
two conditions are satisfied.

\begin{enumerate}
\item For a coproduct $A \to B$ of maps in $I$, in any pushout
\[
\xymatrix{
A \ar[r] \ar[d] & X \ar[d] \\
B \ar[r] & Y
}
\]
in $\aA$, the cobase change $X \to Y$ is an $h$-cofibration in $\aC$.
\item Given a sequential colimit in $\aA$ along maps that are
$h$-cofibrations in $\aC$, the colimit is $\aA$ is equal to the
colimit in $\aC$.
\end{enumerate}

In order to be able to apply Bousfield localization, it is convenient
to add the requirements that:
\begin{enumerate}
\item The domains of the generating acyclic
cofibrations are small with respect to the generating cofibrations,
\item and the cofibrations are effective monomorphisms.
\end{enumerate}

A compactly generated model category that satisfies these additional
conditions is cellular~\cite[12.1.1]{Hirschhorn}, and so admits left
Bousfield localizations very generally.  In mild abuse of terminology,
we will use the term compactly generated to refer to a compactly
generated model category that is cellular in this paper.

A model category is $G$-topological if it is enriched over $G$-spaces
and satisfies the analogue of Quillen's SM7~\cite[III.1.14]{MM}.
There is an evident $G$-equivariant version of the Cofibration
Hypothesis.  The building block for our work in this section is the
complete model structure on $\gspectra$~\cite[B.63]{HHR}.  (Note that
although the cited reference refers to the positive complete model
structure, the existence of the complete model structure is clear.)
Recall that the complete model structure has generating cofibrations
given by the set of maps
\[
\{G_+ \sma_H S^{-V} \sma S^{n-1}_+ \to G_+ \sma_H S^{-V} \sma D^{n}_+\},
\]
where $V$ varies over the (additive) universe, $n \geq 0$, and
$H \subseteq G$.

\begin{lemma}
The complete model structure is a compactly generated $G$-topological
model structure.
\end{lemma}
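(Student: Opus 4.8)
The plan is to read off each clause of the statement from standard point-set properties of orthogonal $G$-spectra, so that the proof is essentially a sequence of citations. I would first recall the construction: the complete model structure is the stabilization of the level model structure on $\gspectra$ (weak equivalences and fibrations detected levelwise on $H$-fixed points for all $H \subseteq G$ and all $V$), built exactly as the positive complete model structure of~\cite[B.63]{HHR} except that one allows $V = 0$ among the generating cofibrations, so cofibrations need not be ``positive''; the generating acyclic cofibrations are the generating level acyclic cofibrations together with the mapping-cylinder replacements of the maps $\lambda_{V,W}$, smashed with $G_+ \sma_H (-)$ and with sphere boundary inclusions, that detect the $\Omega$-spectrum condition. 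The essential point is that the domains and codomains of all of these maps have the form $G_+ \sma_H S^{-V} \sma A$ with $A$ a finite based CW complex.

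Next I would verify compact generation in the sense of~\cite[5.9]{mmss}. Colimits in $\gspectra$ are formed spacewise and the functors $G_+ \sma_H (-)$, $S^{-V} \sma (-)$ and $(-) \sma A$ preserve them, so $\gspectra(G_+ \sma_H S^{-V} \sma A, -)$ commutes with sequential colimits along $h$-cofibrations; hence the domains of the generating cofibrations and the generating acyclic cofibrations are compact, and the same observation gives smallness of the domains of the generating acyclic cofibrations relative to the generating cofibrations. The Cofibration Hypothesis~\cite[5.3]{mmss} then reduces to two spacewise statements: that a cobase change of a coproduct of generating (acyclic) cofibrations is an $h$-cofibration of underlying orthogonal $G$-spectra---which holds because $S^{n-1}_+ \to D^n_+$ is a Hurewicz cofibration of spaces and $h$-cofibrations are detected and preserved spacewise---and that a sequential colimit along $h$-cofibrations is created in the underlying category. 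That (relative) cell complexes, hence all cofibrations, are effective monomorphisms likewise reduces, as in~\cite[12.1.1]{Hirschhorn} and~\cite{MM}, to the corresponding fact for based $G$-spaces.

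Finally, for the $G$-topological structure I would use that $\gspectra$ is enriched, tensored and cotensored over based $G$-spaces as recalled in Section~2 (tensor $A \sma X$, cotensor $F(A,X)$), and verify the pushout-product axiom~\cite[III.1.14]{MM} on the generating cofibrations and the generating acyclic cofibrations, where the untwisting isomorphism $(G_+ \sma_H Y) \sma K \cong G_+ \sma_H (Y \sma \iota_H^* K)$ for a based $G$-space $K$ reduces it to the pushout-product axiom for based $G$-spaces. The one step I expect to require genuine---though routine---care is the Cofibration Hypothesis, namely tracking that the desuspensions and orbit functors interact correctly with the spacewise colimits; everything else is a direct appeal to~\cite{MM, HHR, mmss, Hirschhorn}.
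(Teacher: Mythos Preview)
Your proposal is correct and follows essentially the same approach as the paper: both arguments are a sequence of citations to standard facts in~\cite{HHR, MM, mmss, Hirschhorn}, reducing compact generation to the compactness of the cells $G_+ \sma_H S^{-V} \sma A$, the Cofibration Hypothesis to the fact that the generating cofibrations are $h$-cofibrations (the paper cites~\cite[B.64]{HHR} directly for this), and effective monomorphisms to the corresponding fact for $S^{n-1}_+ \to D^n_+$. Your version is more explicit than the paper's---in particular you spell out the verification of the $G$-topological pushout-product axiom via the untwisting isomorphism, which the paper's proof does not mention at all (presumably leaving it to the ambient references)---but the underlying strategy is identical.
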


\begin{proof}
The discussion proving~\cite[B.63]{HHR} establishes that the complete model
structure is cofibrantly generated.  Since the generating cofibrations
in the complete model structure are $h$-cofibrations~\cite[B.64]{HHR},
it is straightforward to see that the complete model structure
satisfies the Cofibration Hypothesis.  Finally, the cofibrations are
effective monomorphisms since $S^{n-1}_+ \to D^{n}_+$ is for all
$n \geq 0$, and the compactness criterion for the domain of the
generating acyclics is clearly satisfied.
\end{proof}

\begin{theorem}
The category $\gspectra[G][\bL_U]$ is a compactly generated weak
symmetric monoidal proper $G$-topological model category in which the weak
equivalences and fibrations are detected by the forgetful functor
$U \colon \gspectra[G][\bL_U] \to \gspectra$.
\end{theorem}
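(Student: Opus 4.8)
The strategy is to transfer the complete model structure on $\gspectra$ to $\gspectra[G][\bL_U]$ along the free/forgetful adjunction between these categories, exactly along the lines of~\cite{EKMM} and its equivariant counterpart. The structural observation that makes this work is that $\bL_U = \sL_U(1)_+ \sma (-)$ is the free-module monad for the topological monoid $\sL_U(1)_+$ in $(\gspectra,\sma)$, with unit and multiplication induced by $\id_U$ and by composition of isometries; thus $\gspectra[G][\bL_U]$ is the category of $\sL_U(1)_+$-modules in $\gspectra$. I would declare a morphism of $\bL_U$-algebras to be a weak equivalence, respectively a fibration, precisely when $U$ sends it to one; take $\bL_U I$ and $\bL_U J$ as the generating cofibrations and generating acyclic cofibrations, where $I$ and $J$ generate the complete model structure on $\gspectra$; and define the cofibrations by left lifting against the acyclic fibrations.

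The bulk of the argument is the verification of the hypotheses of the standard lifting theorem for cofibrantly generated model structures. Completeness, cocompleteness, and the creation of all limits and colimits by $U$ are Lemma~\ref{lem:compco}. Because $\bL_U$ is smashing with the based $G$-space $\sL_U(1)_+$, the domains of $\bL_U I$ and $\bL_U J$ are compact, and --- the generators being $\bL_U$ applied to maps that are $h$-cofibrations in $\gspectra$~\cite[B.64]{HHR} --- the (equivariant) Cofibration Hypothesis holds for $\bL_U I$ and $\bL_U J$. The one genuinely substantive point is the acyclicity clause: every relative $\bL_U J$-cell complex must be a weak equivalence. Since $U$ creates colimits, the pushout of a generator $\bL_U j$ (with $j \in J\colon A \to B$) along an arbitrary map $\bL_U A \to X$ has underlying spectrum the pushout in $\gspectra$ of $\sL_U(1)_+ \sma B \leftarrow \sL_U(1)_+ \sma A \to UX$, namely the cobase change of $\sL_U(1)_+ \sma j$. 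As $\sL(U,U)$ is equivariantly contractible and of the $G$-homotopy type of a $G$-CW complex, smashing with $\sL_U(1)_+$ carries $j$ to a map that is both a weak equivalence and an $h$-cofibration in $\gspectra$; such maps are stable under cobase change, and the Cofibration Hypothesis identifies the ensuing transfinite composite with the corresponding colimit in $\gspectra$, where weak equivalences are closed under transfinite composition along $h$-cofibrations. Hence a relative $\bL_U J$-cell complex is a weak equivalence. I expect this acyclicity step --- and, inside it, the point-set fact that smashing with $\sL_U(1)_+$ is homotopically well behaved for the complete model structure --- to be the principal obstacle.

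The remaining adjectives transport formally. The model category is $G$-topological (it satisfies the equivariant analogue of Quillen's SM7) because the tensoring over $G$-spaces is created by $U$ from that of $\gspectra$, which is $G$-topological by the preceding lemma; and it is proper because $U$ creates weak equivalences, fibrations, limits and colimits while the cofibrations of $\gspectra[G][\bL_U]$ have underlying $h$-cofibrations, so the pushout and pullback squares computing left and right properness reduce to the corresponding (known) properties of the complete model structure. Compact generation --- including the cellularity conditions folded into that term --- follows because the domains of the generators are compact, the domains of $\bL_U J$ are small relative to $\bL_U I$, and the cofibrations are effective monomorphisms, being assembled by cobase change and transfinite composition from images under $\bL_U$ of the effective monomorphisms $S^{n-1}_+ \to D^n_+$ of $\gspectra$.

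Finally, for the weak symmetric monoidal model structure I would check the pushout--product axiom on generators. By the isomorphism~\eqref{eq:freeiso}, the pushout--product of $\bL_U i$ and $\bL_U i'$ formed with respect to $\sma_U$ has underlying spectrum obtained by smashing $\sL_U(2)_+$ with the pushout--product of $i$ and $i'$ in $\gspectra$; since $\gspectra$ satisfies the pushout--product axiom and smashing with the $G$-CW-type space $\sL_U(2)$ preserves cofibrations and acyclic cofibrations, this underlying map is an (acyclic) cofibration, and it is an (acyclic) cofibration of $\bL_U$-algebras since it is built by cobase change and composition from maps of the form $\bL_U(\text{cofibration})$, just as in~\cite{EKMM}. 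No unit condition need be imposed in the weak symmetric monoidal setting, the sphere being only a weak unit for $\sma_U$ by Theorem~\ref{thm:unit}; this completes the verification.
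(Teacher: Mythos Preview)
Your overall strategy is correct and is precisely what the paper does: it invokes the equivariant analogue of \cite[5.13]{mmss} to transfer the model structure along the free/forgetful adjunction, and your detailed verification of compactness, the Cofibration Hypothesis, acyclicity of relative $\bL_U J$-cell complexes, properness, and the $G$-topological axiom simply unpacks what that citation encodes. Your pushout--product argument is also essentially right (note that choosing an isomorphism $U^2\cong U$ identifies $\sL_U(2)_+\sma(-)$ with $\bL_U(-)$, so the pushout--product of $\bL_U i$ and $\bL_U i'$ is actually $\bL_U(i\,\square\, i')$, which makes the cofibration claim immediate rather than requiring the somewhat vague ``built by cobase change'' clause).

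There is, however, a genuine gap in your final paragraph. You assert that ``no unit condition need be imposed in the weak symmetric monoidal setting,'' on the grounds that $\Sg$ is only a weak unit. This conflates two different senses of ``weak.'' The weakness of the unit in $\gspectra[G][\bL_U]$ means that the point-set map $\Sg\sma_U X\to X$ is not an isomorphism; it does \emph{not} exempt you from the unit axiom of a monoidal model category, which asks that for a cofibrant replacement $Q\Sg\to\Sg$ and any cofibrant $X$, the map $Q\Sg\sma_U X\to \Sg\sma_U X$ is a weak equivalence. This is still meaningful and still required if you want the monoidal structure to descend correctly to the homotopy category. The paper does check it, citing the equivariant analogue of \cite[XI.3.1]{EKMM}: concretely, one must show that $\bL_U\Sg\sma_U X\to \Sg\sma_U X$ is a weak equivalence for cofibrant $X$, and this is not automatic from what you have written. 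The paper also verifies the monoid axiom (by reduction to generators and the fact that it holds in $\gspectra$); you omit this, though depending on how one reads the theorem statement it may or may not be part of the claim.
\newline
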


\begin{proof}
The monad $\bL_U$ evidently satisfies the hypotheses of (the
equivariant analogue of)~\cite[5.13]{mmss}, and so we can conclude
that there is a compactly generated $G$-topological model structure on
$\gspectra[G][\bL_U]$.  The proof of the unit axiom follows from the
equivariant analogue of from~\cite[XI.3.1]{EKMM}, which holds by the
same proof as in the non-equivariant case.  To check the monoid axiom,
observe that it suffices to check on the generating (acyclic)
cofibrations, and since these are obtained by $\sL_U(1) \sma_+ (-)$
applied to generating (acyclic) cofibrations of $\gspectra$, the
result holds since it does in $\gspectra$.  Finally, it is clear that
$\gspectra[G][\bL_U]$ is proper.
\end{proof}

By construction, the adjoint pair $(\bL_U, U)$ is a Quillen
adjunction.  Since $\sL_U(1)$ is $G$-contractible, we can conclude
that this pair induces a Quillen equivalence between $\gspectra[G][\bL_U]$
and $\gspectra$.

\begin{proposition}
The adjoint pair $(\bL_U,U)$ forms a Quillen equivalence between
$\gspectra[G][\bL_U]$ and $\gspectra$.
\end{proposition}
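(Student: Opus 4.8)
The plan is to verify directly that $(\bL_U, U)$ is a Quillen equivalence, using the fact that it is already known to be a Quillen adjunction (by construction of the model structure, in which weak equivalences and fibrations are detected by $U$) together with the $G$-contractibility of $\sL_U(1) = \aL_U(1)$. The criterion I would use is the standard one: for a Quillen adjunction in which the right adjoint creates weak equivalences, it suffices to show that the derived unit $\eta\colon X \to U(\bL_U X)^{f}$ is a weak equivalence for every cofibrant object $X$ of $\gspectra$, where $(-)^f$ denotes a fibrant replacement in $\gspectra[G][\bL_U]$. Since the right adjoint $U$ detects both weak equivalences and fibrations, an object of $\gspectra[G][\bL_U]$ is fibrant precisely when its underlying $G$-spectrum is fibrant, and $U$ applied to a fibrant replacement is a fibrant replacement in $\gspectra$; hence it is enough to show that the underlying map of the unit $X \to U\bL_U X = \sL_U(1)_+ \sma X$ is already a weak equivalence for cofibrant $X$ (and then compose with a fibrant replacement, which is a weak equivalence on both sides).

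The underlying map $X \to \sL_U(1)_+ \sma X$ is induced by the basepoint inclusion $S^0 \to \sL_U(1)_+$ coming from the identity isometry $\id_U \in \sL_U(1)$. Since $\sL_U(1)$ is $G$-contractible, the map $S^0 \to \sL_U(1)_+$ is a based $G$-homotopy equivalence, so I would like to conclude that smashing with $X$ preserves this equivalence. The one subtlety is that smashing an arbitrary weak equivalence of based $G$-spaces with a fixed $G$-spectrum need not be a weak equivalence without some cofibrancy; however, $\sL_U(1)$ being $G$-contractible means $S^0 \to \sL_U(1)_+$ is a genuine $G$-homotopy equivalence (not merely a weak one), and smashing a $G$-homotopy equivalence of based $G$-spaces with any orthogonal $G$-spectrum yields a $G$-homotopy equivalence of orthogonal $G$-spectra, hence in particular a weak equivalence. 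Thus no cofibrancy hypothesis on $X$ is actually needed for the underlying unit map; cofibrancy only enters through the formal criterion.

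The main obstacle — really the only point requiring a moment's care — is making sure that ``$G$-contractible'' is being used in the strong sense (a $G$-equivariant contraction, i.e.\ $\sL_U(1)^K$ contractible for all $K \subseteq G$, yielding an actual $G$-homotopy equivalence $* \to \sL_U(1)$), which is exactly how the $\Ninfty$-operad axiom supplies it: $\aL_U(0) = \sL(*,U)$ is a point and the operad structure maps give the contraction, and more relevantly $\sL_U(1)$ is $G$-contractible because it contains $\id_U$ and the space of isometries $U\to U$ deformation retracts $G$-equivariantly onto the identity (one can also cite that the linear isometries operad is an $\Ninfty$ operad, so $\sL_U(1)$ is a universal space for a family containing all $H \times \{1\}$, hence is nonequivariantly and fixed-point-wise contractible). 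Granting this, the argument is a one-line application of the criterion above, and I would also remark that the composite Quillen equivalence can be promoted along the constructions of the previous subsections (it descends to $\gspectra_U$ via $\Sg \sma_U (-)$), though the statement as given only asserts the equivalence between $\gspectra[G][\bL_U]$ and $\gspectra$.
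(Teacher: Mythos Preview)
Your proposal is correct and follows essentially the same approach as the paper: the paper simply asserts that because $\sL_U(1)$ is $G$-contractible the adjunction is a Quillen equivalence, and you have filled in the standard details (unit map $X \to \sL_U(1)_+ \sma X$ is a $G$-homotopy equivalence via the based $G$-homotopy equivalence $S^0 \to \sL_U(1)_+$). There is nothing to add.
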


Although $\bL_U$ is not strong symmetric monoidal,
it is close: as a consequence of Lemma~\ref{lem:unisum}, there is a
homeomorphism
\[
\bL_U X \sma_U \bL_U Y \cong \bL_U (X \sma Y)
\]
and more generally homeomorphisms
\[
\bL_U X_1 \sma_U \ldots \sma_U \bL_U X_k \cong \bL_U
(X_1 \sma \ldots \sma X_k).
\]
(The failure of $\bL_U$ to be strong symmetric monoidal is a
consequence of the fact that these homeomorphisms ultimately depend on
choices of homeomorphisms $U^k \to U$.)
On the other hand, the functor $Q(-)
= \Sg \sma_{\Sigma^{\infty}_+ \sL_U(1)} (-)$ is strong symmetric
monoidal~\cite[4.14]{BCS}.  As a consequence, we have the following
comparison result (where here recall that $p^*$ denotes the right
adjoint to $Q$ which gives an object of $\gspectra[G]$ the trivial
$\sL_U(1)$-action).

\begin{proposition}
The adjoint pair $(Q,p^*)$ is a weak symmetric monoidal Quillen
equivalence.
\end{proposition}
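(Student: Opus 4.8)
The plan is to verify in turn that $(Q,p^*)$ is a Quillen adjunction, then that it is a Quillen equivalence, and finally that it is symmetric monoidal in the relevant weak sense; the last point is essentially a restatement of the fact that $Q$ is strong symmetric monoidal.

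That $(Q,p^*)$ is a Quillen adjunction is formal. It suffices to check that the right adjoint $p^*$ preserves fibrations and acyclic fibrations, and since the fibrations and weak equivalences of $\gspectra[G][\bL_U]$ are created by the forgetful functor $U\colon\gspectra[G][\bL_U]\to\gspectra$ while the underlying $G$-spectrum of $p^*Y$ is $Y$ (so $U\circ p^*=\Id$), a map $p^*f$ is an (acyclic) fibration precisely when $f$ is. Thus $p^*$ is right Quillen.

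For the Quillen equivalence I would avoid an explicit cell induction by exploiting a factorization of the identity, using the standard fact that a Quillen adjunction is a Quillen equivalence as soon as the total left derived functor of its left adjoint is an equivalence of homotopy categories. Since $Q$ is left adjoint to $p^*$ and $\bL_U$ is left adjoint to the forgetful functor $U$, the composite $Q\circ\bL_U$ is left adjoint to $U\circ p^*=\Id_{\gspectra}$, whence $Q\circ\bL_U\cong\Id_{\gspectra}$ by uniqueness of adjoints. As $Q$ and $\bL_U$ are both left Quillen, passing to derived functors gives $\mathbf{L}Q\circ\mathbf{L}\bL_U\cong\Id$ on $\Ho(\gspectra)$. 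By the preceding proposition $(\bL_U,U)$ is a Quillen equivalence, so $\mathbf{L}\bL_U$ is an equivalence of homotopy categories; therefore $\mathbf{L}Q$ is a quasi-inverse of it and in particular an equivalence, and $(Q,p^*)$ is a Quillen equivalence. (Alternatively, in the style of EKMM, one checks directly that for cofibrant $X$ the canonical map $UX\to QX$ exhibiting $QX$ as a quotient is a weak equivalence — immediate for free $\bL_U$-algebras because $\sL_U(1)$ is $G$-contractible, then for cell algebras by the usual induction over pushouts of free maps and over filtered colimits along $h$-cofibrations via the Cofibration Hypothesis — which identifies the derived unit of $(Q,p^*)$ with a weak equivalence, $p^*$ reflecting weak equivalences since $U\circ p^*=\Id$.)

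Finally, by \cite[4.14]{BCS} the functor $Q$ is strong symmetric monoidal, so in particular it sends the unit $\Sg$ to $\Sg$ and its comonoidal comparison maps $Q(X\sma_U Y)\to QX\sma QY$ are isomorphisms, hence weak equivalences on all objects; the right adjoint $p^*$ is correspondingly lax symmetric monoidal (indeed strong, using Lemma~\ref{lem:equikriz}). These are precisely the conditions defining a weak symmetric monoidal Quillen adjunction, the word ``weak'' here reflecting only that $\gspectra[G][\bL_U]$ is itself a weak symmetric monoidal model category — the unit map $\Sg\sma_U X\to X$ need not be an isomorphism. Combined with the Quillen equivalence above, this proves the proposition. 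The only substantive point is the assertion that $\mathbf{L}Q$ is an equivalence of homotopy categories; routing it through $Q\circ\bL_U\cong\Id$ and the already-established Quillen equivalence $(\bL_U,U)$ is exactly what sidesteps the more laborious cofibrant-object induction.
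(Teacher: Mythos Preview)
Your proof is correct, and in fact slightly slicker than the paper's on the Quillen-equivalence step. The paper verifies the Quillen equivalence by evaluating $Q$ on the generating cofibrations to conclude that the natural comparison between $QX$ and $UX$ is a weak equivalence for cofibrant $X$ (your parenthetical alternative is exactly this argument). You instead route through the observation $Q\circ\bL_U\cong\Id$, obtained by uniqueness of adjoints from $U\circ p^*=\Id$, and then invoke the already-proved Quillen equivalence $(\bL_U,U)$ together with compositionality of total derived functors for left Quillen functors. This two-out-of-three maneuver is valid and avoids any cell induction; the paper's direct computation is more hands-on but yields the same conclusion with no extra input. For the Quillen-adjunction and monoidal parts the two proofs are essentially identical: both use that $p^*$ creates fibrations and weak equivalences, and both appeal to the strong symmetric monoidality of $Q$ from \cite[4.14]{BCS}, the paper making the unit condition explicit via $Q(\bL_U\Sg)\cong\Sg$.
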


\begin{proof}
Since $p^*$ preserves fibrations and weak equivalences, this is
clearly a Quillen adjunction.  Taking $\bL \Sg$ as a cofibrant
replacement of the unit in $\gspectra[G]$, we compute that
$Q \bL \Sg \cong \Sg$ and so the adjunction is monoidal.  Finally,
evaluation of $Q$ on the generating cofibrations makes it clear that
the natural map $QX \to UX$ is a weak equivalence for cofibrant $X$,
and so the adjunction is a Quillen equivalence.
\end{proof}

In order to retain homotopical control over $\gspectra_U$, we need to
prove the equivariant analogue of~\cite[I.8.4,XI.2.2]{EKMM}, i.e.,
that the canonical unit map $\lambda \colon \Sg \sma_U X \to X$ is
always a weak equivalence.  The proof of the required result follows
the outline of~\cite[I.8.5]{EKMM}, using Theorem~\ref{thm:mandell}.

\begin{theorem}\label{thm:uniteq}
For any $X$ in $\gspectra[G][\bL_U]$, the unit map
\[
\lambda \colon \Sg \sma_U X \to X
\]
is a weak equivalence.
\end{theorem}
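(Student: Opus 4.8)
The plan is to follow the outline of~\cite[I.8.5]{EKMM}: first verify the statement for free $\bL_U$-algebras, where it reduces to Theorem~\ref{thm:mandell}, and then propagate it to an arbitrary $X$ by a bar resolution, exploiting that $\Sg\sma_U(-)$ is a colimit-preserving endofunctor of $\gspectra[G][\bL_U]$. For the free case, take $X=\bL_U N=\sL_U(1)_+\sma N$. Unwinding the coequalizer defining $\sma_U$, and using that $\Sg$ carries the \emph{trivial} $\sL_U(1)$-action while $\Sg\sma N\cong N$, presents $\Sg\sma_U\bL_U N$ as
\[
\sL_U(2)\times_{\sL_U(1)\times\sL_U(1)}\bigl(\sL_U(1)_+\sma N\bigr),
\]
in which the first factor acts trivially and the second acts on $\sL_U(1)$ by left translation. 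The resulting "untwisting" isomorphism
\[
\sL_U(2)\times_{\sL_U(1)\times\sL_U(1)}\sL_U(1)\;\cong\;\sL_U(2)/\sL_U(1)
\]
(quotient by the first factor, balanced product along the residual second factor — exactly the two steps appearing in the factorization of $\gamma_1$ in the proof of Theorem~\ref{thm:mandell}) identifies $\Sg\sma_U\bL_U N$ with $(\sL_U(2)/\sL_U(1))_+\sma N$, and under this identification the unit map $\psi$ becomes $(\theta_2)_+\sma\id_N$, where $\theta_2\colon\sL_U(2)/\sL_U(1)\to\sL_U(1)$ is restriction to the second summand. Since $\gamma_1=\theta_2\circ\theta_1$ with $\theta_1$ a homeomorphism and $\gamma_1$ a $G$-homotopy equivalence by Theorem~\ref{thm:mandell} (the case $k=1$), the map $\theta_2$ is a $G$-homotopy equivalence; hence $\psi\colon\Sg\sma_U\bL_U N\to\bL_U N$ is a $G$-homotopy equivalence of $G$-spectra, in particular a weak equivalence, for every $N$.

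For a general $X$, resolve it by the monadic bar construction $B_\bullet=B_\bullet(\bL_U,\bL_U,X)$, whose simplices $\bL_U^{\,n+1}X$ are free $\bL_U$-algebras. After forgetting $\bL_U$-structure the augmented object $B_\bullet\to X$ admits an extra degeneracy, so $|B_\bullet|\to X$ is a simplicial homotopy equivalence, hence a weak equivalence; applying the colimit-preserving functor $\Sg\sma_U(-)$ carries the extra degeneracy along, so $\Sg\sma_U|B_\bullet|\cong|\Sg\sma_U B_\bullet|\to\Sg\sma_U X$ is also a weak equivalence. Both $B_\bullet$ and $\Sg\sma_U B_\bullet$ are proper simplicial $G$-spectra: their degeneracies are obtained by smashing $G$-spectra with the Hurewicz inclusion $S^0\to\sL_U(1)_+$ of $\id_U$, hence are $h$-cofibrations, so their realizations are homotopy colimits. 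By the free case $\psi\colon\Sg\sma_U B_\bullet\to B_\bullet$ is a levelwise weak equivalence of proper simplicial $G$-spectra, so $|\psi|$ is a weak equivalence. Applying two-out-of-three to the naturality square of $\psi$ along the augmentation $\epsilon\colon|B_\bullet|\to X$ (whose top edge is $|\psi|$, right edge $\epsilon$, and left edge $\Sg\sma_U\epsilon$, all three now known to be weak equivalences) shows $\psi\colon\Sg\sma_U X\to X$ is a weak equivalence.

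The main obstacle is the homotopical bookkeeping in the second step: retaining control of $\Sg\sma_U(-)$ on \emph{non-cofibrant} $\bL_U$-algebras. The needed point-set input — that the monadic bar construction is proper and that $\Sg\sma_U(-)$ preserves the relevant $h$-cofibrations and sequential colimits (the Cofibration Hypothesis) — is exactly the kind of delicate fact about the linear isometries operad that fails for a general $\Ninfty$ operad, and it rests ultimately on the contractibility of $\sL_U(1)$ together with Theorem~\ref{thm:mandell}, just as the corresponding step does in~\cite{EKMM}.
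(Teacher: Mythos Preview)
Your reduction to the free case is correct, and the identification of $\psi$ on $\bL_U N$ with $(\theta_2)_+\sma\id_N$ is exactly right. The gap is in propagating to general $X$. The extra degeneracy for the augmented object $B_\bullet(\bL_U,\bL_U,X)\to X$ lives in $\gspectra[G]$, not in $\gspectra[G][\bL_U]$: its bottom rung is the monadic unit $\eta_X\colon X\to\bL_U X$, which is not an $\bL_U$-algebra map. Since $\Sg\sma_U(-)$ is only functorial on $\bL_U$-algebra maps, it does not carry the extra degeneracy along, and you have no independent argument that $\Sg\sma_U|B_\bullet|\to\Sg\sma_U X$ is a weak equivalence. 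Deducing this from the fact that $|B_\bullet|\to X$ is a weak equivalence would be circular: that $\Sg\sma_U(-)$ preserves weak equivalences is exactly the content of the theorem. Without the left edge of your square, two-out-of-three does not apply.

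The paper's route (following \cite[I.8.5]{EKMM}) avoids the bar construction entirely. For \emph{any} $\bL_U$-algebra $X$ one has $\Sg\sma_U X\cong(\sL_U(2)/\sL_U(1))_+\sma_{\sL_U(1)_+}X$ --- your Step~1 identification works verbatim, since quotienting by the first $\sL_U(1)$-factor uses only the trivial action on $\Sg$, not freeness of $X$ --- and under this identification $\lambda$ is induced by $\theta_2$. The key input from the proof of Theorem~\ref{thm:mandell} is not merely that $\theta_2$ is a $G$-homotopy equivalence, but that $\sK(2)\hookrightarrow\sL_U(2)$ is a $G$-homotopy equivalence \emph{of right $\sL_U(1)\times\sL_U(1)$-spaces}. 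Passing to quotients, $\theta_2$ is then a $G$-homotopy equivalence of right $\sL_U(1)$-spaces, so its homotopy inverse and the homotopies are $\sL_U(1)$-equivariant and therefore survive the balanced product with $X$, giving the result for all $X$ at once.
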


Theorem~\ref{thm:uniteq} now allows us to prove the following theorem.

\begin{theorem}
The category $\gspectra_U$ is a compactly generated symmetric monoidal
proper $G$-topological model category in which the weak equivalences
are detected by the forgetful functor and the fibrations are detected
by the functor $F_U(\Sg,-)$.
\end{theorem}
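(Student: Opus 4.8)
The plan is to right-transfer the model structure on $\gspectra[G][\bL_U]$ to $\gspectra_U$ along the adjunction $\bigl(\Sg\sma_U(-),\,F_{\sL_U}(\Sg,-)\bigr)$, following the construction of the EKMM category of $S$-modules in~\cite[VII.4]{EKMM}. Concretely, a map $f$ in $\gspectra_U$ is declared a weak equivalence (resp.\ fibration) exactly when $F_{\sL_U}(\Sg,f)$ is one in $\gspectra[G][\bL_U]$, i.e.\ when $UF_{\sL_U}(\Sg,f)$ is a weak equivalence (resp.\ fibration) in $\gspectra$, and the generating (acyclic) cofibrations are the images under $\Sg\sma_U(-)$ of those of $\gspectra[G][\bL_U]$ --- equivalently the images under the composite free functor $\mathbb{F}=\Sg\sma_U\bL_U(-)\colon\gspectra\to\gspectra_U$ of the generating (acyclic) cofibrations of $\gspectra$.

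The backbone of the argument is the observation that, up to natural weak equivalence, the right adjoint $\mathbb{U}=U\circ F_{\sL_U}(\Sg,-)$ agrees with the plain forgetful functor $\gspectra_U\to\gspectra$, and that $U\mathbb{F}$ agrees with the identity functor on $\gspectra$. Both follow from Theorem~\ref{thm:uniteq}: for $Y$ in $\gspectra_U$ the unit isomorphism $F_U(\Sg,Y)=\Sg\sma_U F_{\sL_U}(\Sg,Y)\cong Y$ of the closed structure, combined with the weak equivalence $\lambda\colon\Sg\sma_U Z\to Z$, gives $UF_{\sL_U}(\Sg,Y)\htp UY$; and for $A$ in $\gspectra$, repeated use of $\lambda$ identifies $U\mathbb{F}(A)$ with $\sL_U(1)_+\sma A$, which maps to $A$ by a $G$-homotopy equivalence since $\sL_U(1)$ is $G$-contractible. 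In particular the transferred weak equivalences are exactly the maps detected by the forgetful functor, and the transferred fibrations are those detected by $F_U(\Sg,-)$, as the statement requires.

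To see that the transferred structure exists, I would check the hypotheses of the equivariant form of the standard lifting criterion~\cite[5.13]{mmss}. The domains of $\mathbb{F}(I)$ and $\mathbb{F}(J)$ are compact, being images of the compact domains of $\gspectra$ under colimit- and compactness-preserving functors; the cells are $h$-cofibrations after forgetting to $\gspectra$, so, since colimits in $\gspectra_U$ are created in $\gspectra$, the $G$-equivariant Cofibration Hypothesis holds. The crucial point is the acyclicity condition: every relative $\mathbb{F}(J)$-cell complex must be a weak equivalence, which by the previous paragraph reduces to its being carried to a weak equivalence by the forgetful functor to $\gspectra$. Since $U\mathbb{F}(j)$ is naturally weakly equivalent to $j$, hence an acyclic $h$-cofibration in $\gspectra$, for each generating acyclic cofibration $j$, and since acyclic $h$-cofibrations in a $G$-topological model category are closed under cobase change and transfinite composition --- these colimits being formed in $\gspectra$ --- this follows. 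The remaining cellularity conditions (effective monomorphisms, smallness of the acyclic domains) are inherited from $\gspectra$ as in the earlier lemmas.

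It then remains to record the extra structure. Properness is essentially formal: right properness descends from $\gspectra[G][\bL_U]$ because $F_{\sL_U}(\Sg,-)$ is a right adjoint that preserves and reflects fibrations and weak equivalences by construction, while left properness follows from that of $\gspectra$ since cofibrations of $\gspectra_U$ forget to $h$-cofibrations and pushouts are created in $\gspectra$. The $G$-topological axiom (SM7) and the symmetric monoidal axioms --- unit, pushout-product, and monoid --- reduce to the corresponding statements in $\gspectra$ via the isomorphisms $\bL_U X_1\sma_U\cdots\sma_U\bL_U X_k\cong\bL_U(X_1\sma\cdots\sma X_k)$ (compare~\eqref{eq:freeiso}) together with the fact that tensors, cotensors, and $\sma_U$ on $\gspectra_U$ are computed through $\gspectra[G][\bL_U]$. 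I expect the main obstacle to be exactly the acyclicity condition --- controlling the homotopy type of iterated pushouts and sequential colimits of the cells $\Sg\sma_U\bL_U(j)$ after forgetting to $\gspectra$ --- which is where Theorem~\ref{thm:uniteq}, and behind it Theorem~\ref{thm:mandell}, does the essential work, together with the bookkeeping needed to run the equivariant Cofibration Hypothesis.
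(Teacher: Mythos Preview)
Your proposal is correct and follows essentially the same outline as the paper's (very terse) proof: both transfer the model structure from $\gspectra[G][\bL_U]$ to $\gspectra_U$ via the lifting criterion of~\cite[5.13]{mmss} applied to the adjunction with left adjoint $\Sg\sma_U(-)$, with Theorem~\ref{thm:uniteq} supplying the key homotopical input. The only difference is expository: the paper notes that since $\Sg\sma_U(-)$ is not literally a monad one may, as in~\cite[VI.4.6]{EKMM}, pass through the mirror-image category of counital objects to fit the hypotheses of~\cite[5.13]{mmss} verbatim, whereas you run the transfer argument directly---these amount to the same thing.
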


\begin{proof}
Although $\Sg \sma_U (-)$ is not a monad, the argument
for~\cite[5.13]{mmss} again applies.  As in the corresponding proof
in~\cite[VI.4.6]{EKMM}, consideration of the category of counital
objects in $\gspectra[G][\bL_U]$ is illuminating.
\end{proof}

\begin{remark}
By adjunction, a map $\Sg \sma_U \bL_U S^n \to X$ in $\gspectra_U$ is
the same as a map $\Sg^n \to X$ in $\gspectra$.  As a consequence, the
``internal'' homotopy groups in $\gspectra_U$ determined by the free
objects on spheres coincide with the homotopy groups on the underlying
orthogonal $G$-spectrum.
\end{remark}

The functor $\Sg \sma_U
(-) \colon \gspectra[G][\bL_U] \to \gspectra_U$ is a Quillen left
adjoint and is a symmetric monoidal functor.  In fact, the following
proposition is straightforward to verify.

\begin{proposition}\label{prop:smodquil}
The adjoint pair $(\Sg \sma_U (-), F_U(\Sg,-))$ forms a weak symmetric
monoidal Quillen equivalence between $\gspectra[G][\bL_U]$ and
$\gspectra_U$.
\end{proposition}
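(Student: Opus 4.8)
The plan is to follow the pattern of the two preceding comparison propositions, with Theorem~\ref{thm:uniteq} carrying essentially all of the homotopical content. First I would record that $(\Sg\sma_U(-),F_U(\Sg,-))$ is a Quillen adjunction: the functor $\Sg\sma_U(-)$ has already been observed to be a left Quillen functor, and equivalently $F_U(\Sg,-)$ is right Quillen directly from the construction of the model structure on $\gspectra_U$, whose weak equivalences and fibrations are by definition detected along it.

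For the weak symmetric monoidal condition, recall that $\Sg\sma_U(-)$ is symmetric monoidal --- indeed the product $\sma_U$ on $\gspectra_U$ is defined by applying $\Sg\sma_U(-)$ to the coequalizer $\sma_U$ --- so the oplax comparison maps $\Sg\sma_U(X\sma_U Y)\to(\Sg\sma_U X)\sma_U(\Sg\sma_U Y)$ are isomorphisms, and the only remaining point is the unit axiom. If $c\colon Q\Sg\xrightarrow{\sim}\Sg$ is a cofibrant replacement of the unit in $\gspectra[G][\bL_U]$, one must check that the composite $\Sg\sma_U Q\Sg\xrightarrow{\Sg\sma_U c}\Sg\sma_U\Sg\xrightarrow{\cong}\Sg$, the last arrow being the isomorphism of Corollary~\ref{corollary:unitisunit}, is a weak equivalence. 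This is immediate from Theorem~\ref{thm:uniteq}: naturality of the unit map $\lambda$ gives $\lambda_{\Sg}\circ(\Sg\sma_U c)=c\circ\lambda_{Q\Sg}$, and the right-hand composite is a weak equivalence since $\lambda_{Q\Sg}$ is (Theorem~\ref{thm:uniteq}) and $c$ is; under the identification of $\lambda_{\Sg}$ with the isomorphism of Corollary~\ref{corollary:unitisunit} this is exactly the map in question.

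For the Quillen equivalence, the key observation is that $\Sg\sma_U(-)$ preserves and reflects all weak equivalences: for $f\colon X\to X'$ in $\gspectra[G][\bL_U]$, naturality of $\lambda$ produces a square comparing $\Sg\sma_U f$ to $f$ through the weak equivalences $\lambda_X,\lambda_{X'}$ of Theorem~\ref{thm:uniteq}, so by two-out-of-three $\Sg\sma_U f$ is a weak equivalence if and only if $f$ is. Consequently $\mathbf{L}(\Sg\sma_U(-))$ is simply the induced functor on homotopy categories, and I would exhibit an inverse for it by hand. The inclusion $J\colon\gspectra_U\hookrightarrow\gspectra[G][\bL_U]$ of the full subcategory of unital objects preserves and reflects weak equivalences, hence descends to homotopy categories; the natural maps $\lambda\colon J(\Sg\sma_U X)\to X$ (a weak equivalence for every $X$ by Theorem~\ref{thm:uniteq}) and $\Sg\sma_U(JY)\to Y$ (an isomorphism for $Y\in\gspectra_U$ by the very definition of $\gspectra_U$) then exhibit $\mathrm{Ho}(\Sg\sma_U(-))$ and $\mathrm{Ho}(J)$ as inverse equivalences of homotopy categories. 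Since a Quillen pair is a Quillen equivalence as soon as its total left derived functor is an equivalence, this completes the argument.

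In short, all of the genuine difficulty has already been discharged by Theorem~\ref{thm:uniteq} (the equivariant analogue of the EKMM unit equivalence~\cite{EKMM}, proved above via Theorem~\ref{thm:mandell}); what remains is formal bookkeeping, the only points requiring a little care being the identification of the two composites of $\Sg\sma_U(-)$ with $J$ as, up to natural weak equivalence, the identity functors, and the identification of $\lambda\colon\Sg\sma_U\Sg\cong\Sg$ as the structure isomorphism appearing in the unit axiom for the weak monoidal Quillen pair.
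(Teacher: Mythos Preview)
Your proposal is correct and matches the paper's implicit approach: the paper does not actually write out a proof, merely recording beforehand that $\Sg\sma_U(-)$ is a symmetric monoidal Quillen left adjoint and declaring the proposition ``straightforward to verify,'' with Theorem~\ref{thm:uniteq} understood to be the essential input. Your write-up makes that input explicit and supplies the routine bookkeeping (unit axiom via $\lambda$, Quillen equivalence via the two-out-of-three argument with $\lambda$), which is exactly what the paper is gesturing at; one small inaccuracy is that the weak equivalences in $\gspectra_U$ are detected by the forgetful functor rather than by $F_U(\Sg,-)$, but this does not affect your argument.
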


As a consequence of these results, we have the following comparison result.

\begin{lemma}
For cofibrant $X,Y \in \gspectra_U$ there is a natural equivalence
\[
X \sma_U Y \to X \sma Y
\]
and more generally for cofibrant $\{X_1, X_2, \ldots
X_n\} \in \gspectra_U$ there are natural equivalences
\[
X_1 \sma_U X_2 \sma_U \ldots \sma_U X_n \to X_1 \sma
X_2 \sma \ldots \sma X_n.
\]
\end{lemma}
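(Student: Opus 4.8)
The plan is to follow the strategy of~\cite[Chapter~I]{EKMM}: build the comparison map explicitly, reduce by a cell induction to free $\bL_U$-algebras, and settle the base case using Theorem~\ref{thm:mandell}. First I would construct the natural transformation. Restricting a non-equivariant linear isometry $U^{k}\to U$ to each of its $k$ coordinate summands gives a map $r_{k}\colon\sL_U(k)\to\sL_U(1)^{\times k}$; it is $G$-equivariant (conjugation preserves the summands) and compatible with the right $\sL_U(1)^{\times k}$-actions, being precomposition by block sum on the source and right translation on the target. Combining $r_k$ with the canonical isomorphism of Theorem~\ref{thm:assoc} and the identifications $\sL_U(1)\times_{\sL_U(1)}Z\cong Z$ yields a natural map
\begin{multline*}
X_1\sma_U\cdots\sma_U X_k=\sL_U(k)\times_{\sL_U(1)^{\times k}}(X_1\sma\cdots\sma X_k)\\
\too\sL_U(1)^{\times k}\times_{\sL_U(1)^{\times k}}(X_1\sma\cdots\sma X_k)\cong X_1\sma\cdots\sma X_k,
\end{multline*}
compatible with the operad structure maps so that the $k$-fold transformation is assembled from the binary one. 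It remains to prove this is a weak equivalence when the $X_i$ are cofibrant in $\gspectra_U$.

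Second I would run a cell induction, one variable at a time. Because $\gspectra_U$ is a symmetric monoidal model category, $\sma_U$ is a left Quillen bifunctor, and iterating, the $k$-fold product is left Quillen in each variable separately; fixing cofibrant objects in all but one slot, the resulting endofunctor of $\gspectra_U$ preserves cofibrations, acyclic cofibrations, $h$-cofibrations (this uses the Cofibration Hypothesis packaged into ``compactly generated''), and sequential colimits along them. On underlying orthogonal $G$-spectra, $(-)\sma Y$ is a left adjoint that preserves $h$-cofibrations and all colimits, hence carries pushouts along $h$-cofibrations and sequential colimits of such to homotopy pushouts and homotopy colimits. A cofibrant object of $\gspectra_U$ is a retract of a cell object built by cobase changes along the generating cofibrations $\Sg\sma_U\bL_U(G_+\sma_H S^{-V}\sma S^{n-1}_+)\to\Sg\sma_U\bL_U(G_+\sma_H S^{-V}\sma D^{n}_+)$ and sequential colimits, and each such cobase change is an underlying $h$-cofibration; so, reducing one slot at a time, it suffices to treat $X_i=\Sg\sma_U\bL_U A_i$ with $A_i$ a cell orthogonal $G$-spectrum.

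Finally, the base case. Writing $A=A_1\sma\cdots\sma A_k$, repeated use of~\eqref{eq:freeiso} together with Corollary~\ref{corollary:unitisunit} identifies $X_1\sma_U\cdots\sma_U X_k$ with $\Sg\sma_U\bL_U A$, which in turn is $\hat{\sL}_U(1)_+\sma A$ for $\hat{\sL}_U(1)=\sL_U(2)\times_{\sL_U(1)\times\sL_U(1)}(\sL_U(0)\times\sL_U(1))$, since $\sL_U(0)$ is a point and the first copy of $\sL_U(1)$ acts trivially on $\Sg$; the underlying smash $X_1\sma\cdots\sma X_k$ is $(\hat{\sL}_U(1)^{\times k})_+\sma A$, and the comparison map is $A$ smashed with a based $G$-map $\hat{\sL}_U(1)_+\to(\hat{\sL}_U(1)^{\times k})_+$ lying over $S^{0}$. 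By Theorem~\ref{thm:mandell} the structure map $\gamma_1\colon\hat{\sL}_U(1)\to\sL_U(1)$ is a $G$-homotopy equivalence, and $\sL_U(1)$ is $G$-contractible; hence $\hat{\sL}_U(1)$ is $G$-contractible, the collapse maps $(\hat{\sL}_U(1)^{\times j})_+\to S^0$ are based $G$-homotopy equivalences, and so by the two-out-of-three property the comparison map is a $G$-homotopy equivalence after smashing with $A$. The main obstacle is precisely this last step: doing the bookkeeping with the coequalizer defining $\sma_U$ to identify $\Sg\sma_U\bL_U(-)$ with $\hat{\sL}_U(1)_+\sma(-)$ and to match $r_k$ with the map built from $\gamma_1$; this is the equivariant incarnation of the point-set work behind~\cite[I.8.5]{EKMM}, and everything else is formal given the model structures of the previous two sections. (Alternatively, Proposition~\ref{prop:smodquil} together with the homeomorphism $\bL_U A\sma_U\bL_U B\cong\bL_U(A\sma B)$ exhibits $\Sg\sma_U\bL_U(-)$ as a weak monoidal Quillen equivalence, which shows abstractly that $X\sma_U Y$ and $X\sma Y$ agree in the homotopy category; but identifying that equivalence with $r_2$ still requires the computation above.)
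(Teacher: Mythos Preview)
The paper does not give a proof of this lemma at all; it is stated as an immediate ``consequence of these results'', meaning the monoidal Quillen equivalences recorded just before (the adjunctions $(\bL_U,U)$, $(Q,p^*)$, and $(\Sg\sma_U(-),F_U(\Sg,-))$). Your parenthetical alternative at the end is therefore the closest match to what the paper has in mind, while your main argument is a correct, explicit EKMM--style working-out that the paper leaves to the reader.

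One small point of bookkeeping: your identification of $X_1\sma_U\cdots\sma_U X_k$ with $\Sg\sma_U\bL_U A=\hat{\sL}_U(1)_+\sma A$ tacitly uses a non-canonical homeomorphism $\sL_U(k)\cong\sL_U(1)$ (the paper flags this issue just before the $(Q,p^*)$ proposition). It is cleaner to observe directly that
\[
X_1\sma_U\cdots\sma_U X_k\;\cong\;\Sg\sma_U\bigl(\sL_U(k)_+\sma A\bigr)\;\cong\;\hat{\sL}_U(k)_+\sma A,
\]
so that the base case becomes a $G$-map $\hat{\sL}_U(k)\to\hat{\sL}_U(1)^{\times k}$ between $G$-contractible spaces, invoking Theorem~\ref{thm:mandell} for the given $k$ rather than only for $k=1$. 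This avoids the non-canonical identification and is exactly why the paper proves Theorem~\ref{thm:mandell} for all $k$. With that adjustment your argument is complete; everything else (the construction of the comparison map via restriction $r_k$, the one-variable-at-a-time cell induction using the Cofibration Hypothesis, and the two-out-of-three step at the end) is correct as written.
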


We now turn to the study of the multiplicative structure on
$\gspectra_U$.  The following result explains the equivariant
homotopical content of the operadic smash product $\sma_U$.

\begin{theorem}\label{thm:extendedpower}
Let $X$ be a cofibrant object of $\gspectra_U$.  Then there is a
natural weak equivalences of $G \times \Sigma_n$ spectra
\[
(E_{\aF_U}\Sigma_i)_+ \sma X^{\sma i} \htp X^{\sma_U i},
\]
and a natural weak equivalence of $G$-spectra
\[
(E_{\aF_U}\Sigma_i)_+ \sma_{\Sigma_i} X^{\sma i} \htp X^{\sma_U i}
/ \Sigma_i,
\]
where here $\aF_U$ denotes the family of $G \times \Sigma_n$ specified
by $U$.
\end{theorem}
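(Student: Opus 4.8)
The plan is to reduce the statement to a computation with the spaces $\sL_U(n)$ and then invoke the key input Theorem~\ref{thm:mandell}.  First I would recall from Theorem~\ref{thm:assoc} that for a free object $\bL_U A$ one has $(\bL_U A)^{\sma_U i} \cong \sL_U(i)_+ \sma A^{\sma i}$, so that for a general cofibrant $X \in \gspectra_U$, built cellularly from free objects on $G_+ \sma_H S^{-V} \sma S^{n}_+$, the iterated product $X^{\sma_U i}$ is filtered with associated pieces of the form $\sL_U(i)_+ \sma_{(\text{stuff})} X^{\sma i}$; the point-set associativity isomorphism makes this precise.  More honestly, since $\sma_U$ preserves the relevant pushouts and sequential colimits (by the Cofibration Hypothesis) and $X$ is cofibrant, it suffices to treat $X = \bL_U A$ for $A$ a cofibrant orthogonal $G$-spectrum, where $X^{\sma_U i} \cong \sL_U(i)_+ \sma A^{\sma i}$ as $G \times \Sigma_i$-spectra, and then pass along the equivalence $A \htp X$ under the forgetful functor.

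Next, the heart of the matter: I must identify the $G \times \Sigma_i$-space $\sL_U(i)$ with a universal space $E_{\aF_U}\Sigma_i$ for the family $\aF_U$ of subgroups of $G \times \Sigma_i$ prescribed by $U$.  This is essentially built into the definition of the $\Ninfty$ (linear isometries) operad: $\sL_U(i)$ has a free $\Sigma_i$-action and is a universal space for $\aF_U$ by~\cite[\S 3]{BlumbergHill}; alternatively, $\sL_U(i) = \sL(U^i, U)$ is $G \times \Sigma_i$-equivariantly contractible when restricted to any $\Gamma \in \aF_U$ (since then $\bR\{\Sigma_i/?\} \otimes U$ embeds $\Gamma$-equivariantly in $U$, cf.\ Lemma~\ref{lem:unisum}) and has empty $\Gamma$-fixed points otherwise.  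Given this, $(E_{\aF_U}\Sigma_i)_+ \sma A^{\sma i} \htp \sL_U(i)_+ \sma A^{\sma i} \cong (\bL_U A)^{\sma_U i}$, and since $A \htp \bL_U A = X$ under the forgetful functor and smashing with a $G\times\Sigma_i$-CW space preserves the relevant equivalences, we get $(E_{\aF_U}\Sigma_i)_+ \sma X^{\sma i} \htp X^{\sma_U i}$.  The subtlety is handling a general cofibrant $X$ rather than just a free one: one uses that both sides take cofibrations in $\gspectra_U$ to $h$-cofibrations and commute with the filtered colimits of the cellular construction, and that the comparison map is natural, so the equivalence propagates up the cellular filtration by the gluing lemma.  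This is precisely where Theorem~\ref{thm:mandell} enters, since controlling the behaviour of $\sma_U$ on cell attachments requires knowing that the ``degeneracy'' maps $\hat\sL_U(k) \to \sL_U(k)$ are $G \times \Sigma_k$-equivalences.

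For the second, unprojectivized statement, I would simply pass to $\Sigma_i$-orbits: since $\Sigma_i$ acts freely on both $\sL_U(i)$ (hence on $E_{\aF_U}\Sigma_i$) and, after smashing, on the relevant spectra, the orbit functor $(-)/\Sigma_i$ is homotopical on the objects in question (it agrees with the homotopy orbits $(-)_{h\Sigma_i}$ because the $\Sigma_i$-action is free), so the equivalence $(E_{\aF_U}\Sigma_i)_+ \sma X^{\sma i} \htp X^{\sma_U i}$ descends to $(E_{\aF_U}\Sigma_i)_+ \sma_{\Sigma_i} X^{\sma i} \htp X^{\sma_U i}/\Sigma_i$.

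The main obstacle I anticipate is not the identification of $\sL_U(i)$ with $E_{\aF_U}\Sigma_i$ — that is close to definitional — but rather the bookkeeping needed to promote the equivalence from free cofibrant objects to arbitrary cofibrant $X$: one must check that $X \mapsto X^{\sma_U i}$ and $X \mapsto (E_{\aF_U}\Sigma_i)_+ \sma X^{\sma i}$ both send the generating cofibrations of $\gspectra_U$ to $h$-cofibrations and commute with the cellular colimits, so that an induction over the skeleta (using the gluing lemma for the pushout squares and the fact that sequential colimits along $h$-cofibrations are homotopy colimits) applies.  This is the same pattern as in~\cite[III, VII]{EKMM} for the extended powers of $S$-modules, and the equivariant input that makes it run is exactly Theorem~\ref{thm:mandell} together with the freeness of the $\Sigma_i$-actions on the $\sL_U(i)$.
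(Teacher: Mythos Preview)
Your approach is essentially the paper's: handle the free case via Theorem~\ref{thm:assoc} (giving $(\bL_U A)^{\sma_U i}\cong \sL_U(i)_+\sma A^{\sma i}$) together with the identification $\sL_U(i)\simeq E_{\aF_U}\Sigma_i$, and then promote to arbitrary cofibrant $X$ by cellular induction.  The paper compresses the inductive step into a citation of the filtration argument of~\cite[B.117]{HHR}; your description of that step (both sides send generating cofibrations to $h$-cofibrations, commute with the relevant colimits, and the gluing lemma propagates the equivalence) is exactly what that filtration argument amounts to.

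One small correction: Theorem~\ref{thm:mandell} is not the input for the cellular induction here.  That theorem controls the degeneracy maps $\hat\sL_U(k)\to\sL_U(k)$ and is what makes the \emph{unit} map $\Sg\sma_U X\to X$ a weak equivalence (Theorem~\ref{thm:uniteq}); it plays no role in filtering $(X\cup_A B)^{\sma_U i}$, whose associated graded pieces are governed purely by the associativity isomorphisms of Theorem~\ref{thm:assoc} and the block-sum structure on the $\sL_U(j)$.  Your actual description of the induction is fine; just replace the reference to Theorem~\ref{thm:mandell} with the standard smash-power filtration (as in~\cite[B.117]{HHR} or the analogous EKMM argument you mention).
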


\begin{proof}
When $X$ is free as an object of $\gspectra_U$ (i.e., $X =
S \sma_U \bL_U Y$), then the result follows immediately from
Theorem~\ref{thm:assoc} and the fact that $\sL(U^i, U) \htp
E_{\aF_U} \Sigma_i$.  The general result now follows by inductively
reducing to the free case using the filtration argument
of~\cite[B.117]{HHR}.
\end{proof}

In particular, Theorem~\ref{thm:extendedpower} makes clear the way in
which $\gspectra_U$ depends on the choice of $U$.  Specifically, the
$G \times \Sigma_n$-equivariant homotopy type of the $n$-fold $\sma_U$
power of $X$ is controlled by $U$, and is precisely the universal
space for the family associated to $\sL_U(n)$.

\begin{corollary}\label{cor:preserves}
Let $X \to X'$ be an acyclic cofibration in $\gspectra_U$.  Then the
induced maps
\[
\bT X \to \bT X' \qquad\textrm{and}\qquad \bP X \to \bP X'
\]
are weak equivalences.
\end{corollary}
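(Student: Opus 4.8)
The plan is to reduce the statement to Theorem~\ref{thm:extendedpower}, handling the two monads one wedge summand at a time. Writing $\bT X=\bigvee_{k\ge 0}X^{\sma_U k}$ and $\bP X=\bigvee_{k\ge 0}X^{\sma_U k}/\Sigma_k$, and using that weak equivalences in $\gspectra_U$ are detected by the forgetful functor to $\gspectra$ --- where an arbitrary wedge of weak equivalences of orthogonal $G$-spectra is again a weak equivalence, since a wedge is the filtered colimit of its finite sub-wedges, a finite wedge agrees up to weak equivalence with the corresponding finite product, and filtered colimits along $h$-cofibrations preserve weak equivalences --- it suffices to show that each map $X^{\sma_U k}\to X'^{\sma_U k}$ and each map $X^{\sma_U k}/\Sigma_k\to X'^{\sma_U k}/\Sigma_k$ is a weak equivalence. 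Since the generating acyclic cofibrations of $\gspectra_U$ have cofibrant domains, I may as well assume $X$ (hence $X'$) cofibrant, so it is enough to treat an acyclic cofibration between cofibrant objects; if needed the general case follows by factoring and two-out-of-three.

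For the maps $X^{\sma_U k}\to X'^{\sma_U k}$ I would argue directly from the symmetric monoidal model structure on $\gspectra_U$: this map is a finite composite of maps each of which smashes the acyclic cofibration $X\to X'$ with a fixed cofibrant object (a $\sma_U$-power of $X$ or of $X'$), and each such map is again an acyclic cofibration by the pushout--product axiom; hence so is the composite. (Equivalently one could run the argument of the next paragraph, but Theorem~\ref{thm:extendedpower} is not needed for this case.)

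For the maps $X^{\sma_U k}/\Sigma_k\to X'^{\sma_U k}/\Sigma_k$ I would invoke Theorem~\ref{thm:extendedpower}. Its weak equivalences $(E_{\aF_U}\Sigma_k)_+\sma X^{\sma k}\htp X^{\sma_U k}$ and $(E_{\aF_U}\Sigma_k)_+\sma_{\Sigma_k}X^{\sma k}\htp X^{\sma_U k}/\Sigma_k$ are natural in the cofibrant variable, so applying them to $X$ and to $X'$ gives commuting squares two of whose edges are weak equivalences; hence it suffices that $(E_{\aF_U}\Sigma_k)_+\sma_{\Sigma_k}X^{\sma k}\to(E_{\aF_U}\Sigma_k)_+\sma_{\Sigma_k}X'^{\sma k}$ be a weak equivalence. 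Now $X^{\sma k}\to X'^{\sma k}$ is a weak equivalence of $G\times\Sigma_k$-spectra, with $\Sigma_k$ permuting the smash factors, by the smash-power filtration argument of~\cite[B.117]{HHR}; smashing with the $G\times\Sigma_k$-CW complex $(E_{\aF_U}\Sigma_k)_+$ preserves weak equivalences, and --- since for a linear isometries operad the family $\aF_U$ associated to $\sL_U(k)$ consists of graph subgroups, so $\Sigma_k$ acts freely on $E_{\aF_U}\Sigma_k$ --- the further passage to $\Sigma_k$-orbits computes a homotopy orbit and again preserves the weak equivalence. I expect the only real obstacle to be making this last, equivariant, step precise: verifying that the $k$-fold smash power of an acyclic cofibration of cofibrant $G$-spectra is homotopically well behaved as a map of $G\times\Sigma_k$-spectra, and that taking $\aF_U$-homotopy orbits preserves the resulting weak equivalence. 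This is the same filtration machinery of~\cite[B.117]{HHR} already used in the proof of Theorem~\ref{thm:extendedpower}, now applied to a map instead of to a single object, together with the identification of the family attached to $\sL_U(k)$ as one of graph subgroups --- which is precisely what makes the $\Sigma_k$-action free and hence forces strict orbits to agree with homotopy orbits.
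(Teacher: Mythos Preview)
Your core deduction from Theorem~\ref{thm:extendedpower} is exactly what the paper intends; it offers no proof beyond labeling this a corollary, and your treatment of $X^{\sma_U k}/\Sigma_k$ via the natural comparison to $(E_{\aF_U}\Sigma_k)_+\sma_{\Sigma_k}X^{\sma k}$, together with freeness of the $\Sigma_k$-action on $E_{\aF_U}\Sigma_k$, is the right argument.

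There is, however, a genuine gap in your reduction to cofibrant domains. The sentence ``if needed the general case follows by factoring and two-out-of-three'' does not go through: factoring through a cofibrant replacement produces auxiliary weak equivalences such as $QX\to X$ and $Z\to X'$ which are acyclic \emph{fibrations}, and to invoke two-out-of-three you would need to know that $\bP$ sends these to weak equivalences---but you have no independent handle on that, so the argument is circular. The correct reduction is not two-out-of-three but the observation that every acyclic cofibration is a retract of a relative cell complex built from the generating acyclic cofibrations; one then analyzes $\bP$ on such a cell attachment via the pushout filtration of~\cite[B.117]{HHR} (or~\cite[VII]{EKMM}), whose associated graded at each stage involves $\sma_U$-powers of the contractible cofibrant quotient $B/A$ and is therefore contractible regardless of whether the base object is cofibrant. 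This is precisely what the phrase ``proved using the standard outline (e.g., see~\cite[5.13]{mmss} or~\cite[B.130]{HHR})'' immediately following the corollary is pointing at. For the intended application your argument already covers the essential case, but the corollary as stated is for an arbitrary acyclic cofibration and your proposed shortcut does not bridge that gap.
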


Corollary~\ref{cor:preserves} provides the essential technical input
for the next theorem, which is again proved using the standard outline
(e.g., see~\cite[5.13]{mmss} or~\cite[B.130]{HHR}).

\begin{theorem}
The categories $\gspectra_U[\bT]$ and $\gspectra_U[\bP]$ are
compactly generated proper $G$-topological model categories with weak
equivalences and fibrations determined by the forgetful functor to
$\gspectra_U$.
\end{theorem}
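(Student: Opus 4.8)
The plan is to transfer the model structure on $\gspectra_U$ across the free--forgetful adjunctions $(\bT, U)$ and $(\bP, U)$ using the standard recognition theorem for compactly generated $G$-topological model categories, i.e.\ the equivariant analogue of~\cite[5.13]{mmss}, or~\cite[B.130]{HHR} in the commutative case. Concretely I would declare a map of $\bT$-algebras (resp.\ $\bP$-algebras) to be a weak equivalence or fibration exactly when its underlying map in $\gspectra_U$ is one, and take as generating cofibrations and generating acyclic cofibrations the sets $\bT I$, $\bT J$ (resp.\ $\bP I$, $\bP J$), where $I$ and $J$ generate the cofibrations and acyclic cofibrations of $\gspectra_U$. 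Since $\bT$ and $\bP$ are assembled from wedges, $\sma_U$-powers, and $\Sigma_k$-orbits, they preserve reflexive coequalizers and carry $h$-cofibrations to $h$-cofibrations, so the $G$-equivariant Cofibration Hypothesis holds; the compactness of the domains of $\bT I$ and $\bT J$, the fact that cofibrations are effective monomorphisms, and the smallness conditions all reduce to the corresponding statements already established for $\gspectra_U$.

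The one substantive hypothesis of the recognition theorem is that every relative $\bT J$-cell complex (resp.\ $\bP J$-cell complex) is a weak equivalence, and this is exactly where Corollary~\ref{cor:preserves} enters. For a pushout in $\gspectra_U[\bP]$ of a coproduct of maps from $\bP J$ along an arbitrary map of $\bP$-algebras, I would filter the pushout as in the proof of Theorem~\ref{thm:extendedpower} using the filtration of~\cite[B.117]{HHR}; the successive subquotients are built from $\sma_U$-powers and $\Sigma_k$-orbits of the acyclic cofibrations in $J$ and are weak equivalences by Corollary~\ref{cor:preserves} (together with Theorem~\ref{thm:extendedpower}), and since each filtration stage is an $h$-cofibration the transfinite composite is a weak equivalence as well. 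A further transfinite composition of $h$-cofibrations between weak equivalences then handles arbitrary cell complexes. I expect the careful bookkeeping in this filtration step to be the main obstacle, even though Corollary~\ref{cor:preserves} has already isolated the essential homotopical content.

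Finally, the remaining structure is recorded as follows. The $G$-topological axiom (the analogue of SM7) is checked on generating cofibrations and, using that cotensors with based $G$-spaces are created in $\gspectra_U$, reduces to the fact that $\gspectra_U$ is $G$-topological. Right properness is immediate, since weak equivalences and fibrations are detected by the forgetful right adjoint and pullbacks are created in the right-proper category $\gspectra_U$. For left properness, one observes that every cofibration of $\bT$- or $\bP$-algebras is a retract of a relative $\bT I$- (resp.\ $\bP I$-) cell complex and hence has underlying $h$-cofibration; pushing a weak equivalence out along such a map and analyzing the result through the same filtration as above reduces the claim to left properness of $\gspectra_U$ together with Corollary~\ref{cor:preserves}.
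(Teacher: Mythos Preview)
Your proposal is correct and follows exactly the approach the paper takes: the paper simply states that Corollary~\ref{cor:preserves} provides the essential technical input and defers to the standard outline of~\cite[5.13]{mmss} and~\cite[B.130]{HHR}, which is precisely the transfer argument you have spelled out in detail. Your elaboration of the filtration step, the Cofibration Hypothesis, and the properness verifications is more explicit than what the paper records, but the strategy and the key ingredients coincide.
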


For a fixed ring object $R$, we have the following relative version of
the preceding theorem.

\begin{theorem}
For an object $R$ in $\gspectra_U[\bT]$ or $\gspectra_U[\bP]$, the
category of $R$-modules in $\gspectra_U$ is a compactly generated
proper $G$-topological model category with weak equivalences and
fibrations determined by the forgetful functor to $\gspectra_U$.  When
$R$ is commutative (i.e., an object in $\gspectra_U[\bP]$), then
\begin{enumerate}
\item the category of $R$-modules in $\gspectra_U$ is a compactly
generated proper $G$-topological symmetric monoidal model category and
\item the category of $R$-algebras is a compactly generated proper
$G$-topological model category.
\end{enumerate}
\end{theorem}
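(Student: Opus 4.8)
The plan is to obtain both asserted model structures by lifting the compactly generated $G$-topological model structure on $\gspectra_U$ along the evident free--forgetful adjunctions, following the standard monadic outline of~\cite[5.13]{mmss} (compare~\cite[B.130]{HHR} and~\cite[VII.4]{EKMM}). Fix $R$ in $\gspectra_U[\bT]$ or $\gspectra_U[\bP]$. The category $\aM_{R,U}$ of $R$-modules is the category of algebras over the monad $R\sma_U(-)$ on $\gspectra_U$; since this monad admits the right adjoint $F_U(R,-)$ it preserves all colimits, so $\aM_{R,U}$ is complete and cocomplete with all colimits created in $\gspectra_U$. I would take as generating cofibrations and generating acyclic cofibrations the images $R\sma_U I$ and $R\sma_U J$ of the generating sets $I,J$ for $\gspectra_U$. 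The Cofibration Hypothesis is then immediate: $R\sma_U(-)$ is smashing with a fixed object, hence preserves $h$-cofibrations, and colimits in $\aM_{R,U}$ agree with colimits in $\gspectra_U$, which already satisfies the Cofibration Hypothesis; compactness of the domains of $R\sma_U I$ and $R\sma_U J$ and the cellularity conditions are inherited from $\gspectra_U$ for the same reason.

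The one step that is not formal is the acyclicity condition needed to run the lifting machinery: every relative $(R\sma_U J)$-cell complex must be a weak equivalence, which amounts to a ``monoid axiom'' for $\sma_U$ applied to $R$. The cleanest route is to observe, via Theorem~\ref{thm:extendedpower} and Proposition~\ref{prop:smodquil}, that on underlying $G$-spectra $R\sma_U(-)$ computes the left derived smash product with $R$ and so preserves weak equivalences between objects that are cofibrant enough, and then to reduce an arbitrary relative $(R\sma_U J)$-cell complex to that case by the filtration argument of~\cite[B.117]{HHR}. One can also argue directly: the members of $J$ have the form $\Sg\sma_U\bL_U j$ with $j$ a generating acyclic cofibration of $\gspectra$, so by unitality of $R$ and Theorem~\ref{thm:assoc} the map $R\sma_U(\Sg\sma_U\bL_U j)\cong R\sma_U\bL_U j$ has underlying $G$-spectrum obtained by applying $\sL_U(2)\times_{\sL_U(1)\times\sL_U(1)}(-)$ to $\id_R\sma\aL_U(1)_+\sma j$, which is an acyclic $h$-cofibration by the monoid axiom in $\gspectra$; inducing along the universal space $\sL_U(2)$, together with cobase changes and transfinite colimits, then preserves weak equivalences, using left properness of $\gspectra$. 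I expect this verification --- together with its analogue for $\sma_{R,U}$ in the commutative case below --- to be the main obstacle.

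Granting the acyclicity condition, the lifting machinery produces the compactly generated $G$-topological model structure on $\aM_{R,U}$ with weak equivalences and fibrations created by the forgetful functor to $\gspectra_U$. Properness is then formal: right properness is inherited because fibrations and weak equivalences are detected in the right proper category $\gspectra_U$, and left properness holds because cofibrations of $R$-modules are $h$-cofibrations in $\gspectra_U$ (by the Cofibration Hypothesis), pushouts in $\aM_{R,U}$ are computed in $\gspectra_U$, and $\gspectra_U$ is left proper. The $G$-topological enrichment and the analogue of SM7 transfer from $\gspectra_U$ through the tensoring with based $G$-spaces, whose tensors in $\aM_{R,U}$ are again created in $\gspectra_U$. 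When $R$ is commutative, for statement~(1) the closed symmetric monoidal structure $(\aM_{R,U},\sma_{R,U},R,F_U(-,-))$ has already been constructed, and it remains to check the pushout--product and unit axioms: the pushout--product axiom reduces, via the free $R$-module functor together with the natural isomorphism $(R\sma_U X)\sma_{R,U}(R\sma_U Y)\cong R\sma_U(X\sma_U Y)$, to the pushout--product axiom for $\sma_U$ on $\gspectra_U$ and the acyclicity input of the previous paragraph; the unit axiom follows by taking a cofibrant replacement of $R$ in $\gspectra_U$ and invoking Theorem~\ref{thm:uniteq} together with the comparison of $\sma_U$ with $\sma$.

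Finally, for statement~(2), the category of $R$-algebras is the category of algebras for the monad $\bT$ (respectively the monad $\bP$ in the commutative case) on $\aM_{R,U}$, so I would again apply the monadic outline of~\cite[5.13]{mmss}. The essential input is the $\aM_{R,U}$-analogue of Corollary~\ref{cor:preserves}, namely that $\bT$ and $\bP$ send acyclic cofibrations of $\aM_{R,U}$ to weak equivalences. This is proved exactly as Theorem~\ref{thm:extendedpower} and Corollary~\ref{cor:preserves}: for a cofibrant $R$-module $X$ the $n$-fold $\sma_{R,U}$-power, after forgetting to $\gspectra$, is still governed by $(E_{\aF_U}\Sigma_n)_+\sma X^{\sma n}$, and the general case reduces to the free one by the filtration of~\cite[B.117]{HHR}. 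With this input the lifting machinery yields the asserted compactly generated proper $G$-topological model structure on $R$-algebras, completing the argument.
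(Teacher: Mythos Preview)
Your proposal is correct and follows exactly the approach the paper intends: the paper does not actually supply a proof for this theorem, only the sentence ``For a fixed ring object $R$, we have the following relative version of the preceding theorem,'' deferring implicitly to the same standard monadic lifting outline of~\cite[5.13]{mmss} and~\cite[B.130]{HHR} with Corollary~\ref{cor:preserves} as the key homotopical input. Your write-up simply unpacks that outline in detail, so there is nothing to compare.
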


\subsection{The homotopical theory of change of group and fixed-point
functors}

In this section, we describe how to compute the derived functors of
the change-of-group and fixed-point functors described in
Section~\ref{sec:change}.  Our analysis bootstraps from the analogous
theory in the setting of $\gspectra$; the following two lemmas
establish that the homotopical theory for $\gspectra[G][\bL_U]$ and
$\gspectra_U$ can be understood in terms of the homotopical theory for
$\gspectra$.

\begin{lemma}
Let $X$ be an object of $\gspectra[G][\bL_U]$ or $\gspectra_U$.  If
$X$ is cofibrant, then the underlying orthogonal $G$-spectrum
associated to $X$ has the homotopy type of a cofibrant object.  The
analogous results hold for $(\gspectra[G][\bL_U])[\bT]$ and
$\gspectra_U[\bT]$.
\end{lemma}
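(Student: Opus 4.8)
The plan is to deduce every case from the single hands-on input that the functor $Q\colon\gspectra[G][\bL_U]\to\gspectra[G]$ is left Quillen, together with the natural weak equivalence $QX\to UX$ on cofibrant objects that was produced in the verification that $(Q,p^\ast)$ is a Quillen equivalence. So suppose first that $X$ is cofibrant in $\gspectra[G][\bL_U]$. Since $Q$ is left Quillen, $QX$ is a cofibrant orthogonal $G$-spectrum, and the comparison map $QX\to UX$ is a weak equivalence; hence the underlying orthogonal $G$-spectrum $UX$ has the homotopy type of the cofibrant object $QX$. This settles the $\gspectra[G][\bL_U]$ case, and it is the only genuinely new computation needed.

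For $\gspectra_U$, I would first note that $\Sg\sma_U(-)\colon\gspectra[G][\bL_U]\to\gspectra_U$ preserves \emph{all} weak equivalences: by Theorem~\ref{thm:uniteq} the unit map $\Sg\sma_U(-)\to\mathrm{Id}$ is a natural weak equivalence, so this follows from two-out-of-three. Now let $X$ be cofibrant in $\gspectra_U$, regard it as an object of $\gspectra[G][\bL_U]$, and choose a cofibrant replacement $q\colon\widetilde X\to X$ there. Applying $\Sg\sma_U(-)$, and using that the unit map $\Sg\sma_U X\to X$ is an isomorphism for objects of $\gspectra_U$ by definition, we obtain a weak equivalence $\Sg\sma_U\widetilde X\to X$ in $\gspectra_U$; comparing underlying orthogonal $G$-spectra and invoking the unit weak equivalence $\Sg\sma_U\widetilde X\to\widetilde X$ once more gives $UX\htp U(\Sg\sma_U\widetilde X)\htp U\widetilde X$. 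The first part, applied to the cofibrant object $\widetilde X$ of $\gspectra[G][\bL_U]$, shows the right-hand term has the homotopy type of a cofibrant orthogonal $G$-spectrum, hence so does $UX$.

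For the associative-monoid categories $(\gspectra[G][\bL_U])[\bT]$ and $\gspectra_U[\bT]$ I would run the usual cell-monoid filtration argument, following the template of~\cite[VII.6]{EKMM} and~\cite[B.117]{HHR}. A cofibrant monoid is a retract of a $\bT$-cell object, built from the initial monoid $\Sg$ --- whose underlying orthogonal $G$-spectrum is the sphere, which is cofibrant in $\gspectra[G]$ --- by a transfinite composite of pushouts of maps of the form $\bT A\to\bT B$ with $A\to B$ a generating cofibration. The key point is that such a pushout $M\sqcup_{\bT A}\bT B$ carries a natural filtration over $M$ whose successive quotients are formed by pushouts along smash products, with respect to $\sma_U$, of copies of $M$ with the cofiber $B/A$; because the monoids here are merely associative, no symmetric-power quotients intervene, so by the first two parts together with the comparison of $\sma_U$ with the ordinary smash product on cofibrant objects, each of these underlying smash products has the homotopy type of a cofibrant orthogonal $G$-spectrum as soon as $M$ does. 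Since every attaching map in the filtration is an $h$-cofibration of orthogonal $G$-spectra, and a sequential colimit along $h$-cofibrations computes a homotopy colimit, an induction over the cell structure and a final passage to retracts gives the claim.

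The main obstacle is the bookkeeping in this last step: one must check, stage by stage through the free-monoid filtration, that the relevant maps really are $h$-cofibrations of orthogonal $G$-spectra and that the intermediate objects retain the homotopy type of cofibrant orthogonal $G$-spectra. All of this is routine --- it is exactly the argument of~\cite{EKMM}, with Theorem~\ref{thm:extendedpower} standing in for the non-equivariant extended-power computations --- and it is precisely here that the restriction to associative monoids matters: for commutative monoids the $\Sigma_n$-quotients reintroduce the twisting by $E_{\aF_U}\Sigma_n$ of Theorem~\ref{thm:extendedpower}, and the conclusion genuinely fails, which is why the statement omits $\bP$.
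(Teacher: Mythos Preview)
Your proof is correct. For the module categories $\gspectra[G][\bL_U]$ and $\gspectra_U$ you take a genuinely different route from the paper. The paper argues uniformly for all four categories via the Cofibration Hypothesis: reduce to a cellular object, observe that the sequential colimit and each cell attachment are along $h$-cofibrations in $\gspectra$ (so the colimits are computed and homotopically controlled there), and then inspect the generating cells directly. You instead leverage the Quillen pair $(Q,p^*)$ already established in the paper: for cofibrant $X$ in $\gspectra[G][\bL_U]$, the object $QX$ is cofibrant in $\gspectra$ and $QX\to UX$ is a weak equivalence, settling the first case at once. Your approach is slicker for the module cases but relies on the $(Q,p^*)$ comparison proved elsewhere; the paper's cell induction is more self-contained and, importantly, applies without change to the $\bT$-algebra case, where you must revert to essentially the same cell argument anyway.

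Two minor remarks. Your $\gspectra_U$ step can be shortened: weak equivalences in $\gspectra[G][\bL_U]$ are already detected by the forgetful functor to $\gspectra$, so the cofibrant replacement $\widetilde X\to X$ there immediately yields $UX\htp U\widetilde X$ without routing through $\Sg\sma_U(-)$. And your closing assertion that the conclusion ``genuinely fails'' for $\bP$ is not substantiated; the lemma simply omits the commutative case, and nothing in the paper indicates it is false.
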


\begin{proof}
This follows from inspection of the generating cells and
the ``Cofibration Hypothesis'' in this context.  We can assume without
loss of generality that $X$ is a cellular object.  Then $X = \colim_n
X_n$, where the colimit is sequential and along $h$-cofibrations.  The
Cofibration Hypothesis then implies that we can compute the colimit in
the underlying category, and so it suffices to consider each $X_n$.
Since each $X_n$ is formed from $X_{n-1}$ by attaching cells, the
Cofibration Hypothesis again allows us to inductively reduce this to
consideration of the generating cells, where the result is clear.
\end{proof}

\begin{lemma}
Let $X$ be a fibrant object in $\gspectra[G][\bL_U]$,
$\gspectra[G][\bL_U])[\bT]$, or $(\gspectra[G][\bL_U])[\bP]$.  Then
$X$ is fibrant in $\gspectra[G]$.  Analogously, if $X$ is fibrant in
$\gspectra_U$, $\gspectra_U[\bT]$, or $\gspectra_U[\bP]$, then
$F_U(\Sg,X)$ is fibrant in $\gspectra[G]$.
\end{lemma}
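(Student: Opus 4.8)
The plan is to trace fibrancy down the tower of forgetful functors, using that each model structure involved has its fibrations either detected or created by a right adjoint to the preceding category. If $X$ is fibrant in $\gspectra[G][\bL_U]$, then, since the fibrations of that model structure are detected by the forgetful functor $U\colon\gspectra[G][\bL_U]\to\gspectra[G]$, the orthogonal $G$-spectrum $UX$ --- which is simply $X$ with its $\bL_U$-action forgotten --- is fibrant in $\gspectra[G]$. If $X$ is fibrant in $(\gspectra[G][\bL_U])[\bT]$ or $(\gspectra[G][\bL_U])[\bP]$, then these model structures are obtained by lifting along the relevant free--forgetful adjunction (as in~\cite[5.13]{mmss}, exactly as in the unital case), so their fibrations are created by the forgetful functor to $\gspectra[G][\bL_U]$; hence the underlying $\bL_U$-algebra of $X$ is fibrant and the previous case applies.

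For the unital categories the same scheme works once $X$ is replaced by $F_U(\Sg,X)$. By Proposition~\ref{prop:smodquil} the pair $(\Sg\sma_U(-),F_U(\Sg,-))$ is a weak symmetric monoidal Quillen equivalence between $\gspectra[G][\bL_U]$ and $\gspectra_U$; in particular $F_U(\Sg,-)$ is a right Quillen functor, so it carries a fibrant object of $\gspectra_U$ to a fibrant object of $\gspectra[G][\bL_U]$. Combined with the first paragraph, $F_U(\Sg,X)$ is fibrant in $\gspectra[G]$ whenever $X$ is fibrant in $\gspectra_U$. Finally, if $X$ is fibrant in $\gspectra_U[\bT]$ or $\gspectra_U[\bP]$, then, since those model structures have fibrations created by the forgetful functor to $\gspectra_U$, the underlying object of $\gspectra_U$ is fibrant and the preceding sentence applies.

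There is no substantive obstacle here: the content is entirely the bookkeeping of which right adjoint detects or creates fibrations at each stage. The one point requiring care is why the unital half of the statement is phrased in terms of $F_U(\Sg,X)$ rather than $X$ itself --- the fibrations of $\gspectra_U$ are not detected by its forgetful functor to $\gspectra[G][\bL_U]$ but by $F_U(\Sg,-)$, so $X$ need not be fibrant as an underlying $G$-spectrum, and the only non-formal ingredient is that $F_U(\Sg,-)$ is right Quillen, \ie Proposition~\ref{prop:smodquil}. One should also record that the four monadic algebra model structures genuinely create their fibrations along the forgetful functors, which is immediate from the lifting construction.
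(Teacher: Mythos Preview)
Your proof is correct and matches the paper's own argument essentially verbatim: reduce the algebra cases to the module cases via the forgetful functors creating fibrations, use that fibrations in $\gspectra[G][\bL_U]$ are created in $\gspectra[G]$, and for $\gspectra_U$ invoke Proposition~\ref{prop:smodquil} to see that $F_U(\Sg,-)$ is a right Quillen functor. Your additional remark explaining why the unital statement must be phrased in terms of $F_U(\Sg,X)$ is a helpful clarification not made explicit in the paper.
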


\begin{proof}
The statements about modules imply the statements about monoids and
commutative monoids, as fibrations in the model structures on the
categories of algebras are determined by the forgetful functors to
$\gspectra[G][\bL_U]$ and $\gspectra_U$ respectively.  The first
assertion is clear for $\gspectra[G][\bL_U]$ since the fibrations are
created by the forgetful functor to $\gspectra[G]$.  For
$\gspectra_U$, the result follows from
Proposition~\ref{prop:smodquil}; the functor
$F_U(\Sg,-) \colon \gspectra_U \to \gspectra[G][\bL_U]$ is a Quillen
right adjoint.
\end{proof}

In order to understand the behavior of the fixed point functors on
$\gspectra_U$, we need to describe the homotopical behavior of the
point-set multiplicative change of universe functors.  In contrast to
the situation for the additive functors in orthogonal spectra, these
always induce Quillen equivalences.  

\begin{proposition}\label{prop:homochange}
Let $U$ and $U'$ be $G$-universes.  The multiplicative change of
universe functors $\sL\sI_U^{U'}$ are left (and right) Quillen
functors that preserve weak equivalences between cofibrant objects and
therefore induce Quillen equivalences between $\gspectra[G][\bL_U]$
and $\gspectra[G][\bL_{U'}]$ and $\gspectra_U$ and $\gspectra_U'$,
respectively.
\end{proposition}

\begin{warning}
What is not preserved by $\sL\sI_U^{U'}$ is not the underlying
additive homotopy theory but the multiplicative norms.  Specifically,
the derived functor of $\sL\sI_U^{U'}$ preserves only those
multiplicative norms corresponding to $G$-sets that are admissible in
both $U$ and $U'$.  Put another way, these functors do not preserve
the homotopical equivariant symmetric monoidal structure.
\end{warning}

We now turn to the fixed points.  The forgetful functors $\iota_H^*$
preserve all weak equivalences, and so are already derived.  Their
left and right adjoints can be derived by cofibrant or fibrant
approximation, as a consequence of the preceding lemmas.  Similarly,
Proposition~\ref{prop:homochange} implies that the (right) derived
functors of the categorical fixed points can be computed by fibrant
replacement and the (left) derived functors of geometric fixed points
by cofibrant replacement.  We summarize the situation in the following
result.

\begin{proposition}
\hspace{5 pt}
\begin{enumerate}
\item The forgetful functors $\iota_H^*$ preserve all weak equivalences on
$\gspectra_U$ and $\gspectra[G][\bL_U]$.
\item The left adjoint $G_+ \sma_H (-)$ to $\iota_H^*$ preserves weak
equivalences between cofibrant objects on $\gspectra_U$ and
$\gspectra[G][\bL_U]$.  The right adjoint $F_H(G,-)$ to $\iota_H^*$
preserves weak equivalences between fibrant objects on $\gspectra_U$
and $\gspectra[G][\bL_U]$.
\item The categorical fixed point functor $(-)^H$ preserves weak
equivalences between fibrant objects in $\gspectra_U$ and
$\gspectra[G][\bL_U]$.
\item The geometric fixed point functor $\Phi^H$ preserves weak
equivalences between cofibrant objects in $\gspectra_U$ and
$\gspectra[G][\bL_U]$.
\end{enumerate}
\end{proposition}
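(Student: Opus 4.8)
The plan is to deduce all four assertions from the corresponding classical facts about orthogonal $G$-spectra by reducing along the forgetful functors. The three inputs I would use are: (a) weak equivalences in $\gspectra[G][\bL_U]$ and in $\gspectra_U$ are detected by the forgetful functor to $\gspectra[G]$; (b) the two preceding lemmas, which tell us that the underlying orthogonal $G$-spectrum of a cofibrant object of $\gspectra[G][\bL_U]$ or $\gspectra_U$ has the homotopy type of a cofibrant object, and that a fibrant object of $\gspectra[G][\bL_U]$ is already fibrant in $\gspectra[G]$ while a fibrant object of $\gspectra_U$ becomes fibrant in $\gspectra[G]$ after applying $F_U(\Sg,-)$; and (c) the point-set compatibilities of Section~\ref{sec:change}: the homeomorphisms $\iota^*_H \bL_U X \cong \bL_{\iota^*_H U}\iota^*_H X$, $G_+ \sma_H \bL_{\iota^*_H U} Y \cong \bL_U(G_+ \sma_H Y)$, $F_H(G,\bL^\sharp_{\iota^*_H U} Y) \cong \bL^\sharp F_H(G,Y)$, together with $(\sL_{\bR^\infty}(1)_+ \sma X)^H \cong \sL_{\bR^\infty}(1)_+ \sma X^H$ and its geometric analogue (so items (3) and (4) are to be read on the universe $\bR^\infty$, where these fixed-point functors were constructed).

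First I would dispatch (1): the restriction functor $\iota^*_H$ preserves all weak equivalences of $\gspectra[G]$ (it is both left and right Quillen for the complete model structure), and by the first homeomorphism in (c) it commutes with the forgetful functor; hence $\iota^*_H f$ has underlying map $\iota^*_H$ of the underlying map of $f$, and (a) gives the claim, passing at once to the full subcategory $\gspectra_U$. For (2), (3), and (4) the pattern is uniform. Given a weak equivalence $f$ between cofibrant objects (for $G_+\sma_H(-)$ and $\Phi^H$) or between fibrant objects (for $F_H(G,-)$ and $(-)^H$), I would use (b) to replace the sources and targets by their underlying orthogonal spectra, which have the homotopy type of cofibrant objects (resp. are fibrant); use (c) to identify the underlying map of $Ff$ with $F$ applied to the underlying map of $f$; and then invoke the fact that $G_+\sma_H(-)$ and $\Phi^H$ are left Quillen on the relevant category of orthogonal spectra while $F_H(G,-)$ and $(-)^H$ are right Quillen, so that by Ken Brown's lemma each carries weak equivalences between objects of the homotopy type of cofibrant objects, resp.\ between fibrant objects, to weak equivalences. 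Detection of weak equivalences in $\gspectra[G][\bL_U]$ and $\gspectra_U$ then finishes each case.

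The step I expect to need the most care is the $\gspectra_U$ half of the statements about the right adjoint $F_H(G,-)$ and the categorical fixed points $(-)^H$, because a fibrant object of $\gspectra_U$ is not fibrant as an orthogonal $G$-spectrum on the nose; one must route through $F_U(\Sg,-)$, which by Proposition~\ref{prop:smodquil} is a Quillen right adjoint and by the second preceding lemma sends fibrant objects of $\gspectra_U$ to fibrant objects of $\gspectra[G][\bL_U]$, hence of $\gspectra[G]$. For these two cases the reduction must therefore be organized around compatibility isomorphisms $F_U(\Sg, F_H(G,X)) \cong F_H(G, F_{\iota^*_H U}(\Sg, X))$ and $F_U(\Sg, Y^H) \cong (F_U(\Sg,Y))^H$, which I would verify by observing that $F_H(G,-)$ and $(-)^H$ are assembled from limit constructions that commute with the equalizer defining $F_U(\Sg,-)$ and with the forgetful functors. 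Once these compatibilities are in place, the argument of the previous paragraph applies verbatim.
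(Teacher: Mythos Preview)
Your proposal is correct and matches the paper's approach exactly: the paper does not give a formal proof of this proposition, but the paragraph immediately preceding it states that the forgetful functors are already derived and that the adjoints and fixed-point functors can be derived by cofibrant or fibrant approximation ``as a consequence of the preceding lemmas,'' which is precisely the reduction you have written out in detail. Your explicit attention to the $\gspectra_U$ case for the right adjoints (routing through $F_U(\Sg,-)$) and your caveat that items (3) and (4) are constructed in the paper only for the trivial universe are both appropriate elaborations of points the paper leaves implicit.
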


Finally, we have the following result which shows that the geometric
fixed-point functor is strong monoidal in the homotopical sense.

\begin{proposition}
Let $X$ and $Y$ be cofibrant objects in $\gspectra[G][U]$
or $\gspectra_{U}$.  Then the natural map
\[
\Phi^H X \sma_{U} \Phi^H Y \to \Phi^H
(X \sma_{U} Y)
\]
is a weak equivalence.
\end{proposition}

\begin{proof}
First consider the case of $\gspectra[G][U]$.  The result follows from
the result for $\Phi^H$ on $\gspectra$~\cite[V.4.7]{MM} when $X$ and
$Y$ are generating cells, since
\[
\bL_{U^H} X' \sma_{U^H} \bL_{U^H}
Y' \cong \sL_{U^H}(2)_+ \sma (X' \sma Y')
\]
for any $X'$ and $Y'$ and $WH$ acts trivially on $\sL{U^H}(2)_+$.
Since $(-) \sma_{U^H} (-)$ preserves colimits in either variable and
preserves weak equivalences between cofibrant objects, we can conclude
the general statement.  The case of $\gspectra_U$ follows from
analogous considerations.
\end{proof}

\subsection{The homotopical theory of the norm}\label{sec:homnorm}

In this section, we show that the norm $N_{H,\iota_H^* U}^{G,U}$ is a
homotopical functor and participates in a Quillen adjunction when
restricted to commutative ring objects.

\begin{theorem}\label{thm:normagree}
Let $X$ be a cofibrant object in $\gspectra[G][\bL_U]$ or
$\gspectra_G$.  The natural map 
\[
N_{H,\iota_H^* U}^{G,U} X \to N_H^G X
\]
is a weak equivalence when $G/H$ is admissible for $U$.
\end{theorem}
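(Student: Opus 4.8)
The strategy is to reduce, by a cellular filtration argument, to the case of a free algebra $X=\bL_{\iota_H^* U}Y$, and there to compute both functors explicitly; the comparison then becomes a statement about $G$-contractible spaces of linear isometries.

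First I would record the homotopical inputs behind the reduction. A cofibrant $X$ in $\gspectra[G][\bL_U]$ (or $\gspectra_U$) has underlying orthogonal $H$-spectrum of cofibrant homotopy type, the norm $N_H^G$ preserves weak equivalences between such (the $\cite{HHR}$ input used throughout), and $N_{H,\iota_H^* U}^{G,U}$ commutes with reflexive coequalizers since $N_H^G$ does $\cite[A.54]{HHR}$ (as do formation of orbits and smash products). Hence it suffices to check the claim on the stages of a cellular filtration of $X$, and --- running exactly the induction used to prove Theorem~\ref{thm:extendedpower}, i.e.\ the filtration argument of~\cite[B.117]{HHR} together with the compatibility of $N_H^G$ with the distributive pushouts appearing there --- to check it when $X=\bL_{\iota_H^* U}Y$ is free on a cofibrant orthogonal $H$-spectrum $Y$. (The case of $\gspectra_U$ follows from that of $\gspectra[G][\bL_U]$, using that $N_{H,\iota_H^* U}^{G,U}$ is compatible with $\Sg\sma_U(-)$ and with the weak equivalence $\Sg\sma_U X\to X$ of Theorem~\ref{thm:uniteq}.) Setting up this induction is the main obstacle: neither norm commutes with arbitrary colimits, only with reflexive coequalizers and with the indexed pushouts of the filtration, so the bookkeeping is precisely that of the extended-power computation.

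For $X=\bL_{\iota_H^* U}Y$, strong symmetric monoidality of $N_H^G$ gives a natural isomorphism $N_H^G(\sL_{\iota_H^* U}(1)_+\sma Y)\cong F_H(G,\sL_{\iota_H^* U}(1))_+\sma N_H^G Y$; inserting this into Definition~\ref{defn:internalabsnorm} and cancelling the free $F_H(G,\sL_{\iota_H^* U}(1))$-action (as in~\eqref{eq:freeiso}) yields a natural isomorphism
\[
N_{H,\iota_H^* U}^{G,U}(\bL_{\iota_H^* U}Y)\;\cong\;\sL(\Ind_H^G\iota_H^* U,\,U)_+\sma N_H^G Y,
\]
while the underlying orthogonal $G$-spectrum of $N_H^G(\bL_{\iota_H^* U}Y)$ is $F_H(G,\sL_{\iota_H^* U}(1))_+\sma N_H^G Y$. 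Both sit over $N_H^G Y$ via the collapse maps of the space factors, and since $\sL(\Ind_H^G\iota_H^* U,U)$ is contractible there is no ambiguity in making these collapses compatible with the comparison map. Thus it suffices to prove that the $G$-spaces $\sL(\Ind_H^G\iota_H^* U,U)$ and $F_H(G,\sL_{\iota_H^* U}(1))$ are $G$-contractible.

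This is where admissibility of $G/H$ is used. By the projection formula $\Ind_H^G\iota_H^* U\cong\bR\{G/H\}\otimes U$ as $G$-representations, and since $G/H$ is a non-empty admissible set for $\sL(U)$, Lemma~\ref{lem:unisum} supplies a $G$-equivariant isometry $\bR\{G/H\}\otimes U\cong U$; hence $\sL(\Ind_H^G\iota_H^* U,U)\cong\sL(U,U)=\sL_U(1)$, which is $G$-contractible. On the other side $\sL_{\iota_H^* U}(1)=\sL(\iota_H^* U,\iota_H^* U)$ is $H$-contractible, and the coinduction $F_H(G,-)$ preserves equivariant contractibility, so $F_H(G,\sL_{\iota_H^* U}(1))$ is $G$-contractible as well. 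Both spaces have the $G$-homotopy type of $G$-CW complexes, so the collapses to a point are $G$-homotopy equivalences; smashing with $N_H^G Y$ shows both sides of the comparison are naturally $G$-homotopy equivalent to $N_H^G Y$, compatibly with the comparison map, which is therefore a weak equivalence. The filtration reduction then gives the statement for arbitrary cofibrant $X$.
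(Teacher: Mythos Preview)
Your proposal is correct and follows essentially the same approach as the paper: reduce via the cellular filtration to free objects, then use that $\sL(\Ind_H^G\iota_H^* U,U)$ is $G$-contractible when $G/H$ is admissible (precisely via Lemma~\ref{lem:unisum} and the identification $\Ind_H^G\iota_H^* U\cong\bR\{G/H\}\otimes U$). The paper's proof is terser---it simply asserts that on free objects the comparison is the collapse of $\sL(\Ind_H^G\iota_H^* U,U)_+$---whereas you are more careful in also treating the coinduced factor $F_H(G,\sL_{\iota_H^* U}(1))$ appearing in $N_H^G(\bL_{\iota_H^* U}Y)$ and exhibiting both sides as $G$-contractibly over $N_H^G Y$; this extra bookkeeping is harmless and arguably clarifies what the paper leaves implicit.
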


\begin{proof}
By induction over the cellular filtration, it suffices to consider the
case when $X$ is free.  In this case, we're looking at the map
\[
\sL(\Ind_H^G \iota_H^* U, U)_+ \sma N_H^G X \to N_H^G X
\]
given by the collapse map $\sL(\Ind_H^G \iota_H^* U, U)_+ \to S^0$.
Since the collapse is a $G$-equivalence when $G/H$ is admissible, the
result follows.
\end{proof}

\begin{remark}
When $G/H$ is not admissible for $U$, it is not clear in general what
the homotopy type of $N_{H,\iota_H^* U}^{G,U}$ is.  For free objects,
the homotopy type is controlled by $\sL(\Ind_H^G \iota_H^* U, U)$,
which has no $G$-fixed points.
\end{remark}

\begin{corollary}
The functor $N_{H,\iota_H^* U}^{G,U}$ preserves weak equivalences
between cofibrant objects in $\gspectra[H]_{\iota_H^* U}$ and
$\gspectra[H][\bL_{\iota_H^* U}]$ when $G/H$ is admissible for $U$.
\end{corollary}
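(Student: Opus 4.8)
The plan is to deduce the statement directly from Theorem~\ref{thm:normagree} together with the established homotopical behavior of the classical norm on orthogonal spectra. Let $f\colon X\to Y$ be a weak equivalence between cofibrant objects of $\gspectra[H][\bL_{\iota_H^* U}]$; the case of $\gspectra[H]_{\iota_H^* U}$ is handled identically. Because the coequalizer defining $N_{H,\iota_H^* U}^{G,U}$ in Definition~\ref{defn:internalabsnorm} and the collapse map $\sL(\Ind_H^G\iota_H^* U,U)_+\to S^0$ are both natural in the $\bL_{\iota_H^* U}$-algebra, the morphism $f$ fits into a commuting square
\[
\xymatrix{
N_{H,\iota_H^* U}^{G,U} X \ar[r] \ar[d] & N_H^G X \ar[d] \\
N_{H,\iota_H^* U}^{G,U} Y \ar[r] & N_H^G Y
}
\]
in which the horizontal arrows are the natural maps of Theorem~\ref{thm:normagree} and the right-hand vertical arrow is $N_H^G$ applied to the underlying map of orthogonal $H$-spectra.

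First I would note that the two horizontal maps are weak equivalences: this is precisely Theorem~\ref{thm:normagree}, whose hypotheses are met since $X$ and $Y$ are cofibrant and $G/H$ is admissible for $U$. Next, by the lemma that a cofibrant object of $\gspectra[H][\bL_{\iota_H^* U}]$ has underlying orthogonal $H$-spectrum of the homotopy type of a cofibrant object, the spectra underlying $X$ and $Y$ are of the homotopy type of cofibrant objects of $\gspectra[H]$, and $f$ restricts to a weak equivalence between them. Since the classical norm $N_H^G\colon\gspectra[H]\to\gspectra$ is a homotopical functor on orthogonal $H$-spectra of the homotopy type of cofibrant objects~\cite{HHR}, the right-hand vertical arrow is a weak equivalence. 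Two-out-of-three then forces $N_{H,\iota_H^* U}^{G,U} f$ to be a weak equivalence, which is the assertion of the corollary.

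The only load-bearing inputs are Theorem~\ref{thm:normagree} and the homotopical control of $N_H^G$ on orthogonal $H$-spectra from~\cite{HHR}; everything else is a formal two-out-of-three argument, so I anticipate no genuine obstacle here. The one point warranting care is the commutativity and naturality of the square above, but this is immediate once one recalls that the comparison map of Theorem~\ref{thm:normagree} is nothing but the collapse of $\sL(\Ind_H^G\iota_H^* U,U)_+$ smashed with $N_H^G X$, which is visibly natural in $X$.
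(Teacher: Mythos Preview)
Your argument is correct and is precisely the deduction the paper intends: the corollary is stated without proof immediately after Theorem~\ref{thm:normagree}, and the implicit argument is exactly your two-out-of-three reduction along the natural comparison to $N_H^G$, using that cofibrant $\bL_{\iota_H^* U}$-algebras have underlying spectra of cofibrant homotopy type and that $N_H^G$ is homotopical on such objects. One small wording point: your final sentence describes the comparison map as literally the collapse smashed with $N_H^G X$, which is only the case for free $X$; for general $X$ it is the map induced on the coequalizer by that collapse, but naturality is equally clear from that description.
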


The next lemma provides homotopical control on the output of the norm
functor.

\begin{lemma}\label{lem:cof}
Let $X$ be a cofibrant object in $\gspectra[H][\bL_{\widehat{U}}]$ or
$\gspectra[H]_{\widehat{U}}$.  Then $N_{H,\iota_H^* U}^{G,U} X$ is
cofibrant in $\gspectra[G][\bL_U]$.
\end{lemma}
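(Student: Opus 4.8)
The plan is to argue by cellular induction, in the style of the proofs of Theorems~\ref{thm:normagree} and~\ref{thm:extendedpower}. A cofibrant object of $\gspectra[H][\bL_{\iota_H^* U}]$ is a retract of a cell object built by attaching free cells $\bL_{\iota_H^* U} j$, for $j$ a generating cofibration of $\gspectra[H]$, along sequential colimits of pushouts. Now $N_{H,\iota_H^* U}^{G,U}$ is the composite of the norm $N_H^G$ with the balanced product $\sL(\Ind_H^G\iota_H^* U,U)_+\sma_{F_H(G,\sL_{\iota_H^* U}(1))}(-)$, the latter preserving all colimits, while $N_H^G$ preserves reflexive coequalizers and sequential colimits along $h$-cofibrations, and its behaviour on a pushout along a cofibration is governed by the norm filtration of~\cite[B.117]{HHR}. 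Carrying that filtration through the balanced product, one finds that $N_{H,\iota_H^* U}^{G,U}$ sends such a cell presentation to a filtered object in $\gspectra[G][\bL_U]$ whose layers are cofibrations, once one knows that $N_{H,\iota_H^* U}^{G,U}$ of a free algebra on a cofibrant orthogonal $H$-spectrum is cofibrant. This reduces the lemma to the case $X=\bL_{\iota_H^* U} Y$ with $Y$ cofibrant in $\gspectra[H]$. For the unital category $\gspectra[H]_{\iota_H^* U}$ the same reduction applies after composing with the Quillen left adjoint $\Sg\sma_{\iota_H^* U}(-)$ of Proposition~\ref{prop:smodquil}, which commutes with the norm up to natural isomorphism by Theorem~\ref{thm:normstrong}.

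In the free case, the natural isomorphism $N_H^G(\sL_{\iota_H^* U}(1)_+\sma Y)\cong F_H(G,\sL_{\iota_H^* U}(1))_+\sma N_H^G Y$ recorded just before Definition~\ref{defn:internalabsnorm}, together with the defining coequalizer and the monoid map $I_{(-)}$, gives an identification
\[
N_{H,\iota_H^* U}^{G,U}(\bL_{\iota_H^* U} Y)\;\cong\;\sL(\Ind_H^G\iota_H^* U,U)_+\sma N_H^G Y
\]
of $\bL_U$-algebras, the $\bL_U$-structure coming from the left (postcomposition) action of $\sL_U(1)=\sL(U,U)$ on $\sL(\Ind_H^G\iota_H^* U,U)$. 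Since $N_H^G$ sends cofibrant orthogonal $H$-spectra to cofibrant orthogonal $G$-spectra~\cite{HHR}, the spectrum $N_H^G Y$ is cofibrant in $\gspectra[G]$, and it remains only to show that $\sL(\Ind_H^G\iota_H^* U,U)_+\sma Z$ is cofibrant in $\gspectra[G][\bL_U]$ whenever $Z$ is cofibrant in $\gspectra[G]$.

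This last point is where the real work lies, and I expect it to be the main obstacle: it is a point-set assertion about the $G$-space $\sL(\Ind_H^G\iota_H^* U,U)$ equipped with its $\sL_U(1)$-action, of exactly the flavour exploited in Lemmas~\ref{lem:equikriz} and~\ref{lem:normkriz} and in the proof of Theorem~\ref{thm:mandell}. One way to handle it is to verify directly that smashing with $\sL(\Ind_H^G\iota_H^* U,U)$, with its postcomposition $\sL_U(1)$-action, carries $\bL_U$-free cells to cofibrant $\bL_U$-algebras --- the equivariant analogue of the structural property of spaces of linear isometries that makes the EKMM machine run. Alternatively, one can use the identification
\[
N_{H,\iota_H^* U}^{G,U}\;\cong\;\sL\sI_{\bR^\infty}^{U}\circ N_{H,\bR^\infty}^{G,\bR^\infty}\circ\sL\sI_{\iota_H^* U}^{\bR^\infty}
\]
from the proof of Theorem~\ref{thm:adj} --- the displayed chain of isomorphisms there uses only that $N_H^G$ preserves reflexive coequalizers, so it is valid on the full module categories, not merely on commutative rings --- and combine the fact that $N_{H,\bR^\infty}^{G,\bR^\infty}$ preserves cofibrant objects (via the equivalence between $\gspectra[G][\bL_{\bR^\infty}]$ and the category of $G$-objects in $\mathcal S[\bL_{\bR^\infty}]$ and~\cite{HHR}) with the fact that the multiplicative change-of-universe equivalences of Theorem~\ref{thm:multichange} preserve cofibrant objects. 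Either way the essential input is a cofibrancy property of spaces of linear isometries, not a formal one; in particular $\sL(\Ind_H^G\iota_H^* U,U)$ need not be $G$-contractible when $G/H$ is inadmissible, which is precisely the reason the whole development is confined to linear isometries operads, as in~\cite{EKMM}.
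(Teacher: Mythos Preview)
Your reduction to the free case is exactly the paper's: it cites the filtration of~\cite[A.3.4]{HHR} to reduce to $X=\sL_{\widehat{U}}(1)_+\sma Y$ and then records the same identification
\[
N_{H,\widehat{U}}^{G,U}\bigl(\sL_{\widehat{U}}(1)_+\sma Y\bigr)\;\cong\;\sL(\Ind_H^G\widehat{U},U)_+\sma N_H^G Y.
\]
Where you diverge is at the last step, which you flag as ``where the real work lies.'' In the paper this is one line: a $G$-equivariant isomorphism of universes $\Ind_H^G\widehat{U}\cong U$ (Lemma~\ref{lem:unisum}) induces, by precomposition, a $G$-equivariant isomorphism $\sL(\Ind_H^G\widehat{U},U)\cong\sL(U,U)=\sL_U(1)$ compatible with the left $\sL_U(1)$-action. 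Hence the displayed object is literally $\bL_U(N_H^G Y)$, a free $\bL_U$-algebra on a cofibrant orthogonal $G$-spectrum, and is therefore cofibrant. No analysis of EKMM-style structural properties of isometry spaces, and no detour through the trivial universe and multiplicative change of universe, is needed.

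Your instinct that admissibility is lurking here is correct: the appeal to Lemma~\ref{lem:unisum} is precisely the statement that $\Ind_H^G\iota_H^* U\cong\bR\{G/H\}\otimes U\cong U$ as $G$-universes, which holds when $G/H$ is admissible for $U$. The paper's statement of the lemma does not record this hypothesis, but its proof uses it, and the lemma is only invoked downstream (Proposition~\ref{prop:normcomp}, Theorem~\ref{thm:rnormagree}) in admissible situations. So the ``obstacle'' you anticipated in the inadmissible case is real, but the paper simply does not address that case; in the admissible case the step is immediate from Lemma~\ref{lem:unisum}, and both of your proposed workarounds are unnecessary elaborations.
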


\begin{proof}
Using the filtration of~\cite[A.3.4]{HHR}, we can inductively reduce
to the case when $X$ is of the form $\sL_{\widehat{U}}(1)_+ \sma Y$.  In
this case,
\[
N_{H,\iota_H^* U}^{G,U} (\sL_{\widehat{U}}(1)_+ \sma
Y) \cong \sL(\Ind_{H}^{G} \widehat{U}, U)_+ \sma N_H^G Y.
\]
Since $\sL(\Ind_H^G \widehat{U}, U) \cong \sL_1(U)$ by
Lemma~\ref{lem:unisum}, the result follows.
\end{proof}

As a consequence, we have the following result about the composition
of the norm functor.

\begin{proposition}\label{prop:normcomp}
Fix $H_1 \subseteq H_2 \subseteq G$.  Let $X$ be a cofibrant object in
$\gspectra[H_1][\bL_{\iota_{H_1}^* U}]$ or
$\gspectra[H_1]_{\iota_{H_1}^* U}$.  Then there is a natural weak
equivalence
\[
N_{H_1, \iota_{H_1}^* U}^{G,U} X \htp N_{H_2, \iota_{H_2}^* U}^{G,U}
N_{H_1, \iota_{H_1}^* U}^{H_2, \iota_{H_2}^* U} X.
\]
\end{proposition}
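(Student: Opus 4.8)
The plan is to deduce this from the transitivity of the Hill--Hopkins--Ravenel norm by transporting everything to the trivial multiplicative universe, exactly as in the proof of Theorem~\ref{thm:adj}. Recall from that proof the natural identification
\[
N_{H,\iota_H^* U}^{G,U} \;\cong\; \sL\sI_{\bR^{\infty}}^{U}\circ N_{H,\bR^{\infty}}^{G,\bR^{\infty}}\circ \sL\sI_{\iota_H^* U}^{\bR^{\infty}},
\]
which comes directly from Definition~\ref{defn:internalabsnorm} and Theorem~\ref{thm:multichange}; since the chain of isomorphisms proving it manipulates only reflexive coequalizers --- which $N_H^G$ preserves on all of $\gspectra[H]$ by~\cite[A.54]{HHR} --- it holds for an arbitrary object $X$, not merely a commutative monoid, and it descends to the unital categories $\gspectra[H]_{\iota_H^* U}$ by Theorem~\ref{thm:multichange}.

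First I would apply this identification three times: once to $N_{H_1,\iota_{H_1}^* U}^{H_2,\iota_{H_2}^* U}$ (replacing $G$ by $H_2$ and $U$ by $\iota_{H_2}^* U$, using $\iota_{H_1}^*\iota_{H_2}^* U = \iota_{H_1}^* U$), once to $N_{H_2,\iota_{H_2}^* U}^{G,U}$, and once to $N_{H_1,\iota_{H_1}^* U}^{G,U}$. In the composite $N_{H_2,\iota_{H_2}^* U}^{G,U}\circ N_{H_1,\iota_{H_1}^* U}^{H_2,\iota_{H_2}^* U}$ an interior pair of change-of-universe functors $\sL\sI_{\iota_{H_2}^* U}^{\bR^{\infty}}\circ \sL\sI_{\bR^{\infty}}^{\iota_{H_2}^* U}$ appears, which is the identity of $\gspectra[H_2][\bL_{\bR^{\infty}}]$ by Theorem~\ref{thm:multichange}, so the composite reduces to $\sL\sI_{\bR^{\infty}}^{U}\circ\bigl(N_{H_2,\bR^{\infty}}^{G,\bR^{\infty}}\circ N_{H_1,\bR^{\infty}}^{H_2,\bR^{\infty}}\bigr)\circ\sL\sI_{\iota_{H_1}^* U}^{\bR^{\infty}}$. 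Under the equivalence between $\gspectra[G'][\bL_{\bR^{\infty}}]$ and the category of $G'$-objects in $\mathcal{S}[\bL_{\bR^{\infty}}]$ used in the proof of Theorem~\ref{thm:adj}, the inner composite is an iterated Hill--Hopkins--Ravenel norm, for which $N_{H_2}^{G}\circ N_{H_1}^{H_2}\cong N_{H_1}^{G}$ by transitivity of the norm~\cite{HHR}. Conjugating this isomorphism by $\sL\sI_{\bR^{\infty}}^{U}$ and $\sL\sI_{\iota_{H_1}^* U}^{\bR^{\infty}}$ and comparing with the identification of $N_{H_1,\iota_{H_1}^* U}^{G,U}$ yields a natural isomorphism $N_{H_2,\iota_{H_2}^* U}^{G,U} N_{H_1,\iota_{H_1}^* U}^{H_2,\iota_{H_2}^* U} X\cong N_{H_1,\iota_{H_1}^* U}^{G,U} X$ for every $X$, and hence the desired weak equivalence on cofibrant objects.

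The main obstacle I anticipate is the bookkeeping required to confirm that the identification of Theorem~\ref{thm:adj} and the transitivity of the norm on $\bL_{\bR^{\infty}}$-algebras are genuinely available for arbitrary objects, and that the latter transports correctly through the equivalence with $\mathcal{S}[\bL_{\bR^{\infty}}]$-objects; one should also check that the isomorphism produced this way coincides with the evident comparison map assembled from the operad structure and the transitivity isomorphism $\Ind_{H_2}^{G}\Ind_{H_1}^{H_2}\iota_{H_1}^* U\cong\Ind_{H_1}^{G}\iota_{H_1}^* U$ of induced universes. A more self-contained alternative, at the cost of extra work, is to argue directly by induction over the cellular filtration of~\cite[A.3.4]{HHR}, as in Lemma~\ref{lem:cof} and Theorem~\ref{thm:normagree}: since $N_{H_1,\iota_{H_1}^* U}^{H_2,\iota_{H_2}^* U}$ carries cofibrant objects to cofibrant objects by Lemma~\ref{lem:cof}, iterating the filtration argument of~\cite[B.117]{HHR} reduces the claim to the free case $X=\sL_{\iota_{H_1}^* U}(1)_+\sma Y$, where both sides are computed from Definition~\ref{defn:internalabsnorm}, the strong symmetric monoidality and transitivity of $N_H^G$ on orthogonal spectra, and a homeomorphism $\sL(\Ind_{H_1}^G\iota_{H_1}^* U,U)\cong\sL(\Ind_{H_2}^G\iota_{H_2}^* U,U)\times_{F_{H_2}(G,\sL_{\iota_{H_2}^* U}(1))}F_{H_2}(G,\sL(\Ind_{H_1}^{H_2}\iota_{H_1}^* U,\iota_{H_2}^* U))$ proved by a split-coequalizer argument in the style of Corollary~\ref{cor:corhop2} and Theorem~\ref{thm:multichange}.
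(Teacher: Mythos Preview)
Your proposal is correct, and in fact both routes you sketch work.  The paper takes your ``self-contained alternative'': it expands $N_{H_2,\iota_{H_2}^* U}^{G,U} N_{H_1,\iota_{H_1}^* U}^{H_2,\iota_{H_2}^* U} X$ directly from Definition~\ref{defn:internalabsnorm}, pushes $N_{H_2}^G$ through the inner reflexive coequalizer, and then proves the isomorphism of isometry spaces
\[
\sL(\Ind_{H_2}^G\iota_{H_2}^* U,U)\times_{F_{H_2}(G,\sL_{\iota_{H_2}^* U}(1))}F_{H_2}(G,\sL(\Ind_{H_1}^{H_2}\iota_{H_1}^* U,\iota_{H_2}^* U))\;\cong\;\sL(\Ind_{H_1}^G\iota_{H_1}^* U,U)
\]
by checking that the underlying non-equivariant diagram is a reflexive coequalizer, citing Lemma~\ref{lem:lemhop2}.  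This yields a point-set isomorphism for every $X$, with Lemma~\ref{lem:cof} invoked at the end.

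Your primary route via multiplicative change of universe is more conceptual and genuinely different in organization, but it is worth noting that its real content is the same point-set identity.  The identification $N_{H,\iota_H^* U}^{G,U}\cong \sL\sI_{\bR^\infty}^{U}\circ N_{H,\bR^\infty}^{G,\bR^\infty}\circ \sL\sI_{\iota_H^* U}^{\bR^\infty}$ from the proof of Theorem~\ref{thm:adj} already uses exactly this kind of split-coequalizer computation (the step collapsing $\sL(\Ind_H^G\bR^\infty,U)\times_{F_H(G,\sL_{\bR^\infty}(1))}F_H(G,\sL(\iota_H^* U,\bR^\infty))$ to $\sL(\Ind_H^G\iota_H^* U,U)$), and your remaining step---transitivity of $N_{H,\bR^\infty}^{G,\bR^\infty}$ on $\bL_{\bR^\infty}$-spectra---is not quite free: the norm of Definition~\ref{defn:internalabsnorm} is built from the HHR norm for $\sma$ plus an isometry-space correction, so identifying it with the indexed $\sma_{\bR^\infty}$-power on $G$-objects in $\mathcal{S}[\bL_{\bR^\infty}]$ (where HHR transitivity is immediate) again requires a Hopkins-type identity as in Theorem~\ref{thm:assoc} or Lemma~\ref{lem:lemhop2}.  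So your approach packages the argument more pleasantly, but does not avoid the underlying calculation; the paper simply performs that calculation once, in place.
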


\begin{proof}
Expanding using the definition, we have
\[
N_{H_2, \iota_{H_2}^* U}^{G,U} N_{H_1, \iota_{H_1}^*
 U}^{H_2, \iota_{H_2}^* U} X = \sL(\Ind_{H_2}^G \iota_{H_2}^* U,
 U) \times_{F_{H_2}(G, \sL_{\iota_{H_2}^* U}(1))}
 N_{H_2}^G \left(N_{H_1, \iota_{H_1}^* U}^{H_2, \iota_{H_2}^* U}
 X\right)
\]
and
\[
N_{H_1, \iota_{H_1}^* U}^{H_2, \iota_{H_2}^* U}
 X = \left(\sL(\Ind_{H_1}^{H_2} \iota_{H_1}^* U, \iota_{H_2}^*
 U) \times_{F_{H_1}(H_2, \sL_{\iota_{H_1}^* U}(1))} N_{H_1}^{H_2}
 X\right)
\]
which implies that
\[
N_{H_2}^G \left(N_{H_1, \iota_{H_1}^* U}^{H_2, \iota_{H_2}^* U}
 X\right)
\cong
\left(F_{H_2}(G,\sL(\Ind_{H_1}^{H_2} \iota_{H_1}^* U, \iota_{H_2}^* U)) \times_{F_{H_2}(G, \sL_{\iota_{H_1}^* U}(1))} N_{H_1}^{G} X\right).
\]
Next, we show that
\[
\sL(\Ind_{H_2}^G \iota_{H_2}^* U, U) \times_{F_{H_2}(G, \sL_{\iota_{H_2}^* U}(1))} F_{H_2}(G,\sL(\Ind_{H_1}^{H_2} \iota_{H_1}^* U, \iota_{H_2}^* U))
\]
is isomorphic to $\sL(\Ind_{H_1}^G \iota_{H_1}^* U, U)$.  There is an equivariant map
\[
\sL(\Ind_{H_2}^G \iota_{H_2}^* U, U) \times F_{H_2}(G,\sL(\Ind_{H_1}^{H_2} \iota_{H_1}^* U, \iota_{H_2}^* U)) \to \sL(\Ind_{H_1}^G \iota_{H_1}^* U, U)
\]
induced by composition and the natural map
\[
F_{H_2}(G,\sL(\Ind_{H_1}^{H_2} \iota_{H_1}^* U, \iota_{H_2}^* U)) \to \sL(\Ind_{H_1}^{G} \iota_{H_1}^* U, \Ind_{H_2}^G \iota_{H_2}^* U)
\]
induced by the direct sum.  This map is compatible with the maps
determining the coequalizer, and so it suffices to check that the
underlying non-equivariant diagram is a reflexive coequalizer.  This
now follows from Lemma~\ref{lem:lemhop2}.  The theorem is now a
consequence of the preceding isomorphism and Lemma~\ref{lem:cof}.
\end{proof}

In the case of commutative monoid objects, it is straightforward to
check that the adjunction involving the norm and the forgetful functor
is homotopical; it is clear that $\iota_H^*$ preserves fibrations and
weak equivalences.

\begin{theorem}\label{thm:homoadj}
The adjoint pairs
\[
N_{H,\iota_H^* U}^{G,U} \colon (\gspectra[H][\bL_{\iota_H^*
U}])[\bP] \leftrightarrows
(\gspectra[G][\bL_{U}])[\bP] \colon \iota_H^*
\]
and
\[
N_{H,\iota_H^* U}^{G,U} \colon \gspectra[H]_{\iota_H^*
U}[\bP] \leftrightarrows
\gspectra_U[\bP] \colon \iota_H^*
\]
are Quillen adjunction.
\end{theorem}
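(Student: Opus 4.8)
The plan is to reduce everything to the well-known homotopical behavior of the restriction functor $\iota_H^* \colon \gspectra[G] \to \gspectra[H]$. By Theorem~\ref{thm:adj} the two pairs are already adjunctions, so it suffices to show that in each case $\iota_H^*$ is a right Quillen functor, i.e.\ that it preserves fibrations and acyclic fibrations. On commutative monoid objects $\iota_H^*$ is computed on underlying $G$-spectra by ordinary restriction of group, and restriction $\iota_H^* \colon \gspectra[G] \to \gspectra[H]$ preserves all weak equivalences; since weak equivalences in each of the four categories appearing in the statement are created (after at most one further forgetful step) on underlying $G$-spectra, $\iota_H^*$ preserves all weak equivalences there as well. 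Consequently it is enough to check that $\iota_H^*$ preserves fibrations, after which preservation of acyclic fibrations is automatic.

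For the ``big'' categories, recall that fibrations in $(\gspectra[G][\bL_U])[\bP]$ are created by the forgetful functor to $\gspectra[G][\bL_U]$, whose fibrations are in turn created by the forgetful functor to $\gspectra[G]$, and similarly for $H$. The homeomorphism $\iota_H^*\bL_U X \cong \bL_{\iota_H^* U}\iota_H^* X$ from Section~\ref{sec:change}, compatible with the monad structures, shows that these forgetful functors commute strictly with $\iota_H^*$. Thus if $f$ is a fibration in $(\gspectra[G][\bL_U])[\bP]$, its underlying map in $\gspectra[G]$ is a fibration, hence so is the restriction of that map to $H$ because $\iota_H^* \colon \gspectra[G] \to \gspectra[H]$ is right Quillen; unwinding the creation of fibrations in the other direction, $\iota_H^* f$ is a fibration in $(\gspectra[H][\bL_{\iota_H^* U}])[\bP]$.

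For the unital categories the argument is identical once the analogous compatibilities are in place: fibrations in $\gspectra_U[\bP]$ are created by the forgetful functor to $\gspectra_U$, and fibrations in $\gspectra_U$ are detected by $F_U(\Sg,-) \colon \gspectra_U \to \gspectra[G][\bL_U]$, which is a Quillen right adjoint by Proposition~\ref{prop:smodquil}. The discussion of Section~\ref{sec:change} records that $\iota_H^*$ is compatible with the passage to unital objects and supplies a natural isomorphism $\iota_H^* F_U(\Sg, X) \cong F_{\iota_H^* U}(\Sg, \iota_H^* X)$, so the same chain of implications goes through. I expect the only genuine work to be this bookkeeping — confirming that each forgetful functor and each right-adjoint ``detecting'' functor (in particular $F_U(\Sg,-)$ in the unital case) commutes with $\iota_H^*$ up to coherent natural isomorphism; once that is checked, the Quillen adjunction statement is formal.
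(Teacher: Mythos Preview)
Your approach is exactly the paper's: the paper does not give a separate proof environment for this theorem, but the sentence immediately preceding it states that ``it is clear that $\iota_H^*$ preserves fibrations and weak equivalences,'' which is precisely the reduction you carry out. You have simply unpacked that one-liner by tracing the creation of fibrations and weak equivalences through the chain of forgetful functors and checking that $\iota_H^*$ commutes with each step; the paper leaves all of this implicit.

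One small caution on the unital case: the paper's Section~\ref{sec:change} asserts only that $\iota_H^*$ is ``compatible with the functors creating the unital objects,'' without explicitly recording the isomorphism $\iota_H^* F_{\sL_U}(\Sg[G],-) \cong F_{\sL_{\iota_H^* U}}(\Sg[H],\iota_H^*(-))$ you invoke. This isomorphism does hold (since $\iota_H^*$ preserves limits, is strong symmetric monoidal, and sends $\Sg[G]$ to $\Sg[H]$), but strictly speaking you are supplying a detail the paper suppresses rather than citing one it proves.
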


Note however that the derived functor of the norm $N_{H,\iota_H^*
U}^{G,U}$ on commutative rings only agrees with the derived functor of
the module norm when $G/H$ is admissible for $U$; the following result
is a consequence of the fact that derived functor of the norm on
commutative rings in orthogonal $H$-spectra agrees with the underlying
norm~\cite[B.148]{HHR}.

\begin{proposition}
Let $X$ be a cofibrant object in $\gspectra[G][\bL_U][\bP]$.  The
natural map
\[
N_{H,\iota_H^* U}^{G,U} X \to N_H^G X
\]
is a weak equivalence when $G/H$ is admissible for $U$.
\end{proposition}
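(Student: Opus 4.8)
The plan is to run the argument of~\cite[B.148]{HHR} inside the category $\gspectra[H][\bL_{\iota_H^* U}]$, using Theorem~\ref{thm:normagree} in place of the corresponding statement for cofibrant orthogonal $H$-spectra. The formal inputs are already available: by Theorems~\ref{thm:normstrong} and~\ref{thm:adj} the functor $N_{H,\iota_H^* U}^{G,U}$ restricted to commutative rings is a strong symmetric monoidal left adjoint, so it commutes with the free commutative algebra monad $\bP$, with the pushouts defining cellular commutative algebras, and with the filtered colimits appearing below; by~\cite[A.54, A.56]{HHR} the functor $N_H^G$ on commutative orthogonal $H$-ring spectra has the same properties and preserves reflexive coequalizers; and in both cases the comparison map is induced by collapsing the $G$-space $\sL(\Ind_H^G \iota_H^* U, U)$, which is $G$-contractible precisely because $G/H$ is admissible for $U$ (compare the proof of Theorem~\ref{thm:normagree} and Lemma~\ref{lem:cof}).

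First I would resolve $X$ by a cellular commutative $\bL_{\iota_H^* U}$-algebra --- a transfinite composite of pushouts of maps $\bP(i)$ with $i$ a generating cofibration of $\gspectra[H][\bL_{\iota_H^* U}]$ --- and filter each such pushout by the standard filtration of~\cite[B.117]{HHR}. The subquotients of the resulting cell filtration $\dots\to R_{n-1}\to R_n\to\dots$ have the form $R_{n-1}\sma_{\iota_H^* U}\big(W^{\sma_{\iota_H^* U}k}/\Sigma_k\big)$ for $W$ a free $\bL_{\iota_H^* U}$-algebra on a cell of $\gspectra[H]$, and these extended powers are cofibrant in $\gspectra[H][\bL_{\iota_H^* U}]$ since $\Sigma_k$ acts freely on $\sL_{\iota_H^* U}(k)$ (cf.\ Theorem~\ref{thm:extendedpower} and Lemma~\ref{lem:cof}). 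Applying $N_{H,\iota_H^* U}^{G,U}$ and $N_H^G$ to these filtrations and using strong symmetric monoidality, one identifies the image of each subquotient under both norms, the $\sL$-spaces recombining exactly as in the proofs of Theorem~\ref{thm:normstrong} and Proposition~\ref{prop:normcomp} (via Corollaries~\ref{cor:corhop} and~\ref{cor:corhop2}); an induction on the cell filtration, with inductive hypothesis on $R_{n-1}$, then reduces the comparison to the extended powers, where Theorem~\ref{thm:normagree} applies directly. Passing to colimits handles $X$ itself, and a retract argument covers the general cofibrant case.

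The hard part is that $X$ is cofibrant only as a commutative $\bL_{\iota_H^* U}$-algebra, not as an object of $\gspectra[H][\bL_{\iota_H^* U}]$, so a priori $N_H^G$ need not carry the commutative-algebra cell filtration to a filtration computing the derived functor of $N_H^G$ on the underlying orthogonal $H$-spectrum; controlling this is precisely the point of~\cite[B.146, B.148]{HHR}, where the key inputs are the $\Sigma_k$-equivariant cofibrancy of the extended-power subquotients and the fact that $N_H^G$ sends $\Sigma_k$-equivariant $h$-cofibrations to $h$-cofibrations. I would invoke that machinery rather than reprove it, verifying only that its hypotheses transport from orthogonal $H$-spectra to $\gspectra[H][\bL_{\iota_H^* U}]$ --- which they do by the same arguments, ultimately Lemma~\ref{lem:unisum}, that underlie Theorem~\ref{thm:extendedpower}.
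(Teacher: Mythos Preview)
Your proposal is correct and aligns with the paper's approach: the paper does not give an explicit proof of this proposition, stating only that it ``is a consequence of the fact that the derived functor of the norm on commutative rings in orthogonal $H$-spectra agrees with the underlying norm~\cite[B.148]{HHR}.'' Your plan is precisely a careful unpacking of how that citation is meant to be deployed in the $\bL_U$-algebra setting, and the ingredients you identify (strong monoidality, left-adjointness, the cellular filtration of~\cite[B.117]{HHR}, and the $\Sigma_k$-cofibrancy of extended powers) are exactly the ones needed.
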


We now turn to the relative norm construction.

\begin{theorem}\label{thm:rnormagree}
The functor $_R N_{H, \iota_H^* U}^{G,U}$ preserves weak equivalence
between cofibrant objects in $\aM_{\iota_H^* R,\iota_H^* U}$ when
$G/H$ is admissible for $U$.
\end{theorem}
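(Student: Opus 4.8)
The plan is to factor the relative norm as an absolute norm followed by a base change, and then to reduce to free $\iota_H^* R$-modules by the cellular filtration argument of~\cite{HHR}. By definition and Theorem~\ref{thm:normstrong}, the functor ${}_R N := {}_R N_{H,\iota_H^* U}^{G,U}$ is the composite of the absolute norm $N := N_{H,\iota_H^* U}^{G,U}$, regarded as a strong symmetric monoidal functor $\aM_{\iota_H^* R,\iota_H^* U} \to \aM_{N\iota_H^* R, U}$, with the base change $R \sma_{N\iota_H^* R}(-)$ along the counit $N\iota_H^* R \to R$. The absolute norm preserves filtered colimits and reflexive coequalizers (by~\cite[A.54]{HHR} and the coequalizer construction of Definition~\ref{defn:internalabsnorm}), and the base change is a left adjoint, so ${}_R N$ commutes with the sequential colimits occurring in cell complexes; its interaction with the pushouts attaching cells will be governed by the distributivity (``cube'') analysis recalled below.

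First I would compute ${}_R N$ on a free module $X = \iota_H^* R \sma_{\iota_H^* U} \bL_{\iota_H^* U} Y$. Strong monoidality of $N$ together with the identification $N(\bL_{\iota_H^* U} Y) \cong \sL(\Ind_H^G \iota_H^* U, U)_+ \sma N_H^G Y$ from the proof of Lemma~\ref{lem:cof} give natural isomorphisms
\[
{}_R N\, X \cong R \sma_{N\iota_H^* R}\big(N\iota_H^* R \sma_U N(\bL_{\iota_H^* U} Y)\big) \cong R \sma_U \big(\sL(\Ind_H^G \iota_H^* U, U)_+ \sma N_H^G Y\big).
\]
When $G/H$ is admissible, Lemma~\ref{lem:unisum} gives $\sL(\Ind_H^G \iota_H^* U, U) \cong \sL_U(1)$, which is $G$-contractible, so the collapse map is a natural weak equivalence ${}_R N\, X \htp R \sma_U N_H^G Y$; moreover $\sL(\Ind_H^G \iota_H^* U, U)_+ \sma N_H^G Y$ is cofibrant in $\gspectra_U$ by Lemma~\ref{lem:cof}. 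Since $\bL_{\iota_H^* U} Y$ is cofibrant exactly when $Y$ is, the ordinary norm $N_H^G$ preserves weak equivalences between cofibrant orthogonal $H$-spectra~\cite{HHR}, and $R \sma_U(-)$ preserves weak equivalences between cofibrant objects of $\gspectra_U$ (by the module model structure on $\aM_{R,U}$), it follows that ${}_R N$ behaves correctly on the generating (acyclic) cofibrations of $\aM_{\iota_H^* R, \iota_H^* U}$; one may alternatively phrase this step through Theorem~\ref{thm:normagree}.

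The inductive step carries the real content. A cofibrant $\iota_H^* R$-module is a retract of one obtained from free modules by iterated pushouts along generating cofibrations and by sequential colimits along $h$-cofibrations. Because the norm is not additive, ${}_R N$ does not send such a pushout square to a pushout square; instead one must transport the generalized distributivity analysis of~\cite[A.3.4, B.117]{HHR} — which expresses the norm of a cell attachment via a filtration whose layers are smash products of induced cells — first to the category $\gspectra_U$ and then to $R$-modules after base change. The inputs that make this transport go through are exactly the ones assembled above: the cofibrancy statement of Lemma~\ref{lem:cof}, the $G$-contractibility of $\sL(\Ind_H^G \iota_H^* U, U)$ in the admissible case (so these induced-cell layers have the expected homotopy type), and the free-module computation. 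Granting this, ${}_R N\, X$ acquires a filtration by cofibrant $R$-modules that is natural in $X$, so a weak equivalence between cofibrant $\iota_H^* R$-modules induces a levelwise weak equivalence of such filtrations and hence a weak equivalence on colimits.

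I expect the main obstacle to be precisely this verification that the cube lemma of~\cite[A.3.4, B.117]{HHR} carries over to the operadic setting of $\gspectra_U$ and $\aM_{R,U}$: the non-additivity of the norm forbids black-boxing the cellular induction, so one must re-examine the distributivity bookkeeping in the presence of the linear isometries operad. Once that machinery is in place, the base case (the free-module computation, where admissibility collapses the isometry space) and the passage to colimits are routine.
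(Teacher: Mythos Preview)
Your approach is correct and is essentially the same as the paper's: factor the relative norm as the absolute norm followed by base change along the counit, verify the claim on free $\iota_H^* R$-modules, and extend via a cellular filtration. The paper's proof is much terser than yours --- it merely observes that, since the relative norm is strong symmetric monoidal, it suffices to show that the absolute norm $N_{H,\iota_H^* U}^{G,U}$ takes cofibrant $\iota_H^* R$-modules to cofibrant $N\iota_H^* R$-modules, and then reduces that to free objects --- but the substance is identical, and the HHR distributivity machinery you invoke is exactly what underlies that reduction. Your explicit free-module computation and your honest flagging of the cube-lemma transport to the operadic setting make explicit what the paper leaves implicit.
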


\begin{proof}
Since the $R$-relative norm is strong symmetric monoidal, it suffices
to show that when $X$ is cofibrant in $\aM_{\iota_H^* R,\iota_H^* U}$,
$N_{H, \iota_H^* U}^{G,U} X$ is cofibrant as an $N_{H,\iota_H^*
U}^{G,U} R$ module.  Once again, it suffices to check this on free
objects, where it is straighforward.
\end{proof}

\section{$G$-symmetric monoidal categories of modules over an
$N_\infty$ algebra}\label{sec:Gsym}

In this section, we describe the homotopical $G$-symmetric monoidal
structure on $\aM_{R,U}$.  More precisely, we have a $\sL_U$-symmetric
monoidal structure, where we mean an equivariant symmetric monoidal
structure specified by the coefficient system of admissible sets for
$\sL_U$~\cite[4.4]{HillHopkinsLocalization}.  We characterize this
structure in terms of a homotopical exponential functor 
\[
N^T \colon \aM_{R,U} \to \aM_{R,U}
\]
for any admissible $G$-set $T$.  We explain how this ``internal norm'' arises
from structure on the collection of norms and forgetful functors on
the categories $\aM_{\iota_H^* R, \iota_H^* U}$ as $H$ varies over the
closed subgroups of $G$; these functors assemble into an incomplete
Mackey functor in homotopical categories.  We also explain the
resulting structure on commutative monoid objects, recovering the
characterizations of~\cite[6.11]{BlumbergHill}.

\subsection{The $G$-symmetric monoidal structure on $\gspectra$ and $\aM_R$}\label{sec:gsym}

In this subsection, we review the canonical $G$-symmetric monoidal
structure on $\gspectra$ and $\aM_R$ for $R$ a commutative ring
orthogonal $G$-spectrum.  We begin by recalling
from~\cite[\S6]{BlumbergHill} the definition of the internal norm in
orthogonal spectra.

\begin{definition}
Let $H \subset G$ be a closed subgroup.  The internal norm of an
orthogonal $G$-spectrum $X$ is specified by the formula
\[
N^{G/H} X = N_H^G \iota_H^* X.
\]
For an arbitrary $G$-set $T$, we define the internal norm by
decomposing $T$ into a disjoint union of orbits $\coprod_i G/H_i$ and
defining
\[
N^T X = \bigwedge_i N^{G/H_i} X.
\]
\end{definition}

For example, when $T$ is a trivial $G$-set, $N^T M$ is simply the
smash-power of $|T|$ copies of $M$.  Note that this definition extends
in the evident way to categories of modules over a commutative ring
orthogonal $G$-spectrum.

There is another equivalent description for this which will make the properties of the norm (summarized in Theorem~\ref{thm:gsymmabs} below) more transparent. If $T$ is a finite $G$-set, then let $B_TG$ denote the translation category of $T$. This has object set $T$ itself and the morphism set is $T\times G$ with structure maps the projection onto $T$ and the action. Given a $G$-spectrum $X$, we have a $B_GT$-shaped diagram $X^{T}$ described by $t\mapsto X$ and $(t,g)$ acts as multiplication by $g$ on $X$.

A map of finite $G$-sets $f\colon T\to S$ produces a covering category $B_TG\to B_SG$ as in \cite[Definition A.24]{HHR}, and therefore we have an associated indexed monoidal product $f_\ast^\otimes$.

\begin{proposition}\label{prop:RemixNorm}
Let $p\colon T\to\ast$ be the terminal map. There is a canonical isomorphism
\[
p_\ast^\otimes X^{T}\cong N^T(X).
\]
\end{proposition}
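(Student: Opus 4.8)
The plan is to unwind the indexed monoidal product $p_\ast^\otimes$ applied to $X^{T}$, reduce to the case of a single orbit, and there recognise the resulting construction as the Hill--Hopkins--Ravenel norm. First I would describe $p_\ast^\otimes X^{T}$ explicitly. The category $B_\ast G$ has a single object $\ast$ with automorphism group $G$, and the covering $B_TG\to B_\ast G$ has discrete fibre over $\ast$ equal to the object set $T$ of $B_TG$. By the construction of the indexed monoidal product over a covering category recalled above, the underlying $G$-spectrum of $p_\ast^\otimes X^{T}$ is therefore $\bigwedge_{t\in T}X^{T}(t)=X^{\sma T}$, and the action of $g\in G$ is computed by unique path lifting: the unique morphism of $B_TG$ with source $t$ lying over $g$ is $(t,g)\colon t\to g\cdot t$, on which $X^{T}$ acts by $g\colon X\to X$. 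Thus $g$ sends the smash factor indexed by $t$ to the one indexed by $g\cdot t$ via the action of $g$ on $X$: this is the ``permute and multiply'' $G$-action.

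Next I would decompose $T=\coprod_i G/H_i$ into orbits. Correspondingly the translation category splits as $B_TG\cong\coprod_i B_{G/H_i}G$, the covering $p$ restricts to the terminal coverings $p_i\colon B_{G/H_i}G\to B_\ast G$, and since the fibre over $\ast$ of a disjoint union of coverings is the disjoint union of the fibres, indexed products carry this coproduct of index categories to a smash product: $p_\ast^\otimes X^{T}\cong\bigwedge_i (p_i)_\ast^\otimes X^{G/H_i}$. As $N^{T}X=\bigwedge_i N^{G/H_i}X$ by definition, it suffices to prove the claim when $T=G/H$. In that case the full subcategory of $B_{G/H}G$ on the object $eH$ has automorphism group $\{g\in G\mid g\cdot eH=eH\}=H$; since $G$ acts transitively on $G/H$, this inclusion $B_\ast H\hookrightarrow B_{G/H}G$ is an equivalence of categories, and the diagram $X^{G/H}$ restricts along it to $\iota_H^* X$. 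Transported across this equivalence, the indexed monoidal product $(p_i)_\ast^\otimes$ becomes the categorical norm $N_H^G$ of \cite[Appendix~A]{HHR}, so $(p_i)_\ast^\otimes X^{G/H}\cong N_H^G\iota_H^* X=N^{G/H}X$; smashing over the orbits then gives $p_\ast^\otimes X^{T}\cong N^{T}X$. (Alternatively, one can avoid the appeal to the norm's description as an indexed product and directly match the explicit ``permute and multiply'' formula above with the standard underlying-spectrum description of $N^{T}X$, at the cost of having to carry along a choice of coset representatives.)

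The step that requires genuine care is the last one: checking that the equivalence $B_\ast H\simeq B_{G/H}G$ is compatible with the covering functors to $B_\ast G$ and with the diagrams $X^{G/H}$ and $\iota_H^* X$ strongly enough to produce an isomorphism of indexed monoidal products---not merely of underlying $G$-spectra---and likewise that the ``coproduct of indices to smash product'' identity for indexed products is itself $G$-equivariant. Both facts are formal consequences of the universal property characterising $f_\ast^\otimes$ together with unique path lifting in covering categories; they constitute exactly the coherence bookkeeping that the word ``canonical'' is carrying in the statement, while everything else follows immediately from the definitions.
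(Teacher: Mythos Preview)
Your proposal is correct and follows essentially the same strategy as the paper: reduce to a single orbit by compatibility with disjoint unions, and there identify both sides as indexed monoidal products over the covering $B_{G/H}G\to BG$. The paper packages the last identification as your parenthetical alternative---it writes $N_H^G\iota_H^*X$ as the indexed product of the diagram $gH\mapsto g\cdot\iota_H^*X$ (built from a choice of coset representatives) and then exhibits an isomorphism from the constant diagram $X^{G/H}$ given at $gH$ by multiplication by $g$---which is exactly the content of your equivalence $B_\ast H\simeq B_{G/H}G$ unwound.
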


\begin{proof}
Since both sides take disjoint unions to smash products (the left by
construction and the right by definition), it suffices to construct
the canonical isomorphism when $T=G/H$.  In this case, each side is
then an indexed product.

Using the additive change of universe equivalence, we can work in the
category of orthogonal spectra with a $G$-action and prove the desired 
equality there. In this case, both sides are the indexed product
associated to $p\colon G/H\to \ast$, so it will suffice to show that
the resulting diagrams are isomorphic. For the left-hand side, the
diagram is the constant diagram $X^{G/H}$. For the right-hand side,
the diagram is determined by choosing coset representatives and
sending a coset $gH$ to the $gHg^{-1}$-spectrum $g\cdot i_H^\ast X$
(where here $g\cdot Y$ for an $H$-spectrum $Y$ is just the restriction
along the isomorphism $gHg^{-1}\cong H$). However, we then have an
equivariant isomorphism of diagrams 
\[
X^{G/H}\cong (gH\mapsto g\cdot i_H^\ast X)
\]
which at a coset $gH$ is simply multiplication by $g$. The indexed products are therefore isomorphic.
\end{proof}

\begin{remark}
The key step in the argument is the same as the one showing that we have canonical isomorphisms
\[
F_H(G_+,i_H^\ast X)\cong F(G/H_+,X)\text{ and } G_+\sma_H i_H^\ast X\cong G/H_+\sma X.
\]
In each case, we have the same two diagrams as the one given above and then we compare the associated indexed monoidal products.
\end{remark}

Because $N_H^G$, $-\sma -$, and $\iota_H^*$ preserve weak equivalences and
cofibrant objects, the internal norm is a homotopical functor.

\begin{lemma}\label{lem:inthom}
For any $G$-set $T$, the internal norm $N^T$ preserves acyclic
cofibrations.
\end{lemma}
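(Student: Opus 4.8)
The plan is to reduce to the case of a single orbit $T = G/H$ and then split $N^{G/H}$ into its two constituents, the restriction $\iota_H^*$ and the absolute norm $N_H^G$, checking each separately. Since $N^T X = \bigwedge_i N^{G/H_i} X$ by definition and a smash product of acyclic cofibrations is again an acyclic cofibration in a symmetric monoidal model category (by the pushout--product and monoid axioms, which hold for $\gspectra$ and $\aM_R$ by the classical symmetric monoidal model structures \cite{MM, HHR}), it will suffice to show that each functor $N^{G/H} = N_H^G \circ \iota_H^*$ carries acyclic cofibrations to acyclic cofibrations. An alternative packaging of this reduction is available through Proposition~\ref{prop:RemixNorm}, which realizes $N^T$ as the indexed monoidal product along the covering $B_TG \to BG$ of the diagram $X^{T}$; I will use the direct description since it matches the definition of $N^T$ above.

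For $\iota_H^*$, the point is that it preserves all weak equivalences, so it is enough to see that it preserves cofibrations. Restricting a generating cofibration $G_+ \sma_K S^{-V} \sma (S^{n-1}_+ \to D^n_+)$ along $H \subseteq G$ and applying the double coset decomposition of $\iota_H^*(G/K_+)$ rewrites it as a coproduct of generating cofibrations of $\gspectra[H]$ (and the same argument applies to $R$-modules, since $R$-module structures restrict compatibly). As $\iota_H^*$ preserves colimits, it therefore carries all cofibrations to cofibrations, hence all acyclic cofibrations to acyclic cofibrations.

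For the absolute norm $N_H^G$, I would invoke the filtration analysis of the norm from \cite[Appendix B]{HHR}. Applied to a cofibration $A \to B$ it produces a finite filtration of $N_H^G B$ under $N_H^G A$ whose successive stages are attached along ``punctured cube'' pushout--product maps built, by indexed smash products, from copies of $A \to B$ and identity maps; when $A \to B$ is moreover a weak equivalence, each such attaching map has at least one factor equal to the acyclic cofibration $A \to B$, so it is itself an acyclic cofibration, and hence each stage inclusion and therefore the composite $N_H^G A \to N_H^G B$ is an acyclic cofibration. In the module category over a commutative ring orthogonal $G$-spectrum $R$ one argues in the same way with the $R$-relative norm, or observes that $N^{G/H}$ on $\aM_R$ is obtained from the absolute norm by base change along the internal norm map $N_H^G \iota_H^* R \to R$, which is a left Quillen functor. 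Together with the first paragraph this gives the lemma for all $T$.

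I expect the only genuinely non-formal step to be the behavior of $N_H^G$ on acyclic cofibrations: unlike $\iota_H^*$ and $\sma$, the norm is not a left adjoint, so this is not automatic and really rests on the point-set filtration of \cite{HHR}. The orbit decomposition, the smash reduction, and the analysis of $\iota_H^*$ are all formal consequences of the monoidal model structures recalled above.
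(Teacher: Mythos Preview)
Your argument is correct and follows the same decomposition the paper has in mind. The paper gives no proof beyond the sentence immediately preceding the lemma, which notes that $N_H^G$, $\sma$, and $\iota_H^*$ each preserve weak equivalences and cofibrant objects and concludes that $N^T$ is homotopical. Your write-up expands this into a genuine proof of the stated claim (preservation of acyclic cofibrations, not merely of weak equivalences between cofibrant objects): you check that $\iota_H^*$ preserves cofibrations on generators, and you correctly locate the only substantive step in the behavior of $N_H^G$ on acyclic cofibrations, for which you invoke the punctured-cube filtration of \cite[Appendix~B]{HHR}. That is precisely what the paper is tacitly relying on, so the two approaches agree; yours simply supplies the details the paper omits.

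One small caution: your reduction to orbits via ``a smash product of acyclic cofibrations is an acyclic cofibration'' uses the pushout--product axiom against $\emptyset \to N^{G/H_i} X$, which needs each $N^{G/H_i} X$ to be cofibrant. This holds once the input $X$ is cofibrant (since $\iota_H^*$ and $N_H^G$ preserve cofibrant objects), and the lemma is only ever applied in that regime, but the lemma as stated does not impose that hypothesis.
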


%Moreover, the work of~\cite[App. B]{HHR} establishes the following
%theorem, which encodes the $G$-symmetric monoidal structure.

We can now recall the basic theorem establishing the $G$-symmetric monoidal structure on $G$-spectra. All of this follows easily from Appendix A of \cite{HHR}; for convenience, we include details here.

\begin{theorem}\label{thm:gsymmabs}
%\hspace{5 pt}
\mbox{}
\begin{enumerate}
\item For $H_1 \subseteq H_2 \subseteq G$, there is a natural
isomorphism
\[
\iota_{H_1}^* \cong \iota_{H_1}^* \iota_{H_2}^*.
\]
\item For $G$-sets $T_1$ and $T_2$,% and $X$ is a cofibrant object in $\gspectra$,
there is a natural isomorphism
\[
N^{T_1 \times T_2} X \htp N^{T_1} N^{T_2} X.
\]
\item For $K \subset H$, there is a natural isomorphism
\[
\iota_K^* N^T X \htp N^{\iota_K^* T} \iota_K^* X
\]
\end{enumerate}
\end{theorem}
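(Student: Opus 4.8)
The plan is to deduce all three statements from the identification of the internal norm with an indexed monoidal product (Proposition~\ref{prop:RemixNorm}), together with two standard formal properties of indexed monoidal products over covering categories from~\cite[Appendix~A]{HHR}: strict functoriality along composites of covering maps, and a Beck--Chevalley isomorphism over pullback squares of covering categories. As in the proof of Proposition~\ref{prop:RemixNorm} I would carry out the whole argument in the point-set category of orthogonal spectra with a $G$-action; the isomorphisms produced are then point-set, hence in particular weak equivalences, and homotopy invariance is already recorded in Lemma~\ref{lem:inthom}. Statement~(1) requires no work beyond this setup: it is the transitivity of restriction along $H_1\subseteq H_2\subseteq G$, corresponding to the composite of covering categories $B_{\ast}H_1\to B_{\ast}H_2\to B_{\ast}G$.

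For~(2), I would factor the terminal map $q\colon T_1\times T_2\to\ast$ as $T_1\times T_2\xrightarrow{\mathrm{pr}_2}T_2\xrightarrow{p_2}\ast$. Functoriality of the indexed monoidal product gives $q^\otimes_\ast\cong p^\otimes_{2,\ast}\circ\mathrm{pr}^\otimes_{2,\ast}$, so by Proposition~\ref{prop:RemixNorm} it is enough to check that the partial indexed product $\mathrm{pr}^\otimes_{2,\ast}X^{T_1\times T_2}$ is, as a $B_{T_2}G$-shaped diagram, naturally isomorphic to $(N^{T_1}X)^{T_2}$; feeding this back through $p^\otimes_{2,\ast}$ and Proposition~\ref{prop:RemixNorm} then identifies $N^{T_1\times T_2}X$ with $N^{T_1}N^{T_2}X$. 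The required isomorphism of diagrams comes from the fact that the fibers of $\mathrm{pr}_2$ are copies of $T_1$, compatibly with the $G$-action, so that $\mathrm{pr}^\otimes_{2,\ast}$ is computed fiberwise by the norm along $T_1$, exactly as in the reindexing step of Proposition~\ref{prop:RemixNorm}.

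For~(3), the double coset formula, I would use that restriction $\iota_K^*$ corresponds to pulling covering categories back along $B_{\ast}K\to B_{\ast}H$, and that the square with corners $B_{\iota_K^*T}K$, $B_TH$, $B_{\ast}K$, $B_{\ast}H$ is a (strict) pullback of covering categories, the translation category of the $K$-set $\iota_K^*T$ being the pullback of the translation category of the $H$-set $T$. The Beck--Chevalley isomorphism then gives $\iota_K^*\,p^\otimes_\ast X^T\cong(\iota_K^*T\to\ast)^\otimes_\ast\,\iota_K^*(X^T)$, and $\iota_K^*(X^T)$ is the constant diagram $(\iota_K^*X)^{\iota_K^*T}$, so the right-hand side is $N^{\iota_K^*T}\iota_K^*X$ by Proposition~\ref{prop:RemixNorm}. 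Decomposing $T=\coprod_iH/H_i$ and using the orbit decomposition $\iota_K^*(H/H_i)\cong\coprod_{[g]\in K\backslash H/H_i}K/(K\cap gH_ig^{-1})$ recovers the classical form of the double coset formula, since $N^{K/(K\cap gH_ig^{-1})}\iota_K^*X\cong N^{K\cap gH_ig^{-1}}_K\iota^*_{K\cap gH_ig^{-1}}\iota_K^*X$.

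The hard part will be the bookkeeping underlying the two diagram identifications: pinning down $\mathrm{pr}^\otimes_{2,\ast}X^{T_1\times T_2}$ and $\iota_K^*(X^T)$ as the expected ``constant'' diagrams on the nose, and verifying that the square of translation categories used in~(3) literally meets the hypotheses of the Beck--Chevalley statement of~\cite[Appendix~A]{HHR}. Both are matters of unwinding the translation-category description of covering maps and their fibers; no genuinely new homotopy theory enters once Proposition~\ref{prop:RemixNorm} and the cited point-set properties of indexed monoidal products are available.
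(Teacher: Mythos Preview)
Your proposal is correct and follows essentially the same route as the paper: you reduce all three statements to Proposition~\ref{prop:RemixNorm} and the formal properties of indexed monoidal products from \cite[Appendix~A]{HHR}, invoking functoriality along composites for~(2) and the Beck--Chevalley isomorphism for the pullback square of translation categories for~(3). The only cosmetic differences are that the paper factors through $T_1$ rather than $T_2$ in~(2), and in~(3) it writes the pullback square at the $G$-level (with corners $B_{G/H\times T}G$, $B_TG$, $B_{G/H}G$, $BG$) before identifying the left edge with $B_{\iota_H^*T}H\to BH$, whereas you pass to the subgroup level directly; neither changes the substance.
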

\begin{proof}
The first part is obvious.  For the second and third parts, we use the alternative description of $N^TX$ given by Proposition~\ref{prop:RemixNorm}.

For the second, observe that $B_{T_1\times T_2} G\cong B_{T_1}G\times B_{T_2}G$, and the composite of the norms is the composites of the indexed products
\[
T_1\times T_2\to T_1\to \ast.
\]
The composite of the indexed products is the indexed product of the composites \cite[Prop A.29]{HHR}.

The third is the variant of the double coset formula here. If $T$ is a finite $G$-set, then we have a pullback diagram of categories
\[
\xymatrix{
{B_{G/H\times T}G}\ar[r]\ar[d] & {B_T G}\ar[d] \\
{B_{G/H} G}\ar[r] & {BG.}
}
\]
Since $G/H\times T\cong G\times_H i_H^\ast T$, the left-hand side of this diagram is equivalent to $B_{i_H^\ast T} H\to BH$. Since the map on spectra induced by pulling back along $B_{G/H} G\to BG$ is $i_H^\ast$, we conclude by \cite[Prop A.31]{HHR} that $i_H^\ast N^TX\cong N^{i_H^\ast T}i_H^{\ast X}$.
\end{proof}

The analogue of Theorem~\ref{thm:gsymmabs} for modules over a
commutative ring orthogonal $G$-spectrum $R$ follows from the
characterization of the $R$-relative norm via the formula
\[
_R N_H^G X \cong R \sma_{N_H^G R} N_H^G X
\]
and the fact that the norm $N_H^G$ is the left adjoint to the
restriction functor $\iota_H^*$ on commutative rings.  We explain in
detail the argument below in the proof of Theorem~\ref{thm:gsymmmon}.

\subsection{The $\sL_U$-symmetric monoidal structure on $\gspectra_U$ and $\aM_{R,U}$}

We now provide the analogous definitions in our context.

\begin{definition}
Given $H \subset G$ a closed subset, we define the internal norm
\[
_R N_U^{G/H} M \colon \aM_{R,U} \to \aM_{R,U}
\]
as the composite
\[
_R N^{G/H}_U (-) := _R N^{G,U}_{H,\iota_H^*} \iota_H^* (-).
\]
We extend the internal norm to an arbitrary $G$-set $T$ by decomposing
$T$ into a disjoint union of orbits $\coprod_i G/H_i$ and specifying
that
\[
_R N^T_U M = \bigwedge_{i} {_R N^{G/H_i}} M.
\]
\end{definition}

We now describe the homotopical properties of the internal norm.  We
begin by considering the absolute case where $R = S$.

\begin{lemma}\label{lem:nderive}
Let $T$ be an admissible $G$-set.  The functor $N^T$ preserves weak
equivalences between cofibrant objects.
\end{lemma}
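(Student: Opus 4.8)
The plan is to reduce to one orbit at a time and then assemble the homotopical facts already established for $\iota_H^*$, for the norm $N_{H,\iota_H^* U}^{G,U}$, and for $\sma_U$. Write $T = \coprod_i G/H_i$. Since the admissible sets for $\sL_U$ form an indexing system, hence in particular a truncation sub coefficient system of $\mSet$, they are closed under passage to subobjects; as each orbit $G/H_i$ maps monically into $T$, every $G/H_i$ is again admissible for $U$. By definition $N^T = {_S N^T_U}$ satisfies $N^T M = N^{G/H_1} M \sma_U \cdots \sma_U N^{G/H_m} M$, where $N^{G/H_i}(-) = N_{H_i,\iota_{H_i}^* U}^{G,U}\iota_{H_i}^*(-)$; here I use that, $R = S$ being the unit, the relative norm agrees with the absolute norm of Definition~\ref{defn:internalabsnorm}, because $N_{H,\iota_H^* U}^{G,U}$ is strong symmetric monoidal (Theorem~\ref{thm:normstrong}) and so preserves the unit. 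Thus it suffices to treat each factor $N^{G/H_i}$ and then the iterated smash product $\sma_U$.

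So let $f \colon M \to M'$ be a weak equivalence between cofibrant objects of $\gspectra_U$. The forgetful functor $\iota_{H_i}^* \colon \gspectra_U \to \gspectra[H_i]_{\iota_{H_i}^* U}$ preserves all weak equivalences, and it carries cofibrant objects to cofibrant objects, immediately from the Mackey-type decomposition of $\iota_{H_i}^*$ applied to the generating cells (and the fact that $\iota_{H_i}^*$ preserves colimits). Hence $\iota_{H_i}^* f$ is a weak equivalence between cofibrant objects. Because $G/H_i$ is admissible, the corollary to Theorem~\ref{thm:normagree} shows $N_{H_i,\iota_{H_i}^* U}^{G,U}\iota_{H_i}^* f$ is a weak equivalence, and Lemma~\ref{lem:cof} (together with the evident unital form of its filtration argument) shows that its source and target are cofibrant in $\gspectra_U$. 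So $N^{G/H_i} f$ is a weak equivalence between cofibrant objects of $\gspectra_U$, for every $i$.

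Finally, since $\gspectra_U$ is a symmetric monoidal model category, the pushout-product axiom together with Ken Brown's lemma implies that $\sma_U$ preserves weak equivalences between cofibrant objects in each variable; smashing the maps $N^{G/H_i} f$ together therefore yields that $N^T f = N^{G/H_1} f \sma_U \cdots \sma_U N^{G/H_m} f$ is a weak equivalence, as desired. I expect the only step needing genuine attention to be the observation that every orbit of an admissible $G$-set is again admissible; once that is in place, each remaining ingredient is quoted verbatim from the preceding sections and no new point-set input is required.
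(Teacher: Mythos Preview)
Your argument is correct and follows essentially the same route as the paper: the paper's proof is much terser, singling out only the one nontrivial ingredient (that $\iota_H^*$ preserves cofibrant objects, via preservation of colimits, of cofibrant orthogonal spectra, and the identification $\iota_H^*(\sL_U(1)_+\sma X)\cong \sL_{\iota_H^*U}(1)_+\sma\iota_H^*X$), while you spell out the full reduction through Theorem~\ref{thm:normagree}, Lemma~\ref{lem:cof}, and the pushout-product axiom. Your explicit observation that each orbit of an admissible set is again admissible (via the truncation property of indexing systems) is a point the paper leaves implicit.
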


\begin{proof}
This is a consequence of the fact that $\iota_H^*$ preserves cofibrant
orthogonal $G$-spectra~\cite[V.2.2]{MM}, colimits, and the
identification
\[
\iota_H^* \sL_U(1)_+ \sma X \cong \sL_{\iota_H^*U}(1)_+ \sma
(\iota_H^* X).
\]
\end{proof}

Furthermore, we can also identify the interaction of $N^T$ with the
cartesian product.

\begin{lemma}\label{lem:normcart}
When $T_1$ and $T_2$ are admissible $G$-sets and $M$ is a cofibrant
object in $\gspectra[G][\bL_U]$, there is a natural weak equivalence
\[
N^{T_1 \times T_2} M \htp N^{T_1} (N^{T_2} M).
\]
\end{lemma}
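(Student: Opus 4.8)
The plan is to reduce to the analogous statement for orthogonal $G$-spectra, namely Theorem~\ref{thm:gsymmabs}(2), by comparing the operadic internal norm with the internal norm on underlying orthogonal $G$-spectra. Write $\bU\colon\gspectra[G][\bL_U]\to\gspectra[G]$ for the forgetful functor, which both preserves and detects weak equivalences. The auxiliary step I would establish first is: for an admissible $G$-set $T$ and a cofibrant object $X$ of $\gspectra[G][\bL_U]$, there is a natural weak equivalence $\bU N^T X\htp N^T\bU X$, where on the right $N^T$ denotes the internal norm of the previous subsection on orthogonal $G$-spectra.

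To prove this I would decompose $T=\coprod_i G/H_i$; each orbit $G/H_i$ is again admissible, since the indexing system of admissible sets is closed under passage to subobjects. Because $\iota_{H_i}^*$ carries cofibrant objects to cofibrant objects (as in the proof of Lemma~\ref{lem:nderive}) and $N_{H_i,\iota_{H_i}^* U}^{G,U}$ preserves cofibrancy by Lemma~\ref{lem:cof}, each factor $N^{G/H_i}X\cong N_{H_i,\iota_{H_i}^* U}^{G,U}\iota_{H_i}^* X$ is cofibrant, and Theorem~\ref{thm:normagree} supplies a natural weak equivalence $\bU N^{G/H_i}X\htp N_{H_i}^G\iota_{H_i}^*\bU X=N^{G/H_i}\bU X$ of orthogonal $G$-spectra. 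Smashing these equivalences together over $i$ and invoking that the underlying orthogonal $G$-spectrum of an iterated $\sma_U$-product of cofibrant objects is naturally equivalent to the ordinary smash product (the filtration argument establishing the corresponding statement for $\gspectra_U$), together with homotopy-invariance of the ordinary smash product on cofibrant objects, yields the auxiliary equivalence.

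Granting this, the lemma follows from a chain of natural weak equivalences. First, $T_1\times T_2$ is admissible, since the indexing system is closed under Cartesian products, and $N^{T_2}M$ is cofibrant, by the cofibrancy considerations above together with the pushout-product axiom for $\sma_U$. Then
\begin{align*}
\bU N^{T_1\times T_2}M &\htp N^{T_1\times T_2}\bU M\htp N^{T_1}N^{T_2}\bU M\\
&\htp N^{T_1}\bU N^{T_2}M\htp\bU N^{T_1}N^{T_2}M,
\end{align*}
where the first and last equivalences are instances of the auxiliary step (applied to $X=M$ and $X=N^{T_2}M$), the middle one is Theorem~\ref{thm:gsymmabs}(2) applied to the orthogonal $G$-spectrum $\bU M$, and the fourth uses the auxiliary step once more to rewrite $\bU N^{T_2}M\htp N^{T_2}\bU M$ together with the fact that $N^{T_1}$ on orthogonal $G$-spectra preserves weak equivalences between objects of cofibrant homotopy type (Lemma~\ref{lem:inthom}). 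Since $\bU$ detects weak equivalences and every equivalence above is natural in $M$, this produces the desired natural weak equivalence $N^{T_1\times T_2}M\htp N^{T_1}(N^{T_2}M)$.

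The main obstacle is the cofibrancy bookkeeping: at each stage the objects involved must be cofibrant, or at least of cofibrant homotopy type, so that the three homotopical inputs --- Theorem~\ref{thm:normagree}, the comparison $\sma_U\htp\sma$ on cofibrant objects, and homotopy-invariance of $N^T$ and of $\sma$ --- actually apply. The key point is that $N^{T_2}M$ is \emph{genuinely} cofibrant, not merely of cofibrant homotopy type; this is exactly what Lemma~\ref{lem:cof}, stability of cofibrancy under $\iota_H^*$, and the pushout-product axiom guarantee, and it is what licenses re-applying the auxiliary step to $X=N^{T_2}M$. Apart from this, the only structural facts needed are the closure of the admissible indexing system under subobjects and under Cartesian products.
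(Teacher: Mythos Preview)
Your proposal is correct and follows essentially the same strategy as the paper's proof, which simply cites Lemma~\ref{lem:nderive}, Lemma~\ref{lem:inthom}, and Theorem~\ref{thm:normagree}. You have unpacked precisely what those citations are doing: compare the operadic internal norm with the ordinary internal norm on underlying orthogonal $G$-spectra via Theorem~\ref{thm:normagree}, invoke the isomorphism $N^{T_1\times T_2}\cong N^{T_1}N^{T_2}$ from Theorem~\ref{thm:gsymmabs}(2) there, and use the homotopy-invariance statements (together with the cofibrancy bookkeeping you make explicit via Lemma~\ref{lem:cof} and stability under $\iota_H^*$) to transport the equivalence back.
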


\begin{proof}
This follows from Lemma~\ref{lem:nderive}, Lemma~\ref{lem:inthom}, and
Theorem~\ref{thm:normagree}.
\end{proof}

Proposition~\ref{prop:normcomp} shows that the norm functors compose
as expected, and it is clear that for $H_1 \subseteq H_2 \subseteq G$,
$\iota_{H_1}^* \cong \iota_{H_1}^* \iota_{H_2}^*$.

\begin{theorem}\label{thm:doublecoset}
Fix $K \subseteq H$, let $T$ be an admissible $H$-set, and let $M$ be
a cofibrant object in $\gspectra[G][\bL_U]$.  The composite $\iota_K^*
N^T M$ is naturally equivalent to $N^{\iota_K^* T} \iota_K^* M$.
\end{theorem}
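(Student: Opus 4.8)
The plan is to deduce this from the corresponding statement for orthogonal $G$-spectra, Theorem~\ref{thm:gsymmabs}(3), by transporting along the comparison weak equivalences established in the previous section. Write $\mathbb U$ for the relevant forgetful functor to orthogonal spectra; recall that $\mathbb U$ detects weak equivalences, commutes with the restriction functors $\iota^*$, and carries cofibrant objects to objects of cofibrant homotopy type. Since $\iota_K^*$ is strong symmetric monoidal and, by construction, $N^T$ is the $\sma_U$-product of the functors $N^{G/H_i}$ indexed by the orbits $G/H_i$ of $T$ --- each factor being cofibrant by Lemma~\ref{lem:cof} --- I would first reduce to the case of a single orbit $T=G/H$. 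The double coset content then resides entirely in the orbit decomposition $\iota_K^*(G/H)\cong\coprod_{KgH\in K\backslash G/H}K/(K\cap{}^{g}H)$ of the restricted orbit.

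For $T=G/H$ I would rewrite $\iota_K^*N^{G/H}M=\iota_K^*N_{H,\iota_H^* U}^{G,U}\iota_H^*M$. Since $M$, hence $\iota_H^*M$, is cofibrant (restriction preserves cofibrant objects, as in the proof of Lemma~\ref{lem:nderive}) and $G/H$ is admissible for $U$, Theorem~\ref{thm:normagree} provides a natural weak equivalence from $\mathbb U N_{H,\iota_H^* U}^{G,U}\iota_H^*M$ to the classical internal norm $N^{G/H}(\mathbb U M)=N_H^G\iota_H^*\mathbb U M$ on orthogonal $G$-spectra. Applying $\iota_K^*$, which preserves weak equivalences, yields a natural weak equivalence $\mathbb U\iota_K^*N^{G/H}M\htp\iota_K^*N^{G/H}(\mathbb U M)$.

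Then I would invoke Theorem~\ref{thm:gsymmabs}(3) for the orthogonal $G$-spectrum $\mathbb U M$ to get a natural isomorphism $\iota_K^*N^{G/H}(\mathbb U M)\cong N^{\iota_K^*(G/H)}(\mathbb U\iota_K^* M)$. Here one needs that each orbit $K/(K\cap{}^{g}H)$ of $\iota_K^*(G/H)$ is admissible for $\iota_K^* U$: the admissible sets for the linear isometries operads form an indexing system~\cite[4.17]{BlumbergHill}, hence a sub-coefficient system of $\mSet$ closed under passage to subobjects, so restricting the admissible set $G/H$ to $K$ and decomposing into orbits keeps us among the admissible sets (concretely, restricting the equivariant embedding $\bR\{G/H\}\otimes U\hookrightarrow U$ to $K$ exhibits each summand $K/(K\cap{}^{g}H)$ as admissible for $\iota_K^* U$). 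Finally I would run the comparison of the second paragraph in reverse, over the group $K$: since $\iota_K^* M$ is cofibrant and each orbit of $\iota_K^*(G/H)$ is admissible, Theorem~\ref{thm:normagree} applied factorwise, together with the comparison of $\sma_{\iota_K^* U}$ with $\sma$ on cofibrant objects (the norm outputs again being cofibrant by Lemma~\ref{lem:cof}), gives a natural weak equivalence $N^{\iota_K^*(G/H)}(\mathbb U\iota_K^* M)\htp\mathbb U N^{\iota_K^*(G/H)}\iota_K^* M$. Splicing the three resulting (zig-zag) equivalences and using that $\mathbb U$ detects weak equivalences produces the desired natural equivalence $\iota_K^*N^{G/H}M\htp N^{\iota_K^*(G/H)}\iota_K^* M$, which by the reduction above finishes the proof.

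The main obstacle will be bookkeeping rather than new mathematics: all the genuinely new content is absorbed into Theorem~\ref{thm:gsymmabs}(3), but one must check at each stage that the spectra being normed are cofibrant (so that Theorem~\ref{thm:normagree} and the comparison of $\sma_U$ with $\sma$ both apply) and that every comparison map is natural in $M$, so that the zig-zag assembles into an honest natural transformation. A secondary point, used above, is that restriction carries an admissible set to a disjoint union of admissible sets, which is where the indexing-system structure of the admissible sets for $\sL_U$ is invoked.
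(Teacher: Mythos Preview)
Your proposal is correct and follows the same strategy as the paper's own proof, which is the one-line observation that the result follows from Theorem~\ref{thm:normagree} together with the double coset formula for the norm in orthogonal $G$-spectra (Theorem~\ref{thm:gsymmabs}(3)). You have simply unpacked the bookkeeping the paper leaves implicit: the reduction to a single orbit, the cofibrancy checks needed to invoke Theorem~\ref{thm:normagree} and the $\sma_U$ versus $\sma$ comparison, and the verification that restriction of an admissible set decomposes into admissible orbits.
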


\begin{proof}
This again follows from Theorem~\ref{thm:normagree} and the fact that
the desired equivalence holds for the norm in orthogonal spectra.
\end{proof}

When $R$ is no longer necessarily the sphere, we have corresponding
analogues of the preceding results; we summarize the situation in the
following theorem.

\begin{theorem}\label{thm:gsymmmon}
Let $R$ be a cofibrant object in $\gspectra_U[\bP]$.
\begin{enumerate}
\item The functor ${_R} N^T_U$ preserves weak equivalences between
cofibrant objects.
\item When $T_1$ and $T_2$ are admissible $G$-sets and $M$ is a
cofibrant object in $\aM_{R,U}$, there is a natural equivalence
\[
_R N_U^{T_1 \times T_2} M \htp _R N_U^{T_1} (_R N_U^{T_2} M).
\]
\item For $H_1 \subseteq H_2 \subseteq G$, there is a natural
isomorphism
\[
\iota_{H_1}^* \cong \iota_{H_1}^* \iota_{H_2}^*.
\]
\item For $K \subset H$ and $T$ an admissible $G$-set, there is a
natural equivalence
\[
\iota_K^* {_R} N_{U}^T M \htp {_R} N_{U}^{\iota_K^* T} \iota_K^* M
\]
when $M$ is a cofibrant object in $\aM_{R,U}$.
\end{enumerate}
\end{theorem}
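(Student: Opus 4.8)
The plan is to deduce all four assertions from their absolute counterparts in Theorem~\ref{thm:gsymmabs} (together with the extension of that result to modules over a commutative ring orthogonal $G$-spectrum, which we establish along the way), by combining the base-change description of the relative norm with the homotopical comparison results Theorem~\ref{thm:normagree}, Theorem~\ref{thm:rnormagree}, and Proposition~\ref{prop:normcomp}. Statement~(3) is immediate, exactly as Theorem~\ref{thm:gsymmabs}(1). For~(1), write $T = \coprod_i G/H_i$, so that $_R N^T_U$ is the $\sma_{R,U}$-product of the functors $_R N^{G/H_i}_U = {_R N^{G,U}_{H_i,\iota_{H_i}^*}}\,\iota_{H_i}^*$, each $G/H_i$ being admissible since $T$ is; each $\iota_{H_i}^*$ preserves cofibrant objects (as in the proof of Lemma~\ref{lem:nderive}), each $_R N^{G,U}_{H_i,\iota_{H_i}^*}$ preserves weak equivalences between cofibrant objects by Theorem~\ref{thm:rnormagree}, and $\sma_{R,U}$ preserves weak equivalences between cofibrant objects because $\aM_{R,U}$ is a symmetric monoidal model category; hence so does the composite.

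The core of the argument, used for~(2) and~(4), is the defining isomorphism ${_R N^{G,U}_{H,\iota_H^* U}} X \cong R \sma_{N^{G,U}_{H,\iota_H^* U}(\iota_H^* R)}\, N^{G,U}_{H,\iota_H^* U} X$ of the relative norm, together with the fact (Theorem~\ref{thm:adj}, Theorem~\ref{thm:homoadj}) that $N^{G,U}_{H,\iota_H^* U}$ is left adjoint to $\iota_H^*$ on commutative rings. Since $R$, hence $\iota_H^* R$, is cofibrant commutative, $N^{G,U}_{H,\iota_H^* U}(\iota_H^* R)$ is a cofibrant commutative ring, and the proof of Theorem~\ref{thm:rnormagree} shows that $N^{G,U}_{H,\iota_H^* U} X$ is cofibrant as a module over it; so the displayed coequalizer computes the derived relative smash product. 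Applying Theorem~\ref{thm:normagree} to $X$ and its commutative-ring variant to $R$, on cofibrant inputs we may therefore replace the operadic norm $N^{G,U}_{H,\iota_H^* U}$ by the honest norm $N_H^G$ of orthogonal $G$-spectra throughout, reducing each of~(2) and~(4) to the orthogonal-spectrum identity $_R N_H^G(-) \cong R \sma_{N_H^G R} N_H^G(-)$. Statement~(4) then follows from the double coset formula of Theorem~\ref{thm:gsymmabs}(3) (equivalently Theorem~\ref{thm:doublecoset}): $\iota_K^*$ is strong symmetric monoidal and preserves all colimits, so it commutes with $\sma_{R,U}$, with the orbit decomposition of $T$, and---via $\iota_K^*\iota_H^*\cong\iota_K^*$ for $K\subseteq H$---with the inner restriction, so the orthogonal-spectrum double coset formula applies orbitwise and reassembles to the asserted equivalence. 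For~(2) one argues similarly, using the absolute multiplicativity $N^{T_1\times T_2}\htp N^{T_1}N^{T_2}$ of Theorem~\ref{thm:gsymmabs}(2), Proposition~\ref{prop:normcomp} to compose the norms of the cofibrant pieces, and the distributivity of $\sma_{R,U}$ over disjoint unions of $G$-sets.

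I expect the main obstacle to lie entirely in the homotopical bookkeeping around the base-change coequalizer: one must check that $R \sma_{N^{G,U}_{H,\iota_H^* U}(\iota_H^* R)}(-)$, defined as a point-set coequalizer, genuinely models the derived relative smash product---i.e. that $N^{G,U}_{H,\iota_H^* U}(\iota_H^* R)$ is a cofibrant commutative ring over which cofibrant modules are homotopically flat, and that $N^{G,U}_{H,\iota_H^* U} X$ is cofibrant over it---which is precisely the content supplied by Lemma~\ref{lem:cof} and the proof of Theorem~\ref{thm:rnormagree}. Once this is secured, the remaining steps are routine: tracking orbit decompositions, invoking the coherence isomorphisms of Theorem~\ref{thm:assoc} and the absolute structure theorem, and keeping every intermediate object cofibrant so that the comparison maps of Theorem~\ref{thm:normagree} remain weak equivalences. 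None of this goes beyond the analogous orthogonal-spectrum and EKMM-style arguments already invoked in the paper.
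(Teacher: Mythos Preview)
Your proposal is correct and follows essentially the same approach as the paper: both use Theorem~\ref{thm:rnormagree} for~(1), declare~(3) immediate, and for~(2) and~(4) exploit the base-change description ${_R}N^{G,U}_{H,\iota_H^* U} X \cong R \sma_{N^{G,U}_{H,\iota_H^* U}(\iota_H^* R)} N^{G,U}_{H,\iota_H^* U} X$, the strong monoidality of $\iota_K^*$, the adjunction on commutative rings, the absolute double coset formula (Theorem~\ref{thm:doublecoset}/Theorem~\ref{thm:gsymmabs}), and the observation that the relevant coequalizers compute derived smash products under the cofibrancy hypotheses. The only organizational difference is that you front-load the reduction to honest orthogonal-spectrum norms via Theorem~\ref{thm:normagree}, whereas the paper stays with the operadic norms $N_U^T$ and invokes the absolute results in that language; the ingredients and logic are the same.
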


\begin{proof}
The first of these follows from Theorem~\ref{thm:rnormagree}.  The
second is a consequence of Theorem~\ref{thm:normagree}; the proof is
analogous to the proof of Lemma~\ref{lem:normcart}, along with the
observation that the smash product defining the relative norm computes
the derived smash product under our hypotheses.  The third is
immediate.  For the fourth, we can leverage the absolute result as
follows.

Since $\iota_K^*$ is a strong symmetric monoidal functor, we have the
isomorphisms
\[
\iota_K^* {_R} N_{U}^T M \cong \iota_K^* \left( N_U^T M \sma_{N_U^T R}
R \right) \cong (\iota_K^* N_U^T M) \sma_{\iota_K^* N_U^T R} \iota_K^* R.
\]
By Theorem~\ref{thm:doublecoset}, we know that
\[
\iota^*_K N_U^T M \htp N^{\iota_K^* T} \iota_K^* M.
\]
Moreover, since $N_U^T$ is a left adjoint on commutative rings, we
have an isomorphism
\[
\iota_K^* N_U^T R \cong N_{\iota^*_H U}^{\iota_H^* T} \iota_H^* R,
\]
which is compatible with the counit $N_U^T R \to R$ used in the
formation of the relative smash product.  Since the hypotheses
guarantee we are computing the derived smash product, we end up with a
natural weak equivalence
\[
\iota_K^* {_R} N_{U}^T M \htp N^{\iota_K^* T} \iota_K^*
M \sma_{N_{\iota^*_H U}^{\iota_H^* T} \iota_H^* R} \iota_H^* R \cong {_R} N_{U}^{\iota_K^* T} \iota_K^* M.
\]
\end{proof}

\subsection{The multiplicative structure on $N_\infty$ algebras}

In this subsection, we explain how the $\sL_U$-symmetric monoidal
structure on $\gspectra_U$ induces additional multiplicative structure
on objects of $\gspectra_U[\bP]$.  Of course,
Theorem~\ref{thm:commninf} implies that an object of $\gspectra_U[\bP]$
is an $N_\infty$ algebra structured by the equivariant linear
isometries operad determined by $U$, and~\cite[6.11]{BlumbergHill}
explains the extra structure this gives.  Our purpose here is to
demonstrate that this structure is essentially an immediate
consequence of Theorem~\ref{thm:gsymmmon}.

Let $R$ be a cofibrant object of $\gspectra_U[\bP]$.  The adjunction
of Theorem~\ref{thm:homoadj} yields homotopical counit maps
\[
N^{G/H}_U = N_{H, \iota_H^*}^G \iota_H^* R \to R
\]
for admissible $G/H$, which clearly induce natural maps
\[
N_U^T R \to R \qquad\textrm{and}\qquad G_+ \sma_H N_U^S \iota_K^*
R \to R,
\]
for admissible $G$-sets $T$ and admissible $K \subseteq G$ sets $S$.
The argument of~\cite[6.8]{BlumbergHill} extends without change to
produce a map
\[
N_U^T R \to N_U^S R
\]
given any $G$-map $f \colon S \to T$.

It is clear from the definition of $N_U^T$ that the diagram
\[
\xymatrix{
N_U^{S \coprod T} R \cong N^S_U R \sma N^T_U R \ar[r] \ar[d] & R \sma
R \ar[dl] \\
R \\
}
\]
commutes.  Assertion (2) of Theorem~\ref{thm:gsymmmon} implies that
the diagram
\[
\xymatrix{
N_U^{S \times T} R \cong N_U^S N_U^T R \ar[d] \ar[r] & N_U^T
R \ar[dl] \\
R \\
}
\]
commutes.  Finally, assertion (4) of Theorem~\ref{thm:gsymmmon}
implies that for any admissible sets $S$ and $T$ such that for some
$K \subseteq G$ we have $\iota_K^* S \cong \iota_K^* T$, the diagram
\[
\xymatrix{
\iota_K^* N_U^S R \cong N_{\iota_K^* U}^{\iota_K^* S} \iota_K^*
R \ar[rr]^-{\cong} \ar[dr] && N_{\iota_K^* U}^{\iota_K^* T} \iota_K^*
R \cong \iota_K^* N_U^T R \ar[dl] \\
& R &
}
\]
commutes.  Thus, we precisely recover the characterizations
of~\cite[6.11]{BlumbergHill}.

\section{Examples and applications}

We close with several examples in which the technology in this paper
can be used to construct symmetric monoidal structures on categories
of equivariant modules.  All of the examples we consider arise when
studying smashing Bousfield localization.  The first family of
examples is a necessary ingredient in the work of Greenlees and
Shipley on monoidal equivalences between various models for rational
$G$-spectra~\cite{GreenleesShipley, GreenleesShipley2}.  The second
class of examples is relevant to understanding chromatic localizations
in the equivariant setting.

The technical underpinning of all of these results is the following
theorem of Hopkins and the second
author~\cite{HillHopkinsLocalization}.

\begin{theorem}\label{thm:loc}%[{\cite[]{}]
Let $\cO$ be an $\Ninfty$ operad, and let $\m{\mathcal C}_{\cO}$
denote the associated indexing system. Let $L$ be a Bousfield
localization on the category $\gspectra$ and let $\m{\mathcal Z}$
denote the coefficient system of acyclics for $L$ (i.e. the value at
$G/H$ is the subcategory of the homotopy category of $\gspectra[H]$
consisting of those $H$-spectra which are acyclic for the restriction
of $L$). Then if $\m{\mathcal Z}$ is closed under the (derived) norms
specified by $\m{\mathcal C}_{\cO}$, $L$ preserves $\cO$-algebras.
\end{theorem}

In particular, this theorem reduces questions about what structure a
localization preserves to determining categorical structure on the
categories of acyclics.  We apply this in two cases of classical
interest.

\subsection{Isotropic localization}

As was first observed by McClure~\cite{McClure}, the localization
which nullifies anything induced does not preserve genuine equivariant
commutative rings (e.g., algebras over the linear isometries operad
for a complete universe $U$).  In particular,
$\Sigma^{\infty} \tilde{E}\cP$ cannot be made into a genuine
equivariant commutative ring spectrum: since the restriction to any
proper subgroup of $\Sigma^{\infty}\tilde{E}\cP$ is contractible, then
the putative counit map determined by the commutative ring structure
\[
N_H^Gi_H^\ast \Sigma^{\infty}\tilde{E}\cP\to\Sigma^{\infty}\tilde{E}\cP
\]
cannot be unital.

More generally, we can apply Theorem~\ref{thm:loc} to produce
immediate strengthenings of this observation.  Let $\cF$ be a family
of subgroups of $G$.  For any $\cF$ there exists a smashing
localization $L_{\cF}$ which nullifies any $G$-spectrum with isotropy
in $\cF$.  The canonical localization sequence is then precisely the
isotropy separation sequence:
\[
E\cF_{+}\wedge X\to X\to \tilde{E}\cF\wedge X.
\]

\begin{proposition}\label{prop:families}
Let $\cF$ be a family of subgroups of $G$ which is not the trivial
family.  Then $\tilde{E}\cF$ is not a genuine equivariant
commutative ring spectrum.  (It is however always a naive $E_\infty$
ring spectrum.)
\end{proposition}

\begin{proof}
If $\cF$ is non-trivial, then $i_e^\ast \Sigma^{\infty} \tilde{E}\aF$ is
contractible. The argument above now shows that if
$\Sigma^{\infty} \tilde{E}\aF$ had a genuine equivariant commutative
ring structure, then the absolute norm would factor through the zero
ring; we arrive at the same contradiction as above.  The final
observation is always satisfied by Bousfield localizations. 
\end{proof}

Proposition~\ref{prop:families} shows that category of local objects
(equivalently, the category of modules over
$\Sigma^{\infty} \tilde{E}\mathcal F$) cannot be given a symmetric
monoidal structure when working with the symmetric monoidal category
of orthogonal $G$-spectra.  In contrast, using Theorem~\ref{thm:main1} 
above, we can obtain a symmetric monoidal category of modules.

\begin{corollary}
For any family $\cF$ of subgroups of $G$, the category of local
spectra for $L_{\cF}$ can always be modeled by a symmetric monoidal
category.  
\end{corollary}

More interestingly, we can describe localizations that result in
richer equivariant structures on categories of local objects (i.e.,
modules).

\begin{theorem}
Let $\cF$ be a family of subgroups of $G$. Let $\sL_U$ be such that
for all admissible $H/K$ and for $H'$ in the family, the isotropy of
\[
N_K^Hi_K^\ast (\Sigma^{\infty}_+ H/H')=\Sigma^{\infty}_{+} Map_K(H,H/H')
\]
is in $\mathcal F$.  Then $\Sigma^{\infty}\tilde{E}\mathcal F$ is a
$\sL_U$-algebra and its category of modules is a $\sL_U$-symmetric
monoidal category.
\end{theorem}

This provides a very satisfying sanity check. If $N$ is a normal
subgroup of $G$ and if $\cF_N$ is the family of subgroups which do not
contain $N$, then there is a composite Quillen equivalence 
\[
\modules{\Sigma^{\infty}\tilde{E}\mathcal F_N} \leftrightarrows \gspectra[(G/N)],
\]
where the right adjoint is essentially just the $N$-fixed points
(e.g., see~\cite[3.2,3.3]{GreenleesShipley2}).  The target is a
$\m{\Set}^{G/N}$-monoidal category as recalled in Section~\ref{sec:gsym}
above.  Our work can be used to promote this Quillen equivalence to a
structured equivalence via the following result.

\begin{corollary}
Let $N$ be a normal subgroup of $G$, and let $\cF_{N}$ denote
the family of subgroups of $G$ which do not contain $N$.  Then the
category of $\Sigma^{\infty}\tilde{E}\mathcal F_N$-modules can be
modeled as a $\m{\Set}^{G/N}$-symmetric monoidal category.
\end{corollary}

\subsection{Chromatic localization}

Work of Balmer and Sanders has (up to a small ambiguity) classified
the triangulated subcategories of $\gspectra$~\cite{BalmerSanders}.
These are determined by the topology on the spectrum (in the sense of
Balmer~\cite{Balmer}) of $\gspectra$: triangulated subcategories of
$\gspectra$ are in bijective correspondence with Thomason subsets of
the spectrum, i.e., the subsets which are a union of closed subsets
with quasi-compact complement.  Balmer and Sanders showed that the
prime ideals are exactly the inverse images under various geometric
fixed points functors of the classical Devinatz-Hopkins-Smith type
$n$-spectra.

Given a Thomason subset $V$, let $L_V$ denote the associated
localization nullifying the triangulated subcategory associated to
$V$.  Theorem~\ref{thm:loc} above specifies when $L_V$ preserves
equivariant multiplicative structures (and a complete classification
of such localizations is forthcoming), so we single out a particular
case of interest.

Fix a prime $p$ such that $p | |G|$ and let $(\gspectra)_p$ denote the
category $\gspectra$ localized at $p$.  Let $V_{n,G}$ denote the
triangulated subcategory of $(\gspectra)_p$ generated by $G_+\wedge
M(n)$, where $M(n)$ is any type $n$-spectrum.

\begin{proposition}
The localization $L_{V_{n,G}}$ does not preserve genuine equivariant
commutative ring spectra.
\end{proposition}

\begin{proof}
Everything in the triangulated category $V_{n,G}$ has the property
that the geometric fixed points are contractible.  However, the
diagonal map provides an isomorphism in the derived category
\[
E \cong \Phi^G N_e^G E
\]
for any spectrum $E$.  In particular, taking 
\[
E=i_e^\ast G_+\wedge M(n)\simeq \bigvee_{|G|} M(n)
\]
shows that the geometric fixed points of the norm of the generator of
the acyclics is not acyclic.
\end{proof}

In particular, there is little hope for any of the equivariant
chromatic categories to be $G$-symmetric monoidal categories.  Once
again, Theorem~\ref{thm:main1} above guarantees that we can construct
models that are symmetric monoidal categories, however.

\iffalse
\subsection{Comparison results}

In this subsection, we show that when $U$ is a complete universe, the
$G$-symmetric monoidal structure on $\gspectra_U$ is equivalent to the
$G$-symmetric monoidal structure on $\gspectra$.
\fi

\begin{appendix}

\section{The equivariant linear isometries operad}\label{app:lin}

In this section, we collect some technical results about the behavior
of the equivariant linear isometries operad.  

\begin{lemma}\label{lem:unisum}
Let $U$ be any $G$-universe.  If $T$ is a non-empty admissible set for
$\sL(U)$, then there is a $G$-equivariant homeomorphism
\[
\mathbb R\{T\}\otimes U\to U.
\]
\end{lemma}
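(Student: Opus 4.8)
The plan is to translate admissibility of $T$ for $\sL_U$ into the existence of a $G$-equivariant linear isometric embedding $\mathbb R\{T\}\otimes U\hookrightarrow U$, and then to promote this embedding to an isomorphism by comparing multiplicities of irreducible subrepresentations.

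First I would unwind the definition. Writing $n=|T|$, let $\sigma_T\colon G\to\Sigma_n$ be the homomorphism classifying the $G$-action on $T$. By \cite[4.17]{BlumbergHill} the $G$-set $T$ is admissible for $\sL_U$ exactly when the graph $\Gamma$ of $\sigma_T$ lies in the family $\aF_n(\sL_U)$ for which $\aL_U(n)=\sL(U^n,U)$ is a universal space. For a linear isometries space, $\sL(U^n,U)^{\Gamma}$ is the space of $\Gamma$-equivariant isometric embeddings $U^n\to U$, which is contractible whenever it is nonempty; hence $\Gamma\in\aF_n(\sL_U)$ if and only if such an embedding exists. Since the $\Gamma\cong G$ action on $U^{n}$ --- the $n$ coordinates permuted via $\sigma_T$, with $G$ acting diagonally through $U$ --- is precisely the permutation-tensor representation $\mathbb R\{T\}\otimes U$, admissibility of $T$ produces a $G$-equivariant linear isometric embedding $\mathbb R\{T\}\otimes U\hookrightarrow U$.

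For the opposite embedding I would use that $T$ is nonempty: the vector $\sum_{t\in T}t$ is $G$-fixed and spans a trivial summand $\mathbb R\subseteq\mathbb R\{T\}$, so $U\cong\mathbb R\otimes U$ is a $G$-orthogonal summand of $\mathbb R\{T\}\otimes U$, giving a $G$-equivariant isometric embedding $U\hookrightarrow\mathbb R\{T\}\otimes U$. Both $U$ and $\mathbb R\{T\}\otimes U$ are countably infinite-dimensional real inner product $G$-spaces, hence by Maschke's theorem each decomposes as an orthogonal direct sum of irreducible subrepresentations with well-defined multiplicities (cardinals at most $\aleph_0$), and a $G$-equivariant injection raises no such multiplicity. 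The two embeddings therefore force the multiplicity of every irreducible in $U$ to equal its multiplicity in $\mathbb R\{T\}\otimes U$, so there is a $G$-equivariant linear isometric isomorphism $\mathbb R\{T\}\otimes U\to U$, which is in particular the desired $G$-homeomorphism. (The same comparison shows that $\mathbb R\{T\}\otimes U$ again satisfies the universe convention.)

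The one step needing genuine care is the bookkeeping in the middle paragraph: identifying the conjugation action on $U^{n}$ attached to the graph of $\sigma_T$ with $\mathbb R\{T\}\otimes U$, and quoting the correct statement from \cite{BlumbergHill} that $\aF_n(\sL_U)$ is the family of subgroups with nonempty (hence contractible) fixed spaces on $\sL(U^n,U)$. Once admissibility has been converted into a two-sided family of equivariant isometric embeddings, the promotion to an isomorphism is elementary equivariant linear algebra --- a Schr\"oder--Bernstein count on isotypic components --- with no homotopy-theoretic input.
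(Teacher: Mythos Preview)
Your proof is correct and follows essentially the same route as the paper's: extract a $G$-equivariant embedding $\mathbb R\{T\}\otimes U\hookrightarrow U$ from admissibility, obtain the reverse embedding from the trivial summand of $\mathbb R\{T\}$ coming from $T\neq\emptyset$, and then conclude that the two $G$-inner-product spaces are isomorphic by matching irreducible isotypic pieces. The only cosmetic difference is that the paper invokes the universe convention (each irreducible already occurs with multiplicity $\aleph_0$) to equate the isotypic multiplicities, whereas you run the Schr\"oder--Bernstein count directly; both arguments reach the same conclusion.
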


\begin{proof}
By definition of admissibility, for the linear isometries operad we
have an equivariant embedding 
\[
\mathbb R\{T\}\otimes  U\to U.
\]
This implies that every isomorphism class of representations in $\mathbb R\{T\}\otimes U$ is contained in $U$. The inclusion of a trivial summand in $\mathbb R\{T\}$ (which exists since $T$ is non-empty) guarantees that every irreducible representation of $U$ is also in $\mathbb R\{T\}\otimes U$.
\end{proof}

\begin{lemma}\label{lem:equikriz}
The orbit space $\sL_U(2) / (\sL_U(1) \times \sL_U(1))$ consists of a
single point.  More generally, the orbit space $\sL_U(n)
/ \sL_U(1)^{\times n}$ consists of a single point.
\end{lemma}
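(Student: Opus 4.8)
Since the isometries and the elements of $\sL_U(1)^{\times n}$ at issue are not equivariant, and a one-point $G$-set is unique, it suffices to prove the underlying non-equivariant statement; this is the equivariant analogue of an observation of Kriz recorded in~\cite{EKMM}, and since the underlying inner product space of $U$ has countably infinite dimension we may as well take $U=\bR^{\infty}$. An element $\phi\in\sL_U(n)=\sL(U^n,U)$ is an $n$-tuple $(f_1,\dots,f_n)$ of isometric embeddings $f_k\colon U\to U$ with mutually orthogonal images (its restrictions to the summands of $U^n$), and the right action of $(a_1,\dots,a_n)$ sends it to $(f_1a_1,\dots,f_na_n)$. Hence the orbit of $\phi$ is precisely the set of tuples $(f_1',\dots,f_n')$ with $\mathrm{im}(f_k')\subseteq\mathrm{im}(f_k)$ for every $k$: precomposition can shrink the image of each piece but can neither move nor enlarge it. In particular the action is very far from transitive, so the substance of the lemma is that the \emph{coequalizer} --- the quotient by the equivalence relation generated by the orbit relation --- is a single point, i.e.\ that any two elements are identified after a finite zigzag of orbit memberships.

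The plan is to connect arbitrary $\phi=(f_k)$ and $\psi=(g_k)$ in two moves. First I would shrink each, within its orbit, to an element whose component-images are orthogonal to all of the other's: a Gram--Schmidt-style induction produces infinite-dimensional subspaces $V_k\subseteq\mathrm{im}(f_k)$ and $V_k'\subseteq\mathrm{im}(g_k)$ that are pairwise orthogonal across all $2n$ indices at once, since at each stage one only needs a unit vector in one of the $\mathrm{im}(f_k)$ or $\mathrm{im}(g_k)$ orthogonal to the finitely many vectors already chosen, and an infinite-dimensional space is never exhausted by finitely many hyperplanes. Using the description of orbits above, replace $\phi$ by the element of its orbit whose $k$-th component has image $V_k$, and $\psi$ by the one with $V_k'$; we are reduced to the case in which every $\mathrm{im}(f_k)$ is orthogonal to every $\mathrm{im}(g_l)$.

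In that case I would build a common ``upper bound'': the subspaces $W_k:=\overline{\mathrm{im}(f_k)+\mathrm{im}(g_k)}$ are infinite-dimensional, and the four orthogonality relations together force the $W_k$ to be mutually orthogonal, so there is $\chi\in\sL_U(n)$ whose $k$-th component has image $W_k$; since $\mathrm{im}(f_k)$ and $\mathrm{im}(g_k)$ both sit inside $W_k$, each of $\phi$ and $\psi$ lies in the orbit of $\chi$, and so they agree in the quotient. Concatenating the two moves shows that $\sL_U(n)/\sL_U(1)^{\times n}$ is a single point, the case $n=2$ being the first assertion. The crux --- and the only place infinite-dimensionality of $U$ is used in an essential way --- is this shrinking step: because the monoid action can only shrink images, two given isometries cannot be fit inside a common one directly, and the enlarging step only becomes available after their images have been pulled apart into orthogonal position; everything else is bookkeeping with restrictions of isometries to orthogonal summands.
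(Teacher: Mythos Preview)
Your argument is correct. You correctly observe that since $\sL_U(1)$ is only a monoid, the ``orbit space'' must be read as the coequalizer, i.e., the quotient by the equivalence relation generated by $\phi\cdot a\sim\phi$; your zigzag $\phi\leadsto\phi'\leadsto\chi\leadsto\psi'\leadsto\psi$ shows this relation identifies any two points. The Gram--Schmidt step is sound because the intersection of an infinite-dimensional subspace with finitely many hyperplanes remains infinite-dimensional, and the resulting $V_k$, $V_k'$, and $W_k=V_k\oplus V_k'$ are spans of countable orthonormal sets, hence are images of linear isometries $\bR^\infty\to\bR^\infty$.

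The paper takes a different route. It makes the same reduction to the underlying non-equivariant statement, but then simply cites~\cite[I.8.1]{EKMM} for the case $n=2$ and deduces general $n$ by induction from the associativity isomorphism of Theorem~\ref{thm:assoc}, which gives
\[
\sL_U(n)/\sL_U(1)^{\times n}\cong\bigl(\sL_U(2)\times_{\sL_U(1)\times\sL_U(1)}(\sL_U(1)\times\sL_U(n-1))\bigr)\big/\sL_U(1)^{\times n},
\]
so commuting coequalizers reduces to $n-1$ and $n=2$. Your approach is more elementary and self-contained: it handles all $n$ uniformly, makes the meaning of the quotient explicit, and does not rely on either the Hopkins associativity lemma or the EKMM citation. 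The paper's approach is terser and reuses machinery already established in the paper, at the cost of deferring the actual content to the literature.
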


\begin{proof}
The right action map
$\sL_U(2) \times \sL_U(1) \times \sL_U(1) \to \sL_U(2)$ is clearly a
map of $G$-spaces.  As a consequence, we can compute the orbit space
as the colimit of underlying spaces, and so in this case the result
follows from the non-equivariant identification of the orbit
space~\cite[I.8.1]{EKMM}.

We deduce the general case by induction: We can use
theorem~\ref{thm:assoc} to write
\[
\sL_U(n) / \sL_U(1)^{\times n} \cong
\left(\sL_U(2) \times_{\sL_U(1) \times \sL_U(1)}
(\sL_U(1) \times \sL_U(n-1))\right) / \sL_U(1)^{\times n}.
\]
Since coequalizers commute, the result for $n$ now follows from the
base case $n=2$ and the induction hypothesis.
\end{proof}

More generally, we have the following result.

\begin{lemma}\label{lem:normkriz}
The orbit space $\sL(\Ind_H^G \widehat{U}, U) /
 F_H(G, \sL_{\widehat{U}}(1))$ consists of a single point.
\end{lemma}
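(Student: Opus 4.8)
The plan is to reduce the claim to its non-equivariant analogue, exactly as in the proof of Lemma~\ref{lem:equikriz}. First I would observe that the right action map
\[
\sL(\Ind_H^G \widehat{U}, U) \times F_H(G, \sL_{\widehat{U}}(1)) \to \sL(\Ind_H^G \widehat{U}, U)
\]
(precomposition along the isometries $I_{(-)}$) is a map of $G$-spaces: $G$ acts by conjugation on the source and target spaces of isometries and by right translation on $F_H(G,\sL_{\widehat{U}}(1))$, and a direct check from the formula $I_f(g\otimes u) = g\otimes f(g)(u)$ shows that conjugating $I_f$ by the $G$-action on $\Ind_H^G\widehat{U}$ produces $I_{g\cdot f}$, which gives the equivariance. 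Consequently the orbit space is a coequalizer of $G$-spaces whose two maps are $G$-equivariant, so it is created in spaces, and it suffices to identify the underlying non-equivariant orbit space with a point.

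For that, I would choose coset representatives $g_1,\dots,g_n$ for $G/H$, with $n=[G:H]$. These yield a non-equivariant isomorphism $\Ind_H^G\widehat{U}\cong\bigoplus_i g_i\otimes\widehat{U}\cong\widehat{U}^{\oplus n}$ and, via evaluation $f\mapsto (f(g_1),\dots,f(g_n))$, a non-equivariant isomorphism of monoids $F_H(G,\sL_{\widehat{U}}(1))\cong\sL(\widehat{U},\widehat{U})^{\times n}$. Under these identifications, $I_f$ becomes the orthogonal (block) sum $f(g_1)\oplus\cdots\oplus f(g_n)$ of isometries of $\widehat{U}^{\oplus n}$, so the action of $F_H(G,\sL_{\widehat{U}}(1))$ on $\sL(\Ind_H^G\widehat{U},U)$ becomes precisely the precomposition action of $\sL(\widehat{U},\widehat{U})^{\times n}$ on $\sL(\widehat{U}^{\oplus n},U)$ by block sums. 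Since $\widehat{U}$ and $U$ are each countably-infinite-dimensional real inner product spaces, this is, up to homeomorphism, the standard action of $\mathcal{L}(1)^{\times n}$ on $\mathcal{L}(n)$, whose orbit space is a single point by~\cite[I.8.1]{EKMM} together with the induction on $n$ already carried out in the proof of Lemma~\ref{lem:equikriz}. This completes the identification.

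The main obstacle is the bookkeeping in the second step: correctly matching the coinduction $F_H(G,-)$, the explicit monoid map $I_{(-)}$, and the induced-module structure on $\Ind_H^G\widehat{U}$, so that the action on the space of isometries literally becomes precomposition by block sums once coset representatives are fixed. Everything else --- equivariance of the action map, the fact that orbit spaces of $G$-spaces are computed on underlying spaces, and the non-equivariant Kriz-type input --- is either immediate or already recorded in the proof of Lemma~\ref{lem:equikriz}.
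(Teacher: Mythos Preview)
Your proposal is correct and follows essentially the same approach as the paper: reduce to underlying spaces via equivariance of the action map, then invoke the non-equivariant Kriz result (Lemma~\ref{lem:equikriz}/\cite[I.8.1]{EKMM}). The only difference is that you spell out the identification explicitly by choosing coset representatives to rewrite the action as the block-sum action of $\sL_{\widehat{U}}(1)^{\times n}$ on $\sL(\widehat{U}^{\oplus n},U)$, whereas the paper simply asserts that the result ``can be deduced from Lemma~\ref{lem:equikriz}'' without unpacking this step.
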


\begin{proof}
As in the proof of Lemma~\ref{lem:equikriz}, since the action map
\[
\sL(\Ind_H^G \widehat{U}, U) \times F_H(G, \sL_{\widehat{U}}(1)) \to \sL(\Ind_H^G \widehat{U}, U)
\]
is a map of $G$-spaces, it suffices to compute the orbit space in
terms of the colimit of the underlying spaces.  In this case, we can
deduce the result from Lemma~\ref{lem:equikriz}.
\end{proof}

We also have a series of generalizations of~\cite[I.5.4]{EKMM}.

\begin{lemma}\label{lem:normhop}
Let $T$ and $T'$ be non-empty admissible sets for $U$.  There are
natural isomorphisms
\begin{align*}
\sL(\bR\{T\} &\otimes \widehat{U} \oplus \bR\{T'\} \otimes \widehat{U},
U) \cong \\
&\sL_U(2) \times_{\sL_U(1) \times \sL_U(1)}
(\sL(\bR\{T\} \otimes \widehat{U}, U) \times \sL(\bR\{T'\} \otimes \widehat{U}, U)).
\end{align*}
\end{lemma}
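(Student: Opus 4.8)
The plan is to establish this exactly as the non-equivariant isomorphism \cite[I.5.4]{EKMM} was used to prove $\sL_U(i+j)\cong\sL_U(2)\times_{\sL_U(1)\times\sL_U(1)}\sL_U(i)\times\sL_U(j)$ in the proof of Theorem~\ref{thm:assoc}, only now the two ``input'' universes are $\bR\{T\}\otimes\widehat U$ and $\bR\{T'\}\otimes\widehat U$ rather than $U^i$ and $U^j$. First I would write down the comparison map explicitly: a point $\sigma\in\sL_U(2)=\sL(U\oplus U,U)$ together with isometries $f\colon\bR\{T\}\otimes\widehat U\to U$ and $f'\colon\bR\{T'\}\otimes\widehat U\to U$ is sent to the composite
\[
\bR\{T\}\otimes\widehat U\oplus\bR\{T'\}\otimes\widehat U \overto{f\oplus f'} U\oplus U \overto{\sigma} U.
\]
This formula is equivariant for the relevant conjugation actions and is invariant under the right action of $\sL_U(1)\times\sL_U(1)$ used to form the balanced product (precomposition on $\sL_U(2)$ by block sum, postcomposition on the pair $(f,f')$), so it descends to a natural $G$-map from the coequalizer to $\sL(\bR\{T\}\otimes\widehat U\oplus\bR\{T'\}\otimes\widehat U,U)$.

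The remaining task is to see that this $G$-map is a homeomorphism, and here I would reduce to the non-equivariant case. Since the set-theoretic inverse of a $G$-equivariant bijection is automatically $G$-equivariant, and since (as in the proof of Lemma~\ref{lem:equikriz}) the coequalizer defining the balanced product is created on underlying spaces because the relevant action maps are maps of $G$-spaces, it suffices to check bijectivity and continuity of the inverse after forgetting the group. Because $T$ and $T'$ are non-empty, the underlying non-equivariant spaces $\bR\{T\}\otimes\widehat U$ and $\bR\{T'\}\otimes\widehat U$ are countably infinite-dimensional, hence universes, so the claim is precisely \cite[I.5.4]{EKMM} for the universes $\bR\{T\}\otimes\widehat U$, $\bR\{T'\}\otimes\widehat U$, and $U$, which supplies the inverse together with its continuity. (Only the non-emptiness of $T$ and $T'$ enters this reduction; the admissibility hypothesis is the context in which the lemma is used and, via Lemma~\ref{lem:unisum}, records the $G$-equivariant embeddings $\bR\{T\}\otimes\widehat U\to U$ that make the isometry $G$-spaces appearing non-empty universal spaces.)

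The step I expect to be the main obstacle---and the reason this is genuinely a point-set argument rather than, say, the split-coequalizer manipulation used for Theorem~\ref{thm:multichange}---is the construction of the inverse in \cite[I.5.4]{EKMM}: given $\phi$ in the target one must produce a triple $(\sigma,f,f')$ with $\sigma\circ(f\oplus f')=\phi$, which forces $\mathrm{im}(f)\perp\mathrm{im}(f')$ in $U$ and requires choosing $f$, $f'$ with sufficiently large orthogonal complements in $U$ and then extending a partial isometry over $U\oplus U$---choices that cannot be made equivariantly. The point of the reduction above is that no equivariant inverse is ever needed: the comparison map is equivariant by inspection of its formula, so it is enough that its underlying map be a homeomorphism, and for that the point-set argument of \cite{EKMM} applies verbatim. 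I would therefore simply invoke \cite[I.5.4]{EKMM} for the underlying-space statement and note equivariance of the comparison map directly from its defining formula, exactly as was done for the $\sL_U(i+j)$ isomorphism in the proof of Theorem~\ref{thm:assoc}.
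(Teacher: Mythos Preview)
Your proposal is correct and follows essentially the same route as the paper: both reduce to showing the underlying non-equivariant diagram is a (reflexive) coequalizer and then invoke the point-set argument of \cite[I.5.4]{EKMM}, with equivariance of the comparison map coming for free from its defining formula. Your write-up is more explicit about the comparison map and about why the non-equivariant reduction suffices; the paper phrases the same step as ``choose isomorphisms $\widehat{U}\otimes\bR\{T'\}\cong\widehat{U}$'' (citing Lemma~\ref{lem:unisum}) to obtain the splittings, which is exactly the input your invocation of \cite[I.5.4]{EKMM} uses.
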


\begin{proof}
First, observe that it suffices to show that non-equivariantly this
isomorphism arises from a reflexive coequalizer diagram.  Now using
Lemma~\ref{lem:unisum} to choose isomorphisms
$\widehat{U} \otimes \bR\{T'\} \cong \widehat{U}$, the required
non-equivariant splittings arise just as in the proof
of~\cite[I.5.4]{EKMM}.
\end{proof}

A particularly useful corollary of Lemma~\ref{lem:normhop} is the following:

\begin{corollary}\label{cor:corhop}
There is a natural isomorphism
\[
\sL(\Ind_H^G \widehat{U} \oplus \Ind_H^G \widehat{U},
U) \cong \sL_U(2) \times_{\sL_U(1) \times \sL_U(1)}
(\sL(\Ind_H^G \widehat{U}, U) \times \sL(\Ind_H^G \widehat{U}, U)).
\]
\end{corollary}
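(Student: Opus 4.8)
The plan is to read off Corollary~\ref{cor:corhop} from Lemma~\ref{lem:normhop} by specializing to the simplest possible admissible sets. Concretely, I would apply Lemma~\ref{lem:normhop} with the $G$-universe $\Ind_H^G\widehat U$ playing the role of ``$\widehat U$'' there and with $T = T' = *$, the one-point $G$-set. Both hypotheses are then immediate. The one-point $G$-set is trivial, hence lies in every indexing system and in particular is admissible for $\sL(U)$. And $\Ind_H^G\widehat U$ is again a $G$-universe: it contains the trivial $G$-representation because $\widehat U$ contains the trivial $H$-representation (so $\Ind_H^G\widehat U$ contains $\bR\{G/H\}$, which contains $\bR$), and since $\widehat U\cong\bigoplus_{\bN}\widehat U$ as an $H$-universe, applying the additive functor $\Ind_H^G$ gives $\Ind_H^G\widehat U\cong\bigoplus_{\bN}\Ind_H^G\widehat U$, so every representation occurring in it occurs infinitely often.

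With these choices $\bR\{T\} = \bR\{T'\} = \bR$ carries the trivial action, so $\bR\{T\}\otimes\Ind_H^G\widehat U$ and $\bR\{T'\}\otimes\Ind_H^G\widehat U$ are canonically identified with $\Ind_H^G\widehat U$, and hence $\bR\{T\}\otimes\Ind_H^G\widehat U\oplus\bR\{T'\}\otimes\Ind_H^G\widehat U\cong\Ind_H^G\widehat U\oplus\Ind_H^G\widehat U$. Inserting these identifications into the natural isomorphism furnished by Lemma~\ref{lem:normhop} yields exactly
\[
\sL(\Ind_H^G\widehat U\oplus\Ind_H^G\widehat U, U)\cong\sL_U(2)\times_{\sL_U(1)\times\sL_U(1)}\bigl(\sL(\Ind_H^G\widehat U, U)\times\sL(\Ind_H^G\widehat U, U)\bigr),
\]
and naturality is inherited from that of Lemma~\ref{lem:normhop}.

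I do not anticipate a real obstacle here: all of the content --- the production of the underlying non-equivariant reflexive (in fact split) coequalizer and the check that it computes the asserted balanced product, via the argument of \cite[I.5.4]{EKMM} together with Lemma~\ref{lem:unisum} --- is already done in the proof of Lemma~\ref{lem:normhop}, and for the one-point sets $T = T' = *$ the appeal to Lemma~\ref{lem:unisum} inside that proof degenerates to the identity $\bR\otimes\Ind_H^G\widehat U\cong\Ind_H^G\widehat U$. The only steps worth a sentence are the two bookkeeping facts isolated above. Should one wish to avoid invoking that $\Ind_H^G\widehat U$ is a universe, one can instead rerun the splitting of \cite[I.5.4]{EKMM} directly for the pair $(\Ind_H^G\widehat U,\Ind_H^G\widehat U)$, which requires only a non-equivariant isomorphism $\Ind_H^G\widehat U\cong\Ind_H^G\widehat U$ --- available at once since this universe is infinite-dimensional.
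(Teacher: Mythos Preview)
Your proposal is correct. Both you and the paper deduce the corollary by specializing Lemma~\ref{lem:normhop}; the paper offers no separate argument, and the parallel with Corollary~\ref{cor:corhop2} (where $\sL_{\widehat{U}}(i)^{T}$ becomes $F_H(G,\sL_{\widehat{U}}(i))$) makes clear that the intended specialization there is $T = T' = G/H$ with the original $H$-universe $\widehat{U}$, reading $\bR\{G/H\}\otimes\widehat{U}$ as $\Ind_H^G\widehat{U}$.

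Your route is a mild but genuine variation: rather than taking $T = T' = G/H$, you substitute the $G$-universe $\Ind_H^G\widehat{U}$ for the lemma's $\widehat{U}$ and set $T = T' = \ast$. This has two small advantages. First, the admissibility hypothesis on $T,T'$ is satisfied trivially, so you need no standing assumption that $G/H$ is admissible for $U$ (a hypothesis the paper's corollary does not state, but which its presumed specialization would seem to require). Second, the appeal to Lemma~\ref{lem:unisum} inside the proof of Lemma~\ref{lem:normhop} degenerates to the identity $\bR\otimes\Ind_H^G\widehat{U}\cong\Ind_H^G\widehat{U}$, so nothing delicate happens. The only cost is that you are invoking the lemma with a $G$-universe in the slot labeled ``$\widehat{U}$'' (introduced in that subsection as an $H$-universe), which is formally outside its stated scope; your verification that $\Ind_H^G\widehat{U}$ is a $G$-universe, together with your remark that the proof of Lemma~\ref{lem:normhop} only uses non-equivariant splittings, handles this cleanly.
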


We also have another kind of analogue of~\cite[I.5.4]{EKMM}.

\begin{lemma}\label{lem:lemhop2}
Let $T$ be a non-empty admissible set for $U$.  Then there is a
natural isomorphism
\[
\sL((\bR\{T\} \otimes \widehat{U}) \oplus (\bR\{T\} \otimes \widehat{U}), U) \cong \sL(\bR\{T\} \otimes \widehat{U}, U) \times_{\sL_{\widehat{U}}(1)^{T}} \sL_{\widehat{U}}(2)^{T}.
\]
\end{lemma}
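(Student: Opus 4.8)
The isomorphism asserted is the one induced by composition. An element of the right-hand balanced product is represented by a pair $\bigl(f,(\psi_{t})_{t\in T}\bigr)$ with $f\colon\bR\{T\}\otimes\widehat U\to U$ a linear isometry and $(\psi_{t})\in\prod_{t\in T}\sL(\widehat U^{2},\widehat U)=\sL_{\widehat U}(2)^{T}$; forming the block isometry $\bigoplus_{t\in T}\psi_{t}\colon\bR\{T\}\otimes\widehat U^{2}=\bigoplus_{t}\widehat U^{2}\to\bigoplus_{t}\widehat U=\bR\{T\}\otimes\widehat U$, one sends this pair to $f\circ\bigl(\bigoplus_{t}\psi_{t}\bigr)$. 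Here the right action of $\sL_{\widehat U}(1)^{T}$ on $\sL(\bR\{T\}\otimes\widehat U,U)$ is precomposition with block isometries, its left action on $\sL_{\widehat U}(2)^{T}$ is componentwise postcomposition, and $\times_{\sL_{\widehat U}(1)^{T}}$ denotes the corresponding coequalizer. The plan is to argue as in the proof of Lemma~\ref{lem:normhop}: first check that this map descends to the coequalizer and is equivariant for the ambient group, so that it suffices to prove it is a homeomorphism of underlying non-equivariant spaces. Note that, unlike in Lemma~\ref{lem:normhop}, the decomposition here takes place not over the target universe $U$ but inside the $T$-indexed factors of the source, which is why $\sL_{\widehat U}(2)^{T}$ and not $\sL_{U}(2)$ appears.

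To prove the underlying map is a homeomorphism, I would trivialize the right-hand side, just as in the non-equivariant ingredient of~\cite[I.5.4]{EKMM}. Since $\widehat U$ is a universe there is an isometric isomorphism $\sigma\colon\widehat U^{2}\to\widehat U$ (indeed $\sigma$ may be chosen equivariant, in which case the identification below may be carried out equivariantly throughout). Applying $\sigma$ in each $T$-slot gives a homeomorphism $\bR\{T\}\otimes\widehat U^{2}\cong\bR\{T\}\otimes\widehat U$, hence a homeomorphism $\sL(\bR\{T\}\otimes\widehat U^{2},U)\cong\sL(\bR\{T\}\otimes\widehat U,U)$ by precomposition with its inverse; componentwise precomposition with $\sigma^{-1}$ gives a homeomorphism $\sL_{\widehat U}(2)^{T}\cong\sL_{\widehat U}(1)^{T}$, which one checks intertwines the left $\sL_{\widehat U}(1)^{T}$-actions. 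Under these identifications the right-hand side becomes
\[
\sL(\bR\{T\}\otimes\widehat U,U)\times_{\sL_{\widehat U}(1)^{T}}\sL_{\widehat U}(1)^{T},
\]
the balanced product of a right $\sL_{\widehat U}(1)^{T}$-space with the acting monoid; its action map identifies this canonically with $\sL(\bR\{T\}\otimes\widehat U,U)$, a split---hence absolute---coequalizer with section $f\mapsto\bigl(f,(\mathrm{id}_{\widehat U})_{t\in T}\bigr)$. A diagram chase then shows that, under the two identifications just made, the composition map corresponds precisely to this canonical isomorphism, and is therefore itself an isomorphism.

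The one substantive point is this last diagram chase: one must verify that composition with $\sigma$ and with $\sigma^{-1}$ intertwines the various left and right actions of $\sL_{\widehat U}(1)^{T}$ and is compatible with the definition of the composition map, so that the square presenting the composition map as $(\text{target identification})^{-1}\circ(\text{canonical isomorphism})\circ(\text{source identification})$ commutes. This is routine once the actions are written out; it is the exact analogue, for the inner universe $\widehat U$, of the manipulations with spaces of linear isometries already carried out in Theorem~\ref{thm:assoc} and Lemma~\ref{lem:normhop}. I expect this compatibility check to be the only place any care is needed, and I note that the argument uses only that $\widehat U$ is a universe---neither admissibility nor non-emptiness of $T$ entering in an essential way.
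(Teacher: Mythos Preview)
Your proposal is correct and follows essentially the same route as the paper's proof. Both arguments reduce to the underlying non-equivariant statement and then exhibit the coequalizer as split by choosing an isomorphism $\widehat U^{2}\cong\widehat U$ in each $T$-slot; the paper writes down the splitting map $f\mapsto\bigl(f\circ h,(h_{i})\bigr)$ directly, whereas you phrase the same manipulation as trivializing both sides via identifications and recognizing the resulting balanced product $\sL(\bR\{T\}\otimes\widehat U,U)\times_{\sL_{\widehat U}(1)^{T}}\sL_{\widehat U}(1)^{T}$ as canonically split---these are the same splitting. Your closing observation that neither admissibility nor non-emptiness of $T$ is genuinely used in the argument is also accurate: the paper's proof, like yours, only requires that $\widehat U$ is a universe.
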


\begin{proof}
Again, the result follows by producing a reflexive coequalizer after
forgetting the $G$-action.  Specifically, we need to show that the
diagram
\[
\xymatrix{
\sL(\bR\{T\} \otimes \widehat{U},
U) \times \sL_{\widehat{U}}(1)^{T} \times \sL_{\widehat{U}}(2)^{T}
\ar@<1ex>[d] \ar@<-1ex>[d] \\
\sL(\bR\{T\} \otimes \widehat{U},
U) \times \sL_{\widehat{U}}(2)^{T} \ar[d] \\
\sL((\bR\{T\} \otimes \widehat{U}) \oplus (\bR\{T\} \otimes \widehat{U}), U)
}
\]
is a reflexive coequalizer.  Choosing $|T|$ isomorphisms
$h_i \colon \widehat{U}^2 \cong \widehat{U}$ such that the sum assembles to
an isomorphism
$h \colon \bR\{T\} \otimes \widehat{U} \oplus \bR\{T\} \otimes \widehat{U} \cong \bR\{T\} \otimes \widehat{U}$,
we can define the splitting map
\[
\sL((\bR\{T\} \otimes \widehat{U}) \oplus
(\bR\{T\} \otimes \widehat{U}), U) \to
\sL(\bR\{T\} \otimes \widehat{U}, U) \times \sL_{\widehat{U}}(2)^{T}
\]
via $f \mapsto (f \circ h, h_1, h_2, \ldots, h_{|T|})$.  The argument
now proceeds exactly as in~\cite[I.5.4]{EKMM}.
\end{proof}

This has the following corollary.

\begin{corollary}\label{cor:corhop2}
For $H \subseteq G$, there is a natural isomorphism
\[
\sL(\Ind_H^G \widehat{U} \oplus \Ind_H^G \widehat{U},
U) \cong \sL(\Ind_H^G \widehat{U}, U) \times_{F_H(G, \sL_{\widehat{U}}(1))}
F_H(G, \sL_{\widehat{U}}(2)).
\]
\end{corollary}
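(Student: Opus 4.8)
The plan is to get Corollary~\ref{cor:corhop2} as the special case $T=G/H$ of Lemma~\ref{lem:lemhop2}, after rewriting the two notational ingredients appearing there in the norm language of Definition~\ref{defn:internalabsnorm}. First I would use the identification $\bR\{G/H\}\otimes\widehat U\cong\Ind_H^G\widehat U$, exactly as in the passage from Lemma~\ref{lem:normhop} to Corollary~\ref{cor:corhop}; this rewrites the left-hand side of Lemma~\ref{lem:lemhop2} as $\sL(\Ind_H^G\widehat U\oplus\Ind_H^G\widehat U, U)$, the left-hand side of the corollary. Second, for the right-hand side I would identify the $(G/H)$-fold products $\sL_{\widehat U}(1)^{G/H}$ and $\sL_{\widehat U}(2)^{G/H}$ with the coinduced $G$-spaces $F_H(G,\sL_{\widehat U}(1))$ and $F_H(G,\sL_{\widehat U}(2))$, via the standard natural $G$-homeomorphism $X^{G/H}\cong F_H(G,X)$ for an $H$-space $X$, under which the permutation action on the factors on the left corresponds to the conjugation action on $H$-maps $G\to X$ on the right.

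With these identifications on objects, what remains is to match the parallel pairs of the two coequalizer diagrams, that is, to check that the identifications respect the actions defining the balanced products; since isomorphic coequalizer diagrams have isomorphic coequalizers, the corollary then follows from Lemma~\ref{lem:lemhop2}. Concretely, one checks that under $\sL_{\widehat U}(1)^{G/H}\cong F_H(G,\sL_{\widehat U}(1))$ the block-wise precomposition action on $\sL(\bR\{G/H\}\otimes\widehat U, U)$ becomes the action of $F_H(G,\sL_{\widehat U}(1))$ on $\sL(\Ind_H^G\widehat U, U)$ through the monoid map $I_{(-)}$ of Definition~\ref{defn:internalabsnorm} followed by precomposition, and that the left $\sL_{\widehat U}(1)^{G/H}$-action on $\sL_{\widehat U}(2)^{G/H}$ becomes the corresponding action of $F_H(G,\sL_{\widehat U}(1))$ on $F_H(G,\sL_{\widehat U}(2))$ coming from the operadic composition of $\sL_{\widehat U}$.

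The main --- though entirely routine --- obstacle is exactly this last bookkeeping with the twisted $G$-action on the coinduced spaces: one fixes coset representatives for $G/H$, writes $X^{G/H}\cong F_H(G,X)$ out explicitly, and verifies that it intertwines the block description of the precomposition action (coming from the splitting $\bR\{G/H\}\otimes\widehat U\cong\Ind_H^G\widehat U$) with the $I_{(-)}$ description built into Definition~\ref{defn:internalabsnorm}. No input beyond Lemma~\ref{lem:lemhop2} (and, through it, Lemma~\ref{lem:unisum}) is needed.
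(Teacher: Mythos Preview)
Your proposal is correct and is exactly the approach the paper has in mind: the corollary is simply Lemma~\ref{lem:lemhop2} specialized to $T=G/H$, together with the standard identifications $\bR\{G/H\}\otimes\widehat U\cong\Ind_H^G\widehat U$ and $X^{G/H}\cong F_H(G,X)$ (parallel to the passage from Lemma~\ref{lem:normhop} to Corollary~\ref{cor:corhop}). Your additional bookkeeping verifying that these identifications intertwine the relevant actions is the only thing one actually has to write down, and you have it right.
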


Finally, we turn to the main technical theorem about the equivariant
linear isometries operad that justifies the use of the unital objects.
In the proof, we make use of the following standard technical lemma:

\begin{lemma}\label{lem:action}
Let $X$ have a left $H$-action and right $G$-action which are
compatible (i.e., $X$ is an $H \times G$-space).  Then the coequalizer
\[
(-) \times_{H} X
\]
specifies a functor from the category of $G' \times H$-spaces and
equivariant maps to $G' \times G$-spaces and equivariant maps.
\end{lemma}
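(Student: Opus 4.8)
The plan is to unwind the definition of the balanced product as a coequalizer and then simply observe that the residual actions descend. First I would recall that $Y \times_H X$ is by definition the coequalizer (formed in spaces, equivalently in $G$-spaces since colimits of equivariant spaces are created on underlying spaces) of the pair of maps
\[
Y \times H \times X \rightrightarrows Y \times X
\]
given by $(y,h,x) \mapsto (yh, x)$ and $(y,h,x) \mapsto (y, hx)$; both legs are well defined because $Y$ carries an $H$-action and $X$ carries its given (left) $H$-action.

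Next I would produce the action on the quotient. On $Y \times X$, let $G'$ act through the first factor via the $G'$-action on $Y$ and let $G$ act through the second factor via the $G$-action on $X$; since these touch disjoint factors, they assemble into an action of $G' \times G$ on $Y \times X$. Because the $G'$-action on $Y$ commutes with its $H$-action and the $G$-action on $X$ commutes with its $H$-action, this $G' \times G$-action is strictly compatible with both legs of the coequalizer diagram, and so it passes to the quotient. This equips $Y \times_H X$ with the structure of a $G' \times G$-space, as desired.

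For functoriality, I would observe that a map $f \colon Y \to Y'$ of $G' \times H$-spaces yields $f \times \id_X \colon Y \times X \to Y' \times X$, which is $(G' \times G)$-equivariant and commutes with the two maps in the coequalizer diagrams for $Y$ and for $Y'$; by the universal property of the coequalizer it descends to a $(G' \times G)$-equivariant map $f \times_H \id_X \colon Y \times_H X \to Y' \times_H X$. Preservation of identities and composites is then immediate from the uniqueness clause in that universal property.

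I do not expect a genuine obstacle here; the only points that warrant a word of care are that colimits of equivariant spaces are computed on underlying spaces, so the quotient topology is the expected one, and that the $H$-action being quotiented out is the ``diagonal'' action appearing in the coequalizer rather than the separate $H$-actions on the two factors — it is precisely the compatibility of each given action with that diagonal $H$-action that makes the $G'$- and $G$-actions descend.
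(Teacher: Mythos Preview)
Your proposal is correct and follows essentially the same approach as the paper: both arguments equip $Y \times_H X$ with the $G' \times G$-action inherited from the factors and then verify that a $G' \times H$-map $f$ induces a $G' \times G$-map on the quotient. The only difference is stylistic --- you phrase everything via the universal property of the coequalizer, while the paper works directly with explicit element-level formulas like $\theta_f((g'y,x)) = (f(g'y),x) = (g'f(y),x) = g'\theta_f((y,x))$.
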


\begin{proof}
Let $Y$ be a $G' \times H$-space.  It is clear that $Y \times_H X$ has
a $G' \times G$ action inherited from the $G'$-action on $Y$ and the
$G$-action on $X$.  Let $f \colon Y \to Y'$ be a map of $G' \times
H$-spaces.  Then there is an induced map of spaces
\[
\theta_f \colon Y \times_H X \to Y' \times_H X
\]
defined by $(y,x) \mapsto (f(y),x)$.  This is a left $G'$-map since $f$ is
a $G' \times H$-map; $\theta_f((g'y,x)) = (f(g'y),x) = (g'f(y),x) =
g'\theta_f((y,x))$.  Similarly, it is a right $G$-map.
\end{proof}

\begin{theorem}\label{thm:mandell}
For each $k > 0$, the map
\[
\gamma_k \colon \hat{\sL}_U(k) = \sL_U(2) \times_{\sL_U(1) \times \sL_U(1)}
(\sL_U(0) \times \sL_U(k)) \to \sL_U(k)
\]
induced by the operadic structure map
\[
\sL_U(2) \times \sL_U(0) \times \sL_U(k) \to \sL_U(k)
\]
is a homotopy equivalence of $G \times \Sigma_k$ spaces.
\end{theorem}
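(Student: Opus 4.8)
The plan is to recognize $\gamma_k$ as a map between two universal spaces for the same family of subgroups of $G\times\Sigma_k$, and then invoke the fact that a $G\times\Sigma_k$-map between universal spaces for a common family $\aF$ automatically induces a weak equivalence on $\Gamma$-fixed points for every $\Gamma$ — it is a map of (weakly) contractible spaces when $\Gamma\in\aF$ and the identity of the empty space otherwise — hence is a weak $G\times\Sigma_k$-equivalence and, both sides having the homotopy type of $G\times\Sigma_k$-CW complexes, a $G\times\Sigma_k$-homotopy equivalence. By axiom (3) in the definition of an $\Ninfty$ operad, $\sL_U(k)$ is a universal space for a family $\aF_k=\aF_k(\sL_U)$ of subgroups of $G\times\Sigma_k$, so the whole argument reduces to showing that $\hat{\sL}_U(k)$ is again a universal space for $\aF_k$. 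Throughout I would use Lemma~\ref{lem:action} to keep track of equivariance in the balanced products defining $\hat{\sL}_U(k)$.

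First I would settle which fixed-point spaces are nonempty. Since $\Sigma_k$ acts freely on $\sL_U(k)$ (axiom (2)) and $\gamma_k$ is $\Sigma_k$-equivariant, a point of $\hat{\sL}_U(k)$ fixed by a subgroup $\Gamma$ meeting $\{1\}\times\Sigma_k$ nontrivially would map under $\gamma_k$ into the empty space $\sL_U(k)^{\Gamma}$; hence only the graph subgroups $\Gamma_\phi$ of homomorphisms $\phi\colon H\to\Sigma_k$ can have nonempty fixed points, on either side. Moreover there is a $G\times\Sigma_k$-equivariant section $s\colon\sL_U(k)\to\hat{\sL}_U(k)$ sending $b$ to the class of $(g_0,a,b)$, where $a\in\sL_U(0)$ is the unique point and $g_0\colon U\oplus U\to U$ is a $G$-equivariant isometry (one exists, e.g.\ by Lemma~\ref{lem:unisum}); equivariance in $G$ uses that $g_0$ is $G$-fixed, and in $\Sigma_k$ it is immediate. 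Thus $\hat{\sL}_U(k)^{\Gamma_\phi}\neq\emptyset$ if and only if $\sL_U(k)^{\Gamma_\phi}\neq\emptyset$, which happens exactly when $\Gamma_\phi\in\aF_k$, and it remains only to prove that $\hat{\sL}_U(k)^{\Gamma_\phi}$ is contractible whenever $\Gamma_\phi\in\aF_k$.

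This contractibility is the crux, and is where I expect the main obstacle: it is the equivariant form of \cite[I.8.4]{EKMM} and rests on the delicate point-set behavior of spaces of linear isometries. The approach I would take is to verify that the functor $(-)^{\Gamma_\phi}$ commutes with the reflexive coequalizer and the $\sL_U(1)\times\sL_U(1)$-balanced product defining $\hat{\sL}_U(k)$ — using that the action map $\sL_U(2)\times\sL_U(1)\times\sL_U(1)\to\sL_U(2)$ is a map of $G$-spaces admitting suitably equivariant local sections, the equivariant counterpart of the numerability facts of \cite[Ch.~I]{EKMM} — thereby identifying $\hat{\sL}_U(k)^{\Gamma_\phi}$ with the analogous twisted product built from the underlying universe of $\iota_H^*U$ with its $\phi$-twist (here $\Gamma_\phi$ acts on $\sL_U(2)$ and $\sL_U(1)$ only through $H$). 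At that fixed-point level, \cite[I.8.4]{EKMM} exhibits this twisted product as a bundle with fiber $\sL_U(k)^{\Gamma_\phi}$ over an orbit space that is a single point by the argument of Lemma~\ref{lem:equikriz}; since the fiber is contractible precisely because $\Gamma_\phi\in\aF_k$, so is the total space. Should the interchange of $(-)^{\Gamma_\phi}$ with these coequalizers prove awkward to verify directly, the alternative is to construct the contraction of $\hat{\sL}_U(k)^{\Gamma_\phi}$ by hand, imitating the non-equivariant contraction but taking all auxiliary isometries $\mathbb R\{T\}\otimes\iota_H^*U\cong\iota_H^*U$ (for $T$ the admissible $H$-set associated to $\phi$) and $\iota_H^*U\oplus\iota_H^*U\cong\iota_H^*U$ to be $H$-equivariant via Lemma~\ref{lem:unisum}, and tracking equivariance throughout with Lemma~\ref{lem:action} and the $G$-contractibility of $\sL_U(1)$.
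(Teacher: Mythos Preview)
Your overall strategy coincides with the paper's: both arguments reduce to showing that $\hat{\sL}_U(k)$ is a universal space for the same family $\aF_k$ as $\sL_U(k)$. Your nonemptiness argument is fine (although the map $s$ you write down is \emph{not} a section of $\gamma_k$: $\gamma_k(s(b))=g_0|_{\{0\}\oplus U}\circ b$, not $b$; fortunately you only use that $s$ is $G\times\Sigma_k$-equivariant, which it is).

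The genuine gap is in your contractibility step. You propose to commute $(-)^{\Gamma_\phi}$ with the balanced-product coequalizer defining $\hat{\sL}_U(k)$, appealing to ``suitably equivariant local sections'' of the $\sL_U(1)^2$-action on $\sL_U(2)$. This interchange of fixed points with orbit spaces is exactly the delicate point, and your justification is not sufficient: fixed points of a quotient by a free action are not in general the quotient of fixed points, and the numerability arguments in \cite[Ch.~I]{EKMM} do not directly supply the equivariant sections you need. Your fallback of constructing the contraction ``by hand'' is the right instinct, but you do not indicate how. You also assert without argument that $\hat{\sL}_U(k)$ has the $G\times\Sigma_k$-homotopy type of a CW complex, which you need to pass from a weak equivalence to a genuine homotopy equivalence.

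The paper sidesteps both issues by never computing fixed points of $\hat{\sL}_U(k)$ at all. Instead it first rewrites
\[
\hat{\sL}_U(k)\;\cong\;(\sL_U(2)/\sL_U(1))\times_{\sL_U(1)}\sL_U(k),
\]
where $\sL_U(2)/\sL_U(1)$ carries the residual right $\sL_U(1)$-action and the map $\theta_2$ to $\sL_U(1)$ given by restriction to the second summand. It then invokes the subspace trick of \cite[XI.2.2]{EKMM}: writing $U\cong U_1\oplus U_2$ as $G$-universes via Lemma~\ref{lem:unisum}, one introduces $\sK(2)=\{f\in\sL_U(2)\mid f(\{0\}\oplus U)\subset U_2\}$ and checks that (i) the inclusion $\sK(2)\hookrightarrow\sL_U(2)$ is a $G$-homotopy equivalence of right $\sL_U(1)^2$-spaces, and (ii) $\theta_2$ restricts to a $G$-homeomorphism $\sK(2)/\sL_U(1)\to\sK_1\cong\sL_U(1)$. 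Applying $(-)\times_{\sL_U(1)}\sL_U(k)$ (via Lemma~\ref{lem:action}) to this zig-zag yields explicit $G\times\Sigma_k$-homotopy equivalences
\[
\hat{\sL}_U(k)\;\xleftarrow{\ \simeq\ }\;(\sK(2)/\sL_U(1))\times_{\sL_U(1)}\sL_U(k)\;\xrightarrow{\ \cong\ }\;\sL_U(k),
\]
with no appeal to fixed-point computations or CW structure on $\hat{\sL}_U(k)$. This is precisely the ``by hand'' contraction you allude to; the missing idea in your proposal is this passage through the subspace $\sK(2)$.
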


\begin{proof}
First, consider the case where $k=1$.  In this case, we are considering the map
\[
\gamma_1 \colon \sL_U(2) \times_{\sL_U(1) \times \sL_U(1)} (\sL_U(0) \times \sL_U(1)) \to \sL_U(1).
\]
The proof of~\cite[XI.2.2]{EKMM} goes through in the equivariant
context to show that $\gamma_1$ is a homotopy equivalence of
$G$-spaces.  It is helpful to decompose $\gamma_1$ as
follows~\cite[VI.6]{MM}:
\[
\xymatrix{
\sL_U(2) \times_{\sL_U(1) \times \sL_U(1)}
(\sL_U(0) \times \sL_U(1)) \ar[r]^-{\theta_1} & \sL_U(2)
/ \sL_U(1) \ar[r]^-{\theta_2} & \sL_U(1),\\
}
\]
where $\sL_U(2)/\sL_U(1)$ is the orbit space for the right action of
$\sL_U(1)$ on $\sL_U(2)$ given by $(f, h) \mapsto f \circ
(h \oplus \id)$ and equipped with the right action of $\sL_U(1)$
specified by $([f],h) \mapsto [f \circ (\id \oplus h)]$, $\theta_2$ is
the restriction to the second summand, and $\theta_1$ is specified by
$(g,0,f) \mapsto g \circ (\id \oplus f)$.  Both maps are
$G \times \sL_U(1)$ maps, and $\theta_1$ is a homeomorphism.

Now take $k > 1$.  Then $\gamma_k$ factors as the composite
\[
\hat{\sL}_U(k) \cong (\sL_U(2)
/ \sL_U(1)) \times_{\sL_U(1)} \sL_U(k) \to
\sL_U(1) \times_{\sL_U(1)} \sL_U(k) \cong \sL_U(k),
\]
induced by $\gamma_1$, where we are using the homeomorphism
\[
\hat{\sL}_U(k) \cong (\sL_U(2) \times_{\sL_U(1) \times \sL_U(1)}
(\sL_U(0) \times \sL_U(1))) \times_{\sL_U(1)} \sL_U(k).
\]
To see this, observe that $\gamma_k((g,0,f)) = g \circ (f \oplus 0)$.
On the other hand, the composite above first takes $(g,0,f)$ to
$((g \circ \id), f)$, then $((g \circ \id),f)$ to $(\theta_2(g), f)$,
and finally $(\theta_2(g),f)$ to $\theta_2(g) \circ f = g \circ
(f \oplus 0)$.

Since $\sL_U(k)$ is a universal space for the family of subgroups of
$G \times \Sigma_k$ prescribed by $U$, it suffices to show that
$(\sL_U(2)/\sL_U(1)) \times_{\sL_U(1)} \sL_U(k)$ is also a universal
space for the same family.  To do this, we will unpack part of the
proof of~\cite[XI.2.2]{EKMM}.

Write $U \cong U_1 \oplus U_2$ as $G$-spaces, where $U_1$ and $U_2$
are $G$-universes such that $U_1 \cong U$ and $U_2 \cong U$; we can do
this by Lemma~\ref{lem:unisum}.  Define
$\sK(2) \subset \sL_U(2)$ to be $\{f \, \mid \, f(\{0\}\oplus
U) \subset U_2$, equipped with the conjugation $G$-action.  Next, we
define
\[
\hat{\sK}_1 = \sK(2) / \sL_U(1)
\]
and we let $\sK_1 \subset \sL_U(1)$ be $\{f \, \mid \, f(U) \subseteq
U_2\}$ with the conjugation $G$-action.  The map $\theta_2$ restricts
to give a $G$-map $\hat{\sK}_1 \to \sK_1$ which is compatible with the
action of $\sL_U(1)$ and so by Lemma~\ref{lem:action} we have an induced
$G \times \Sigma_k$-map
\[
\hat{\sK}_1 \times_{\sL_U(1)} \sL_U(k) \to \sK_1 \times_{\sL_U(1)} \sL_U(k).
\]
The non-equivariant argument in~\cite[XI.2.2]{EKMM} extends to the
equivariant case to show that the map $\hat{\sK}_1 \to \sK_1$ is a
homeomorphism of $G$-spaces.  On the other hand, we have a
homeomorphism of $G$-spaces $\sK_1 \cong \sL(U,U_2) \cong \sL_U(1)$
which is compatible with the action of $\sL_U(1)$, and so
Lemma~\ref{lem:action} implies that there is a composite
$G \times \Sigma_k$-map
\[
\sK_1 \times_{\sL_U(1)} \sL_U(k) \to \sL_U(1) \times_{\sL_U(1)} \sL_U(k) \cong \sL_U(k)
\]
which is a homeomorphism.  Putting these together, we have a
$G \times \Sigma_k$-map
\[
\hat{\sK}_1 \times_{\sL_U(1)} \sL_U(k) \cong \sL_U(k)
\]
that is a homeomorphism.

To finish the argument, observe that the proof in~\cite[XI.2.2]{EKMM}
extends to the equivariant context to show that the inclusion
\[
\sK(2) \to \sL_U(2)
\]
is a $G$-homotopy equivalence of right
$\sL_U(1) \times \sL_U(1)$-spaces and therefore
\[
\hat{\sK}_1 \to (\sL_U(2)/\sL_U(1))
\]
is a $G$-homotopy equivalence of right $\sL_U(1)$-spaces.  As a
consequence, the induced map
\[
\hat{\sK}_1 \times_{\sL_U(1)} \sL_U(k) \to
(\sL_U(2)/\sL_U(1)) \times_{\sL_U(1)} \sL_U(k)
\]
is a $G \times \Sigma_k$-homotopy equivalence.
\end{proof}

\section{Compact Lie groups}

In this appendix, we quickly outline what aspects of our work in this
paper continue to hold when $G$ is an infinite compact Lie group.
Basically, all of the foundational material in this paper goes through
except the results on multiplicative norms; when $G$ is an infinite
compact Lie group, norms exist only for subgroups $H$ of finite index
and hence we can only work with admissible finite sets.  With this
modification, the theorems of the paper remain true.

To be more precise, the work of the paper depends on various results
about the linear isometries operad, mostly collected in
Appendix~\ref{app:lin}.  Lemma~\ref{lem:unisum} holds with the same
proof for finite $G$-sets; however, in all of our
applications of Lemma~\ref{lem:unisum}, this case suffices.
Lemmas~\ref{lem:equikriz} and~\ref{lem:normkriz} hold with the same
proofs; these arguments do not rely on the finiteness of $G$.
Lemmas~\ref{lem:normhop} and~\ref{lem:lemhop2} again require finite
$G$-sets, but this suffices to conclude Lemmas~\ref{cor:corhop}
and~\ref{cor:corhop2}, respectively.  In the body of the paper,
Theorem~\ref{thm:multichange} goes through with the same proof, as
does the essential Theorem~\ref{thm:mandell}.

As a consequence, the work of the remainder of the paper goes through
without modification in the arguments except for the material on the
norm in Sections~\ref{sec:pointnorm},~\ref{sec:homnorm},
and~\ref{sec:Gsym}.  Here, the results on $N_H^G$ require that $G/H$
be a finite $G$-set, i.e., that the subgroups have finite index.

\end{appendix}


\begin{thebibliography}{00}

\bibitem{Balmer}
P.~Balmer.
\newblock The spectrum of prime ideals in tensor triangulated
categories.
\newblock J. Reine Angew. Math. {\bf 588} (2005), 149–-168.

\bibitem{BalmerSanders}
P.~Balmer and B.~Sanders.
\newblock The spectrum of the equivariant stable homotopy category of
a finite group.
\newblock arXiv:1508.03969, 2015.

\bibitem{Blumthesis}
A.~J.~Blumberg.
\newblock Progress towards the {$K$}-theory of {T}hom spectra.
\newblock University of Chicago doctoral thesis (2005).

\bibitem{BCS}
A.~J.~Blumberg, R.~L.~Cohen, and C.~Schlichtkrull.
\newblock {$THH$} of {T}hom spectra and the free loop space.
\newblock Geom. and Top. {\bf 14} (2010), 1165--1242.

\bibitem{BlumbergHill}
A.~J.~Blumberg and M.~A.~Hill.
\newblock Operadic multiplications in equivariant spectra, norms, and
transfers.
\newblock Adv. Math. {\bf 285} (2015), 658--708.

\bibitem{BohmannOsorno}
A.~M.~Bohmann and A.~Osorno.
\newblock Constructing equivariant spectra via categorical Mackey
functors.
\newblock Algebr. Geom. Topol. {\bf 15} (2015), 537--563.

\bibitem{ClanBarwick}
C.~Barwick and E.~Dotto and S.~Glasman and D.~Nardin and J.~Shah.
\newblock Equivariant higher categories and equivariant higher algebra.
\newblock In progress (2015).

\bibitem{ElmendorfMay}
A.~D.~Elmendorf and J.~P.~May.
\newblock Algebras over equivariant sphere spectra.
\newblock J. of Pure and Appl. Alg. {\bf 116} (1997), 139--149.

\bibitem{EKMM}
A.~D. Elmendorf, I.~Kriz, M.~A. Mandell, and J.~P. May.
\newblock {\em Rings, modules, and algebras in stable homotopy theory},
  volume~47 of {\em Mathematical Surveys and Monographs}.
\newblock American Mathematical Society, Providence, RI, 1997.
\newblock With an appendix by M. Cole.

\bibitem{GreenleesMay}
J.~P.~C. Greenlees and J.~P. May.
\newblock Localization and completion theorems for {$M{\rm U}$}-module spectra.
\newblock Ann. of Math. {\bf 146} (1997), 509--544.

\bibitem{GreenleesShipley}
J.~P.~C. Greenlees and B.~Shipley.
\newblock An algebraic model for rational torus-equivariant spectra.
\newblock arXiv:1101.2511, 2011.

\bibitem{GreenleesShipley2}
J.~P.~C. Greenlees and B.~Shipley.
\newblock Fixed point adjunctions for equivariant module spectra.
\newblock arXiv:1301.5869, 2013.

\bibitem{GuillouMay}
B.~Guillou and J.~P.~May.
\newblock Permutative {$G$}-categories in equivariant infinite loop
space theory.
\newblock arXiv:1207.3459, 2012.

\bibitem{HillHopkinsLocalization}
M.~A.~Hill and M.~J.~Hopkins
\newblock Equivariant symmetric monoidal structures.
\newblock arXiv:1610.03114, 2016.

\bibitem{HHR}
M.~A.~Hill and M.~J.~Hopkins and D.~Ravenel.
\newblock On the non-existence of elements of {K}ervaire invariant one.
\newblock To appear in Annals of Mathematics.

\bibitem{HillMeier}
M.~A.~Hill and F.~L.~Meier.
\newblock All about ${T}mf_1(3)$
\newblock arXiv:1507.08115, 2015.

\bibitem{Hirschhorn}
P.~Hirschhorn.
\newblock Model categories and their localizations.
\newblock American Mathematical Society, Providence, RI, 2003.

\bibitem{HSS}
M.~Hovey and B.~Shipley and J.~Smith.
\newblock Symmetric spectra.
\newblock J. of the Amer. Math. Soc. {\bf 13} no. 1 (2000), 149--208.

\bibitem{KM}
I.~Kriz and J.~P.~May.
\newblock Operads, algebras, modules, and motives.
\newblock Asterisque {\bf 233} (1995).

\bibitem{MM} M.~A.~Mandell and J.~P.~May.
\newblock Equivariant orthogonal spectra and {$S$}-modules,
\newblock Mem. Amer. Math. Soc. {\bf 159} no. 755 (2002).

\bibitem{mmss} M.~A.~Mandell and J~.P.~May and S.~Schwede and
B.~Shipley.
\newblock Model categories of diagram spectra,
\newblock Proc. London Math. Soc. {\bf 82} no. 2 (2001), 441--512.

\bibitem{MayAlaska} J.~P.~May.
\newblock Equivariant homotopy and cohomology theory
\newblock CBMS Regional Conference Series in Mathematics {\bf 91} (1996). 
\newblock With contributions by M.~Cole, G.~Comezana, S.~Costenoble,
A.~D.~Elmendorf, J.~P.~C.~Greenlees, L.~G.~Lewis, Jr., R.~J.~Piacenza,
G.~Triantafillou, and S.~Waner.

\bibitem{McClure} J.~E.~McClure.
\newblock {$E_\infty$}-ring structures for {T}ate spectra. 
\newblock Proc. Amer. Math. Soc. {\bf 124} no. 6 (1994), 1917-–1922.

\bibitem{Nakoaka} H.~Nakoaka.
\newblock Ideals of {T}ambara functors.
\newblock Adv. in Math. {\bf 230} (2012), 2295--2331.

\bibitem{SchwedeBook} S.~Schwede.
\newblock Lectures notes in equivariant stable homotopy theory.
\newblock \url{http://www.math.uni-bonn.de/people/schwede/equivariant.pdf}

\bibitem{Schwedeglobal} S.~Schwede.
\newblock Global homotopy theory.
\newblock \url{http://www.math.uni-bonn.de/people/schwede/global.pdf}

\bibitem{Shimakawa} K.~Shimakawa.
\newblock A note on {$\Gamma_G$}-spaces.
\newblock Osaka J. Math. {\bf 28} (1991), 223--228.

\bibitem{Spitzweck} M.~Spitzweck.
\newblock Operads, algebras and modules in general model categories.
\newblock arXiv:0101102, 2001.

\end{thebibliography}
\end{document}